\newtheorem{theo}{Theorem}[section]
\newtheorem{lemma}[theo]{Lemma}
\newtheorem{prop}[theo]{Proposition}
\newtheorem{cor}[theo]{Corollary}
\theoremstyle{definition}
\newtheorem{defi}[theo]{Definition}
\theoremstyle{remark}
\newtheorem{remark}[theo]{Remark}
\newtheorem{example}[theo]{Example}
\def\Z{\mathbb{Z}}
\def\coh{{\operatorname{coh}}}
\def\Qcoh{\operatorname{Qcoh}}
\def\bD{{\mathbf D}}
\def\bR{{\mathbf R}}
\def\bL{{\mathbf L}}
\def\ba{{\mathbf a}}
\def\be{{\mathbf e}}
\def\pre-tr{\operatorname{pre-tr}}
\def\h{\operatorname{h}}
\def\Hom{\operatorname{Hom}}
\def\End{\operatorname{End}}
\newcommand{\bbZ}{{\mathbb Z}}
\newcommand{\cJ}{{\mathcal J}}
\newcommand{\csI}{{\mathscr I}}
\newcommand{\cO}{{\mathcal O}}
\newcommand{\cP}{{\mathcal P}}
\newcommand{\cM}{{\mathcal M}}
\newcommand{\cN}{{\mathcal N}}
\newcommand{\cD}{{\mathscr D}}
\newcommand{\cA}{{\mathcal A}}
\newcommand{\csA}{{\mathscr A}}
\newcommand{\cB}{{\mathcal B}}
\newcommand{\cC}{{\mathcal C}}
\newcommand{\cE}{{\mathcal E}}
\newcommand{\cU}{{\mathcal U}}
\newcommand{\cS}{{\mathcal S}}
\newcommand{\cT}{{\mathscr T}}
\newcommand{\cHom}{{\mathcal Hom}}
\newcommand{\Fun}{\operatorname{Fun}}
\newcommand{\Tot}{\operatorname{Tot}}
\newcommand{\Perf}{\operatorname{Perf}}
\newcommand{\Ker}{\operatorname{Ker}}
\newcommand{\pdim}{\operatorname{pdim}}
\newcommand{\gldim}{\operatorname{gldim}}
\newcommand{\Ext}{\operatorname{Ext}}
\newcommand{\add}{\operatorname{add}}
\newcommand{\Ind}{\operatorname{Ind}}
\newcommand{\LInd}{\operatorname{\mathbf{L}Ind}}
\newcommand{\Res}{\operatorname{Res}}
\newcommand{\Spec}{\operatorname{Spec}\,}
\newcommand{\Proj}{\operatorname{Proj}\,}
\newcommand{\id}{\operatorname{id}}
\newcommand{\op}{{\operatorname{op}}}
\newcommand{\Coker}{\operatorname{Coker}}
\newcommand{\pd}{\operatorname{pd}}
\newcommand{\barp}{{\bar{p}}}
\newcommand{\bull}{{\scriptstyle{{-1}}}}
\newcommand{\bff}{{\mathbf{f}}}
\newcommand{\Sch}{\mathop{\mathsf{Sch}}\nolimits}
\newcommand{\ASch}{\mathop{\mathsf{ASch}}\nolimits}
\newcommand{\Tria}{\mathop{\mathsf{Tria}}\nolimits}
\newcommand{\sDG}{\mathop{\mathsf{sDG}}\nolimits}
\title{Categorical resolutions of irrational singularities}
\author{Alexander Kuznetsov}
\address{\sloppy
\parbox{0.9\textwidth}{
{\bf A.K.: }Algebraic Geometry Section, Steklov Mathematical Institute,
8 Gubkin str., Moscow 119991 Russia
\hfill\\[5pt]
\phantom{{\bf A.K.: }}The Poncelet Laboratory, Independent University of Moscow
\hfill\\[5pt]
\phantom{{\bf A.K.: }}Laboratory of Algebraic Geometry, SU-HSE
\hfill
}\bigskip}
\email{akuznet@mi.ras.ru}
\author{Valery A.~Lunts}
\address{{\bf V.L.: }Department of Mathematics, Indiana University, Bloomington, IN 47405, USA} 
\email{vlunts@indiana.edu}
\thanks{A.K.\ was partially supported by
RFFI grants 10-01-93110, 10-01-93113, 11-01-00393, 11-01-00568, \hbox{11-01-92613-KO-a}, 12-01-33024, NSh-5139.2012.1,
the grant of the Simons foundation, and by AG Laboratory SU-HSE, RF government grant, ag.11.G34.31.0023.
}
\begin{document}

\begin{abstract}
We show that the derived category of any singularity over a field of characteristic $0$
can be embedded fully and faithfully into a smooth triangulated category which has a semiorthogonal 
decomposition with components equivalent to derived categories of smooth varieties. This provides 
a categorical resolution of the singularity.
\end{abstract}

\maketitle

\tableofcontents

\newcommand{\BL}{\textsf{BL}}
\newcommand{\Cone}{\textsf{Cone}}
\newcommand{\Hot}{\operatorname{\textrm{\rm Hot}}}
\newcommand{\Mor}{\operatorname{\textrm{\rm Mor}}}
\newcommand{\opp}{\textrm{op}}
\newcommand{\red}{\textrm{\sf red}}
\newcommand{\comp}{\textrm{\sf c}}
\newcommand{\perf}{\textrm{\sf perf}}
\newcommand{\mmod}{\textrm{-\rm mod}}
\newcommand{\dgm}{\textrm{-\rm dgm}}
\newcommand{\hinj}{\textrm{\rm h-inj}}
\newcommand{\hproj}{\textrm{\rm h-proj}}
\newcommand{\hflat}{\operatorname{\textrm{\rm h-flat}}}
\newcommand{\hflata}{\operatorname{\textrm{\rm h-flat}^\circ}}
\newcommand{\hflatperf}{\operatorname{\textrm{\rm h-flat-perf}}}
\newcommand{\hflatperfa}{\operatorname{\textrm{\rm h-flat-perf}^\circ}}
\newcommand{\comperf}{\operatorname{\textrm{\rm com-perf}}}
\newcommand{\comperfa}{\operatorname{\textrm{\rm com-perf}^\circ}}
\newcommand{\com}{\operatorname{\textrm{\rm com}}}
\newcommand{\coma}{\operatorname{\textrm{\rm com}^\circ}}
\newcommand{\hfp}{\textrm{h-fp}}
\newcommand{\cf}{{\text{\rm cf}}}
\newcommand{\cdcf}{\operatorname{\cD}}
\newcommand{\lotimes}{\stackrel{{\mathbb L}}\otimes}
\newcommand{\bbI}{\mathbb{I}}

\newcommand{\fa}{{\mathfrak{a}}}
\newcommand{\fr}{{\mathfrak{r}}}

\newcommand{\sa}{{\mathsf a}}
\newcommand{\sd}{{\mathsf d}}
\newcommand{\se}{{\mathsf e}}
\newcommand{\si}{{\mathsf i}}
\newcommand{\sj}{{\mathsf j}}
\newcommand{\sq}{{\mathsf q}}
\newcommand{\st}{{\mathsf t}}
\newcommand{\sap}{{\mathsf a}}
\newcommand{\sep}{{\mathsf e}}
\newcommand{\sip}{{\mathsf i}}
\newcommand{\sqp}{{\mathsf q}}
\newcommand{\stp}{{\mathsf t}}
\newcommand{\sS}{{\mathsf S}}

\newcommand{\Ob}{\operatorname{\mathrm Ob}}
\newcommand{\sY}{{\mathsf Y}}
\newcommand{\bp}{{\mathbf p}}
\newcommand{\kk}{{\Bbbk}}

\newcommand{\md}{{\text{-}\operatorname{mod}}}
\newcommand{\fmd}{{\text{-}\operatorname{fmod}}}
\newcommand{\qmd}{{\text{-}\operatorname{qmod}}}
\newcommand{\Ann}{\operatorname{Ann}}
\newcommand{\RHom}{\operatorname{RHom}}
\newcommand{\RCHom}{\operatorname{R\mathscr{H}\!\mathit{om}}}

\def\Acycl{\operatorname{\rm Acycl}}
\def\fqc{\operatorname{fQcoh}}
\def\qqc{\operatorname{qQcoh}}
\def\dqc{\operatorname{dQcoh}}

\section{Introduction}\label{s-intro}

Resolution of singularities is one of central concepts in algebraic geometry.
In many cases it allows to reduce the complicated geometry of singular schemes 
to a much more tractable geometry of smooth schemes. From the categorical point of view
a resolution $\pi:X \to Y$ manifests itself in a pair of adjoint functors
\begin{equation*}
L\pi^*:\bD(Y) \to \bD(X)
\qquad\text{and}\qquad
R\pi_*:\bD(X) \to \bD(Y)
\end{equation*}
(the derived pullback and the derived pushforward) between the derived categories 
of quasicoherent sheaves on $X$ and $Y$ respectively.
The functors are related by the projection formula
\begin{equation*}
R\pi_*(L\pi^*(F)) \cong F \lotimes R\pi_*\cO_X,
\end{equation*}
which shows that from the categorical point of view there are two completely different situations:
\begin{enumerate}
\item the canonical  morphism $\cO_Y \to R\pi_*\cO_X$ is an isomorphism;
\item the canonical  morphism $\cO_Y \to R\pi_*\cO_X$ is not an isomorphism.
\end{enumerate}
If (1) holds one says that $Y$ has {\sf rational singularities}. In this case $R\pi_*\circ L\pi^* \cong \id$,
hence the pullback functor $L\pi^*$ is fully faithful, so the singular category $\bD(Y)$ {\em embeds}\/ into
a smooth category $\bD(X)$. This embedding allows the reductions of geometrical questions on~$Y$ to those on $X$
which we mentioned above. 
On a contrary, if (2) holds, the functor $L\pi^*$ is not fully faithful, so it does not provide such a reduction.
So, from a categorical point of view, the usual resolution of a scheme which has irrational singularities
is not a resolution at all!

The goal of the present paper is to show that even for a scheme $Y$ with irrational singularities one
can construct a categorical resolution by gluing appropriately several derived categories of smooth varieties.
To be more precise, for each separated scheme of finite type~$Y$ over a field $\kk$ 
of characteristic $0$ we construct a nice triangulated
category $\cT$ with an adjoint pair of triangulated functors $\pi^*:\bD(Y) \to \cT$ and $\pi_*:\cT \to \bD(Y)$
such that $\pi_*\circ\pi^* \cong \id$ so that $\pi^*$ is fully faithful, and, moreover, $\cT$ enjoys 
a number of useful properties, see Definition~\ref{resboth} below.




Before going to precise definitions we have to explain what we mean by saying that $\cT$ is nice.
First of all, the category $\cT$ has to be smooth. The notion of smoothness is formulated in terms 
of DG-categories (an introduction into the subject can be found in section 3), so we have to start 
with recalling what is a smooth DG-category.


\begin{defi}\label{dgsmooth}
A small DG-category $\cD$ is {\sf smooth} if the diagonal bimodule $\cD$ is {\em perfect}.
In other words, if it is contained in the smallest Karoubian closed triangulated 
subcategory of the derived category $\bD(\cD^\op\otimes\cD)$ generated by representable bimodules.
\end{defi}

This leads to the following definitions.

\begin{defi}\label{trsmooth}
A cocomplete compactly generated triangulated category $\cT$ is {\sf smooth} 
if there exists a smooth DG-category $\cD$ such that its derived category $\bD(\cD)$ 
is equivalent to $\cT$ as a triangulated category.
\end{defi} 


It is well known (see~\cite{TV}) that if $X$ is a smooth variety then the derived category $\bD(X)$ is smooth.


Now let us give a precise definition of a categorical resolution. 
Recall that an object $F$ of a triangulated category $\cT$ is {\sf compact} if the functor $\Hom_\cT(F,-)$
commutes with arbitrary direct sums. The subcategory $\cT^\comp \subset \cT$ of compact objects in $\cT$
is triangulated. 
Compact objects in $\bD(Y)$, the unbounded derived category of quasicoherent sheaves 
on a separated scheme of finite type $Y$, are {\sf perfect complexes}, i.e. the objects 
which are locally quasiisomorphic to finite complexes of locally free sheaves of finite rank. 
The perfect complexes form a triangulated subcategory of $\bD^b(\coh(Y))$ which we denote by $\bD^\perf(Y)$.

\begin{defi}[\cite{K08}]\label{resboth}
A {\sf categorical resolution of a scheme $Y$}\/ is 
a smooth cocomplete compactly generated triangulated category $\cT$ with an adjoint pair of triangulated functors
\begin{equation*}
\pi^*:\bD(Y) \to \cT
\quad\text{and}\quad
\pi_*:\cT \to \bD(Y),
\end{equation*}
such that 
\begin{enumerate}
\item $\pi_*\circ\pi^* = \id$;
\item both $\pi^*$ and $\pi_*$ commute with arbitrary direct sums;
\item $\pi_*(\cT^\comp) \subset \bD^b(\coh(Y))$.
\end{enumerate}
\end{defi}

Note that the second property implies that $\pi^*(\bD^\perf(Y)) \subset \cT^\comp$ (see Lemma~\ref{cprops}).

If $\pi:X\to Y$ is a usual resolution and $Y$ has rational singularities then the category $\cT = \bD(X)$
with the functors $L\pi^*$ and $R\pi_*$ is a categorical resolution of $Y$. Note however, that if 
the singularities of $Y$ are not rational then $\bD(X)$ is {\em not} a categorical resolution of $Y$ 
since the composition $R\pi_*\circ L\pi^*$ is isomorphic to the tensor product with $R\pi_*\cO_X$, 
which is not the identity functor.

To formulate the main result of the paper we need one more notion. As we already have said the resolution
which we construct is a nice triangulated category. In fact it is even nicer than just a smooth triangulated 
category. It has a very geometric nature --- to be more precise it has a semiorthogonal decomposition 
with all components being derived categories of smooth algebraic varieties. We call such categories 
{\sf strongly geometric}. Recall also that a cocomplete compactly generated triangulated category 
$\cT$ is {\sf proper} if the category $\cT^\comp$ is $\Ext$-finite, which means that the vector 
space $\oplus_{i \in \Z} \Hom(F,G[i])$ is finite-dimensional for all $F,G \in \cT^\comp$.


The main result of the paper is the following

\begin{theo}\label{cr}
Any separated scheme of finite type $Y$ over a field of characteristic~$0$
has a categorical resolution by a strongly geometric triangulated category $\cT$.
If $Y$ is proper then so is the resolving category $\cT$.
\end{theo}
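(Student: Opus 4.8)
The plan is to reduce the general case to the case of a projective variety by a sequence of standard dévissages, and then to build the resolution for a projective variety by induction on dimension using a resolution of singularities together with the gluing machinery for semiorthogonal decompositions. First I would note that it suffices to treat a reduced scheme (the derived category of $Y$ and of $Y_{\red}$ differ by an action that can be absorbed into the construction, or one handles the non-reduced structure via a filtration by ideals of nilpotents), and then, covering $Y$ by affine opens and using a Mayer--Vietoris type gluing of categorical resolutions along intersections, it suffices to construct a categorical resolution of an affine (hence quasi-projective) variety. Compactifying and resolving the boundary, the problem is reduced to constructing a strongly geometric categorical resolution of a \emph{projective} variety $Y$, where moreover one wants to keep track of the resolution in a neighborhood of a closed subscheme so that the gluings are compatible.

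For a projective variety $Y$ of dimension $n$, choose a resolution of singularities $\pi : X \to Y$ (char $0$) which is an isomorphism over the smooth locus, with exceptional data supported over a closed subscheme $Z \subsetneq Y$ of smaller dimension. The key idea is: $\bD(X)$ fails to be a resolution of $Y$ only because $R\pi_*\cO_X \neq \cO_Y$; the higher direct images $R^i\pi_*\cO_X$ are sheaves supported on $Z$. One then \emph{glues} $\bD(X)$ with a categorical resolution $\widetilde{\cT}_Z$ of (a suitable scheme built from) $Z$ — obtained by the inductive hypothesis, since $\dim Z < \dim Y$ — in such a way that the defect $R\pi_*\cO_X$ is corrected. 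Concretely, I expect the construction to produce $\cT$ as a category admitting a semiorthogonal decomposition whose components are $\bD(X)$ and several copies of pieces of $\widetilde{\cT}_Z$ (themselves glued from derived categories of smooth varieties by induction), with the functors $\pi^* , \pi_*$ assembled from $L\pi^*, R\pi_*$ and the structure functors of $\widetilde{\cT}_Z$ so that $\pi_* \pi^* \cong \id$ is forced by the correction. Smoothness of $\cT$ follows because a category glued from smooth DG-categories along a perfect bimodule is again smooth; strong geometricity is immediate from the form of the semiorthogonal decomposition; properties (2) and (3) of Definition~\ref{resboth} are checked componentwise.

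For the properness clause — which is the statement to be proved here — the observation is that once $Y$ is proper the entire construction stays inside the world of proper varieties: the resolution $X$ of a proper $Y$ is proper, the closed subscheme $Z$ is proper, and by induction its categorical resolution $\widetilde{\cT}_Z$ is built from derived categories of \emph{smooth projective} varieties, hence is proper; and a triangulated category glued from finitely many proper strongly geometric components along a semiorthogonal decomposition is again proper, because $\Hom$ spaces in $\cT^\comp$ fit into finite iterated extensions of $\Hom$ spaces between compact objects of the components, each of which is finite-dimensional. So properness propagates through the induction exactly as strong geometricity does.

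The main obstacle will be the gluing step: arranging the correction of $R\pi_*\cO_X$ so that $\pi_*\pi^* \cong \id$ holds \emph{on the nose} while simultaneously (a) keeping the gluing bimodule perfect (needed for smoothness of the total category), (b) making the whole construction functorial/compatible enough with restriction to opens that the affine-cover gluing in the first paragraph goes through, and (c) ensuring the components remain honest derived categories of smooth varieties rather than merely smooth DG-categories. Properness itself, by contrast, is essentially a formal corollary: it is the assertion that this inductive construction, when started from a proper $Y$, never leaves the class of proper geometric components, and that properness is stable under the finite semiorthogonal gluings used — both of which are routine once the construction is in place.
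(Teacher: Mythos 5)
Your broad architecture (resolution of singularities, then glue derived categories of smooth pieces along a semiorthogonal decomposition, then induct, then observe properness propagates) matches the paper's, but you are missing the two ideas that make the gluing actually work, and you introduce dévissages that the paper neither needs nor uses.

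The first missing ingredient is the notion of a \emph{nonrational center}. You write that one should ``glue $\bD(X)$ with a categorical resolution of $Z$'' so as to correct the defect $R\pi_*\cO_X \neq \cO_Y$, but you do not say how the glued functor $\pi^*$ would be made fully faithful, and in fact gluing along $Z$ itself does not do it. The paper shows (Lemma~\ref{innrc}, Lemma~\ref{nrc-tri}) that one must replace $Z$ by a specific \emph{nonreduced} infinitesimal neighborhood $S$ (the subscheme cut out by $I_Z^n$ for $n\gg0$), chosen so that $Rf_*I_{f^{-1}(S)}\cong I_S$. This identity is exactly what produces the distinguished triangle $\cO_Y \to Rf_*\cO_X \oplus i_*\cO_S \to Rf_*j_*\cO_T$ which in turn makes $\pi_0^*$ fully faithful (Proposition~\ref{pisfai}). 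Without this choice the gluing of $\bD(X)$ with a resolution of $Z$ gives no control on $\Hom_{\bD(\cD_0)}(\pi_0 F,\pi_0 G)$.

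The second missing ingredient, which is really the technical heart of the paper, is the sheaf of Auslander-type algebras $\csA_{S,\fr,n}$ on the nonreduced scheme $S$. Because the nonrational center $S$ is necessarily nonreduced (even when $Y$ is reduced), one needs a smooth categorical resolution of $\bD(S)$; the paper builds this as $\bD(\csA_S)$, proves it is smooth and proper when $S_{\red}=Z$ is (Theorem~\ref{dasm}, Theorem~\ref{ausres}), and uses it as one component of the gluing. Your suggestion to reduce to the reduced case ``via a filtration by ideals of nilpotents'' does not avoid this: the nonreduced $S$ appears at every step of the induction regardless of whether $Y$ itself is reduced, so the $\csA$-module machinery is indispensable rather than a convenience. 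Relatedly, your reductions to the affine and then projective case via an open cover and Mayer--Vietoris gluing of categorical resolutions are not in the paper, and it is not clear they can be made rigorous (gluing DG-resolutions over a Zariski cover while preserving smoothness and the resolution property is a nontrivial descent problem that the paper sidesteps by working directly with a global chain of smooth blowups). Finally, the paper's induction is on the length $m$ of a fixed Hironaka resolution tower~\eqref{frs}, not on $\dim Y$; this is what lets it reglue the component $\cdcf(Y_1)$ by its resolution (Proposition~\ref{reglu}) and verify perfectness of the gluing bimodule via Proposition~\ref{tv} and Lemma~\ref{phicoh}. Your discussion of properness, on the other hand, is essentially correct and does match the paper's: once smoothness and perfectness of the gluing bimodule are established, properness follows from Proposition~\ref{utsm} and the properness of $\cdcf(\csA_S)$, with the induction started from a proper $Y$.
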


%
%
%
%

Note that we do not put any restrictions on the singularity of $Y$. In fact, it may be not normal,
reducible, and even nonreduced, the construction still works!

As we will soon explain the varieties whose derived categories appear as components of the categorical
resolution which we construct are strongly related to the usual process of desingularization of the reduced scheme $Y_\red$ --- 
if we fix a usual resolution $X \to Y_\red$ by a sequence of blowups with smooth centers $Z_0$, $Z_1$, \dots, $Z_{m-1}$ 
then the components of $\cT$ are $\bD(Z_0)$, $\bD(Z_1)$, \dots, $\bD(Z_{m-1})$ (each of them 
may be repeated several times!) and $\bD(X)$ (with repetitions if $Y$ itself is not reduced).
 
Let us outline the construction which we use.

First of all we use heavily the machinery of DG-categories (see section~\ref{s-p-dg} for 
an introduction into the subject). Accordingly we use a DG-version of the definition
of a categorical resolution. 

\begin{defi}\label{pcdgr}
A {\sf partial categorical DG-resolution} of a small pretriangulated DG-category $\cD$
is a small pretriangulated DG-category $\tilde\cD$ with a DG-functor $\pi:\cD \to \tilde\cD$ 
which induces a fully faithful functor on homotopy categories. 
If additionally $\tilde\cD$ is smooth we say that it is a {\sf categorical DG-resolution}.
\end{defi}

%


The first instrument of the construction is the notion of a {\sf gluing} of DG-categories.
Given two small DG-categories $\cD_1$, $\cD_2$ and a $\cD_1$-$\cD_2$-bimodule $\varphi$
we define a DG-category $\cD_1\times_\varphi\cD_2$. The definition can be found in section~\ref{s-glu},
here we will just say that it is a straightforward generalization of an upper triangular algebra,
with the diagonal entries being two given algebras and the upper diagonal entry being a bimodule
over these algebras. We show that the gluing of two pretriangulated categories is itself pretriangulated.
The derived and homotopy categories of the gluing have semiorthogonal decompositions
\begin{equation*}
[\cD_1\times_\varphi\cD_2] = \big\langle [\cD_1], [\cD_2] \big\rangle,
\qquad
\bD(\cD_1\times_\varphi\cD_2) = \big\langle \bD(\cD_1), \bD(\cD_2) \big\rangle.
\end{equation*}
The quasiequivalence class of the gluing depends only on the quasiisomorphism class of the gluing bimodule,
and the gluing is smooth if and only if both components $\cD_1$ and $\cD_2$ are smooth and the gluing bimodule
is perfect (see Proposition~\ref{utsm} and~\cite{LS}).

Another instrument is the category of $\csA$-modules. 
Given a scheme $S$, an integer $n$, and an ideal $\fr \subset \cO_S$ such that $\fr^n = 0$ we define
the sheaf of $\cO_S$-algebras $\csA = \csA_{S,\fr,n}$ as a certain subalgebra in
$\End_{\cO_S}(\cO_S \oplus \cO_S/\fr^{n-1} \oplus \dots \oplus \cO_S/\fr)$, see~\eqref{cadef}
for precise formula. The ringed space $(S,\csA_{S,\fr,n})$ is called {\sf an $\csA$-space}.
One of the points of the paper is that $\csA$-spaces share many good properties with schemes
(and have some advantages) so one can use them as building blocks for constructing triangulated
categories of geometric interest. \fbox{\tt !!!}

The category $\Qcoh(\csA)$ of quasicoherent $\csA$-modules on $S$ is an abelian category
and we prove that its derived category $\bD(\csA)$ gives a categorical resolution of $\bD(S)$ if the closed 
subscheme $S_0 \subset S$ corresponding to the ideal $\fr$ is smooth. Moreover,$\bD(\csA)$ comes with semiorthogonal decompositions
\begin{equation*}
\begin{array}{lll}
\bD(\csA) &=& \big\langle \bD(S_0),\bD(S_0),\dots,\bD(S_0) \big\rangle
\\
\bD^b(\coh(\csA)) &=& \big\langle \bD^b(\coh(S_0)),\bD^b(\coh(S_0)),\dots, \bD^b(\coh(S_0)) \big\rangle
\end{array}
\end{equation*}
%
%
(the number of components equals $n$).
Moreover, taking appropriate DG-enhancement $\cdcf(\csA)$ 
of the derived category of perfect $\csA$-modules gives a categorical DG-resolution
of $\cdcf(S)$, a DG-model of the category of perfect complexes on~$S$.

Now the main construction looks as follows. We consider a blowup $f: Y_1 \to Y$ with a smooth center $Z \subset Y$.
Then we show that for $n$ sufficiently large the $n$-th infinitesimal neighborhood $S$ of the subscheme $Z$
(i.e. the subscheme with $I_S = I_Z^n$) has the following important property:
\begin{equation*}
Rf_* I_{f^{-1}(S)} \cong I_S.
\end{equation*}
Here $f^{-1}(S)$ is the scheme-theoretic preimage of $S$. We call such a subscheme $S$ a {\sf nonrational center} for $f$.
We consider the nilpotent ideal $\fr := I_Z/I_S$ on $S$, so that we have $S_0 = Z$, and the category $\Qcoh(\csA_S) = \Qcoh(\csA_{S,\fr,n})$
of $\csA_S$-modules.
Further, we construct a $\cdcf(\csA_S)$-$\cdcf(Y_1)$-bimodule $\varphi$ such that the gluing
$\cdcf(\csA_S)\times_\varphi\cdcf(Y_1)$ is a partial categorical resolution of $\cdcf(Y)$.
Of course this is not yet a resolution --- there is no reason for $Y_1$ to be smooth.
However, this serves as a step of induction. With a wise choice of the first blowup center $Z$
the singularities of $Y_1$ are more simple than those of $Y$ and we can assume by induction that $\cdcf(Y_1)$
has a categorical DG-resolution~$\cD_1$. Then we replace the gluing $\cdcf(\csA_S)\times_\varphi\cdcf(Y_1)$
by $\cD := \cdcf(\csA_S)\times_{\tilde\varphi}\cD_1$, where $\tilde\varphi = \varphi\lotimes_{\cdcf(Y_1)}\cD_1$
(this procedure is called {\sf regluing}) and show that $\cD$ is a categorical DG-resolution of $\cdcf(Y)$.

The base of the induction is the case of a (possibly nonreduced) scheme $Y$ such that the associated 
reduced scheme $Y_\red$ is smooth. In this case the resolution is again provided by the derived category
of $\csA$-modules $\bD(\csA_Y)$. Thus we use the category of $\csA$-modules both in the base and in the step
of the induction. Note also that this category is essential for the construction even if the original
scheme $Y$ is reduced since the nonrational centers typically have nonreduced structure which has to be resolved.


Note that each step of the above construction seriously depends on a choice of integer~$n$ (which as it was mentioned above
can be chosen arbitrarily as soon as it is sufficiently large). It is worth mentioning that increasing $n$ by $1$
has an effect of adding one more semiorthogonal component equivalent to $\bD(Z)$ to the resolving category.
This is analogous to making an extra smooth blowup of a resolution of singularities in geometric situation.
Thus if one would like to construct ``smaller'' resolution, one should pick up $n$ as small as possible.

The constructed resolution has several important properties. First of all, replacing $Y$ by its open subsets
one obtains a presheaf of DG-categories on $Y$. If $Y$ is generically reduced then on a sufficiently small 
open subset $U \subset Y$ the corresponding DG-category coincides with $\cdcf(U)$ --- this expresses birationality 
of the resolution.
If $Y$ comes with an action of a group $G$ one can choose a resolution on which 
the same group $G$ acts in a compatible way.


Since our categorical resolution requires a usual resolution as an input it is restricted to characteristic $0$.
On the other hand, in cases where a usual resolution is known in positive characteristic (e.g. in dimension 3 
and sufficiently large characteristic) our construction can be applied to give a categorical resolution.

\bigskip

Now let us explain the relation of this paper to other notions of a categorical resolution
in the literature. The first appearance of this concept is due to Bondal and Orlov, \cite{BO02}.
The notion suggested in {\em loc.\ cit.} is much stronger than one used here --- it is assumed there
that the resolving category is the bounded derived category of an {\em abelian} category of finite
homological dimension (in particular the resolving category has a bounded t-structure), and 
moreover that $\bD^b(\coh(Y))$ is a localization of that category. We believe that the first assumption
is irrelevant and too restrictive, this is why we construct resolutions as derived categories 
of DG-categories (note however that the resolution $\bD(\csA_S)$ of a nonreduced scheme $S$
with smooth $S_0$ enjoys all the properties asked for in~\cite{BO02}). On the other hand, 
we believe that the resolutions which we construct enjoy the second property --- that $\bD^b(\coh(Y))$ 
is the quotient of $\cT^\comp$. 
Recently, Alexander Efimov announced a proof of this fact based on our approach.


Another notion, a {\em noncommutative crepant resolution}, was introduced by Van den Bergh in~\cite{VdB04}.	
It was even more restrictive than the original definition of Bondal and Orlov. In addition it was
assumed that the resolving abelian category can be realized as the category of sheaves of modules
over a certain sheaf of algebras on $Y$ with nice homological properties.

On the other hand, in~\cite{K08} the definition of a categorical resolution (in the context 
of small categories) used here first appeared.
However, it was expected that such resolution may exist
only if $Y$ has rational singularities, and the point of the paper was in finding minimal
resolutions by starting from a commutative resolution and then shrinking it by chopping out
some irrelevant semiorthogonal components. So, in a sense the result of the present paper shows
that the same approach may be used for nonrational singularities as well. 

Finally, in papers~\cite{L10a,L10b} the notion of a categorical resolution in the context 
of big categories was considered. In particular, it was shown in~\cite{L10b} that one can 
construct a categorical resolution of $\bD(Y)$ by a so-called smooth poset scheme if and only if
$Y$ has Du Bois singularities. A smooth poset scheme is obtained by ``gluing'' a finite number
of smooth schemes $X_\alpha$ (indexed by a poset) along arbitrary morphisms $f_{\alpha\beta}:X_\alpha \to X_\beta$
(if $\alpha \ge \beta$). Thus in case $Y$ has Du Bois singularities its categorical resolution
(as in Theorem~\ref{cr}) can be chosen in a more geometric way: one only needs to glue ``smooth schemes''
along honest morphisms and $\csA$-modules can be avoided. However, the resolving category in this 
paper is also constructed by similar gluings where some of smooth schemes are replaced by smooth $\csA$-spaces.

{\bf Acknowledgements.} The authors are grateful to the Indiana University in Bloomington where a significant 
part of the work has been done. We are very grateful to Dima Kaledin, Dima Orlov and Olaf Schn\"urer
for valuable discussions and Osamu Iyama for insightful comments. We also thank Bernhard Keller and
Bertrand To\"en for answering some questions. We are especially grateful to 
Sasha Efimov whose suggestions allowed to simplify considerably the section on $\csA$-modules.


\section{Preliminaries on triangulated categories}\label{s-p-tri}

General reference for the material on triangulated categories is \cite{BK89,BO95,BO02}.

\subsection{Semiorthogonal decompositions}

Let $\kk$ be a field and $\cT$ a $\kk$-linear triangulated category.

\begin{defi}[\cite{BK89,BO95}]\label{def-sod}
A {\sf semiorthogonal decomposition of triangulated category $\cT$}\/ is a collection $\cT_1,\dots,\cT_m$ of strictly full triangulated subcategories in $\cT$
({\sf components}\/ of the decomposition) such that
\begin{itemize}
\item $\Hom_\cT(\cT_i,\cT_j) = 0$ for $i >j$;
\item for any $F\in \cT$ there is a chain of maps
\begin{equation}\label{defsod}
0 = F_m \to F_{m-1} \to \dots \to F_1\to F_0 = F
\end{equation} 
such that $\Cone(F_i \to F_{i-1})\in \cT_i$ for all $i=1,\dots,m$.
\end{itemize}
If only the first property holds we will say that $\cT_1,\dots,\cT_m$ is a {\sf semiorthogonal collection} of subcategories.
We will use notation
\begin{equation*}
\cT = \langle \cT_1,\dots,\cT_m \rangle
\end{equation*}
to express a semiorthogonal decomposition of $\cT$ with components $\cT_1$, \dots, $\cT_m$.
\end{defi}

It follows from the definition that the chain of maps~\eqref{defsod} is functorial in $F$, and moreover, 
the cones of the maps in the chain are also functorial. In other words, there are functors
\begin{equation*}
\cT \to \cT_i,\qquad
F \mapsto \bp_i(F) := \Cone(F_i \to F_{i-1})
\end{equation*}
known as {\sf the projection functors of the semiorthogonal decomposition}.

For future convenience we will need to restate the definition of a semiorthogonal decomposition in the special case of $m = 2$ components.
In this case the chain~\eqref{defsod} for $F \in\cT$ looks as $0 \to F' \to F$ and the conditions are that 
$F' = \Cone(0 \to F') \in \cT_2$ and $\Cone(F' \to F) \in \cT_1$. In other words, $\cT  = \langle\cT_1,\cT_2\rangle$
is a semiorthogonal decomposition iff
\begin{itemize}
\item $\Hom(\cT_2,\cT_1) = 0$ and
\item for each $F \in \cT$ there is a distinguished triangle
\begin{equation*}
\bp_2(F) \to F \to \bp_1(F) \to \bp_2(F)[1]
\end{equation*}
with $\bp_i(F) \in \cT_i$.
\end{itemize}
One can also express the last property by saying that each object in $\cT$ can be represented as a cone of a morphism
from an object of $\cT_1$ to an object of $\cT_2$.

The following result is well known.

\begin{lemma}[\cite{BK89}]\label{sod3}
Assume that $\cT_1,\cT_2 \subset \cT$ is a semiorthogonal pair of full triangulated subcategories, such that 
\begin{itemize}
\item the embedding functor $i_1:\cT_1 \to \cT$ has a left adjoint $i_1^*:\cT \to \cT_1$, and
\item the embedding functor $i_2:\cT_2 \to \cT$ has a right adjoint $i_2^!:\cT \to \cT_2$.
\end{itemize}
Then there is a semiorthogonal decomposition
\begin{equation*}
\cT = \langle \cT_1, {}^\perp\cT_1 \cap \cT_2^\perp, \cT_2 \rangle,
\end{equation*}
where 
\begin{equation*}\arraycolsep=0pt
\begin{array}{lll}
{}^\perp\cT_1 \,&= \{ T \in \cT\ |\ \Hom(T,\cT_1) = 0 \} \,&= \Ker i_1^*,\\
\cT_2^\perp &= \{ T \in \cT\ |\ \Hom(\cT_2,T) = 0 \} &= \Ker i_2^!.
\end{array}
\end{equation*}
In particular, if ${}^\perp\cT_1 \cap\cT_2^\perp = 0$ then $\cT = \langle \cT_1,\cT_2 \rangle$.
Moreover, $\bp_1 = i_1i_1^*$, $\bp_2 = i_2i_2^!$.
\end{lemma}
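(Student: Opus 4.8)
The plan is to construct the claimed semiorthogonal decomposition directly from the two adjunctions, following the standard ``mutation'' argument of Bondal--Kapranov. First I would set $\cT_1' = {}^\perp\cT_1$ and $\cT_2' = \cT_2^\perp$; both are strictly full triangulated subcategories of $\cT$ since they are defined by vanishing of $\Hom$ out of, resp.\ into, a fixed triangulated subcategory. The hypotheses say precisely that $i_1$ admits a left adjoint $i_1^*$ and $i_2$ admits a right adjoint $i_2^!$, so by the general yoga of adjoints between triangulated categories (see \cite{BK89}) both $i_1^*$ and $i_2^!$ are triangulated functors, and one has $\Ker i_1^* = {}^\perp\cT_1$, $\Ker i_2^! = \cT_2^\perp$ as stated. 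The middle component will be $\cM := {}^\perp\cT_1 \cap \cT_2^\perp$.

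Next I would verify semiorthogonality of the triple $(\cT_1, \cM, \cT_2)$. The vanishing $\Hom(\cM,\cT_1)=0$ and $\Hom(\cT_2,\cM)=0$ are immediate from the definitions of ${}^\perp\cT_1$ and $\cT_2^\perp$, and $\Hom(\cT_2,\cT_1)=0$ is the assumed semiorthogonality of the pair. So the only content is the existence of the filtration~\eqref{defsod}. Given $F \in \cT$, the plan is to build it in two stages. Using the left adjoint $i_1^*$, the counit gives a morphism $i_1 i_1^* F \to F$; complete it to a triangle
\begin{equation*}
i_1 i_1^* F \to F \to F' \to i_1 i_1^* F[1].
\end{equation*}
A standard check shows $F' \in {}^\perp\cT_1$: apply $\Hom(-,C)$ for $C \in \cT_1$ to the triangle, and use that $i_1^*F \to i_1^* i_1 i_1^* F$ (the component relevant to $\Hom(i_1i_1^*F, C) \cong \Hom(i_1^*F, i_1^*C)$... ) — more precisely, the counit $i_1 i_1^* F \to F$ induces an isomorphism on $\Hom(-,C)$ for all $C\in\cT_1$ by adjunction, hence $\Hom(F',C)=0$. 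Then, with $F' \in {}^\perp\cT_1$ in hand, apply the right adjoint $i_2^!$: the unit gives $F' \to i_2 i_2^! F'$, and I complete it to a triangle
\begin{equation*}
F'' \to F' \to i_2 i_2^! F' \to F''[1].
\end{equation*}
Dually, $F'' \in \cT_2^\perp$ because $\Hom(C', F') \to \Hom(C', i_2 i_2^! F')$ is an isomorphism for $C' \in \cT_2$; and $F'' \in {}^\perp\cT_1$ as well, since $ {}^\perp\cT_1$ is triangulated, $F' \in {}^\perp\cT_1$, and $i_2 i_2^! F' \in \cT_2 \subset {}^\perp\cT_1$ (the last inclusion being the semiorthogonality $\Hom(\cT_2,\cT_1)=0$). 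Hence $F'' \in \cM$. Splicing the two triangles produces the chain $0 = F_3 \to F_2 = F'' \to F_1 = F' \to F_0 = F$ with $\Cone(F_1\to F_0) = i_1i_1^*F \in \cT_1$, $\Cone(F_2 \to F_1) = i_2 i_2^! F' \in \cT_2$, and $\Cone(F_3 \to F_2) = F'' \in \cM$ — but I must reorder so the components appear as $\langle\cT_1,\cM,\cT_2\rangle$ in the sense of Definition~\ref{def-sod}, i.e.\ with $\cT_1$ carved off \emph{last}. So actually I would build the filtration in the opposite order consistent with the convention $\Hom(\cT_i,\cT_j)=0$ for $i>j$: first split off $\cT_2$ on the right using $i_2^!$ applied to $F$, getting $i_2 i_2^! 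F \to F \to G$ with $G \in \cT_2^\perp$, then split off $\cT_1$ on the left of $G$ using $i_1^*$, getting $i_1 i_1^* G \to G \to H$ with $H \in {}^\perp\cT_1 \cap \cT_2^\perp = \cM$; one checks $i_1 i_1^* G \in \cT_1$ and that $\Hom(\cT_2, \cT_1)=0$, $\Hom(\cT_2,\cM)=0$ are compatible with the ordering.

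Finally, the ``in particular'' clause is automatic: if $\cM = 0$ then the middle cone vanishes, so the filtration collapses to the two-step chain exhibiting $\cT = \langle \cT_1, \cT_2\rangle$. The formulas $\bp_1 = i_1 i_1^*$, $\bp_2 = i_2 i_2^!$ for the projection functors follow by matching the triangles above with the defining triangles of the projection functors and invoking the uniqueness (functoriality) of the semiorthogonal filtration noted after Definition~\ref{def-sod}. The only genuinely delicate point is bookkeeping: keeping straight which adjoint produces membership in which orthogonal, and getting the order of the three components to match the $i>j$ convention — there is no real analytic obstacle, as everything reduces to the octahedral axiom and the adjunction isomorphisms. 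I expect this ordering/orthogonality bookkeeping, rather than any single hard step, to be where care is needed.
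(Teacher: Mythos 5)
The paper cites \cite{BK89} for this lemma and gives no proof, so there is nothing in the source to compare against; this is a review of your argument on its own terms. Your strategy --- carve off a $\cT_1$-piece with $i_1^*$, a $\cT_2$-piece with $i_2^!$, and check the remainder lands in $\cM := {}^\perp\cT_1\cap\cT_2^\perp$ --- is the right one, but the adjunction morphisms you write point the wrong way. For $i_1^*\dashv i_1$ the \emph{unit} is $\eta_F\colon F\to i_1 i_1^* F$, while the counit $i_1^*i_1\to\id_{\cT_1}$ is an isomorphism living entirely inside $\cT_1$; there is no canonical morphism $i_1 i_1^* F\to F$, so your triangle ``$i_1 i_1^* F\to F\to F'$'' (and later ``$i_1 i_1^* G\to G\to H$'') does not exist. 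Dually, for $i_2\dashv i_2^!$ the adjunction supplies the counit $\epsilon_F\colon i_2 i_2^! F\to F$, not a map $F'\to i_2 i_2^! F'$ as in your first attempt.

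Once the directions are corrected, the cleanest route is the order you started with, not the one you reversed to. Form the triangle $F'\to F\xrightarrow{\ \eta_F\ }i_1 i_1^* F$; applying $\Hom(-,i_1 C)$ for $C\in\cT_1$ and noting that $\eta_{F[n]}^*$ is an isomorphism for all $n$ (adjunction plus full faithfulness of $i_1$) gives $F'\in{}^\perp\cT_1$. Then form $i_2 i_2^! F'\xrightarrow{\ \epsilon\ }F'\to F''$; dually $F''\in\cT_2^\perp$, and $F''\in{}^\perp\cT_1$ because $F'$ and $i_2 i_2^! F'\in\cT_2\subset{}^\perp\cT_1$ both lie in the triangulated subcategory ${}^\perp\cT_1$. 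The chain $0\to i_2 i_2^! F'\to F'\to F$ has successive cones $i_2 i_2^! F'\in\cT_2$, $F''\in\cM$, $i_1 i_1^* F\in\cT_1$, matching the convention of Definition~\ref{def-sod} for $\langle\cT_1,\cM,\cT_2\rangle$ with no octahedron needed; your reversed order does work, but there one genuinely needs the octahedral axiom to splice the two triangles into a single filtration, which you gesture at but do not carry out. Finally, $\Ker i_1^*={}^\perp\cT_1$ is the one-line observation that $i_1^*T=0$ iff $\Hom_{\cT_1}(i_1^*T,C)=0$ for all $C$ iff $\Hom_\cT(T,i_1C)=0$ for all $C$, and dually for $\Ker i_2^!$.
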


\subsection{The gluing functor}\label{ss-gf}

Let $\cT = \langle\cT_1,\cT_2\rangle$ be a semiorthogonal decomposition. 

\begin{defi}\label{glubim}
The {\sf gluing bifunctor}\/ of a semiorthogonal decomposition $\cT = \langle \cT_1,\cT_2 \rangle$
is the functor $\Phi:\cT_1^\op\times\cT_2 \to \kk\mmod$ defined by
\begin{equation*}
\Phi(F_1,F_2) = \Hom_\cT(F_1,F_2[1]).
\end{equation*}
\end{defi}

For each object $F_2 \in \cT_2$ we have a contravariant cohomological functor $\Phi(-,F_2):\cT_1^\op \to \kk\mmod$.
Assume that for all $F_2\in\cT_2$ this functor is representable. Then there exists a functor
$\phi:\cT_2 \to \cT_1$ with a functorial isomorphism
\begin{equation}\label{glufun}
\Phi(F_1,F_2) \cong \Hom_{\cT_1}(F_1,\phi(F_2)).
\end{equation} 
If such a functor $\phi$ exists then it is unique up to an  isomorphism.

\begin{defi}\label{glumod}
The functor $\phi$ with the property~\eqref{glufun} is called the {\sf gluing functor}\/ 
of a semiorthogonal decomposition $\cT = \langle \cT_1,\cT_2 \rangle$.
\end{defi}

It is easy to see that if $i_1$ has also right adjoint $i_1^!$ then 
\begin{equation}\label{phiii}
\phi = i_1^!i_2[1] 
\end{equation} 
is the gluing functor.

The importance of the gluing functor is shown by the following

\begin{lemma}\label{objsod}
Assume $\cT = \langle \cT_1,\cT_2 \rangle$ and a gluing functor $\phi$ exists.
To give an object $F$ of $\cT$ is equivalent to giving an object $F_1$ of $\cT_1$, an object $F_2$ of $\cT_2$
and a morphism $f:F_1 \to \phi(F_2)$ in $\cT_1$.
\end{lemma}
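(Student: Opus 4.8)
The plan is to use the semiorthogonal decomposition $\cT = \langle\cT_1,\cT_2\rangle$ to break an object $F$ of $\cT$ into its pieces and reassemble the data. First I would recall that, by the two-component restatement of Definition~\ref{def-sod}, every $F \in \cT$ sits in a functorial distinguished triangle
\begin{equation*}
\bp_2(F) \to F \to \bp_1(F) \to \bp_2(F)[1]
\end{equation*}
with $\bp_i(F) \in \cT_i$. Setting $F_1 := \bp_1(F)$ and $F_2 := \bp_2(F)$, the connecting morphism $\bp_1(F) \to \bp_2(F)[1]$ is an element of $\Phi(F_1,F_2) = \Hom_\cT(F_1,F_2[1])$, which by the defining isomorphism~\eqref{glufun} of the gluing functor corresponds to a morphism $f:F_1 \to \phi(F_2)$ in $\cT_1$. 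This produces the triple $(F_1,F_2,f)$ from $F$.

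Conversely, given $(F_1,F_2,f)$ with $F_1\in\cT_1$, $F_2\in\cT_2$, $f:F_1\to\phi(F_2)$, I would run~\eqref{glufun} backwards to obtain a morphism $g:F_1\to F_2[1]$ in $\cT$, and define $F$ to be a cone: complete $g[-1]:F_1[-1]\to F_2$ to a triangle $F_2 \to F \to F_1 \xrightarrow{g} F_2[1]$. One checks this $F$ lies in $\cT$ (trivially) and that its projection triangle recovers $F_1$ and $F_2$: since $F_2\in\cT_2$ and $F_1\in\cT_1$ and $\Hom(\cT_2,\cT_1)=0$, this triangle \emph{is} the canonical decomposition triangle of $F$, so $\bp_2(F)\cong F_2$ and $\bp_1(F)\cong F_1$, and by construction the connecting map corresponds to $f$. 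Thus the two constructions are mutually inverse on objects.

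To make the statement precise as an equivalence of categories (if that is the intended reading — the phrasing ``to give an object \dots\ is equivalent to giving'' suggests it), I would introduce the category $\cC$ whose objects are triples $(F_1,F_2,f)$ and whose morphisms $(F_1,F_2,f)\to(F_1',F_2',f')$ are pairs $(a_1:F_1\to F_1',\ a_2:F_2\to F_2')$ making the evident square with $f,f'$ and $\phi(a_2)$ commute, and then upgrade the above to a functor $\cT\to\cC$, $F\mapsto(\bp_1(F),\bp_2(F),f_F)$, using functoriality of the projection functors $\bp_i$ and naturality of~\eqref{glufun}. The inverse functor sends $(F_1,F_2,f)$ to the cone $F$ above; functoriality on morphisms uses the axiom that a morphism of the base data induces a (non-unique, but here canonical by the orthogonality $\Hom(\cT_1[-1],\cT_2)$ vanishing appropriately) morphism of cones.

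The main obstacle is the usual non-functoriality of cones in a triangulated category: the morphism $F\to F'$ filling in a morphism of triangles is not unique in general, so one must argue it \emph{is} unique here. The relevant vanishing is $\Hom_\cT(F_1, F_2'[-1]) = \Hom_\cT(\bp_1(F), \bp_2(F')[-1])$; since $\bp_1(F)\in\cT_1$ and $\bp_2(F')[-1]\in\cT_2$, this is $\Hom(\cT_1,\cT_2)$, which need \emph{not} vanish — only $\Hom(\cT_2,\cT_1)=0$ is assumed. So the correct route is not to invoke uniqueness of the filling map directly but to observe that the whole construction is just the canonical projection triangle: the functor $\cT\to\cC$ is defined honestly via the functorial projections $\bp_i$, and one shows it is essentially surjective (done above) and fully faithful by a direct $\Hom$-computation, expanding $\Hom_\cT(F,G)$ via the triangles for $F$ and $G$, using $\Hom(\cT_2,\cT_1)=0$ to kill the cross term, and matching the result with morphisms of triples. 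I expect this $\Hom$-computation to be the technical heart; everything else is formal manipulation of the semiorthogonal decomposition and the adjunction-type isomorphism~\eqref{glufun}.
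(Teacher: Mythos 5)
Your first two paragraphs are exactly the paper's proof: decompose $F$ by the projection triangle, read off the connecting morphism through~\eqref{glufun}, and conversely build $F$ as a (shifted) cone. That is all the lemma is claiming, and on that level your argument is correct and complete.

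The remaining two paragraphs pursue an upgrade to an equivalence of triangulated categories $\cT \simeq \cC$, where $\cC$ is the category of triples. This reading of the lemma is not the intended one, and more importantly it is simply false. You correctly notice that the obvious ``fill in the cone map'' construction is non-canonical because the relevant vanishing would be $\Hom(\cT_1,\cT_2[-1])$, which is not assumed. But the fix you propose --- define $\cT \to \cC$ via the projection functors $\bp_i$ and prove full faithfulness by a $\Hom$-computation --- does not go through: the functor $F \mapsto (\bp_1(F),\bp_2(F),f_F)$ is in general neither full nor faithful. For instance, take $F_1 \in \cT_1$, $F_2 \in \cT_2$ with $\Hom_\cT(F_1,F_2) \neq 0$, set $F = G = F_1 \oplus F_2$, and let $a:F\to G$ be the off-diagonal morphism coming from a nonzero element of $\Hom(F_1,F_2)$. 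Then $\bp_1(a) = 0$ and $\bp_2(a) = 0$ while $a \neq 0$, so the functor kills $a$. The $\Hom$-computation you anticipate would expose precisely this discrepancy rather than close it. The lemma therefore has to stay at the level of objects (which is how it is used: to motivate the gluing construction), and the correct proof is the one you gave at the start.
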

\begin{proof}
Let $F \in \cT$ and put $F_i = \bp_i(F)$. Then we have a distinguished triangle
\begin{equation*}
F_2 \to F \to F_1 \to F_2[1].
\end{equation*}
The connecting morphism $F_1 \to F_2[1]$ is given by an element $f$ of $\Hom_{\cT}(F_1,F_2[1])$
that is of $\Hom_{\cT_1}(F_1,\phi(F_2))$. Thus we produce $F_1$, $F_2$, and $f$ from $F$.
Vice versa, if $F_1$, $F_2$, and $f$ are given we interpret $f$ as a morphism $F_1 \to F_2[1]$
in $\cT$ and take $F = \Cone(F_1 \to F_2[1])[-1]$.
\end{proof}

This Lemma motivates the definition of the gluing category in section~\ref{s-glu}.

\subsection{Compact objects}

Recall that a triangulated category $\cT$ is {\sf cocomplete}\/ if it has arbitrary small direct sums.

\begin{defi}[\cite{N92}]\label{defcomp}
An object $F \in \cT$ in a cocomplete triangulated category $\cT$ is {\sf compact}\/ 
if for any set of objects $G_i \in \cT$ the canonical morphism
\begin{equation}\label{homsum}
\oplus \Hom(F,G_i) \to \Hom(F,\oplus G_i)
\end{equation} 
is an isomorphism.
\end{defi}

Compact objects of a cocomplete triangulated category $\cT$ form a triangulated subcategory in $\cT$ which we denote by $\cT^\comp$.

\begin{defi}[\cite{N92}]\label{cgen}
Triangulated category $\cT$ is {\sf generated} by a class $\cS$ of compact objects if $\cS^\perp = 0$.
In particular, $\cT$ is {\sf compactly generated} if $(\cT^\comp)^\perp = 0$.
\end{defi}

An important result of Neeeman which we will frequently use is the following

\begin{prop}[\cite{N92,N01}]\label{cgenprop}
If $\cS \subset \cT^\comp$ is a set of compact objects which generates $\cT$ then the minimal
triangulated subcategory of $\cT$ containing $\cS$ and closed under arbitrary direct sums is $\cT$ itself.
\end{prop}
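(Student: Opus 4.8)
\textbf{Proof plan for Proposition~\ref{cgenprop}.}
The plan is to show that the inclusion $\cS^\perp = 0$ forces the Bousfield localization machinery to collapse, so that the smallest direct-sum-closed triangulated subcategory $\cR \subset \cT$ containing $\cS$ is already everything. First I would invoke the standard construction (Neeman, Bökstedt--Neeman) showing that since $\cS$ is a set of compact objects, the subcategory $\cR$ is itself a compactly generated triangulated category with $\cR^\comp$ containing $\cS$, and, crucially, the inclusion $\cR \hookrightarrow \cT$ admits a right adjoint. The existence of this right adjoint is exactly the Brown representability theorem applied to the compactly generated category $\cR$: any cohomological functor on $\cR$ sending coproducts to products is representable, and $F \mapsto \Hom_\cT(-, F)|_\cR$ is such a functor because objects of $\cS$ (hence a generating set of $\cR$) are compact in $\cT$, so the functor commutes with the relevant coproducts.

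Given the right adjoint, the core step is a Bousfield localization argument: for every $F \in \cT$ there is a distinguished triangle
\begin{equation*}
F_\cR \to F \to F' \to F_\cR[1]
\end{equation*}
with $F_\cR \in \cR$ and $F' \in \cR^\perp$. I would then argue that $\cR^\perp \subset \cS^\perp$: indeed $\cS \subset \cR$, so anything right-orthogonal to all of $\cR$ is in particular right-orthogonal to $\cS$. By hypothesis $\cS^\perp = 0$, hence $F' = 0$, and therefore the map $F_\cR \to F$ is an isomorphism, i.e. $F \in \cR$. Since $F$ was arbitrary, $\cR = \cT$.

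The main obstacle — and the one point that genuinely needs care rather than bookkeeping — is establishing that $\cR$ is compactly generated with the objects of $\cS$ remaining \emph{compact inside $\cT$} (not merely inside $\cR$), which is what makes Brown representability applicable and produces the right adjoint to $\cR \hookrightarrow \cT$. One must check that the coproducts computed in $\cR$ agree with those computed in $\cT$ (true, because $\cR$ is closed under direct sums in $\cT$ by construction), so that compactness of $s \in \cS$ tested against families in $\cR$ is inherited from compactness in $\cT$; and one must verify that $\cS$ generates $\cR$ in the sense that $\{G \in \cR \mid \Hom(\cS, G) = 0\} = 0$, which follows since $\cR$ is built from $\cS$ by shifts, cones, and coproducts. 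Once these are in place, the localization triangle and the orthogonality collapse are formal. Alternatively, if one wishes to avoid re-deriving Brown representability, the entire statement can be cited directly from \cite{N92} (the original compact generation paper) or \cite{N01}, where it appears as the assertion that a compactly generated triangulated category is the smallest localizing subcategory containing a generating set of compact objects; I would present the self-contained localization argument above and point to those references for the underlying representability result.
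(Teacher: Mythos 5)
The paper gives no proof of this proposition at all: it is stated with a direct citation to Neeman (\cite{N92,N01}) and used as a black box. Your Bousfield-localization argument — show the localizing subcategory $\cR$ generated by $\cS$ is compactly generated (using that coproducts in $\cR$ agree with those in $\cT$, so $\cS$ stays compact and generates by the minimality argument), apply Brown representability to get a right adjoint to the inclusion, form the localization triangle, and observe $\cR^\perp \subset \cS^\perp = 0$ — is a correct reconstruction of the proof found in those references. One small wording slip: in your ``main obstacle'' paragraph you say the point is that $\cS$ remains compact ``inside $\cT$ (not merely inside $\cR$)'', but compactness in $\cT$ is the hypothesis and compactness in $\cR$ is what needs to be established; your next sentence then gets the direction right, so this does not affect the argument.
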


If $X$ is a separated $\kk$-scheme of finite type and $\bD(X)$ is the unbounded derived category
of quasicoherent sheaves on $X$ then $\bD(X)$ is cocomplete, its subcategory of compact objects coincides with the category 
of perfect complexes on $X$
\begin{equation}\label{compperf}
\bD(X)^\comp = \bD^\perf(X),
\end{equation} 
and $\bD(X)$ is compactly generated (see~\cite{N96}).
Recall that a {\sf perfect complex on $X$}\/ is an object of $\bD(X)$ which is locally quasiisomorphic
to a bounded complex of locally free sheaves of finite rank. In particular if $X$ is {\em smooth} then
\begin{equation}\label{compcoh}
\bD(X)^\comp = \bD^b(\coh (X)),
\end{equation} 
the bounded derived category of coherent sheaves.

\begin{defi}\label{csums}
A functor $\Phi:\cT_1\to\cT_2$ between cocomplete triangulated categories {\sf commutes with direct sums} 
if for any set of objects $G_i \in \cT_1$ the canonical morphism 
\begin{equation*}
\oplus \Phi(G_i) \to \Phi(\oplus G_i)
\end{equation*}
is an isomorphism. We also say that $\Phi$ {\sf preserves compactness}\/ if $\Phi(\cT_1^\comp) \subset \cT_2^\comp$.
\end{defi}

The following simple observation will be used very frequently.

\begin{lemma}\label{cprops}
Let $\Phi:\cT_1 \to \cT_2$ be a triangulated functor.\\
$(1)$ Assume that $\Phi$ is fully faithful and commutes with direct sums. If $\Phi(F)$ is compact then $F$ is compact.\\
$(2)$ Assume that $\Phi$ has a right adjoint functor $\Phi^!$ and $\cT_1$ is compactly generated. 
Then $\Phi$ preserves compactness if and only if $\Phi^!$ commutes with direct sums.
\end{lemma}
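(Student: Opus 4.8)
The plan is to prove the two statements separately using only standard properties of compact objects, adjunctions, and the hypothesis that $\cT_1$ is compactly generated (for part $(2)$). I would carry out part $(1)$ first, since it is quick and purely formal. Assume $\Phi$ is fully faithful and commutes with direct sums, and suppose $\Phi(F)$ is compact; I want to show $F$ is compact. Take an arbitrary family $\{G_i\}$ of objects of $\cT_1$. Then I would compute
\[
\Hom_{\cT_1}(F,\oplus G_i)\cong\Hom_{\cT_2}(\Phi(F),\Phi(\oplus G_i))\cong\Hom_{\cT_2}(\Phi(F),\oplus\Phi(G_i))\cong\oplus\Hom_{\cT_2}(\Phi(F),\Phi(G_i))\cong\oplus\Hom_{\cT_1}(F,G_i),
\]
where the first and last isomorphisms use full faithfulness, the second uses that $\Phi$ commutes with direct sums, and the third uses compactness of $\Phi(F)$. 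One checks that the composite is the canonical map~\eqref{homsum}, so $F$ is compact.

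For part $(2)$, assume $\Phi$ has a right adjoint $\Phi^!$ and $\cT_1$ is compactly generated. I would argue both implications. Suppose first that $\Phi^!$ commutes with direct sums; let $F\in\cT_1^\comp$ and let $\{G_i\}$ be objects of $\cT_2$. Using adjunction on both sides,
\[
\Hom_{\cT_2}(\Phi(F),\oplus G_i)\cong\Hom_{\cT_1}(F,\Phi^!(\oplus G_i))\cong\Hom_{\cT_1}(F,\oplus\Phi^!(G_i))\cong\oplus\Hom_{\cT_1}(F,\Phi^!(G_i))\cong\oplus\Hom_{\cT_2}(\Phi(F),G_i),
\]
using in the middle that $\Phi^!$ commutes with direct sums and that $F$ is compact; hence $\Phi(F)$ is compact, i.e. $\Phi$ preserves compactness. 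Conversely, suppose $\Phi$ preserves compactness, and let $\{G_i\}$ be objects of $\cT_2$; I want the canonical map $\oplus\Phi^!(G_i)\to\Phi^!(\oplus G_i)$ to be an isomorphism. Since $\cT_1$ is compactly generated, it suffices (by Definition~\ref{cgen}, checking the cone lies in $(\cT_1^\comp)^\perp=0$) to show that $\Hom_{\cT_1}(F,-)$ sends this map to an isomorphism for every $F\in\cT_1^\comp$. For such $F$,
\[
\Hom_{\cT_1}(F,\oplus\Phi^!(G_i))\cong\oplus\Hom_{\cT_1}(F,\Phi^!(G_i))\cong\oplus\Hom_{\cT_2}(\Phi(F),G_i)\cong\Hom_{\cT_2}(\Phi(F),\oplus G_i)\cong\Hom_{\cT_1}(F,\Phi^!(\oplus G_i)),
\]
where the first isomorphism is compactness of $F$, the second and last are adjunction, and the third is compactness of $\Phi(F)$, which holds because $\Phi$ preserves compactness. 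One verifies the composite is the map induced by the canonical morphism, so it is an isomorphism on $\Hom_{\cT_1}(F,-)$ for all compact $F$, whence the morphism itself is an isomorphism.

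The only genuinely non-formal point, and thus the ``main obstacle'' if there is one, is the converse direction of part $(2)$: one must use compact generation of $\cT_1$ to reduce an isomorphism of objects to an isomorphism of $\Hom$-groups out of compact objects, and be careful that the canonical comparison map is the one being tested. Everything else is bookkeeping with adjunction isomorphisms and checking naturality of the canonical maps; I would not belabor those verifications in the write-up.
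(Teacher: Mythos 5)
Your proof is correct and follows essentially the same route as the paper: part (1) via the commutative square comparing $\Hom(F,-)$ and $\Hom(\Phi(F),-)$ under direct sums, and part (2) via the adjunction diagram together with the reduction, using compact generation of $\cT_1$, from an isomorphism of objects to an isomorphism of $\Hom$-groups out of compact objects. The paper simply packages both directions of part (2) into a single commutative diagram rather than writing two chains of isomorphisms, but the content is identical.
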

\begin{proof}
(1) Let $F,F_i \in\cT_1$. Consider the following commutative diagram
\begin{equation*}
\xymatrix{
\oplus \Hom(F,F_i) \ar[r] \ar[d]_\Phi & \Hom(F,\oplus F_i) \ar[d]^\Phi \\
\oplus \Hom(\Phi(F),\Phi(F_i)) \ar[r] & \Hom(\Phi(F),\oplus \Phi(F_i)) 
}
\end{equation*}
The vertical arrows are isomorphisms since $\Phi$ is fully faithful and commutes with direct sums.
The bottom arrow is an isomorphism since $\Phi(F)$ is compact. Hence the top arrow is an isomorphism,
so $F$ is compact.

(2) Let $F \in \cT_1$, $G_i \in \cT_2$. Consider the following commutative diagram
\begin{equation*}
\xymatrix{
\oplus \Hom(\Phi(F),G_i) \ar@{=}[r] \ar[d] & \oplus \Hom(F,\Phi^!(G_i)) \ar[r] & \Hom(F,\oplus \Phi^!(G_i)) \ar[d] \\
\Hom(\Phi(F), \oplus G_i) \ar@{=}[rr] && \Hom(F,\Phi^!(\oplus G_i))
}
\end{equation*}
where equalities stand for the adjunction isomorphisms. Let $F$ be compact. Then the arrow in the top row is an isomorphism.
If $\Phi$ preserves compactness then the left vertical arrow is an isomorphism, hence the right arrow is an isomorphism as well.
Since this is true for any compact $F$ and $\cT_1$ is compactly generated, it follows that the cone of the canonical map 
$\oplus \Phi^!(G_i) \to \Phi^!(\oplus G_i)$ is zero, hence $\Phi^!$ commutes with direct sums. Vice versa, if $\Phi^!$ commutes
with the direct sums then the right arrow is an isomorphism hence the left arrow is an isomorphism as well, hence $\Phi(F)$
is compact. Thus $\Phi$ preserves compactness.
\end{proof}

%

The following result is very useful to ensure existence of a right adjoint functor.

\begin{theo}[Brown representability \cite{N96}]\label{brown}
Assume that $\Phi:\cT_1 \to \cT_2$ is a triangulated functor, $\cT_1$ is cocomplete and generated by a set of compact objects.
Then $\Phi$ has a right adjoint functor if and only if $\Phi$ commutes with direct sums.
\end{theo}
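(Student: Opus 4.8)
The plan is to prove the two implications separately. The implication ``has a right adjoint $\Rightarrow$ commutes with direct sums'' is formal, and the converse reduces to the Brown representability theorem for the category $\cT_1$ itself, which is the technical heart. For the easy direction, suppose $\Phi$ admits a right adjoint $\Phi^!$. Then for any family $\{G_i\}$ in $\cT_1$ and any $H \in \cT_2$ the adjunction isomorphisms give
\[
\Hom_{\cT_2}\bigl(\Phi(\textstyle\bigoplus_i G_i), H\bigr) \cong \Hom_{\cT_1}\bigl(\textstyle\bigoplus_i G_i, \Phi^!(H)\bigr) \cong \prod_i \Hom_{\cT_1}(G_i, \Phi^!(H)) \cong \prod_i \Hom_{\cT_2}(\Phi(G_i), H) \cong \Hom_{\cT_2}\bigl(\textstyle\bigoplus_i \Phi(G_i), H\bigr),
\]
so by the Yoneda lemma the canonical morphism $\bigoplus_i \Phi(G_i) \to \Phi(\bigoplus_i G_i)$ is an isomorphism.

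For the hard direction, assume $\Phi$ commutes with direct sums. For each $Z \in \cT_2$ consider the functor $H_Z := \Hom_{\cT_2}(\Phi(-), Z) : \cT_1^\op \to \kk\mmod$. It is cohomological since $\Phi$ is triangulated and $\Hom_{\cT_2}(-,Z)$ is cohomological, and it carries direct sums in $\cT_1$ to products since $\Phi$ commutes with direct sums and $\Hom(-,Z)$ does. The crucial input is \emph{Brown representability for $\cT_1$}: every cohomological functor $\cT_1^\op \to \kk\mmod$ taking direct sums to products is representable, given that $\cT_1$ is cocomplete and generated by a set of compact objects. Granting this, we obtain $\Phi^!(Z) \in \cT_1$ together with a natural isomorphism $\Hom_{\cT_1}(-, \Phi^!(Z)) \cong H_Z$. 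A morphism $Z \to Z'$ induces a natural transformation $H_Z \to H_{Z'}$, hence by Yoneda a unique morphism $\Phi^!(Z) \to \Phi^!(Z')$, which makes $\Phi^!$ a functor and the isomorphisms natural in $Z$ as well; this is precisely an adjunction $\Phi \dashv \Phi^!$.

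It remains to sketch Brown representability for $\cT_1$, which is where all the work lies (this is Neeman's theorem). Let $\cS$ be a set of compact generators and $H$ a cohomological, product-preserving functor. One constructs the representing object as a homotopy colimit of a sequence $X_0 \to X_1 \to \cdots$ carrying compatible elements $x_n \in H(X_n)$: start with $X_0 = \bigoplus_{S \in \cS,\, \alpha \in H(S)} S$ and let $x_0$ correspond, under the product-preservation isomorphism $H(X_0) \cong \prod_{S,\alpha} H(S)$, to the tuple $(\alpha)$; having built $(X_n, x_n)$, the element $x_n$ defines a natural transformation $\Hom_{\cT_1}(-, X_n) \to H$, and letting $K_n \to X_n$ be the sum of all maps $S[j] \to X_n$ ($S \in \cS$, $j \in \Z$) lying in its kernel, set $X_{n+1} = \operatorname{Cone}(K_n \to X_n)$ and use the long exact sequence obtained by applying $H$ to the triangle $K_n \to X_n \to X_{n+1}$ to lift $x_n$ to some $x_{n+1} \in H(X_{n+1})$ (possible because $K_n \to X_n$ is annihilated by $x_n$ by construction). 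Let $X$ be the homotopy colimit, defined by the triangle $\bigoplus_n X_n \xrightarrow{\,1 - \mathrm{sh}\,} \bigoplus_n X_n \to X$; applying $H$ and using product-preservation, a $\varprojlim^1$ (Milnor exact sequence) argument produces $x \in H(X)$ restricting to each $x_n$, hence a natural transformation $\eta : \Hom_{\cT_1}(-, X) \to H$. Compactness of the $S$ lets one commute $\Hom_{\cT_1}(S[j], -)$ past the coproducts defining $X$, and one then checks that $\eta$ is bijective on every $S[j]$: surjectivity already at the level of $X_0$, and injectivity because a kernel element factors through some $X_n$ and is killed in $X_{n+1}$. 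The full subcategory of $\cT_1$ on which $\eta$ is an isomorphism is triangulated and closed under direct sums (five lemma plus compactness) and contains $\cS$, so by Proposition~\ref{cgenprop} it is all of $\cT_1$; hence $\eta$ is an isomorphism of functors and $X$ represents $H$.

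The genuinely delicate points are all in this last step: organizing the inductive ``cell attachment'' so that $\eta$ becomes invertible on the generators, and controlling the homotopy colimit --- in particular the $\varprojlim^1$/phantom-map subtleties involved in lifting the $x_n$ to an element of $H(X)$ and in proving injectivity of $\eta$. Everything else --- the reduction of ``existence of a right adjoint'' to ``pointwise representability'', and the Yoneda bookkeeping that promotes the latter to an honest adjunction --- is formal.
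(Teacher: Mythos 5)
The paper does not prove this statement; Theorem~\ref{brown} is quoted from Neeman~\cite{N96} (via~\cite{N92,N01}), so there is no internal argument to compare against. Your sketch is a correct rendering of Neeman's proof: the easy direction is the formal Yoneda computation you give, the reduction of the hard direction to pointwise representability of $H_Z = \Hom_{\cT_2}(\Phi(-),Z)$ is exactly how one deduces the adjoint-functor statement from Brown representability for $\cT_1$, and your outline of the representability argument (iterated cone attachments, homotopy colimit, Milnor $\varprojlim^1$ lift, then extending from generators by the five lemma and Proposition~\ref{cgenprop}) is the standard construction. One small imprecision worth fixing: when you set $X_0 = \bigoplus_{S\in\cS,\ \alpha\in H(S)} S$ you should range over all shifts $S[j]$ (or stipulate that $\cS$ is closed under shifts, which is implicit in the paper's convention $\cS^\perp=0$); otherwise the initial stage only makes $\eta$ surjective on $H(S)$ and not on $H(S[j])$, and the concluding ``$\eta$ is an isomorphism on all $S[j]$'' step would not quite close.
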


Another useful observation is the following

\begin{lemma}\label{cff}
Let $\Phi:\cT \to \cT'$ be a triangulated functor between cocomplete triangulated categories which commutes with arbitrary direct sums.
Let $\cT_0 \subset \cT$ be a subcategory of compact objects which generates $\cT$. If $\Phi$ preserves compactness
and is fully faithful on $\cT_0$ then $\Phi$ is fully faithful on the whole $\cT$. If, moreover, $\Phi(\cT_0)$ generates $\cT'$
then $\Phi$ is an equivalence.
\end{lemma}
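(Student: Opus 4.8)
\textbf{Proof plan for Lemma~\ref{cff}.}
The plan is to split the claim into two parts: first full faithfulness of $\Phi$ on all of $\cT$, and then — under the extra generation hypothesis — essential surjectivity, which together with full faithfulness gives the equivalence. For full faithfulness I would argue by a standard ``devissage'' on the source, fixing one object and letting the other vary over a subcategory that is shown to be all of $\cT$ by Proposition~\ref{cgenprop}. Concretely, fix $F \in \cT_0$ and consider the class
\begin{equation*}
\cS_F = \{ G \in \cT \mid \Phi \text{ induces an isomorphism } \Hom_\cT(F,G[i]) \isomoto \Hom_{\cT'}(\Phi F,\Phi G[i]) \text{ for all } i \in \Z \}.
\end{equation*}
Since $\Phi$ is triangulated and commutes with arbitrary direct sums, and since $\Phi F$ is compact (as $\Phi$ preserves compactness and $F \in \cT_0 \subset \cT^\comp$), both $\Hom_\cT(F,-)$ and $\Hom_{\cT'}(\Phi F,\Phi(-))$ send triangles to long exact sequences and direct sums to direct sums; hence $\cS_F$ is a triangulated subcategory of $\cT$ closed under arbitrary direct sums. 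By hypothesis $\cS_F$ contains $\cT_0$, and $\cT_0$ is a set of compact generators of $\cT$, so Proposition~\ref{cgenprop} forces $\cS_F = \cT$. Thus for every $F \in \cT_0$ and every $G \in \cT$ the map on $\Hom$-spaces is an isomorphism.

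Next I would promote this to arbitrary $F \in \cT$ by the same device applied in the first variable. Fix $G \in \cT$ and let
\begin{equation*}
\cS^G = \{ F \in \cT \mid \Phi \text{ induces an isomorphism } \Hom_\cT(F,G[i]) \isomoto \Hom_{\cT'}(\Phi F,\Phi G[i]) \text{ for all } i \in \Z \}.
\end{equation*}
Because $\Hom_\cT(-,G)$ and $\Hom_{\cT'}(\Phi(-),\Phi G)$ are cohomological and turn direct sums into products, the subcategory $\cS^G$ is triangulated but closed only under finite sums a priori; however one does not need infinite sums here, since by the previous paragraph $\cS^G \supseteq \cT_0$ and the smallest triangulated subcategory containing $\cT_0$ already contains every compact object, while a general $F \in \cT$ is compact in $\cT^\comp$ only if... — here I would instead note that it suffices to have $\cS^G$ triangulated and containing $\cT_0$, and then invoke that $\Hom_{\cT}(F,G)$ for $F$ ranging over the triangulated hull of $\cT_0$ is already pinned down; more cleanly, re-run the generation argument with $\Hom(-,G)$ replaced by the two functors $F \mapsto \Hom(F,G[i])$, which do send arbitrary direct sums to products, so that $\cS^G$ is closed under products of triangles — equivalently, one observes that full faithfulness on all of $\cT$ follows once it holds with the first argument in a generating set, because any $F$ sits in a colimit/triangle built from $\cT_0$ and the five lemma applies termwise. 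Either way, $\cS^G = \cT$, so $\Phi$ is fully faithful.

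For the final assertion, assume in addition that $\Phi(\cT_0)$ generates $\cT'$. Let $\cT'' \subset \cT'$ be the essential image of $\Phi$. Since $\Phi$ is triangulated, commutes with arbitrary direct sums, and is now known to be fully faithful, $\cT''$ is a triangulated subcategory of $\cT'$ closed under arbitrary direct sums; moreover it contains $\Phi(\cT_0)$, which is a set of compact objects of $\cT'$ (again because $\Phi$ preserves compactness) that generates $\cT'$ by assumption. By Proposition~\ref{cgenprop} the smallest such subcategory is all of $\cT'$, whence $\cT'' = \cT'$ and $\Phi$ is essentially surjective; combined with full faithfulness, $\Phi$ is an equivalence. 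The only delicate point — and the one I would write out most carefully — is the passage in the second paragraph from ``fully faithful with first argument in $\cT_0$'' to ``fully faithful on all of $\cT$'': one must be sure the relevant $\Hom$-functor in the first variable interacts correctly with the homotopy colimits (mapping telescopes) used to build general objects of $\cT$ from $\cT_0$, i.e. that the $\lim^1$-type obstruction vanishes, which it does precisely because on the target side $\Phi$ of such a telescope is again a telescope of the images (as $\Phi$ commutes with direct sums) and the comparison map of telescopes is an isomorphism by the already-established case.
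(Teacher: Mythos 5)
Your argument takes a genuinely different route from the paper's. The paper invokes Brown representability to produce a right adjoint $\Phi^!$ of $\Phi$, notes (via Lemma~\ref{cprops}) that $\Phi^!$ commutes with direct sums because $\Phi$ preserves compactness, and then shows that the subcategory of objects on which the unit $\id\to\Phi^!\Phi$ is an isomorphism is a localizing subcategory containing $\cT_0$; Proposition~\ref{cgenprop} then finishes it. You instead avoid the adjoint entirely and do a two-variable devissage directly on the comparison map $\Hom_\cT(F,G)\to\Hom_{\cT'}(\Phi F,\Phi G)$. Both routes are legitimate; the paper's is more in line with the toolkit it has already set up (adjoints, compactness), while yours is more elementary and arguably more transparent, since it never leaves the level of $\Hom$-groups.

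Your first devissage (fix $F\in\cT_0$, vary $G$) is stated correctly. Your second devissage, however, is more laboured than it needs to be, and the hesitation you express is misplaced. The subcategory $\cS^G$ \emph{is} closed under arbitrary direct sums, by a one-line argument you circle around without landing on: if $F=\bigoplus_\alpha F_\alpha$ with each $F_\alpha\in\cS^G$, then $\Hom_\cT(F,G)\cong\prod_\alpha\Hom_\cT(F_\alpha,G)$, and, because $\Phi$ commutes with direct sums, $\Hom_{\cT'}(\Phi F,\Phi G)\cong\Hom_{\cT'}(\bigoplus_\alpha\Phi F_\alpha,\Phi G)\cong\prod_\alpha\Hom_{\cT'}(\Phi F_\alpha,\Phi G)$; the comparison map respects these product decompositions (functoriality of $\Phi$), so it is a product of the isomorphisms $\Hom_\cT(F_\alpha,G)\to\Hom_{\cT'}(\Phi F_\alpha,\Phi G)$ and hence itself an isomorphism. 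No homotopy colimits, mapping telescopes, or $\lim^1$ considerations arise at all: Proposition~\ref{cgenprop} only asks that $\cS^G$ be a triangulated subcategory closed under coproducts and containing a compact generating set, and the product argument above supplies the coproduct closure directly. The parenthetical remarks about ``closed only under finite sums a priori'' and about five-lemma devissage over telescopes should be deleted; they introduce doubt where there is none and risk suggesting to a reader that a genuine $\lim^1$ issue is being swept under the rug. The final paragraph (essential surjectivity via the essential image being localizing and containing $\Phi(\cT_0)$) is correct and matches the paper.
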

\begin{proof}
By Brown representability $\Phi$ has a right adjoint functor $\Phi^!$. Since $\Phi$ preserves compactness
the adjoint $\Phi^!$ also commutes with arbitrary direct sums by Lemma~\ref{cprops}. Consider the adjunction unit
$\id \to \Phi^!\Phi$ and let $\cT_1 \subset \cT$ by the full subcategory consisting of all objects $F$
on which this morphism is an isomorphism. Clearly, it is triangulated and closed under arbitrary direct sums
(since both $\Phi$ and $\Phi^!$ commute with those). Let us show that it contains $\cT_0$. Indeed, take any $F,G \in \cT_0$,
consider the triangle
\begin{equation*}
G \to \Phi^!(\Phi(G)) \to G'
\end{equation*}
and apply $\Hom(F,-)$ to it. Since the functor $\Phi$ is full and faithful on $\cT_0$ we have
$\Hom(F,\Phi^!(\Phi(G))) \cong \Hom(\Phi(F),\Phi(G)) \cong \Hom(F,G)$, hence $\Hom(F,G') = 0$.
Since this is true for any $F \in \cT_0$ and $\cT_0$ generates $\cT$ we conclude that $G' = 0$.
Thus $G \in \cT_1$. Since this is true for any $G$ we conclude that $\cT_0 \subset \cT_1$.
Now, since $\cT_0$ generates $\cT$ and $\cT_1$ is closed under arbitrary direct sums,
using Proposition~\ref{cgenprop} we conclude that $\cT_1 = \cT$, and so $\Phi^!\Phi \cong \id$. 
This implies that $\Phi$ is fully faithful on the whole $\cT$. 

Finally, assume that $\Phi(\cT_0)$ generates $\cT'$. Note that $\Phi(\cT)$ is a full 
triangulated subcategory of $\cT'$ (since $\Phi$ is fully faithful), closed under
arbitrary direct sums (since $\Phi$ commutes with those) and containing $\Phi(\cT_0)$.
Consequently, by Proposition~\ref{cgenprop} we have $\Phi(\cT) = \cT'$, hence $\Phi$ is an equivalence.
\end{proof}

%
%
%
%

\section{Preliminaries on DG-categories}\label{s-p-dg}

A general reference for DG-categories is an excellent review of Keller~\cite{Ke06}.
See also~\cite{D04} and~\cite{T07}.

\subsection{DG-categories and DG-functors}

A DG-category $\cD$ over a field $\kk$ is a $\kk$-linear category such that
\begin{itemize}
\item for all $X_1,X_2 \in \cD$ the space $\Hom_\cD(X_1,X_2)$ is equipped with a structure
of a complex of $\kk$-vector spaces, and
\item the multiplication map 
\begin{equation*}
\Hom_\cD(X_2,X_3)\otimes_\kk\Hom_\cD(X_1,X_2) \to \Hom_\cD(X_1,X_3)
\end{equation*}
is a morphism of complexes.
\end{itemize}


By definition $\Hom_\cD(X,Y) = \oplus_{k\in\bbZ} \Hom^k_\cD(X,Y)$ is a graded vector space with
a differential $d:\Hom^k_\cD(X,Y) \to \Hom^{k+1}_\cD(X,Y)$. The elements $f \in \Hom^k_\cD(X,Y)$
are called {\sf homogeneous morphisms of degree $k$}, $\deg f = k$. The second part of the definition
of a DG-category is just the Leibniz rule for the composition of homogeneous morphisms
$d(fg) = (df)g + (-1)^{\deg f}f(dg)$.

If $\cD$ is a DG-category then the opposite DG-category $\cD^\op$ is defined as the category with the same objects and
$\Hom_{\cD^\op}(X_1,X_2) = \Hom_{\cD}(X_2,X_1)$ and the composition induced by the composition in $\cD$ twisted by
the sign $(-1)^{\deg f\deg g}$.

If $\cD_1$ and $\cD_2$ are DG-categories, we define their tensor product $\cD_1\otimes_\kk\cD_2$ as the DG-category
with objects $\cD_1\times\cD_2$ and morphisms defined by
\begin{equation*}
\Hom_{\cD_1\otimes_\kk\cD_2}((X_1,X_2),(Y_1,Y_2)) = \Hom_{\cD_1}(X_1,Y_1) \otimes_\kk \Hom_{\cD_2}(X_2,Y_2)
\end{equation*}

The simplest example of a DG-category is the category $\kk\dgm$ of complexes of $\kk$-vector spaces with
\begin{equation*}
\Hom^k(V_1,V_2) = 
\prod_{i \in \bbZ} \Hom(V_1^i,V_2^{i+k}),
\qquad
d(f) = d_{V_2}\circ f - (-1)^{\deg f}f\circ d_{V_1}.
\end{equation*}

Also note that each $\kk$-linear category can be considered as a DG-category with the same $\Hom$-spaces
with zero differential and zero grading.

A $\kk$-linear functor $F:\cD_1 \to \cD_2$ is a {\sf DG-functor} if the morphisms 
\begin{equation*}
F_{X_1,X_2}:\Hom_{\cD_1}(X_1,X_2) \to \Hom_{\cD_2}(F(X_1),F(X_2))
\end{equation*}
are morphisms of complexes, i.e.\ preserve the grading and commute with the differentials.

If $\cD_1$ is a {\em small}\/ DG-category (i.e. objects of $\cD_1$ form a set) then
all DG-functors from $\cD_1$ to $\cD_2$ form a DG-category $\Fun(\cD_1,\cD_2)$ with
$\Hom_{\Fun(\cD_1,\cD_2)}(F,G)$ defined as
\begin{equation*}
\Ker \left( \prod_{X \in \cD_1} \Hom_{\cD_2}(F(X),G(X)) \xrightarrow{} \prod_{X,Y \in \cD_1} \Hom(\Hom_{\cD_1}(X,Y),\Hom_{\cD_2}(F(X),G(Y))) \right)
\end{equation*}
In other words an element $f \in \Hom_{\Fun(\cD_1,\cD_2)}^k(F,G)$ consists of all collections of morphisms $f_X \in \Hom^k_{\cD_2}(F(X),G(X))$
given for all $X \in \cD_1$ such that for any morphism $g \in \Hom_{\cD_1}^l(X,Y)$ one has $f_Y\circ F(g) = (-1)^{kl} G(g)\circ f_X$. The differential
is induced by the differentials in $\Hom_{\cD_2}(F(X),G(X))$.

A {\sf right DG-module} over $\cD$ is a DG-functor $M:\cD^\op \to \kk\dgm$. If $\cD$ is small then the category $\cD\dgm$ of right DG-modules
over $\cD$ is a DG-category and the {\sf Yoneda functor}
\begin{equation*}
\sY:\cD \to \cD\dgm,
\qquad
X \mapsto \Hom_\cD(-,X)
\end{equation*}
is a DG-functor. The DG-module $\sY(X)$ obtained by applying the Yoneda functor to an object $X \in \cD$ will be denoted by $\sY^X$.
It is called {\sf representable} DG-module, and one says that the object $X$ is the corresponding {\sf representing object}.
One has an analogue of the Yoneda Lemma
\begin{equation}\label{yone}
\Hom_{\cD\dgm}(\sY^X,M) \cong M(X)
\end{equation}
for any DG-module $M \in \cD\dgm$.
In other words, the Yoneda functor is fully faithful, so $\cD$ can be considered as 
a full DG-subcategory in $\cD\dgm$.

Analogously, a {\sf left DG-module} over $\cD$ is a DG-functor $\cD \to \kk\dgm$. Note that a left $\cD$-module 
is the same as a right $\cD^\op$-module. So, the Yoneda functor can be considered also in this case
\begin{equation*}
\sY^\op:\cD^\op \to \cD^\op\dgm,
\qquad
X \mapsto \Hom_\cD(X,-).
\end{equation*}
We will write $\sY_X$ for $\sY^\op(X)$.


%

\subsection{The homotopy category}

The {\sf homotopy category} $[\cD]$ of a DG-category $\cD$ is defined as the category with
\begin{itemize}
\item $\Ob[\cD] = \Ob\cD$,
\item $\Hom_{[\cD]}(X,Y) = H^0(\Hom_{\cD}(X,Y))$.
\end{itemize}
The homotopy category is a $\kk$-linear category.

One says that closed morphisms of degree zero $f,g \in \Hom_\cD(X,Y)$ are {\sf homotopic}
if they induce equal morphisms in $[\cD]$. In other words if there is $h \in \Hom^{-1}_\cD(X,Y)$ 
such that $f - g = dh$ (this $h$ is a {\sf homotopy} between $f$ and $g$). Further, two objects $X$ and $Y$
in~$\cD$ are {\sf homotopic} (or {\sf homotopy equivalent}) if they are isomorphic in~$[\cD]$.
In other words, if there are closed morphisms of degree zero $f \in \Hom_\cD(X,Y)$
and $g \in \Hom_\cD(Y,X)$ such that $fg$ is homotopic to $\id_Y$ and $gf$ is homotopic to $\id_X$.

Each DG-functor $F:\cD_1 \to \cD_2$ induces a functor $[F]:[\cD_1] \to [\cD_2]$ on homotopy categories.
A DG-functor $F:\cD_1 \to \cD_2$ is a {\sf quasiequivalence} if for all $X_1,Y_1 \in \cD_1$ the morphism 
$F:\Hom_{\cD_1}(X_1,Y_1) \to  \Hom_{\cD_2}(F(X_1),F(Y_1))$ is a quasiisomorphism of complexes 
(in this case $F$ is called {\sf quasi fully faithful}) and for each object $X_2 \in \cD_2$ 
there is an object $X_1 \in \cD_1$ such that $F(X_1)$ is homotopy equivalent to $X_2$.
In particular, $[F]$ is an equivalence of categories.


Two DG-categories $\cD$ and $\cD'$ are called {\sf quasiequivalent} if there is a chain of quasiequivalences
\begin{equation*}
\cD = \cD_0 \xrightarrow{\ \cong\ } \cD_1 \xleftarrow{\ \cong\ } \cD_2 \xrightarrow{\ \cong\ } \cD_3 \xleftarrow{\ \cong\ } \dots \xrightarrow{\ \cong\ } \cD_n = \cD'.
\end{equation*}
It is clear that quasiequivalent DG-categories have equivalent homotopy categories.

\subsection{Pretriangulated categories}

Let $\cD$ be a DG-category and $M$ a DG-module over~$\cD$.
For each integer $k\in\bbZ$ the {\sf shift} $M[k]$ is defined as the DG-module over $\cD$ with
\begin{equation*}
M[k](X) = M(X)[k]
\end{equation*}
for all $X \in \cD$, where the RHS is the usual shift of the complex $M(X)$.
Further, let $f:M \to N$ be a closed morphism of DG-modules of degree~$0$.
Then for any object $X \in \cD$ we have a morphism of complexes $f_X:M(X) \to N(X)$.
The {\sf cone of the morphism $f$} is defined by
\begin{equation*}
\Cone(f)(X) = \Cone(f_X:M(X)\to N(X)).
\end{equation*}
Again, the RHS is the usual cone of a morphism of complexes.

\begin{remark}\label{cone}
In other words, the value of $\Cone(f)$ on $X \in \cD$ is the direct sum of graded vector spaces $N(X) \oplus M(X)[1]$
with the differential $d(n,m) = (dn + f(m),-dm)$. Thus the cone comes with a bunch of morphisms:
\begin{equation*}
M[1] \xrightarrow{i} \Cone(f) \xrightarrow{p} M[1],
\qquad
N \xrightarrow{j} \Cone(f) \xrightarrow{s} N,
\end{equation*}
such that
\begin{equation*}
pi = \id_{M[1]},\quad
sj = \id_N,\quad
ip + js = \id_{\Cone(f)},\qquad
dj = dp = 0,\quad
di = jf,\quad
ds = -fp.
\end{equation*}
Vice versa, given a DG-module $C$ with morphisms $M[1] \xrightarrow{i} C \xrightarrow{p} M[1]$,
$N \xrightarrow{j} C \xrightarrow{s} N$ satisfying the above conditions it is easy to check 
that $C$ is isomorphic to the cone of $f$.
\end{remark}

Note also that $\Cone(M[-1] \xrightarrow{\ 0\ } N) \cong M \oplus N$ is the direct sum of DG-modules.


For any small DG-category $\cD$ the shift functor and the cone functor defined above
induce on the homotopy category $[\cD\dgm]$ of DG-modules over $\cD$ a structure 
of a triangulated category \cite{BK90}.

\begin{defi}[\cite{BK90}]\label{defpretri}
Let $\cD$ be a small DG-category. A DG-subcategory $\cD' \subset \cD\dgm$
is {\sf pretriangulated} if its homotopy category $[\cD']$ is a triangulated
subcategory of $[\cD\dgm]$. In particular, a small DG-category $\cD$ is pretriangulated if
%
%
\begin{enumerate}
\item for any $X \in \cD$ and any $k \in \bbZ$ the DG-module $\sY^X[k]$ is homotopic to a representable DG-module;
\item for any closed morphism $f \in\Hom_\cD(X_1,X_2)$ of degree $0$ in $\cD$ 
the DG-module $\Cone(\sY^f:\sY^{X_1} \to \sY^{X_2})$ is homotopic to a representable DG-module.
\end{enumerate}
\end{defi}

%

The homotopy category of a pretriangulated DG-category is triangulated. Note also that if 
$F:\cD_1 \to \cD_2$ is a DG-functor between small pretriangulated DG-categories 
then the functor $[F]:[\cD_1] \to [\cD_2]$ is triangulated.

\begin{defi}[\cite{BK90}]
An {\sf enhancement} for a triangulated category $\cT$ is a pretriangulated DG-category $\cD$
with an equivalence $\cT \cong [\cD]$ of triangulated categories.
\end{defi}


\subsection{Derived category}

Note that the category of DG-modules over a DG-category $\cD$ has arbitrary direct sums,
which are just componentwise
\begin{equation}\label{sumdgmod}
(\oplus M_i)(X) = \oplus M_i(X)
\end{equation} 
for any set of DG-modules $M_i$ and any $X\in \cD$.

A DG-module $M$ over $\cD$ is {\sf acyclic}, if for any $X\in\cD$ the complex $M(X)$ is acyclic.
The DG-subcategory of acyclic DG-modules is denoted  $\Acycl(\cD)$.
It is evidently pretriangulated and closed under arbitrary direct sums, hence its homotopy category $[\Acycl(\cD)]$
is a localizing triangulated subcategory in $[\cD\dgm]$.

The derived category of DG-modules over $\cD$ is defined as the Verdier quotient
\begin{equation*}
\bD(\cD) = [\cD\dgm]/[\Acycl(\cD)].
\end{equation*}
It is a triangulated category. 
Two DG-modules are called {\sf quasiisomorphic}, if they are isomorphic in the derived category of DG-modules.

By~\cite{N92} the functor $[\cD\dgm] \to \bD(\cD)$
commutes with direct sums. It follows that the images of representable objects are compact in $\bD(\cD)$.
In particular, $\bD(\cD)$ is compactly generated. In fact, compact objects in $\bD(\cD)$ can be described as follows.

Consider the minimal subcategory of $\cD\dgm$ containing all representable DG-modules and closed under 
shifts, cones of closed morphisms, and homotopy direct summands. Objects of this category are called
{\sf perfect} DG-modules. By~\cite[Theorem 3.4]{Ke06} compact objects of $\bD(\cD)$ are given by
perfect DG-modules. In particular, if $\cD$ is pretriangulated and closed under homotopy direct summands 
then 
\begin{equation*}
\bD(\cD)^\comp = [\cD].
\end{equation*}

A DG-module $P$ is called {\sf h-projective} if for any acyclic DG-module $A$
the complex $\Hom_{\cD\dgm}(P,A)$ is acyclic. The DG-subcategory of $\cD\dgm$ consisting
of h-projective DG-modules is denoted $\hproj(\cD)$.
A DG-module $I$ is called {\sf h-injective} if for any acyclic DG-module $A$
the complex $\Hom_{\cD\dgm}(A,I)$ is acyclic. The DG-subcategory of $\cD\dgm$ consisting
of h-injective DG-modules is denoted $\hinj(\cD)$.
Note that by definition $\hproj(\cD)$ is just the left orthogonal to $\Acycl(\cD)$ in $[\cD\dgm]$,
while $\hinj(\cD)$ is just the right orthogonal. In fact, the following is well known

\begin{theo}[\protect{\cite[Prop.\ 3.1]{Ke06}}]\label{hprhin}
There are semiorthogonal decompositions
\begin{equation*}
[\cD\dgm] = \langle [\Acycl(\cD)], [\hproj(\cD)] \rangle,
\qquad
[\cD\dgm] = \langle [\hinj(\cD)], [\Acycl(\cD)] \rangle.
\end{equation*}
In particular, we have equivalences of triangulated categories
\begin{equation*}
[\hproj(\cD)] \cong \bD(\cD) \cong [\hinj(\cD)].
\end{equation*}
Thus the categories $\hproj(\cD)$ and $\hinj(\cD)$ are pretriangulated.
\end{theo}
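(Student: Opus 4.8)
The plan is to check the two semiorthogonal decompositions against Definition~\ref{def-sod}, after which the remaining assertions are formal. By the discussion preceding the theorem, $[\Acycl(\cD)]$ is a triangulated subcategory of $[\cD\dgm]$, and $[\hproj(\cD)]$, $[\hinj(\cD)]$ are its left and right orthogonals in $[\cD\dgm]$; hence both are strictly full triangulated subcategories and the required vanishings $\Hom([\hproj(\cD)],[\Acycl(\cD)])=0$ and $\Hom([\Acycl(\cD)],[\hinj(\cD)])=0$ hold tautologically. So the only real content is the existence, for every DG-module $M$, of decomposition triangles $P\to M\to A\to P[1]$ and $A'\to M\to I\to A'[1]$ with $P$ h-projective, $I$ h-injective, and $A,A'$ acyclic --- equivalently, the existence of an h-projective resolution (a quasi-isomorphism $P\to M$ with $P$ h-projective) and of an h-injective resolution $M\to I$.

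For the h-projective side I would build $P$ as a semi-free DG-module: an exhausting union $P=\bigcup_n P_n$ of DG-submodules, where $P_0$ is a direct sum of shifted representables $\sY^X[k]$ (one for each cocycle of each complex $M(X)$, via the Yoneda identification~\eqref{yone}) so that $P_0\to M$ is surjective on cocycles, hence on cohomology, and each $P_{n+1}\supset P_n$ is obtained by adjoining a further direct sum of shifted representables so as to kill $\Ker\big(H^\bullet(P_n)\to H^\bullet(M)\big)$ compatibly with the differential (the usual cell attachment). Then $P\to M$ is a quasi-isomorphism, so $A:=\Cone(P\to M)$ is acyclic. That $P$ is h-projective follows by assembling closure properties of the class of h-projective modules: a single representable $\sY^X$ is h-projective because $\Hom_{\cD\dgm}(\sY^X,A)\cong A(X)$ by~\eqref{yone}, which is acyclic when $A$ is; the class is closed under shifts and arbitrary direct sums, since $\Hom_{\cD\dgm}(\bigoplus P_i,-)=\prod\Hom_{\cD\dgm}(P_i,-)$ and a product of acyclic complexes is acyclic; it is closed under cones of closed degree-$0$ morphisms by triangulatedness; and it is closed under the colimit $P=\colim P_n$ along the monomorphisms $P_n\hookrightarrow P_{n+1}$, via the telescope triangle with vertices $\bigoplus_n P_n$, $\bigoplus_n P_n$, $P$. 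This produces the first decomposition triangle. The h-injective resolution is obtained dually: either run the mirror construction, embedding $M$ into an iterated extension of ``cofree'' (coinduced) DG-modules, which are h-injective because over a field a product of acyclic complexes is acyclic; or simply invoke the standard existence of h-injective resolutions in the Grothendieck abelian category $\cD\dgm$ (or the injective model structure on it). This gives the second triangle.

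It remains to deduce the ``in particular'' statements, which are pure semiorthogonal-decomposition formalism. For any $\cT=\langle\cT_1,\cT_2\rangle$ one checks, using the decomposition triangles and $\Hom(\cT_2,\cT_1)=0$, that $\bp_2$ is right adjoint to the fully faithful inclusion $i_2\colon\cT_2\hookrightarrow\cT$ and that $\bp_1$ is left adjoint to the fully faithful inclusion $i_1\colon\cT_1\hookrightarrow\cT$; since a fully faithful functor realizes each one-sided adjoint as a Verdier localization, and $\Ker\bp_2=\cT_1$, $\Ker\bp_1=\cT_2$, one gets $\cT/\cT_1\simeq\cT_2$ and $\cT/\cT_2\simeq\cT_1$. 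Applying the first to $[\cD\dgm]=\langle[\Acycl(\cD)],[\hproj(\cD)]\rangle$ gives $[\hproj(\cD)]\simeq[\cD\dgm]/[\Acycl(\cD)]=\bD(\cD)$, and the second to $[\cD\dgm]=\langle[\hinj(\cD)],[\Acycl(\cD)]\rangle$ gives $[\hinj(\cD)]\simeq\bD(\cD)$. Finally, being components of semiorthogonal decompositions of $[\cD\dgm]$, the categories $[\hproj(\cD)]$ and $[\hinj(\cD)]$ are triangulated subcategories of $[\cD\dgm]$, so $\hproj(\cD)$ and $\hinj(\cD)$ are pretriangulated in the sense of Definition~\ref{defpretri}.

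The main obstacle is the construction of the resolutions. Unlike $\bD(\cD)$, the category $[\cD\dgm]$ is \emph{not} compactly generated by the representables (an acyclic DG-module need not be contractible), so one cannot short-circuit the argument by an abstract generation statement as in Proposition~\ref{cgenprop} or by an automatic Bousfield-localization along the localizing subcategory $[\Acycl(\cD)]$; the semi-free resolution must be produced by hand, and the two delicate points are the cell-attaching step that forces $P\to M$ to become a quasi-isomorphism and the verification that the (possibly transfinite) colimit stays h-projective. The h-injective half is the fiddlier one, since the naive dualization trades direct sums for products; this is harmless over a field, which is precisely why the argument needs either ``products of acyclics are acyclic'' or the Grothendieck-category machinery.
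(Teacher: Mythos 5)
The paper itself gives no argument for this statement: the theorem is headed by the citation to \cite[Prop.~3.1]{Ke06} and is used as a black box. Your proof supplies exactly the content that the cited source contains, and it is correct: reduce the two semiorthogonal decompositions (using the observation, already made in the text preceding the theorem, that $[\hproj(\cD)]$ and $[\hinj(\cD)]$ are the left and right orthogonals of $[\Acycl(\cD)]$) to the existence of h-projective and h-injective resolutions; build the former as semi-free modules assembled from shifted representables via~\eqref{yone}, using closure of h-projectives under shifts, cones, arbitrary direct sums (via $\Hom(\oplus P_i,-)=\prod\Hom(P_i,-)$), and the telescope triangle for the colimit along the semi-free filtration; dualize or invoke the Grothendieck-category machinery for the latter; and then read off the equivalences $[\hproj(\cD)]\simeq\bD(\cD)\simeq[\hinj(\cD)]$ from the adjunction formalism of the projection functors, and pretriangulatedness from the components being triangulated subcategories.

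One small imprecision in your closing paragraph: ``a product of acyclic complexes is acyclic'' holds for complexes of modules over \emph{any} ring (cohomology commutes with products of modules by a direct ker/im computation), so that is not where linearity over the field $\kk$ enters. Where it genuinely helps is in promoting the sequential (co)limit of the tower to a homotopy (co)limit: for the semi-free filtration $P_0\subset P_1\subset\cdots$ the required degree-wise splitness is automatic from the graded-free structure, but for the inverse system of h-injectives on the dual side one needs the transition epimorphisms to be degree-wise split to avoid a $\lim^1$ obstruction, and this is supplied by $\kk$-linearity (alternatively, avoided entirely by appealing to the existence of h-injectives for complexes over a Grothendieck category, as the paper itself does in Lemma~\ref{ahf}). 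With that adjustment your argument is complete.
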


It follows from Theorem~\ref{hprhin} that each DG-module $M$ is quasiisomorphic to an h-projective
DG-module $P_M$, which is called its {\sf h-projective resolution}. Analogously, each DG-module $M$ 
is quasiisomorphic to an h-injective DG-module $I_M$, which is called its {\sf h-injective resolution}.
Thus the category of h-projective (resp.\ h-injective) $\cD$-modules provides an enhancement for the derived category.

Note also that the image of the Yoneda functor consists of h-projective $\cD$-modules --- this follows
immediately from~\eqref{yone}. Thus $[\cD]$ can be considered as a full subcategory of $\bD(\cD)$ and
$\cD$ is pretriangulated if and only if this subcategory is triangulated.

\subsection{Tensor products}

If $\cD$ is a small DG-category the {\sf tensor product} of a right and a left DG-modules $M \in\cD\dgm$ and $N\in\cD^\op\dgm$ is defined as
\begin{equation*}
M\otimes_\cD N = \Coker\left(
\bigoplus_{X,Y \in\cD} M(X)\otimes_\kk \Hom_\cD(Y,X) \otimes_\kk N(Y) \xrightarrow{\quad}
\bigoplus_{X \in\cD} M(X) \otimes_\kk N(X)
\right).
\end{equation*}
It is a complex of vector spaces. One checks immediately that
\begin{equation}\label{yonetens}
\sY^X\otimes_\cD N = N(X),
\qquad
M\otimes_\cD\sY_X = M(X).
\end{equation} 

A DG-module $F$ is called {\sf h-flat} if for any acyclic left DG-module $A$ the tensor product $F\otimes_\cD A$ is acyclic.
It follows from~\eqref{yonetens} that each representable DG-module is h-flat. Also one can check that each h-projective 
DG-module is h-flat.


The {\sf derived tensor product} is defined by using h-flat resolutions of either of the factors
\begin{equation*}
M\lotimes_\cD N := F_M \otimes_\cD N \cong F_M\otimes_\cD F_N \cong M \otimes_\cD F_N,
\end{equation*}
where $F_M$ and $F_N$ are h-flat resolutions of $M$ and $N$ respectively (for example one can use h-projective resolutions).
By definition the derived tensor product is defined up to a quasiisomorphism. 

Since as we already mentioned each representable DG-module is h-flat we conclude that
\begin{equation}\label{yoneltens}
\sY^X\lotimes_\cD N = N(X),
\qquad
M\lotimes_\cD\sY_X = M(X).
\end{equation}

\subsection{Bimodules}

Let $\cD_1$ and $\cD_2$ be DG-categories. A $\cD_1-\cD_2$ {\sf DG-bimodule} is a DG-module
over $\cD_1^\op\otimes\cD_2$, i.e.\ a DG-functor $\cD_1\otimes\cD_2^\op \to \kk\dgm$.

One says that a DG-bimodule $\varphi \in \cD_1^\op\otimes\cD_2\dgm$ has some property 
as a left (right) DG-module, if for each $X_2 \in \cD_2$ (resp.\ for each $X_1 \in \cD_1$)
DG-module $\varphi(-,X_2) \in \cD_1^\op\dgm$ (resp.\ $\varphi(X_1,-)\in\cD_2\dgm$)
has this property. In particular we can say that a bimodule is representable (h-projective, h-flat, h-injective, \dots)
as a left (right) DG-module (for short left representable, h-projective, \dots).

Given a left DG-module $M_1$ over $\cD_1$ and a right DG-module $M_2$ over $\cD_2$ one defines their
{\sf exterior tensor product} as 
\begin{equation*}
(M_1\otimes_\kk M_2)(X_1,X_2) := M_1(X_1)\otimes_\kk M_2(X_2).
\end{equation*}
Clearly, this is a DG-bimodule. Moreover, it is clear that an exterior product of two representable bimodules
is representable
\begin{equation*}
\sY_{X_1}\otimes_\kk\sY^{X_2} \cong \sY^{(X_1,X_2)}.
\end{equation*}
One can check that exterior tensor product of h-projective DG-modules is an h-projective DG-bimodule.
Vice versa, an h-projective DG-bimodule is both left and right h-projective.
Also one can check that an h-flat DG-bimodule is both left and right h-flat. 

Let $\varphi_{12} \in (\cD_1^\op\otimes\cD_2)\dgm$ and $\varphi_{23} \in (\cD_2^\op \otimes\cD_3)\dgm$ 
be two DG-bimodules. Their {\sf tensor product} is the $\cD_1-\cD_3$-DG-bimodule 
defined by
\begin{equation*}
(\varphi_{12} \otimes_{\cD_2} \varphi_{23})(X_1,X_3) = \varphi_{12}(X_1,-)\otimes_{\cD_2}\varphi_{23}(-,X_3)
\end{equation*}
for all $X_1 \in \cD_1$ and $X_3 \in \cD_3$. 

Analogously, {\sf the derived tensor product of bimodules} is defined by replacing the first bimodule with a right h-flat resolution
or the second with a left h-flat resolution. 

For each DG-category $\cD$ we denote by $\cD$ the {\sf diagonal bimodule}
\begin{equation*}
\cD(X_1,X_2) = \Hom_\cD(X_2,X_1) \in \cD^\op\otimes\cD\dgm.
\end{equation*}

\begin{remark}\label{dr}
Note that the positions of $X_1$ and $X_2$ in the left and in the right sides of the formula are interchanged.
\end{remark}

It is clear that the diagonal bimodule is both left and right representable. Moreover, it follows immediately from~\eqref{yonetens} that
\begin{equation}\label{diagtens}
M_1\otimes_\cD\cD \cong M_1,\qquad
\cD\otimes_\cD M_2 \cong M_2,
\end{equation}
for all $M_1 \in \cD_1\dgm$ and $M_2 \in \cD_2^\op\dgm$. In particular, the diagonal bimodule is both left and right h-flat
(but not h-flat as a bimodule), so one can derive freely the tensor product in the above formulas:
\begin{equation}\label{diagltens}
M_1\lotimes_\cD\cD \cong M_1,\qquad
\cD\lotimes_\cD M_2 \cong M_2.
\end{equation}


Let $\varphi \in \cD_1^\op\otimes\cD_2\dgm$ be a DG-bimodule. If $F:\cD'_1 \to \cD_1$ and $G:\cD'_2 \to \cD_2$ are DG-functors we define
${}_F\varphi_G$ to be a $\cD'_1$-$\cD'_2$ DG-bimodule defined by
\begin{equation*}
{}_F\varphi_G(X_1',X'_2) = \varphi(F(X'_1),G(X'_2)).
\end{equation*}
In particular, if $F:\cD' \to \cD$ is a DG-functor we have DG-bimodules ${}_F\cD$, $\cD_F$, and ${}_F\cD_F$.

\subsection{Smoothness and properness}\label{ss-smpr}

As we already mentioned in the Introduction a small DG-category $\cD$ is {\sf smooth} if the diagonal bimodule
$\cD$ is a perfect bimodule. In other words, if $\cD$ is a homotopy direct summand of a bimodule obtained
from representative bimodules by finite number of shifts and cones of closed morphism.

A small DG-category is {\sf proper} if for all objects $X,Y \in \cD$ the complex $\cD(X,Y)$ has bounded
and finite-dimensional cohomology. 

We will need the following result of To\"en and Vaqui\'e which allows to prove perfectness of a bimodule.
Let $\varphi \in (\cD_1\otimes\cD^\opp_2)\dgm$ be a DG-bimodule. Consider the derived tensor product functor
\begin{equation}\label{lphi}
L\varphi:\bD(\cD_2) \to \bD(\cD_1),
\qquad
L\varphi(-) = -\lotimes_{\cD_2}\varphi.
\end{equation}
This is a triangulated functor commuting with arbitrary direct sums.

\begin{prop}[\protect{\cite[Lemma 2.8.2]{TV}}]\label{tv}
If DG-category $\cD_2$ is smooth and the derived tensor product functor $L\varphi:\bD(\cD_2) \to  \bD(\cD_1)$
preserves compactness then $\varphi$ is a perfect DG-bimodule.
\end{prop}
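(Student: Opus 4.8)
The plan is to reduce the statement to a known characterization of perfect DG-bimodules: a $\cD_1$-$\cD_2$-bimodule $\varphi$ is perfect as a bimodule precisely when it is compact as an object of $\bD(\cD_1^\op\otimes\cD_2)$, by the description of compact objects via perfect modules recalled after the definition of the derived category. So the goal is to show that $\varphi$, viewed as an object of $\bD(\cD_1^\op\otimes\cD_2)$, is compact. I would exploit the fact that $\cD_1^\op\otimes\cD_2$-modules are the same as $\cD_1$-$\cD_2$-bimodules, and that the diagonal bimodule $\cD_2 \in \bD(\cD_2^\op\otimes\cD_2)$ is perfect by the smoothness hypothesis on $\cD_2$.

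First I would write $\varphi \cong \varphi \lotimes_{\cD_2} \cD_2$ using the unit property \eqref{diagltens} of the diagonal bimodule. Since $\cD_2$ is smooth, $\cD_2$ is a homotopy direct summand of a bimodule $P$ built from representable $\cD_2^\op\otimes\cD_2$-bimodules by finitely many shifts and cones; each such representable bimodule is of the form $\sY_{X}\otimes_\kk\sY^{Y}$ for objects $X, Y \in \cD_2$. Tensoring over $\cD_2$ on the left with $\varphi$ is a triangulated operation commuting with direct sums and homotopy summands, so $\varphi \cong \varphi\lotimes_{\cD_2}\cD_2$ is a homotopy direct summand of $\varphi\lotimes_{\cD_2} P$, and $\varphi\lotimes_{\cD_2} P$ is obtained by the same finite number of shifts and cones from the bimodules $\varphi\lotimes_{\cD_2}(\sY_X\otimes_\kk\sY^Y)$. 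By associativity of the derived tensor product and \eqref{yoneltens} one computes $\varphi\lotimes_{\cD_2}(\sY_X\otimes_\kk\sY^Y) \cong (\varphi\lotimes_{\cD_2}\sY_X)\otimes_\kk\sY^Y \cong \varphi(X,-)\otimes_\kk\sY^Y$, i.e. it is the exterior product of the left $\cD_1$-module $\varphi(X,-)$ with the representable right $\cD_2$-module $\sY^Y$. Here is where the hypothesis on $L\varphi$ enters: $\varphi(X,-) = L\varphi(\sY^X)$, and $\sY^X$ is compact in $\bD(\cD_2)$, so by assumption $\varphi(X,-)$ is a compact (hence perfect) object of $\bD(\cD_1)$, i.e. a homotopy summand of a bimodule built from representable $\cD_1$-modules by finitely many shifts and cones.

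Putting these together: $\varphi$ lies in the smallest Karoubian-closed triangulated subcategory of $\bD(\cD_1^\op\otimes\cD_2)$ generated by bimodules of the form $M\otimes_\kk\sY^Y$ with $M$ a perfect $\cD_1$-module and $Y\in\cD_2$; but the exterior product of a representable $\cD_1$-module with a representable $\cD_2$-module is the representable $\cD_1^\op\otimes\cD_2$-bimodule $\sY^{(X,Y)}$, and exterior product is exact and commutes with homotopy summands in each argument, so every such $M\otimes_\kk\sY^Y$ is perfect as a bimodule. Therefore $\varphi$ is perfect. The main obstacle is bookkeeping the compatibilities — associativity of derived tensor products, the identity $\varphi(X,-)\otimes_\kk\sY^Y \cong \varphi\lotimes_{\cD_2}(\sY_X\otimes_\kk\sY^Y)$, and the fact that forming exterior products and deriving tensor products all commute with the triangulated operations and with passing to homotopy direct summands; none of this is deep but it must be assembled carefully. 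Since the statement is quoted from \cite[Lemma 2.8.2]{TV}, I would in practice simply cite that reference, indicating the argument above as the idea.
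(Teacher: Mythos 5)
The paper does not prove this statement; it simply cites \cite[Lemma 2.8.2]{TV}, so there is no in-text proof to compare against. Your sketch is a correct rendition of the standard argument behind that lemma: absorb $\varphi$ against the diagonal $\cD_2$-bimodule, replace the diagonal by a perfect object using smoothness of $\cD_2$, reduce to the generators $\sY_X\otimes_\kk\sY^Y$, compute $\varphi\lotimes_{\cD_2}(\sY_X\otimes_\kk\sY^Y)$ to obtain an exterior product of a representable $\cD_2$-module with $L\varphi(\sY^\bullet)$, invoke the compactness hypothesis to see that the latter is a perfect $\cD_1$-module, and close up under shifts, cones and retracts. That is exactly what makes the lemma work, and each step you invoke (perfect $=$ compact for module categories, \eqref{yoneltens}, the unit property~\eqref{diagltens}, compatibility of exterior products with the triangulated operations) is recalled in Section~\ref{s-p-dg}.

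Two small points of care, which you already flag as ``bookkeeping'': the Proposition is stated for $\varphi \in (\cD_1\otimes\cD_2^\op)\dgm$ (so that $-\lotimes_{\cD_2}\varphi$ eats the left $\cD_2$-structure of $\varphi$ and produces a right $\cD_1$-module), whereas you place $\varphi$ in $\bD(\cD_1^\op\otimes\cD_2)$, i.e.\ the opposite variances; and with the paper's conventions the computation gives $\sY^{(X,Y)}\lotimes_{\cD_2}\varphi \cong \sY_X\otimes_\kk\varphi(-,Y)$ with $\varphi(-,Y) = L\varphi(\sY^Y)$, rather than your $\varphi(X,-)\otimes_\kk\sY^Y$. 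These are symmetric and do not affect the argument, but if you write it out fully you should fix one set of conventions and propagate it.
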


\subsection{Drinfeld quotient}\label{ss-dq}

Let $\cD$ be a DG-category and $\cD_0 \subset \cD$ its small full DG-subcategory.
In~\cite{D04} Drinfeld defined a new DG-category $\cD/\cD_0$, which is known as {\sf Drinfeld quotient}.
By definition $\cD/\cD_0$ has the same objects as $\cD$ and $\Hom_{\cD/\cD_0}$ is freely generated
over $\Hom_\cD$ by generators $\varepsilon_X$, one for each object $X$ of $\cD_0$, such that
\begin{equation*}
\deg \varepsilon_X = -1,
\qquad
d(\varepsilon_X) = 1_X.
\end{equation*}
Thus $\Hom_{\cD/\cD_0}(Y,Y')$ is given by
\begin{equation}\label{hom-dq}
\bigoplus_{p=0}^\infty
\bigoplus_{X_1,\dots,X_p \in \cD_0} 
\Hom_\cD(Y,X_1)\otimes \varepsilon_{X_1}\otimes 
\Hom_\cD(X_1,X_2)\otimes \varepsilon_{X_2}\otimes \dots \otimes
\varepsilon_{X_p}\otimes 
\Hom_\cD(X_p,Y').
\end{equation}
The Drinfeld quotient $\cD/\cD_0$ comes with a natural DG-functor $\eta_{\cD,\cD_0}:\cD \to \cD/\cD_0$, which takes
$\Hom_\cD(Y,Y')$ to the $p=0$ summand above.
The following nice property of the Drinfeld quotient follows from the definition

\begin{prop}\label{dq-dgf}
Let $\cD$, $\cD'$ be DG-categories and $\cD_0 \subset \cD$, $\cD'_0 \subset \cD'$ their DG-subcategories.
If $F:\cD \to \cD'$ is a DG-functor such that $F(\cD_0) \subset \cD'_0$ then there is a DG-functor 
$\bar F:\cD/\cD_0 \to \cD'/\cD'_0$ such that the diagram
\begin{equation*}
\xymatrix{
\cD \ar[r]^-F \ar[d]_{\eta_{\cD,\cD_0}} & \cD' \ar[d]^{\eta_{\cD',\cD'_0}} \\
\cD/\cD_0 \ar[r]^-{\bar F} & \cD'/\cD'_0
}
\end{equation*}
commutes, i.e.
$\bar F\circ\eta_{\cD,\cD_0} = \eta_{\cD',\cD'_0}\circ F$.
\end{prop}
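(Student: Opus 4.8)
The plan is to construct $\bar F$ explicitly on objects and morphisms using the free generation property of the Hom-complexes in the Drinfeld quotient, and then check the required identities termwise. On objects we are forced to set $\bar F(Y) = F(Y)$ for every object $Y$ of $\cD/\cD_0 = \cD$; note this makes sense since $F(\cD_0) \subset \cD'_0$ means that the generators $\varepsilon_{F(X)}$ for $X \in \cD_0$ are available in $\cD'/\cD'_0$. On morphisms, recall from~\eqref{hom-dq} that a homogeneous element of $\Hom_{\cD/\cD_0}(Y,Y')$ is a sum of tensors of the form
\begin{equation*}
g_0 \otimes \varepsilon_{X_1} \otimes g_1 \otimes \varepsilon_{X_2} \otimes \dots \otimes \varepsilon_{X_p} \otimes g_p,
\end{equation*}
with $X_i \in \cD_0$ and $g_i$ composable morphisms in $\cD$. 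I would define $\bar F$ on such a tensor by applying $F$ to each $g_i$ and sending $\varepsilon_{X_i}$ to $\varepsilon_{F(X_i)}$, i.e.
\begin{equation*}
\bar F\big( g_0 \otimes \varepsilon_{X_1} \otimes \dots \otimes \varepsilon_{X_p} \otimes g_p \big)
= F(g_0) \otimes \varepsilon_{F(X_1)} \otimes \dots \otimes \varepsilon_{F(X_p)} \otimes F(g_p),
\end{equation*}
and extend $\kk$-linearly. Since $F(X_i) \in \cD'_0$, the right-hand side is a legitimate element of $\Hom_{\cD'/\cD'_0}(F(Y),F(Y'))$.

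Next I would verify that $\bar F$ is a DG-functor. It preserves degree because $\deg\varepsilon_{X_i} = \deg\varepsilon_{F(X_i)} = -1$ and $F$ preserves degree of the $g_i$. It is compatible with composition because composition in the Drinfeld quotient is just concatenation of such tensors (using composition in $\cD$ at the junction where two tensors meet), and $F$ respects composition in $\cD$; concatenation of the images is the image of the concatenation. Compatibility with the differential is the one point requiring a small computation: the differential on $\Hom_{\cD/\cD_0}$ is the graded Leibniz derivation determined by $d$ on the $\Hom_\cD$-factors together with $d(\varepsilon_{X_i}) = 1_{X_i}$. Applying $\bar F$ and then $d$, versus $d$ and then $\bar F$, one compares term by term: on the $\Hom_\cD$-factors this is exactly the statement that $F$ commutes with $d$, and on an $\varepsilon_{X_i}$-factor one uses $\bar F(d\varepsilon_{X_i}) = \bar F(1_{X_i}) = 1_{F(X_i)} = d\varepsilon_{F(X_i)} = d(\bar F(\varepsilon_{X_i}))$. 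The Koszul signs match on both sides because the degrees are preserved.

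Finally, commutativity of the square is immediate: $\eta_{\cD,\cD_0}$ is the inclusion of the $p = 0$ summand, which $\bar F$ sends via $g_0 \mapsto F(g_0)$, and this is precisely $\eta_{\cD',\cD'_0} \circ F$ followed by inclusion of the $p=0$ summand. I do not anticipate a genuine obstacle here; the statement is essentially a restatement of the universal/functorial nature of Drinfeld's construction, and the only mild subtlety is bookkeeping the signs and the free-generation structure, which is why the proof is short. The main (very minor) point to be careful about is that $\bar F$ is well-defined, i.e.\ that the formula respects the tensor-algebra relations defining $\Hom_{\cD/\cD_0}$ — but since those Hom-complexes are \emph{freely} generated over $\Hom_\cD$ by the $\varepsilon_X$, specifying $\bar F$ on the $\Hom_\cD$-factors (by $F$) and on the generators $\varepsilon_X$ (by $\varepsilon_{F(X)}$) determines it uniquely and consistently.
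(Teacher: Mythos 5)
Your construction is exactly the paper's: define $\bar F$ to agree with $F$ on objects and on $\Hom_\cD$, send $\varepsilon_X \mapsto \varepsilon_{F(X)}$, and invoke the free generation of $\Hom_{\cD/\cD_0}$ over $\Hom_\cD$ by the $\varepsilon_X$ to see this determines $\bar F$ uniquely and that all the DG-functor axioms and the commutativity of the square hold. You merely spell out the routine verifications (degree, composition, Leibniz compatibility, $p=0$ summand) that the paper leaves implicit.
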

\begin{proof}
We define $\bar F$ to be the same as $F$ on objects, and to extend it on morphisms we define
$\bar{F}(\varepsilon_X) = \varepsilon_{F(X)}$. By definition of $\cD/\cD_0$ this uniquely defines $\bar F$.
\end{proof}

The definition of the Drinfeld quotient $\cD/\cD_0$ as presented above makes sense only in case when $\cD_0$ is a small DG-category.
Otherwise, the $\Hom$-spaces defined by~\eqref{hom-dq} are not sets. However, one can use the machinery of Grothendieck universes
to define the Drinfeld quotient for arbitrary $\cD_0$. We skip the details and refer the interested reader to the Appendix in~\cite{LO}.
Note by the way that the Drinfeld quotient provides a natural enhancement for the Verdier quotient.

\begin{theo}[\cite{D04}]\label{dq}
If $\cD$ is a pretriangulated DG-category and $\cD_0 \subset \cD$ is its pretriangulated DG-subcategory then 
$\cD/\cD_0$ is also pretriangulated and there is an equivalence of triangulated categories $[\cD/\cD_0] \cong [\cD]/[\cD_0]$.
\end{theo}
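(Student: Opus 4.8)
The plan is to build the comparison functor from the canonical DG-functor $\eta:=\eta_{\cD,\cD_0}$, to reduce the assertion to its full faithfulness, and then to extract that from the explicit shape of the mapping complexes in $\cD/\cD_0$; pretriangulatedness will come out of the induction functor along $\eta$. The observation that makes everything run is that for $X\in\cD_0$ the generator $\varepsilon_X\in\Hom_{\cD/\cD_0}^{-1}(X,X)$ satisfies $d\varepsilon_X=1_X$, so $\id_X$ is null-homotopic in $\cD/\cD_0$ and hence $X\cong 0$ in $[\cD/\cD_0]$. Therefore $[\eta]\colon[\cD]\to[\cD/\cD_0]$ sends every object of the triangulated subcategory $[\cD_0]$ to zero, and by the universal property of the Verdier quotient it factors uniquely as $[\cD]\to[\cD]/[\cD_0]\xrightarrow{\ \bar\eta\ }[\cD/\cD_0]$. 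Since $\eta$ is the identity on objects, $\bar\eta$ is bijective on objects, hence essentially surjective, so the theorem reduces to showing that (i) $\bar\eta$ is fully faithful, and (ii) $\cD/\cD_0$ is pretriangulated, so that both sides of $\bar\eta$ carry --- and $\bar\eta$ identifies --- triangulated structures.

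For (i) I would analyse the complex $\Hom_{\cD/\cD_0}(Y,Y')$ of~\eqref{hom-dq}. Over the field $\kk$ every complex of $\kk$-vector spaces is h-flat, which is exactly the hypothesis that keeps the following clean. One recognises~\eqref{hom-dq} as the total complex of a two-sided bar-type complex: apart from the internal differentials of the $\Hom_\cD$-factors, the differential replaces a generator $\varepsilon_X$ by $1_X$ and composes the two neighbouring $\Hom_\cD$-factors through it, lowering the number $p$ of $\varepsilon$'s by one. Filtering by $p$ and chasing a closed degree-$0$ element through the filtration, one interprets it as the bar datum of a roof $Y\xleftarrow{s}\widetilde Y\xrightarrow{t}Y'$ in $[\cD]$ with $\Cone(s)\in[\cD_0]$ --- that is, of a morphism of $[\cD]/[\cD_0]$ written through the (left) calculus of fractions --- and conversely every such roof arises this way, with cohomologous cocycles giving equivalent roofs. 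This identifies $\Hom_{[\cD/\cD_0]}(Y,Y')=H^0\Hom_{\cD/\cD_0}(Y,Y')$ with $\colim_s\Hom_{[\cD]}(\widetilde Y,Y')=\Hom_{[\cD]/[\cD_0]}(Y,Y')$, compatibly with $\bar\eta$, so $\bar\eta$ is fully faithful.

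For (ii) I would use the induction functor $\eta_!=(-)\otimes_\cD(\cD/\cD_0)\colon\cD\dgm\to(\cD/\cD_0)\dgm$, which is a DG-functor, takes $\sY_\cD^X$ to $\sY_{\cD/\cD_0}^X$, commutes with shifts and with cones of closed degree-$0$ maps, and preserves homotopy equivalences. Applying $\eta_!$ to a homotopy equivalence $\sY_\cD^{X[k]}\simeq\sY_\cD^X[k]$ supplied by pretriangulatedness of $\cD$ gives condition~(1) of Definition~\ref{defpretri} for $\cD/\cD_0$; condition~(2) follows the same way for morphisms in the image of $\eta$, and for a general closed degree-$0$ morphism $g$ of $\cD/\cD_0$ one uses that, by the calculus of fractions from step~(i), $g$ represents a morphism of $[\cD]/[\cD_0]$ of the form $t\circ s^{-1}$ with $s,t\in[\cD]$ and $\Cone(s)\in[\cD_0]$; its cone can be formed out of triangles of $[\cD]$ and so equals the image of some $C\in\cD$ produced by pretriangulatedness of $\cD$, whence $\Cone(\sY_{\cD/\cD_0}^g)\simeq\sY_{\cD/\cD_0}^C$. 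Then $[\cD/\cD_0]$ is triangulated and $\bar\eta\colon[\cD]/[\cD_0]\xrightarrow{\ \sim\ }[\cD/\cD_0]$ is an equivalence of triangulated categories, and functoriality of the construction is Proposition~\ref{dq-dgf}. The main obstacle is step~(i): the honest bar-complex computation showing that $H^0$ of~\eqref{hom-dq} recovers the $\Hom$-sets of the Verdier quotient carries all the real content; the remaining steps are formal manipulations with DG-functors and universal properties.
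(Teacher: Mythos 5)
The paper does not prove this theorem; it is taken on faith from Drinfeld~\cite{D04}. So I can only assess your argument on its own merits, not against an in-paper proof.

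Your overall outline is sound: the observation that $d\varepsilon_X = 1_X$ contracts $\id_X$, hence kills $[\cD_0]$ in $[\cD/\cD_0]$; the induced factorization $\bar\eta$ through $[\cD]/[\cD_0]$ via the universal property of the Verdier quotient; the fact that $\eta$ is bijective on objects so $\bar\eta$ is essentially surjective; and the use of $\LInd_\eta$ plus calculus-of-fractions to deduce pretriangulatedness of $\cD/\cD_0$ from step~(i). All of this is correct and these are precisely the reductions one makes. The problem is that essentially the entire content of the theorem has been shifted into the single sentence \emph{``one interprets [a closed degree-$0$ cocycle of~\eqref{hom-dq}] as the bar datum of a roof \dots\ and conversely every such roof arises this way, with cohomologous cocycles giving equivalent roofs.''} That sentence is not a proof; it is the theorem.

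Concretely, the gap in step~(i) is the following. A closed cocycle of~\eqref{hom-dq} is a finite sum $f = \sum_p f^{(p)}$ with a tower of compatibility equations between consecutive $f^{(p)}$'s; $f^{(0)}$ itself need not be a closed morphism of $\cD$, and the passage from the $\{f^{(p)}\}$ to an actual roof $Y\xleftarrow{s}\widetilde Y\xrightarrow{t}Y'$ requires assembling $\widetilde Y$ as an iterated cone built from the $X_i$'s occurring in $f$, verifying that the resulting $s$ has cone in $[\cD_0]$, verifying that the roof recovers $[f]$ in cohomology, and verifying that the assignment is injective on $H^0$ (cohomologous cocycles give equivalent roofs, and vice versa). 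None of this is obvious. If one instead tries to extract it from the $p$-filtration via a spectral sequence, as you suggest, one has to argue strong convergence (the filtration in a fixed total degree need not be finite when the $\Hom_\cD$-complexes are unbounded), and one must then still identify the $E_\infty$-page with the Ore-localization colimit $\colim_s \Hom_{[\cD]}(\widetilde Y, Y')$ defining morphisms in $[\cD]/[\cD_0]$, which is its own nontrivial comparison. Drinfeld's actual argument does not go through a single filtration spectral sequence on~\eqref{hom-dq}; it uses a more structured reduction (replacing $\cD_0$ by an h-flat/semi-free model, an analysis of the DG-endomorphism ring of a generator of $\cD_0$ inside $\cD/\cD_0$, and a derived-module-theoretic characterization of the quotient) and this is where the h-flatness/field hypothesis genuinely enters. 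You mention that working over a field $\kk$ ``keeps the following clean,'' but you never actually use that hypothesis anywhere. So while your scaffolding is right, the load-bearing step is missing.
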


\subsection{Extensions of DG-functors}

In this section we discuss the relation between DG-functors on small DG-categories and functors
on their derived categories.

Let $F:\cD_1 \to \cD_2$ be a DG-functor between small DG-categories. 
For each right DG-module $M$ over $\cD_2$ we have a DG-module $M_F(X) = M(F(X))$ over $\cD_1$.
This defines a DG-functor of {\sf restriction} $\Res_F:\cD_2\dgm \to \cD_1\dgm$ which evidently takes acyclic DG-modules 
to acyclic DG-modules and so descends to a functor between derived categories which we also denote 
\begin{equation*}
\Res_F:\bD(\cD_2) \to \bD(\cD_1),
\qquad
M \mapsto M_F.
\end{equation*}
On the other hand, the DG-functor $F$ produces a $\cD_1$-$\cD_2$-bimodule ${}_F\cD_2$, 
so one can define the {\sf induction} functor $\Ind_F:\cD_1\dgm \to \cD_2\dgm$, $N \mapsto N\otimes_{\cD_1}{}_F\cD_2$ as well as 
its derived functor 
\begin{equation*}
\LInd_F:\bD(\cD_1) \to \bD(\cD_2),
\qquad
N \mapsto N\lotimes_{\cD_1}{}_F\cD_2.
\end{equation*}

\begin{prop}\label{fpb}
The derived induction functor $\LInd_F$ is left adjoint to the restriction functor $\Res_F$.
Both functors $\Res_F$ and $\LInd_F$ commute with arbitrary direct sums.
Moreover, 
\begin{equation*}
\LInd_F(\sY^X) \cong \sY^{F(X)}.
\end{equation*}
If $[F]$ is fully faithful then $\LInd_F$ is fully faithful.
Finally, if $F$ is a quasiequivalence then both $\Res_F$ and $\LInd_F$ are equivalences.
%
\end{prop}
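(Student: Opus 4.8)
The plan is to verify the assertions of Proposition~\ref{fpb} one at a time, reducing everything to standard adjunction and to the Yoneda-type identities~\eqref{yonetens}, \eqref{diagtens}, \eqref{yoneltens}, \eqref{diagltens} established above. The statement is a package of five claims: (a) $\LInd_F \dashv \Res_F$; (b) both functors commute with arbitrary direct sums; (c) $\LInd_F(\sY^X) \cong \sY^{F(X)}$; (d) $[F]$ fully faithful $\Rightarrow$ $\LInd_F$ fully faithful; (e) $F$ a quasiequivalence $\Rightarrow$ both are equivalences. I will treat them essentially in this order, since each relies on the previous ones.

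First I would establish (a) at the level of DG-modules, before deriving. The un-derived induction $\Ind_F(N) = N \otimes_{\cD_1} {}_F\cD_2$ is left adjoint to $\Res_F$ by the usual tensor-hom adjunction together with~\eqref{yonetens}: for a representable $\sY^X$ one has $\Ind_F(\sY^X) = \sY^X \otimes_{\cD_1} {}_F\cD_2 = {}_F\cD_2(X,-) = \Hom_{\cD_2}(-,F(X)) = \sY^{F(X)}$, which already gives the module-level version of (c), and then $\Hom(\Ind_F(\sY^X), M) = \Hom(\sY^{F(X)}, M) = M(F(X)) = M_F(X) = \Hom(\sY^X, \Res_F M)$ by Yoneda~\eqref{yone}; one extends this isomorphism to all $N$ by taking a semi-free (h-flat, or h-projective) resolution and passing to colimits/cones, using that both sides are cohomological and commute with the relevant colimits. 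Passing to derived categories: $\Res_F$ descends directly since it preserves acyclics, and $\LInd_F$ is the derived functor; the adjunction descends because h-projective resolutions compute $\LInd_F$ and $\Res_F$ sends h-injectives to h-injectives (it has the exact left adjoint $\Ind_F$), so $\RHom$ is computed correctly on both sides. That yields (a) and (c) on the derived level, the latter because representables are h-flat so $\LInd_F(\sY^X) = \sY^X \otimes_{\cD_1} {}_F\cD_2 = \sY^{F(X)}$ with no resolution needed.

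For (b): $\Res_F$ commutes with direct sums because direct sums of DG-modules are computed objectwise~\eqref{sumdgmod} and $(\oplus M_i)_F(X) = (\oplus M_i)(F(X)) = \oplus M_i(F(X))$; alternatively it follows from (a) plus Brown representability since $\LInd_F$, being a left adjoint, automatically commutes with direct sums. For (d), this is exactly the situation of Lemma~\ref{cprops}(1) combined with the generation machinery: $\bD(\cD_1)$ is compactly generated by the representables $\sY^X$, on which $\LInd_F$ acts by $\sY^X \mapsto \sY^{F(X)}$, and $\Hom_{\bD(\cD_2)}(\sY^{F(X)}, \sY^{F(Y)}) = \sY^{F(Y)}(F(X)) = \Hom_{\cD_2}(F(X),F(Y))$ computed in the derived category is $H^\bullet \Hom_{\cD_2}(F(X),F(Y))$, while $\Hom_{\bD(\cD_1)}(\sY^X,\sY^Y) = H^\bullet\Hom_{\cD_1}(X,Y)$ — and the hypothesis that $[F]$ is fully faithful gives an isomorphism in degree $0$; to get all degrees one uses that $[F]$ fully faithful together with shifts in the pretriangulated hull (or simply notes that the statement of being ``fully faithful on homotopy categories'' for the relevant enlarged categories forces quasi-fully-faithfulness is \emph{not} assumed here — so in fact I expect the intended reading is that $[F]$ fully faithful already suffices because $\Hom$ in $\bD(\cD_i)$ between representables placed in various degrees reduces to $\Hom$-complexes, and one invokes Lemma~\ref{cff} with $\cT_0$ the representables). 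Thus $\LInd_F$ preserves compactness and is fully faithful on representables, so Lemma~\ref{cff} gives full faithfulness on all of $\bD(\cD_1)$. Finally (e): if $F$ is a quasiequivalence then $[F]$ is an equivalence, so by (d) $\LInd_F$ is fully faithful, and its essential image contains all $\sY^{F(X)}$; since $F$ is quasi-essentially-surjective these generate $\bD(\cD_2)$, so by the last sentence of Lemma~\ref{cff} $\LInd_F$ is an equivalence, whence $\Res_F$, being its adjoint (on both sides, the adjunction units/counits being isomorphisms), is an equivalence too.

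The main obstacle I anticipate is the bookkeeping in step (a)–(d) around \emph{degrees}: proving that $\LInd_F$ is fully faithful requires matching $\Hom$-\emph{complexes}, not just degree-zero $\Hom$'s, between representables, and one has to be careful that $[F]$ fully faithful (as opposed to $F$ quasi-fully-faithful) is enough — this works because $\Hom_{\bD(\cD)}(\sY^X, \sY^Y[k]) = H^k\Hom_\cD(X,Y) = \Hom_{[\cD]}(X,Y[k])$ once $\cD$ is pretriangulated, reducing the higher-degree statement to the degree-zero statement applied after shifting. The rest — the tensor-hom adjunction, objectwise direct sums, and the reduction via Lemma~\ref{cff} and Proposition~\ref{cgenprop} — is routine.
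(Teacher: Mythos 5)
Your proof is essentially correct, and most of it — parts (b), (c), (d), (e) — runs exactly as the paper does: the direct-sum statements come from objectwise computation and the fact that a tensor product commutes with direct sums, $\LInd_F(\sY^X)\cong\sY^{F(X)}$ is~\eqref{yoneltens}, and full faithfulness and the equivalence claim are both handled by Lemma~\ref{cff} with $\cT_0=[\cD_1]$. Where you genuinely diverge is the adjunction in (a): you establish $\Ind_F\dashv\Res_F$ at the level of DG-modules via tensor-hom together with the Yoneda identities and then derive, checking that $\Ind_F$ preserves h-projectives since its right adjoint preserves acyclics. The paper instead goes through Brown representability: since $\LInd_F$ commutes with direct sums and $\bD(\cD_1)$ is compactly generated, it has \emph{some} right adjoint $G$; evaluating $G(M)(X)\cong\Hom(\sY^X,G(M))\cong\Hom(\LInd_F\sY^X,M)\cong\Hom(\sY^{F(X)},M)\cong M_F(X)$ then identifies $G$ with $\Res_F$. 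The paper's route is shorter because Brown representability does the heavy lifting; yours is more hands-on and also recovers the underived DG-adjunction explicitly, which the paper only mentions as an aside.

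Your worry about degrees in part (d) is a legitimate one and deserves to be aired rather than muttered parenthetically. As you note, $\Hom_{\bD(\cD_i)}(\sY^X,\sY^Y[k])=H^k\Hom_{\cD_i}(X,Y)$, so checking full faithfulness of $\LInd_F$ on the generating compacts requires matching all $H^k$, not just $H^0$, and "$[F]$ fully faithful" literally only gives $H^0$. The same issue is latent in the proof of Lemma~\ref{cff}: to conclude $G'=0$ from $\cT_0^\perp=0$ one needs $\Hom(F,G'[k])=0$ for all $k$ and all $F\in\cT_0$, which the displayed argument only delivers when $\cT_0$ is closed under shifts. This is all fine when $\cD_1$ is pretriangulated — then every $\sY^X[k]$ is again (homotopic to) a representable and $[\cD_1]\subset\bD(\cD_1)$ is a triangulated subcategory, so "fully faithful on $[\cD_1]$" automatically includes the shifted Homs — and that is the case in every application in the paper. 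But the Proposition as stated does not assume pretriangulatedness, so its hypothesis should really be read as "$F$ quasi fully faithful" (or $\cD_1$ pretriangulated), which is equivalent to what the paper needs; you are right to flag the distinction.
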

\begin{proof}
First of all, note that $\Res_F$ commutes with arbitrary direct sums by definition and $\LInd_F$ commutes since a tensor product does.
Further, the formula for $\LInd_F(\sY^X)$ follows immediately from~\eqref{yoneltens}. 

To prove the adjunction note that the right adjoint of $\LInd_F$ exists by Brown representability. Let us denote it temporarily by $G$.
Note that for any $\cD_2$-module $M$ we have
\begin{multline*}
G(M)(X) \cong 
\Hom_{\bD(\cD_1)}(\sY^X,G(M)) \cong 
\Hom_{\bD(\cD_2)}(\LInd(\sY^X),M) \cong \\ \cong
\Hom_{\bD(\cD_2)}(\sY^{F(X)},M) \cong
M(F(X)) \cong M_F(X)
\end{multline*}
which shows that $G$ is isomorphic to the restriction functor $\Res_F$.

Finally, to show that $\LInd_F$ is fully faithful we use Lemma~\ref{cff}
with $\cT = \bD(\cD_1)$, $\cT' = \bD(\cD_2)$ and $\cT_0 = [\cD]$, and the same Lemma 
shows that $\LInd_F$ is an equivalence if $F$ is a quasiequivalence.
\end{proof}

In fact, the functors $(\Ind_F,\Res_F)$ form what is called a DG-adjoint pair.

\subsection{Derived category of quasicoherent sheaves}\label{ss-dqs}

Let $S$ be a separated scheme of finite type. Denote by $\Qcoh(S)$ the abelian category
of quasicoherent sheaves on $S$. The derived category $\bD(S)$ is defined as the Verdier quotient
\begin{equation*}
\bD(S) := [\com(S)]/[\coma(S)],
\end{equation*}
where $\com(S)$ is the DG-category of complexes over $\Qcoh(S)$ and $\coma(S) \subset \com(S)$
is the DG-category of acyclic complexes. However for our purposes another description 
of the derived category is more convenient.

Recall that a complex of quasicoherent sheaves $F$ is {\sf h-flat} if for any acyclic complex $A$
the complex $\Tot^\oplus(F\otimes_{\cO_S} A)$ (the direct sum totalization of the bicomplex
$F\otimes_{\cO_S} A$) is acyclic. By~\cite[Prop.~1.1]{AJL} there is enough  h-flat complexes in $\com(S)$ 
(that is each complex is quasiisomorphic to an h-flat complex) hence there is an equivalence
\begin{equation*}
\bD(S) \cong [\hflat(S)]/[\hflata(S)],
\end{equation*}
where $\hflat(S)$ is the category of h-flat complexes and $\hflata(S)\subset \hflat(S)$ is the category
of acyclic h-flat complexes. Using Theorem~\ref{dq} this can be rewritten as the homotopy category
of the Drinfeld quotient
\begin{equation}\label{dsdq}
\bD(S) \cong [\hflat(S)/\hflata(S)].
\end{equation}
This definition is especially useful when one is interested in the derived pullback and tensor product functors
because of the following observation of Spaltenstein.

\begin{lemma}[\cite{Sp88}]\label{pbfa}
For any morphism of schemes $f:T \to S$ the termwise pullback functor $f^*:\com(S) \to \com(T)$
takes h-flat complexes to h-flat complexes and h-flat acyclic complexes to
h-flat acyclic complexes.
The tensor product of an h-flat acyclic complex with any complex is acyclic.
\end{lemma}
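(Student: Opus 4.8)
\smallskip
\noindent\textbf{Sketch of proof (proposal).}
The three assertions are closely linked, and the plan is to establish them in the order: last one, then first, then second. Throughout I will use repeatedly that the termwise tensor product of complexes of quasicoherent sheaves, the direct-sum totalization $\Tot^\oplus$, restriction to an open subscheme, the termwise pullback $f^*$, and formation of stalks are all exact and commute with arbitrary direct sums; combined with the fact that acyclicity of a complex of quasicoherent sheaves may be tested on stalks (cohomology sheaves commuting with stalks), this lets me reduce several steps to statements about complexes of modules over local rings. To show that $\Tot^\oplus(L\otimes_{\cO_S}M)$ is acyclic whenever $L$ is h-flat and acyclic and $M$ is arbitrary, I would choose an h-flat resolution $q\colon P\to M$ (which exists by \cite[Prop.~1.1]{AJL}); then $\Cone(q)$ is acyclic, so applying the exact functor $\Tot^\oplus(L\otimes_{\cO_S}-)$ to the distinguished triangle $P\to M\to\Cone(q)\to P[1]$ yields a distinguished triangle whose last term $\Tot^\oplus(L\otimes_{\cO_S}\Cone(q))$ is acyclic (as $L$ is h-flat), whence the first arrow is a quasiisomorphism; but the first term $\Tot^\oplus(L\otimes_{\cO_S}P)$ is acyclic as well, since $P$ is h-flat and $L$ is acyclic. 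Hence $\Tot^\oplus(L\otimes_{\cO_S}M)$ is acyclic. The very same argument, using h-flat resolutions of complexes of modules, proves the analogous statement over any commutative ring, which I will need below.

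The key technical point is that h-flatness of a complex $F$ of quasicoherent sheaves on $S$ can be tested stalkwise: $F$ is h-flat if and only if $F_s$ is an h-flat complex of $\cO_{S,s}$-modules for every $s\in S$. The ``if'' direction is immediate, since for acyclic $A$ the stalk at $s$ of $\Tot^\oplus(F\otimes_{\cO_S}A)$ is $\Tot^\oplus(F_s\otimes_{\cO_{S,s}}A_s)$ and $A_s$ is acyclic. For ``only if'' I would use separatedness of $S$: fixing an affine open $U\ni s$, the canonical morphism $g\colon\Spec\cO_{S,s}\to S$ is the composite of the morphism of affine schemes $\Spec\cO_{S,s}\to U$ with the open immersion $U\hookrightarrow S$, which is an affine morphism because $S$ is separated; hence $g$ is affine, so $g_*$ is exact and preserves quasicoherence. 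Given an acyclic complex $N^\bullet$ of $\cO_{S,s}$-modules, $g_*N^\bullet$ is then an acyclic complex of quasicoherent sheaves on $S$, so $\Tot^\oplus(F\otimes_{\cO_S}g_*N^\bullet)$ is acyclic; passing to the stalk at $s$ and noting that $g^{-1}(V)=\Spec\cO_{S,s}$ for every open $V\ni s$ (an open subset containing $s$ contains every generization of $s$), so that $(g_*N^\bullet)_s\cong N^\bullet$, we conclude that $\Tot^\oplus(F_s\otimes_{\cO_{S,s}}N^\bullet)$ is acyclic.

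Granting this, assertions (i) and (ii) are short. For any $t\in T$ with $s=f(t)$ there is a natural identification of complexes $(f^*F)_t\cong F_s\otimes_{\cO_{S,s}}\cO_{T,t}$. If $F$ is h-flat then $F_s$ is h-flat over $\cO_{S,s}$ by the previous step, and base change along the local homomorphism $\cO_{S,s}\to\cO_{T,t}$ preserves h-flatness: for a ring map $A\to R$ and an h-flat complex $M^\bullet$ of $A$-modules, the bicomplex $(R\otimes_A M^\bullet)\otimes_R N^\bullet$ agrees termwise with $M^\bullet\otimes_A N^\bullet$, whose totalization is acyclic for every acyclic complex $N^\bullet$ of $R$-modules (being acyclic over $A$ as well). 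Thus all stalks of $f^*F$ are h-flat, hence $f^*F$ is h-flat, which is (i). For (ii), let $F$ moreover be acyclic; then $f^*F$ is h-flat by (i), and each stalk $F_s\otimes_{\cO_{S,s}}\cO_{T,t}$ is acyclic by the module version of the first step applied to $F_s$, which is h-flat over $\cO_{S,s}$ and acyclic (being the stalk of an acyclic complex); hence $f^*F$ is acyclic. I expect the only genuinely delicate point to be the stalkwise criterion for h-flatness --- specifically the use of separatedness of $S$ to see that $\Spec\cO_{S,s}\to S$ is affine, and the identification of the stalk of $g_*N^\bullet$; everything else is formal given the definitions and the existence of h-flat resolutions.
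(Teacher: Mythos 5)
Your argument is correct, and since the paper only cites Spaltenstein without giving a proof, let me compare it to what the reference actually provides. Spaltenstein works with arbitrary sheaves of modules over a general ringed space, where the stalkwise criterion for $K$-flatness (your key technical step) is essentially free: given an acyclic complex $N^\bullet$ of $\cO_{S,s}$-modules one simply pushes it forward along $\{s\}\hookrightarrow S$ as a skyscraper and applies the definition of $K$-flatness. In the quasicoherent setting of the present paper that shortcut is not available, because a skyscraper with stalk an arbitrary $\cO_{S,s}$-module is usually not quasicoherent. Your replacement --- pushing forward along the affine morphism $g\colon\Spec\cO_{S,s}\to S$ --- is precisely the correct fix, and it is worth spelling out that this is where the standing hypothesis that $S$ is separated (really, semi-separated) enters: $g$ factors as $\Spec\cO_{S,s}\to U\hookrightarrow S$ with $U$ affine open, and the open immersion $U\hookrightarrow S$ is affine exactly because intersections of affine opens are affine. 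Since the lemma is invoked in the paper only for separated schemes of finite type, this is harmless, but your proof (unlike Spaltenstein's statement for ringed spaces) genuinely uses that hypothesis, so the order of presentation should make clear that the separatedness of $S$ from the ambient subsection is being used. The rest of your argument --- the two-out-of-three argument for the third assertion via an h-flat replacement, the identification $(f^*F)_t\cong F_{f(t)}\otimes_{\cO_{S,f(t)}}\cO_{T,t}$, and the elementary base-change observation that an h-flat complex of $A$-modules stays h-flat over any $A$-algebra --- is standard and matches the strategy one would extract from Spaltenstein once the reduction to stalks is in place. In short: correct proof, same local computation as in the source, but with a genuinely different (and necessary) mechanism for reducing to stalks in the quasicoherent context.
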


This Lemma shows that for a morphism $f:T \to S$ the pullback functor $f^*$ induces a DG-functor
\begin{equation*}
f^*:\hflat(S) \to \hflat(T)
\qquad\text{such that}\qquad
f^*(\hflata(S)) \subset \hflata(T).
\end{equation*}
Consequently, by Proposition~\ref{dq-dgf} it induces a DG-functor of Drinfeld quotients
\begin{equation*}
f^*:\hflat(S)/\hflata(S) \to \hflat(T)/\hflata(T).
\end{equation*}
The induced functor on the homotopy categories is the derived pullback functor
\begin{equation*}
Lf^*:\bD(S) \to \bD(T).
\end{equation*}
Analogously one defines the derived tensor product functor.

%
%
%
%
%
%
%
%
%
%
%
%
%

Note that by construction the derived pullback functor commutes with direct sums. Therefore, by Brown Representability
(Theorem~\ref{brown}) it has a right adjoint functor
\begin{equation*}
Rf_*:\bD(T) \to \bD(S).
\end{equation*}

Let $\Sch$ be the category of separated schemes of finite type over $\kk$ and 
$\Tria$ be the 2-category of $\kk$-linear triangulated categories.
Associating with a scheme $S$ its derived category $\bD(S)$ and with each morphism
of schemes $f:T \to S$ its derived pullback functor $Lf^*:\bD(S) \to \bD(T)$ defines
a pseudofunctor
\begin{equation*}
\bD:\Sch^\opp \to \Tria,
\qquad
S \mapsto \bD(S),\quad
f:(T\to S) \mapsto (Lf^*:\bD(S) \to \bD(T))
\end{equation*}
which we will call the {\sf derived category pseudofunctor}.
In the next section we will show that it factors through the 2-category of small DG-categories.


\subsection{DG-enhancements for schemes}\label{ss-dge}

Let $\sDG$ denote the 2-category of small DG-categories over $\kk$.
Associating with a small DG-category $\cD$ its derived category $\bD(\cD)$
and with a DG-functor $F:\cD_1 \to \cD_2$ its derived induction functor $\LInd_F:\bD(\cD_1) \to \bD(\cD_2)$ 
gives a pseudofunctor $\bD:\sDG \to \Tria$.
The main result of this section is the following

\begin{theo}\label{cd-pf}
There is a pseudofunctor $\cD:\Sch^\opp \to \sDG$ such that the diagram
\begin{equation*}
\xymatrix{
\Sch^\opp \ar[rr]^\cD \ar[dr]_{\bD} && \sDG \ar[dl]^{\bD} \\ & \Tria
}
\end{equation*}
is commutative.
\end{theo}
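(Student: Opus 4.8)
The plan is to construct the pseudofunctor $\cD:\Sch^\opp \to \sDG$ explicitly by assigning to each scheme $S$ a DG-enhancement of $\bD(S)$ built from the Drinfeld quotient description~\eqref{dsdq}, and to use functoriality of h-flat pullbacks (Lemma~\ref{pbfa}) together with functoriality of the Drinfeld quotient (Proposition~\ref{dq-dgf}) to get the arrow part. Concretely, I would set $\cD(S) := \hflat(S)/\hflata(S)$, the Drinfeld quotient of the DG-category of h-flat complexes of quasicoherent sheaves on $S$ by its full DG-subcategory of acyclic h-flat complexes. By~\eqref{dsdq} we have a natural equivalence $[\cD(S)] \cong \bD(S)$. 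For a morphism $f:T\to S$, Lemma~\ref{pbfa} says the termwise pullback $f^*$ sends $\hflat(S)$ to $\hflat(T)$ and $\hflata(S)$ into $\hflata(T)$, so Proposition~\ref{dq-dgf} yields a DG-functor $\cD(f) := f^* : \cD(S) \to \cD(T)$ fitting into the commuting square with the $\eta$'s. This is exactly the DG-lift of $Lf^*$ recorded in Section~\ref{ss-dqs}, so commutativity of the triangle holds essentially by construction: on homotopy categories $[\cD(f)]$ is $Lf^*$, and under the identification of $\bD(\cD(S))$ with $[\cD(S)]$ (valid since $\cD(S)$ is pretriangulated) the derived induction $\LInd_{\cD(f)}$ restricts on compact generators, i.e.\ on representable modules $= $ objects of $\cD(S)$, to $[\cD(f)] = Lf^*$, and hence agrees with $Lf^*$ everywhere by Lemma~\ref{cff}.

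The first thing I would check carefully is pseudofunctoriality: for $T \xrightarrow{g} S \xrightarrow{f} R$ one must produce a coherent natural isomorphism $\cD(g)\circ\cD(f) \cong \cD(f\circ g)$ of DG-functors, not merely of triangulated functors. The issue is that termwise pullback of complexes of quasicoherent sheaves is only pseudofunctorial: $g^* f^* \cong (fg)^*$ via the canonical base-change-type isomorphism, which is coherent but not strict. Since Proposition~\ref{dq-dgf} sends $F$ to $\bar F$ strictly compatibly with composition (the construction just declares $\bar F(\varepsilon_X) = \varepsilon_{F(X)}$), the natural isomorphism $g^* f^* \cong (fg)^*$ on $\com$ descends to a natural isomorphism of the induced DG-functors on Drinfeld quotients, and the cocycle (associativity pentagon) condition is inherited from the one for pullbacks of sheaves. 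So the pseudofunctor structure on $\cD$ is pulled back from the classical pseudofunctor structure on $S\mapsto \com(S)$, $f\mapsto f^*$. I would also note that $\cD(\id_S)$ is literally the identity DG-functor, so units are strict.

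Next I would verify that $\cD(S)$ really is a small DG-category and pretriangulated, so that it lives in $\sDG$ and its derived category is well-behaved. Smallness requires a universe remark: $\hflat(S)$ is not small, but as the excerpt already flags for the Drinfeld quotient in general, one replaces it by a small full DG-subcategory generating the same homotopy category (e.g.\ h-flat resolutions of a set of generators, or one invokes the Grothendieck-universe formalism of the appendix of~\cite{LO}). Pretriangularity of $\hflat(S)/\hflata(S)$: $\hflat(S)$ is closed under shifts and cones of closed degree-zero morphisms (cones of h-flat complexes are h-flat), so it is pretriangulated, and then Theorem~\ref{dq} gives that the Drinfeld quotient is pretriangulated with $[\cD(S)] \cong [\hflat(S)]/[\hflata(S)] \cong \bD(S)$. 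Finally, compatibility of the two pseudofunctors labelled $\bD$ in the diagram: the pseudofunctor $\bD:\sDG\to\Tria$ sends $F$ to $\LInd_F$, and by Proposition~\ref{fpb} $\LInd_F(\sY^X) = \sY^{F(X)}$, i.e.\ on the image of Yoneda $\LInd_{\cD(f)}$ computes $[\cD(f)] = Lf^*$; since these objects compactly generate and everything commutes with direct sums, Lemma~\ref{cff} upgrades this to a natural equivalence $\LInd_{\cD(f)} \cong Lf^*$ under $\bD(\cD(S))\cong\bD(S)$, which is the asserted commutativity, and the coherence of this equivalence with composition is again inherited from the pullback cocycle.

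The main obstacle is bookkeeping rather than a genuine mathematical difficulty: making the pseudofunctor axioms (the associativity and unit coherences) hold \emph{on the nose at the DG level}, given that pullback of complexes is itself only a pseudofunctor and that the passage $\cD\mapsto\hflat(\cD)/\hflata(\cD)$ introduces universe-size choices. I expect the cleanest route is to choose, once and for all, a functorial flat replacement (e.g.\ via the bar-type or Čech-type resolution of~\cite{AJL}) so that $S\mapsto \hflat(S)$ becomes a genuine 2-functor up to coherent isomorphism, and then transport this structure through Proposition~\ref{dq-dgf}; the verification that the resulting isomorphisms satisfy the pentagon and triangle identities then reduces to the corresponding well-known identities for $f^*$ on sheaves, which I would state rather than re-derive.
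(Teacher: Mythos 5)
There is a genuine gap, and it sits exactly at the point you marked as ``valid since $\cD(S)$ is pretriangulated.'' That claim is false, and it is precisely what the theorem is really about.

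You take $\cD(S) = \hflat(S)/\hflata(S)$, an enhancement of the \emph{whole} derived category, so $[\cD(S)]\cong\bD(S)$. But the pseudofunctor $\bD:\sDG\to\Tria$ sends a small DG-category $\cD$ to $\bD(\cD)$, the derived category of right DG-\emph{modules} over $\cD$. For a pretriangulated $\cD$, the Yoneda embedding realizes $[\cD]$ as a full subcategory of $\bD(\cD)$, but they are not equal; at best (when $\cD$ is moreover Karoubi closed) one has $\bD(\cD)^\comp=[\cD]$. With your choice this would give $\bD(\cD(S))^\comp\simeq\bD(S)$, so $\bD(\cD(S))$ is an enormous category having all of $\bD(S)$ among its compacts --- in particular it cannot be equivalent to $\bD(S)$, whose compacts are only $\bD^\perf(S)$. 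Consequently the triangle does not commute for your $\cD$, independent of any smallness or coherence bookkeeping. The paper's choice is forced: $\cD(S)$ must be a small DG-subcategory of $\hflatperf(S)/\hflata(S)$ with $[\cD(S)]=\bD^\perf(S)$ (h-flat \emph{perfect} complexes), because it is an enhancement of $\bD(S)^\comp$ --- not of $\bD(S)$ --- whose module category reconstructs $\bD(S)$. The fix to the smallness problem you flag is not merely ``pick a small subcategory''; it must be a small subcategory of compact objects, and it must be chosen stably under all pullbacks (the paper does this by taking the union of $g^*(\cD_0(S'))$ over all $g:S\to S'$).

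Once $\cD(S)$ is an enhancement of $\bD^\perf(S)$, the rest of your plan is essentially the paper's: Lemma~\ref{pbfa} plus Proposition~\ref{dq-dgf} give the DG-lift $f^*:\cD(S)\to\cD(T)$, $\Psi_S:\bD(S)\to\bD(\cD(S))$ is built from $M\mapsto\Hom_{\hflat(S)/\hflata(S)}(-,M)$ restricted to $\cD(S)$ and shown to be an equivalence by Lemma~\ref{cff}, and the natural isomorphism $\LInd_{f^*}\circ\Psi_S\cong\Psi_T\circ Lf^*$ is produced on representables via $\LInd_{f^*}(\sY^M)\cong\sY^{f^*M}$ (Proposition~\ref{fpb}) and extended by generation under direct sums (Proposition~\ref{cgenprop}). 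Your worry about pseudofunctorial coherence of $g^*f^*\cong(fg)^*$ is fine but secondary; the substantive issue is the target of the enhancement.
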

\begin{proof}
We want to associate with any scheme $S$ a small DG-category $\cD(S)$ and with any morphism of schemes $f:T \to S$
a DG-functor $f^*:\cD(S) \to \cD(T)$. Let $\hflatperf(S)$ be the full DG-subcategory of $\hflat(S)$ consisting
of h-flat perfect complexes. Then $\hflatperf(S)/\hflata(S)$ is a full DG-subcategory in $\hflat(S)/\hflata(S)$.
Note that its homotopy category is the subcategory  $\bD^\perf(S) \subset \bD(S)$ of perfect complexes. 
This category is essentially small. Choose a small DG-subcategory
$\cD(S) \subset \hflatperf(S)/\hflata(S)$ such that
\begin{equation}\label{hds}
[\cD(S)] = [\hflatperf(S)/\hflata(S)] = \bD^\perf(S).
\end{equation}
and such that for any morphism of schemes $f:T \to S$ we have 
\begin{equation}\label{fsds}
f^*(\cD(S)) \subset \cD(T). 
\end{equation} 
For this we can first take $\cD_0(S)$ to be just an arbitrary choice of a small DG-subcategory 
$\cD_0(S) \subset \hflatperf(S)/\hflata(S)$ for which~\eqref{hds} holds
and then replace it by
\begin{equation*}
\cD(S) := \bigcup_{g:S \to S'} g^*(\cD_0(S')).
\end{equation*}
Here the union is taken with respect to the set of all isomorphism classes of morphisms of separated schemes
of finite type (two morphisms $g':S \to S'$ and $g'':S \to S''$ are isomorphic if there is an isomorphism 
$s:S' \to S''$ such that $g'' = s\circ g'$). This is indeed a set since the isomorphism classes of separated
schemes of finite type form a set and for a given scheme $S'$ all morphisms $S \to S'$ also form a set
(a morphism is determined by its graph).

Then
\begin{equation*}
f^*(\cD(S)) = 
f^*\left(\bigcup_{g:S \to S'} g^*(\cD_0(S'))\right) =
\bigcup_{g:S \to S'} f^*(g^*(\cD_0(S'))) \subset \cD(T)
\end{equation*}
since $f^*(g^*(\cD_0(S')) = (f\circ g)^*(\cD_0(S')) \subset \cD(T)$.

Now let $f:T \to S$ be a morphism of separated schemes of finite type. Restricting the DG-functor
$f^*:\hflat(S)/\hflata(S) \to \hflat(T)/\hflata(T)$ defined in the previous section 
to $\cD(S)$
and using compatibility~\eqref{fsds}
we obtain a DG-functor
\begin{equation*}
f^*:\cD(S) \to \cD(T).
\end{equation*}
This defines the pseudofunctor on morphisms.

It remains to prove that the diagram of the Theorem commutes.
In other words, we have to check that for each scheme $S$ we have an equivalence of categories
\begin{equation*}
\Psi_S:\bD(\cD(S)) \xrightarrow{\ \cong\ } \bD(S) 
\end{equation*}
and that the diagram
\begin{equation*}
\xymatrix{
\bD(S) \ar[rr]^-{\Psi_S} \ar[d]_{Lf^*} && \bD(\cD(S)) \ar[d]^{\LInd_{f^*}} \\ \bD(T) \ar[rr]^-{\Psi_T} && \bD(\cD(T))
}
\end{equation*}
commutes.

Let us start with the equivalence. Recall that $\bD(S) = [\hflat(S)/\hflata(S)]$ 
and $\cD(S)$ is a small DG-subcategory in $\hflat(S)/\hflata(S)$. Define a DG-functor
\begin{equation*}
\hflat(S)/\hflata(S) \to \cD(S)\dgm,
\qquad
M \mapsto \Hom_{\hflat(S)/\hflata(S)}(-,M).
\end{equation*}
Consider the composition
\begin{equation*}
\Psi_S:\bD(S) = [\hflat(S)/\hflata(S)] \to [\cD(S)\dgm] \to \bD(\cD(S)).
\end{equation*}
This functor commutes with arbitrary direct sums since all objects in $\cD(S)$ 
correspond to compact objects in $\bD(S)$. 
Moreover, by definition $\Psi_S(M) \cong \sY^M \in \cD(S)\dgm$ 
for any perfect complex $M$, hence $\Psi_S$ preserves compactness
and $\Psi_S(\cD(S))$ generates $\bD(\cD(S))$.
Finally, for all $M,N \in \cD(S)$ we have
\begin{multline*}
\Hom_{\bD(\cD(S))}(\Psi_S(M),\Psi_S(N)) \cong
\Hom_{\bD(\cD(S))}(\sY^M,\sY^N) \cong \\ \cong
\Hom_{[\cD(S)]}(M,N) \cong
\Hom_{\bD(S)}(M,N),
\end{multline*}
hence $\Psi_S$ is fully faithful on $[\cD(S)]$.
Applying Lemma~\ref{cff} we conclude that $\Psi_S$ is an equivalence.
%
%
%
%

It remains to check that the diagram commutes. Let us first construct a morphism of functors $\LInd_{f^*}\circ \Psi_S \to \Psi_T\circ Lf^*$.
The right adjoint of the derived induction functor is the restriction functor, so it suffices to construct
a morphism of functors $\Psi_S \to \Res_{f^*}\circ \Psi_T\circ Lf^*$. Both parts are induced by DG-functors 
$\hflat(S)/\hflata(S) \to \cD(S)\dgm$, the first by
\begin{equation*}
M \mapsto \Hom_{\hflat(S)/\hflata(S)}(-,M)
\end{equation*}
and the second by
\begin{equation*}
M \mapsto \Hom_{\hflat(T)/\hflata(T)}(f^*(-),f^*(M)).
\end{equation*}
The DG-functor $f^*$ gives a morphism from the first to the second, which on passing
to homotopy categories induces the required morphism of functors. Let us check
that it is an isomorphism.

Let $\cT \subset \bD(S)$ be the full subcategory of $\bD(S)$ formed by objects $M$ such that
the morphism $\LInd_{f^*}(\Psi_S(M)) \to \Psi_T(Lf^*(M))$ is an isomorphism. Clearly, $T$ is 
a triangulated subcategory. Moreover, it contains all objects from $[\cD(S)] \subset \bD(S)$.
Indeed, if $M \in \cD(S)$, then
\begin{equation*}
\LInd_{f^*}(\Psi_S(M)) \cong \LInd_{f^*}(\sY^M) \cong \sY^{f^*(M)} \cong \Psi_T(f^*(M)) \cong \Psi_T(Lf^*(M)),
\end{equation*}
so $M \in \cT$. Finally, $\cT$ is closed under arbitrary direct sums, since all the functors $\Psi_S$, $\Psi_T$,
$Lf^*$ and $\LInd_{f^*}$ commute with those. By Proposition~\ref{cgenprop} we conclude that $\cT = \bD(S)$, 
so the diagram is commutative.
\end{proof}

%
%
%

Note that for a separated scheme $S$ of finite type over a field the enhancement $\cD(S)$ 
of the category $\bD^\perf(S)$ of perfect complexes on $S$ is smooth (see Definition~\ref{dgsmooth})
if $S$ is smooth and proper 
if $S$ is proper by~\cite[Lemma~3.27]{TV}.

\subsection{From a DG-resolution to a categorical resolution}

Recall the definition~\ref{pcdgr} of a partial DG-resolution.

\begin{lemma}\label{compres}
If $\pi:\cD \to \cD'$ and $\pi':\cD'\to \cD''$ are partial DG-resolutions then the composition
$\pi'\circ\pi:\cD\to\cD''$ is a partial DG-resolution.
Moreover, if in addition $\pi'$ is a DG-resolution then so is $\pi'\circ\pi$.
\end{lemma}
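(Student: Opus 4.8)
The plan is to unwind Definition~\ref{pcdgr} in both directions. Recall that a partial categorical DG-resolution $\pi:\cD\to\tilde\cD$ is a DG-functor between small pretriangulated DG-categories inducing a fully faithful functor $[\pi]:[\cD]\to[\tilde\cD]$ on homotopy categories, and it is a DG-resolution if moreover $\tilde\cD$ is smooth. So for the first assertion I would simply observe that $\pi'\circ\pi:\cD\to\cD''$ is again a DG-functor between small pretriangulated DG-categories (composition of DG-functors is a DG-functor, and all three categories $\cD,\cD',\cD''$ are pretriangulated by hypothesis), and that the induced functor on homotopy categories satisfies $[\pi'\circ\pi]=[\pi']\circ[\pi]$. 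Since a composition of two fully faithful functors is fully faithful, $[\pi'\circ\pi]$ is fully faithful, which is exactly what is required.

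For the second assertion, suppose in addition that $\pi'$ is a DG-resolution, i.e.\ $\cD''$ is smooth in the sense of Definition~\ref{dgsmooth}. Then $\pi'\circ\pi$ is a partial categorical DG-resolution by the first part, and its target $\cD''$ is smooth by assumption; hence by definition $\pi'\circ\pi$ is a categorical DG-resolution. Note that no hypothesis of smoothness on $\cD$ or $\cD'$ is needed — the notion of (partial) DG-resolution places smoothness only on the target.

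The only point that requires a word of care is the functoriality identity $[\pi'\circ\pi]=[\pi']\circ[\pi]$: this holds because the homotopy category construction $\cD\mapsto[\cD]$ (with $\Hom_{[\cD]}(X,Y)=H^0(\Hom_\cD(X,Y))$) is itself functorial in DG-functors, as recorded in Section~\ref{s-p-dg}, and each $[F]$ of a DG-functor between pretriangulated DG-categories is triangulated. I do not anticipate any genuine obstacle here; the statement is essentially a bookkeeping consequence of the definitions, and the ``hard part'' — to the extent there is one — is merely to confirm that all the relevant categories remain small and pretriangulated along the composition, which is immediate from the hypotheses.
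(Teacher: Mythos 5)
Your proof is correct and matches the paper's intent exactly — the paper's own proof is simply ``Evidently follows from the definition,'' and your argument is the natural unwinding of that: $[\pi'\circ\pi]=[\pi']\circ[\pi]$ is fully faithful as a composition of fully faithful functors, and smoothness is a condition only on the target $\cD''$.
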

\begin{proof}
Evidently follows from the definition.
\end{proof}


If for a scheme $Y$ a DG-resolution of the DG-category $\cdcf(Y)$ is given we can construct 
a categorical resolution of $Y$. For this we use Proposition~\ref{fpb} to induce the functors
on the derived categories.

\begin{prop}\label{dgcrcr}
Let $\cD$ be a small pretriangulated DG-category and $\pi:\cdcf(Y) \to \cD$ a DG-resolution.
Let $\pi^* = \LInd_\pi:\bD(Y) = \bD(\cdcf(Y)) \to \bD(\cD)$ be the derived induction
and let $\pi_* = \Res_\pi:\bD(\cD) \to \bD(Y)$ be the restriction functors. 
If $\pi_*([\cD]) \subset \bD^b(\coh(Y))$
%
%
then $\bD(\cD)$ is a categorical resolution of $\bD^b(\coh(Y))$.
\end{prop}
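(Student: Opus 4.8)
The plan is to verify each of the three conditions in Definition~\ref{resboth} for the category $\cT := \bD(\cD)$ together with the adjoint pair $\pi^* = \LInd_\pi$ and $\pi_* = \Res_\pi$. First I would recall from Proposition~\ref{fpb} that $\LInd_\pi$ is left adjoint to $\Res_\pi$, that both functors commute with arbitrary direct sums, and that since $\pi$ induces a fully faithful functor $[\pi]:[\cdcf(Y)] \to [\cD]$ on homotopy categories (this is what it means for $\pi$ to be a partial DG-resolution), the functor $\LInd_\pi$ is fully faithful. Fully faithfulness of a left adjoint is exactly the statement that the unit $\id \to \Res_\pi \circ \LInd_\pi$ is an isomorphism, i.e.\ $\pi_*\circ\pi^* \cong \id$; this gives condition~(1). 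Condition~(2) — that both $\pi^*$ and $\pi_*$ commute with direct sums — is immediate from Proposition~\ref{fpb} as well.

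Next I would identify $\bD(Y)$ with $\bD(\cdcf(Y))$: by Theorem~\ref{cd-pf} (or rather by the discussion around~\eqref{hds} and the equivalence $\Psi_S$ constructed in its proof), we have $\bD(\cdcf(Y)) \cong \bD(Y)$ as triangulated categories, compatibly with the derived pullback functors, so that $\pi^*$ is indeed a functor $\bD(Y) \to \bD(\cD)$. Under this identification the compact objects of $\bD(Y)$ are the perfect complexes, which form precisely the homotopy category $[\cdcf(Y)]$, while the compact objects of $\bD(\cD)$ are $[\cD]$ since $\cD$ is pretriangulated (and one may assume closed under homotopy direct summands, or replace it by its Karoubi envelope). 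Then $\pi_* = \Res_\pi$ sends $[\cD]$ into $\bD(Y)$, and by hypothesis $\pi_*([\cD]) \subset \bD^b(\coh(Y))$; since $\cT^\comp = [\cD]$, this is exactly condition~(3).

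Finally, the category $\cT = \bD(\cD)$ is cocomplete and compactly generated because $\cD$ is a small DG-category (the representable DG-modules are a generating set of compact objects, see the discussion in the ``Derived category'' subsection), and it is smooth by Definition~\ref{trsmooth} precisely because $\cD$ is smooth — which holds by the assumption that $\pi:\cdcf(Y) \to \cD$ is a DG-resolution rather than merely a partial one. Putting these together, $\cT = \bD(\cD)$ with the functors $\pi^*$ and $\pi_*$ satisfies all the requirements of Definition~\ref{resboth}, hence is a categorical resolution; and since $\pi_*$ lands compact objects in $\bD^b(\coh(Y))$, it is a categorical resolution of $\bD^b(\coh(Y))$ as claimed. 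I expect the only mildly delicate point to be bookkeeping the identification $\bD(Y) \cong \bD(\cdcf(Y))$ and checking that $\pi^*$ under this identification really is the derived induction along $\pi$ — everything else is a direct application of Proposition~\ref{fpb} and the structural facts recalled in Section~\ref{s-p-dg}.
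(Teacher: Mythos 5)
Your proof is correct and follows essentially the same route as the paper's: cite Proposition~\ref{fpb} for full faithfulness of $\pi^*$ and commutation with direct sums (giving conditions (1) and (2) of Definition~\ref{resboth}), then use $\bD(\cD)^\comp = [\cD]$ together with the hypothesis to get condition (3). Your extra care about the identification $\bD(Y)\cong\bD(\cdcf(Y))$ and the Karoubi-closure of $[\cD]$ is a good instinct, though not needed in substance since $\bD^b(\coh(Y))$ is closed under direct summands.
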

%
%
\begin{proof}
By Proposition~\ref{fpb} we know that $\pi^*$ is fully faithful, commutes with direct sums and has a right adjoint functor $\pi_*$
also commuting with direct sums. So the first two conditions of Definition~\ref{resboth} are satisfied. Since $\bD(\cD)^\comp = [\cD]$
the third condition is satisfied as well.
%
%
%
\end{proof}

\begin{remark}
If $Y$ is projective and $\cD$ is proper then one can check that the condition $\pi_*([\cD]) \subset \bD^b(\coh(Y))$ holds automatically.
Indeed, in this case the subcategory $\bD^b(\coh(Y)) \subset \bD(Y)$ consists of all objects $F$ such that
$\oplus_i \Hom(P,F[i])$ is finite dimensional for all perfect $P$. But by adjunction
\begin{equation*}
\oplus_i \Hom(P,\pi_*(G)[i]) \cong
\oplus_i \Hom(\pi^*(P),G[i]) 
\end{equation*}
is finite dimensional for all $G \in \cD$ since $\pi^*(P) \cong \pi(P)$ is representable and DG-category $\cD$ is proper.
\end{remark}

\section{The gluing}\label{s-glu}

The notion of the gluing of two DG-categories is well-known to experts.
One possible definition may be found in~\cite{Ta} under the name of upper triangular DG-categories. 
We prefer to use slightly different definition as it is more adjusted to our goals. Below we will discuss
its relation to the definition of Tabuada.


\subsection{The construction}


Let $\cD_1$ and $\cD_2$ be two small DG-categories. Consider a bimodule $\varphi \in (\cD_2^\op\otimes\cD_1)\dgm$.
The DG-category $\cD_1\times_\varphi\cD_2$ called {\sf the gluing of $\cD_1$ with $\cD_2$ along $\varphi$}, 
is defined as follows.



\begin{itemize}
\item The {\em objects} of the DG-category $\cD_1\times_\varphi\cD_2$ are triples
$M = (M_1,M_2,\mu)$, where $M_i \in \cD_i$ and $\mu \in \varphi(M_2,M_1)$ 
is a closed element of degree $0$ (recall that being a bimodule $\varphi$ 
associates a complex $\varphi(M_2,M_1)$ to any pair $(M_1,M_2)$ of objects of $\cD_1$ and $\cD_2$).
\item The {\em morphism complexes} are defined to be the sums of
\begin{equation}\label{hglu}
\Hom^k_{\cD_1\times_\varphi\cD_2}(M,N) = 
\Hom^k_{\cD_1}(M_1,N_1) \oplus \Hom^k_{\cD_2}(M_2,N_2) \oplus \varphi^{k-1}(N_2,M_1).
\end{equation} 
with the differentials given by
\begin{equation}\label{dglu}
d(f_1,f_2,f_{21}) = (d(f_1),d(f_2),-d (f_{21}) - f_2\circ\mu + \nu\circ f_1).
\end{equation} 
where $M = (M_1,M_2,\mu)$ and $N = (N_1,N_2,\nu)$.
\item The {\em multiplication} is defined by
\begin{equation}\label{mglu}
(f_1,f_2,f_{21})\circ (g_1,g_2,g_{21}) = (f_1\circ g_1,f_2\circ g_2,f_{21}\circ g_1 + (-1)^{\deg f_2}f_2\circ g_{21}).
\end{equation} 
where $f \in \Hom_{\cD_1\times_\varphi\cD_2}(M,N)$ and $g \in \Hom_{\cD_1\times_\varphi\cD_2}(L,M)$.
In particular, the identity morphism of $(M_1,M_2,\mu)$ is given by
\begin{equation*}
\id_{(M_1,M_2,\mu)} = (\id_{M_1},\id_{M_2},0).
\end{equation*}
\end{itemize}

It is an exercise left to the reader to check that this is a DG-category.

\begin{remark}\label{triglu}
Another way to understand the definition of the gluing category is by saying that
there is a distinguished triangle
\begin{equation*}
\Hom_{\cD_1\times_\varphi\cD_2}(M,N) \to \Hom_{\cD_1}(M_1,N_1) \oplus \Hom_{\cD_2}(M_2,N_2) \xrightarrow{\ (-\nu,\mu)\ } \varphi(N_2,M_1)
\end{equation*}
of complexes of vector spaces (the morphism is given by $(f_1,f_2) \mapsto f_2\circ\mu - \nu\circ f_1$).
\end{remark}

\begin{remark}
The upper triangular DG-category corresponding to DG-categories $\cD_1$ and $\cD_2$ and a bimodule $\varphi$ 
defined in~\cite{Ta} by Tabuada is DG-equivalent to the full DG-subcategory $\cD_1\sqcup_\varphi\cD_2 \subset \cD_1\times_{\varphi[1]}\cD_2$ 
with objects of the form $(M_1,0,0)$ and $(0,M_2,0)$ only. 
One can show that the derived category of $\cD_1\sqcup_\varphi\cD_2$ is equivalent to the derived category of $\cD_1\times_\varphi\cD_2$,
and moreover, even pretriangulated envelopes of these DG-categories coincide. The advantage of the DG-category
$\cD_1\sqcup_\varphi\cD_2$ is that it is smaller and its definition is simpler.
The advantage of $\cD_1\times_\varphi\cD_2$ is that it is more close to a pretriangulated category
as is shown in the following
\end{remark}

\begin{lemma}\label{dpt}
Assume that the categories $\cD_1$ and $\cD_2$ are pretriangulated. 
Then so is the gluing category $\cD_1\times_\varphi\cD_2$.
\end{lemma}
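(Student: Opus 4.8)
The goal is to show that the gluing $\cD_1\times_\varphi\cD_2$ is pretriangulated, meaning (by Definition~\ref{defpretri}) that after embedding it via Yoneda into its module category, the images of shifts and of cones of closed degree-zero morphisms are again homotopy equivalent to representable modules. Since $\cD_1$ and $\cD_2$ are assumed pretriangulated, the strategy is to construct the required objects of $\cD_1\times_\varphi\cD_2$ directly and componentwise, using the pretriangulated structure of the two factors to produce the first two components, and using functoriality of $\varphi$ to produce the gluing datum in the third slot. The key point throughout is that the formulas~\eqref{hglu}, \eqref{dglu}, \eqref{mglu} for morphisms in the gluing are ``upper triangular'', so that the $\cD_1$- and $\cD_2$-parts of any construction can be carried out independently, with the bimodule contributing only a correction term.

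\textbf{Shifts.} First I would treat the shift condition. Given an object $M=(M_1,M_2,\mu)$ and $k\in\bbZ$, I claim the shifted object is $M[k]:=(M_1[k],M_2[k],(-1)^k\mu)$, where $M_i[k]$ is the shift in $\cD_i$ (which exists since $\cD_i$ is pretriangulated), and where we use that a bimodule is a complex in each slot so $\mu\in\varphi(M_2,M_1)$ can be reinterpreted as a degree-$0$ closed element of the shifted complex $\varphi(M_2[k],M_1[k])=\varphi(M_2,M_1)$ up to the sign $(-1)^k$. The sign is dictated by compatibility with the differential~\eqref{dglu}; I would verify it by a short direct computation. Then one checks that $\sY^{M[k]}$ is homotopy equivalent to $\sY^M[k]$ in $(\cD_1\times_\varphi\cD_2)\dgm$: on each test object $N=(N_1,N_2,\nu)$ this reduces, via the description in Remark~\ref{triglu}, to the already-known homotopy equivalences $\Hom_{\cD_i}(M_i,N_i)[k]\simeq\Hom_{\cD_i}(M_i[k],N_i)$ together with the identification $\varphi(N_2,M_1)[k]=\varphi(N_2,M_1[k])$.

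\textbf{Cones.} Next, the cone condition. Let $f=(f_1,f_2,f_{21}):M\to N$ be a closed morphism of degree $0$, with $M=(M_1,M_2,\mu)$, $N=(N_1,N_2,\nu)$. Closedness via~\eqref{dglu} says $f_1,f_2$ are closed and $df_{21}=\nu\circ f_1-f_2\circ\mu$. I would define $\Cone(f):=(C_1,C_2,\gamma)$ where $C_i=\Cone(f_i)$ in $\cD_i$ (these exist and are representable since $\cD_i$ is pretriangulated), and where $\gamma\in\varphi(C_2,C_1)$ is the degree-$0$ element built from $\mu$, $\nu$, and $f_{21}$ using the standard cone formula: concretely, on the level of the bimodule $\varphi(C_2,C_1)=\varphi(N_2,N_1)\oplus\varphi(N_2,M_1[1])\oplus\varphi(M_2[1],N_1)\oplus\varphi(M_2[1],M_1[1])$ one takes $\gamma=(\nu,\ \text{(term from }f_{21}),\ 0,\ \mu)$, with the middle term the contraction of $f_{21}$ against the cone structure maps. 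One then checks $d\gamma=0$ using the closedness relation for $f$ together with $df_1=df_2=0$; this is the one genuinely computational step. Finally, as in the shift case, one verifies $\sY^{\Cone(f)}\simeq\Cone(\sY^f)$ by evaluating on an arbitrary test object and invoking the corresponding statements in $\cD_1$ and $\cD_2$ and the additivity of $\varphi$ in each variable, again through the triangle of Remark~\ref{triglu}.

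\textbf{Main obstacle.} The conceptual content is light --- everything is forced by the upper-triangular shape --- so the real work, and the place where it is easiest to slip, is bookkeeping of Koszul signs: getting the sign $(-1)^k$ in the shifted gluing datum right, and getting the signs in the cone datum $\gamma$ consistent with the differential~\eqref{dglu} and the multiplication~\eqref{mglu}. I would organize this by systematically using the triangle of Remark~\ref{triglu} as the definition of the $\Hom$-complex, so that closedness and the homotopy equivalences become statements about the third vertex of a morphism of triangles; this localizes all sign issues into the single map $(f_1,f_2)\mapsto f_2\circ\mu-\nu\circ f_1$ and its shifts, and the verifications then reduce cleanly to the pretriangulatedness of $\cD_1$ and $\cD_2$.
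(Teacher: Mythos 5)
Your proposal matches the paper's proof: the cone of $f=(f_1,f_2,f_{21})$ is constructed as $(C_1,C_2,\gamma)$ with $C_i=\Cone(f_i)$ in $\cD_i$ and $\gamma\in\varphi(C_2,C_1)$ built from $\mu,\nu,f_{21}$ --- your four-block description of $\gamma$ is exactly the paper's formula $\gamma=i_2\mu p_1+j_2\nu s_1+j_2 f_{21}p_1$ read off the direct-sum decomposition of the underlying graded objects of $C_1,C_2$. The only cosmetic differences are that you verify representability by evaluating on test objects via Remark~\ref{triglu} whereas the paper checks the characterizing relations of Remark~\ref{cone}, and your sign $(-1)^k$ in the shift datum is a convention rather than forced (the objects $(M_1[1],M_2[1],\pm\mu)$ are already isomorphic in the gluing via $(-\id,\id,0)$).
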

\begin{proof}
We must check that the shift of an object
and the cone of a morphism are representable in $\cD_1\times_\varphi\cD_2$. 
For the shift it is evident --- it is clear that the shift of the object $(M_1,M_2,\mu)$ is represented 
by the object $(M_1[1],M_2[1],\mu)$. To check representability of the cone pick a closed morphism 
of degree zero from $(M_1,M_2,\mu)$ to $(N_1,N_2,\nu)$. By definition it is given by a pair of closed degree zero
morphisms  $f_1:M_1 \to N_1$ and $f_2:M_2 \to N_2$ and an element $f_{21} \in \varphi(M_2,N_1)$ of degree $-1$ such that
$$
d(f_{21}) = \nu \circ f_1 - f_2 \circ \mu 
$$
(thus $f_{21}$ is a homotopy between $f_2 \circ \mu$ and $\nu \circ f_1$).
Let $C_1$ be the cone of $f_1$ and let $C_2$ be the cone of $f_2$. 
As it was observed in Remark~\ref{cone} the cones 
come with degree zero morphisms 
$$
M_k[1] \xrightarrow{i_k} C_k \xrightarrow{p_k} M_k[1],
\qquad
N_k \xrightarrow{j_k} C_k \xrightarrow{s_k} N_k.
$$
which give decompositions $C_k = M_k[1] \oplus N_k$ in the graded categories associated with $\cD_k$.
Moreover, we have
$$
d(j_k) = d(p_k) = 0,
\qquad
d(i_k) = j_kf_k,
\qquad
d(s_k) = -f_kp_k.
$$
Now we consider the element
$$
\gamma = i_2\mu p_1 + j_2\nu s_1 + j_2 f_{21} p_1 \in \varphi(C_2,C_1).
$$
The triple $(C_1,C_2,\gamma)$ is then an object of $\cD_1\times_\varphi\cD_2$. Indeed,
\begin{multline*}
d\gamma = 
(di_2)\mu p_1 +j_2\nu (ds_1) + j_2 (df_{21}) p_1 = \\ =
j_2 f_2 \mu p_1 - j_2\nu f_1 p_1 + j_2 (df_{21}) p_1 =
j_2( f_2\mu - \nu f_1 +df_{21}) p_1 = 0
\end{multline*}
so $\gamma$ is a closed element of degree $0$. Using again Remark~\ref{cone} it is straightforward to check that 
$(C_1,C_2,\gamma)$ is the cone of the morphism we have started with --- indeed, one can take $i = (i_1,i_2,0)$,
$p = (p_1,p_2,0)$, $j = (j_1,j_2,0)$ and $s = (s_1,s_2,0)$ and check that all the required relations hold.
%
%
\end{proof}

\subsection{Semiorthogonal decomposition}

For brevity we denote by $\cD$ the gluing category $\cD_1\times_\varphi\cD_2$.
Our next goal is to describe the relation of DG-category $\cD$ with the original
categories $\cD_1$ and $\cD_2$. We start by introducing some natural DG-functors:
\begin{equation}\label{i12}
\begin{array}{ll}
i_1:\cD_1 \to \cD,\qquad & M_1 \mapsto (M_1,0,0) \\
i_2:\cD_2 \to \cD, & M_2 \mapsto (0,M_2,0).
\end{array}
\end{equation} 
It turns out that these functors have adjoints on DG-level.

\begin{lemma}\label{iadj}
$(i)$
The left and the right adjoints of $i_1$ and $i_2$ respectively are given by
\begin{equation}\label{iadj12}
\begin{array}{ll}
i_1^*:\cD \to \cD_1,\qquad & (M_1,M_2,\mu) \mapsto M_1,\\
i_2^!:\cD \to \cD_2,\qquad & (M_1,M_2,\mu) \mapsto M_2.
\end{array}
\end{equation} 


\noindent$(ii)$
Assume that $\cD_1$ is pretriangulated. 
The right adjoint of $i_1$ is given by
\begin{equation}\label{iradj}
i_1^!:\cD \to \cD_1\dgm,\qquad  (N_1,N_2,\nu) \mapsto \Cone(\nu)[-1].
\end{equation} 
\end{lemma}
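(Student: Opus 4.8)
\textbf{Proof plan for Lemma~\ref{iadj}.}

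The plan is to verify the adjunctions directly from the formula~\eqref{hglu} for the morphism complexes in $\cD=\cD_1\times_\varphi\cD_2$, since all the claims are statements about DG-functors and DG-adjunction means an isomorphism of $\Hom$-complexes functorial in both arguments. For part $(i)$, take $M_1\in\cD_1$ and $N=(N_1,N_2,\nu)\in\cD$. By~\eqref{hglu} applied to the pair $i_1(M_1)=(M_1,0,0)$ and $N$ we get $\Hom_\cD(i_1(M_1),N)=\Hom_{\cD_1}(M_1,N_1)\oplus\Hom_{\cD_2}(0,N_2)\oplus\varphi^{\bullet-1}(N_2,0)$, and the last two summands vanish (the zero object of a pretriangulated category has zero endomorphism complex, and $\varphi(N_2,-)$ is a DG-functor so $\varphi(N_2,0)=0$). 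Hence $\Hom_\cD(i_1(M_1),N)\cong\Hom_{\cD_1}(M_1,N_1)=\Hom_{\cD_1}(M_1,i_1^*(N))$; one checks the differential~\eqref{dglu} restricts to the differential of $\Hom_{\cD_1}$ on this summand, so this is an isomorphism of complexes, manifestly functorial. This shows $i_1^*$ is left adjoint to $i_1$. Symmetrically, $\Hom_\cD(N,i_2(M_2))=\Hom_{\cD_1}(N_1,0)\oplus\Hom_{\cD_2}(N_2,M_2)\oplus\varphi^{\bullet-1}(M_2,N_1)$ — wait, the relevant summand is the one pairing $N$ as source and $i_2(M_2)$ as target, giving $\varphi^{\bullet-1}(0,N_1)=0$ and $\Hom_{\cD_1}(N_1,0)=0$, leaving $\Hom_{\cD_2}(N_2,M_2)=\Hom_{\cD_2}(i_2^!(N),M_2)$, so $i_2^!$ is right adjoint to $i_2$.

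For part $(ii)$, I want to show $i_1^!(N_1,N_2,\nu)=\Cone(\nu)[-1]$ is right adjoint to $i_1$. Here $\nu\in\varphi(N_2,N_1)$ is viewed as a closed degree-$0$ morphism of $\cD_1$-modules $\sY_{?}\to\cdots$ — more precisely, since $\cD_1$ is pretriangulated, the left $\cD_1$-module $\varphi(N_2,-)$ together with the representable $\sY^{N_1}=\Hom_{\cD_1}(-,N_1)$ and the element $\nu$ gives a closed degree-$0$ morphism $\sY^{N_1}\to\varphi(N_2,-)$ (by Yoneda~\eqref{yone}, $\nu\in\varphi(N_2,N_1)\cong\Hom(\sY^{N_1},\varphi(N_2,-))$), whose cone shifted by $[-1]$ is again representable by pretriangulatedness; call its representing object $\Cone(\nu)[-1]\in\cD_1$. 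Then for $M_1\in\cD_1$ I compute $\Hom_{\cD_1}(M_1,\Cone(\nu)[-1])$ using Remark~\ref{cone}: as a graded vector space it is $\Hom_{\cD_1}(M_1,N_1)\oplus\varphi^{\bullet-1}(N_2,M_1)$, with differential twisted by the connecting term built from $\nu$. On the other hand, $\Hom_\cD(i_1(M_1),N)=\Hom_{\cD_1}(M_1,N_1)\oplus\varphi^{\bullet-1}(N_2,M_1)$ with differential~\eqref{dglu} specialized to source $(M_1,0,0)$ (so $\mu=0$, the term $f_2\circ\mu$ drops, and $\nu\circ f_1$ survives). I would check these two complexes coincide on the nose — same underlying graded space, same differential — and that the identification is functorial in $M_1$ and in $N$.

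The only genuine subtlety, which I would treat carefully, is that $i_1^!$ as written lands in $\cD_1\dgm$ rather than in $\cD_1$: the object $\Cone(\nu)[-1]$ is a priori only a DG-module, and the statement implicitly uses that it is homotopy-equivalent to a representable one when $\cD_1$ is pretriangulated. So the first step is really to construct $i_1^!$ as a DG-functor $\cD\to\cD_1\dgm$ sending $(N_1,N_2,\nu)$ to the cone DG-module $\Cone(\sY^{N_1}\xrightarrow{\nu}\varphi(N_2,-))[-1]$, check functoriality in $(N_1,N_2,\nu)$ (a morphism in $\cD$ induces a morphism of the defining triangles, hence of cones), and only then invoke pretriangulatedness to replace it by a representable when desired. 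Everything else is bookkeeping with~\eqref{hglu}, \eqref{dglu}, and Remark~\ref{cone}; the main obstacle is simply getting all the signs in the differential to match between the gluing convention~\eqref{dglu} and the cone convention of Remark~\ref{cone}, which I would pin down by writing out $d(f_1,0,f_{21})$ explicitly and comparing term by term.
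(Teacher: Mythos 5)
Your overall strategy — directly comparing $\Hom$-complexes using the explicit formula~\eqref{hglu} — is exactly the paper's strategy, and part $(ii)$ is essentially right. But part $(i)$ has a genuine error: you compute the wrong $\Hom$-complexes for the adjunctions you want, and then force the answer with an incorrect reading of~\eqref{hglu}.

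To prove that $i_1^*$ is the \emph{left} adjoint of $i_1$ you must exhibit an isomorphism $\Hom_\cD(N,i_1(M_1)) \cong \Hom_{\cD_1}(i_1^*(N),M_1)$; you instead compute $\Hom_\cD(i_1(M_1),N)$, which is the relation characterizing the \emph{right} adjoint. Worse, your evaluation of $\Hom_\cD(i_1(M_1),N)$ is wrong: the third summand in~\eqref{hglu} is $\varphi^{\bullet-1}(N_2,M_1)$ (second coordinate of the \emph{target} paired with the \emph{first} coordinate of the \emph{source}), not $\varphi^{\bullet-1}(N_2,0)$, so it does \emph{not} vanish. Indeed your own part $(ii)$ correctly records $\Hom_\cD(i_1(M_1),N)=\Hom_{\cD_1}(M_1,N_1)\oplus\varphi^{\bullet-1}(N_2,M_1)$, contradicting what you wrote in part $(i)$. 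The same confusion recurs for $i_2^!$: you compute $\Hom_\cD(N,i_2(M_2))$, which is the variance for a left adjoint, and again mis-substitute in the $\varphi$ term (it should be $\varphi^{\bullet-1}(M_2,N_1)$, nonzero). The correct computations are $\Hom_\cD((N_1,N_2,\nu),(M_1,0,0))=\Hom_{\cD_1}(N_1,M_1)\oplus 0 \oplus \varphi^{\bullet-1}(0,N_1)=\Hom_{\cD_1}(N_1,M_1)$ and $\Hom_\cD((0,M_2,0),(N_1,N_2,\nu))= 0 \oplus\Hom_{\cD_2}(M_2,N_2)\oplus\varphi^{\bullet-1}(N_2,0)=\Hom_{\cD_2}(M_2,N_2)$; here the $\varphi$ summand genuinely vanishes because one of its arguments is the zero object, not by fiat. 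Fix the variance and the $\varphi$-indexing and part $(i)$ falls into place; part $(ii)$, and your remark about $i_1^!$ landing a priori in $\cD_1\dgm$, are fine as written.
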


Here we use a natural identification $\varphi(N_2,N_1) = \Hom_{\cD_1\dgm}(\sY^{N_1},\varphi(N_2,-))$, 
so $\nu$ is considered as a closed morphism
$\sY^{N_1} \to \varphi(N_2,-)$ and so we can speak about its cone.

\begin{proof}
$(i)$ Indeed, recalling the definition of $\cD$ we see that
$$
\Hom_\cD((N_1,N_2,\nu),i_1(M_1)) =
\Hom_\cD((N_1,N_2,\nu),(M_1,0,0)) = 
\Hom_{\cD_1}(N_1,M_1),
$$
hence $i_1^*(N_1,N_2,\nu) = N_1$.
Analogously, 
$$
\Hom_\cD(i_2(M_2),(N_1,N_2,\nu)) =
\Hom_\cD((0,M_2,0),(N_1,N_2,\nu)) = 
\Hom_{\cD_2}(M_2,N_2),
$$
hence $i_2^!(N_1,N_2,\nu) = N_2$. 

%

$(ii)$
We have
\begin{multline*}
\Hom_\cD(i_1(M_1),(N_1,N_2,\nu)) =
\Hom_\cD((M_1,0,0),(N_1,N_2,\nu)) = \\ =
\Hom_{\cD_1}(M_1,N_1) \oplus \varphi(N_2,M_1)[-1]. 
\end{multline*}
If we think of $\varphi(N_2,M_1)$ as of the $\Hom$-complex from $\sY^{M_1}$ to $\varphi(N_2,-)$
in the category of $\cD_1$-modules, then the differential of the RHS will match with that
of the $\Hom$-complex from $\sY^{M_1}$ to the shifted by $-1$ cone of $\nu$ considered as a morphism from $\sY^{N_1}$ to $\varphi(N_2,-)$.
This shows that $i_1^!(N) = \Cone(\nu)[-1]$. 
%
\end{proof}

Using formulas~\eqref{i12}, \eqref{iadj12}, and~\eqref{iradj} one easily computes
\begin{equation}\label{i1i2}
i_1^*\circ i_1 = \id_{\cD_1},
\qquad
i_2^!\circ i_2 = \id_{\cD_2},
\qquad
i_1^*\circ i_2 = 0,
\qquad
i_1^!\circ i_2 = \varphi[-1].
\end{equation}

%

Recall that the last equation agrees with the definition of the gluing functor
for semiorthogonal decompositions discussed in section~\ref{ss-gf}.
In fact we have the following

\begin{cor}\label{sodglu}
Assume $\cD_1$ and $\cD_2$ are small pretriangulated categories {\rm(}hence so is $\cD${\rm)}.
The functors $i_1:[\cD_1] \to [\cD]$ and $i_2:[\cD_2] \to [\cD]$ are fully faithful and give a semiorthogonal
decomposition $[\cD] = \langle [\cD_1],[\cD_2] \rangle$ with the gluing bifunctor induced by $\varphi$. 
\end{cor}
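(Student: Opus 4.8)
The plan is to verify the three ingredients of a semiorthogonal decomposition with two components, as restated after Lemma~\ref{sod3}: full faithfulness of $i_1$ and $i_2$ on homotopy categories, the semiorthogonality $\Hom_{[\cD]}(i_2[\cD_2],i_1[\cD_1]) = 0$, and the existence, for each object of $[\cD]$, of a gluing triangle. First I would establish full faithfulness. By Lemma~\ref{iadj}(i) the DG-functor $i_1$ has a DG-left-adjoint $i_1^*$ with $i_1^*\circ i_1 = \id_{\cD_1}$, and $i_2$ has a DG-right-adjoint $i_2^!$ with $i_2^!\circ i_2 = \id_{\cD_2}$; passing to homotopy categories these stay adjoint and the compositions remain the identity, so $i_1$ and $i_2$ are fully faithful on $[\cD_1]$ and $[\cD_2]$ respectively. (Alternatively, full faithfulness can be read off directly from the formula~\eqref{hglu} for the $\Hom$-complexes: for objects of the form $(M_1,0,0)$ and $(N_1,0,0)$ the summand $\varphi(0,M_1)$ vanishes and $\Hom_\cD(i_1M_1,i_1N_1) = \Hom_{\cD_1}(M_1,N_1)$ as complexes, hence on $H^0$.)

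Next I would check semiorthogonality. For $M_1\in\cD_1$ and $M_2\in\cD_2$ the formula~\eqref{hglu} gives
\begin{equation*}
\Hom^k_\cD(i_2 M_2, i_1 M_1) = \Hom^k_{\cD_1}(0,M_1)\oplus\Hom^k_{\cD_2}(M_2,0)\oplus\varphi^{k-1}(0,M_1) = 0,
\end{equation*}
since $\varphi$ is additive in each argument and vanishes on the zero object. Thus the $\Hom$-complex itself is zero, a fortiori $\Hom_{[\cD]}(i_2 M_2, i_1 M_1) = 0$, which gives the semiorthogonality condition for arbitrary objects of $[\cD_1]$ and $[\cD_2]$.

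It remains to produce the gluing triangle. Given an object $M = (M_1,M_2,\mu)$ of $\cD$, consider the morphism $\mu$, viewed as a degree-$0$ closed morphism $i_1 M_1 \to i_2 M_2$: indeed by~\eqref{hglu} the triple $(0,0,\mu)\in\Hom^0_\cD(i_1 M_1, i_2 M_2)$, and by~\eqref{dglu} (with the first and second components zero and the gluing data on source and target both zero) it is closed. I would check, using the description of cones in a gluing category from the proof of Lemma~\ref{dpt} (or directly from Remark~\ref{cone} applied componentwise), that $M$ itself is, up to homotopy equivalence, the shift $\Cone(i_1 M_1 \xrightarrow{\mu} i_2 M_2)[-1]$; concretely the cone has first component $\Cone(M_1 \to 0) = M_1[1]$, second component $\Cone(0\to M_2) = M_2$, and gluing element reconstructing $\mu$, so after the shift by $[-1]$ one recovers $(M_1,M_2,\mu)$ on the nose. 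This yields the distinguished triangle $i_1 M_1 \to i_2 M_2 \to M[1] \to i_1 M_1[1]$ in $[\cD]$, i.e. rotating, $\bp_2(M) = i_2 M_2 \to M \to i_1 M_1 = \bp_1(M) \to$, with $\bp_i(M)\in i_i[\cD_i]$ as required. Finally, the identification of the gluing bifunctor: by Definition~\ref{glubim} it is $\Phi(F_1,F_2) = \Hom_{[\cD]}(i_1 F_1, i_2 F_2[1])$, and by~\eqref{hglu} this is computed by the complex $\varphi(F_2,F_1)$ (the only surviving summand), so on cohomology $\Phi(F_1,F_2) = H^0(\varphi(F_2,F_1)[1]) = H^1(\varphi(F_2,F_1))$, matching $\varphi[-1]$ in the sense of~\eqref{i1i2}, as already noted there. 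The main obstacle I anticipate is purely bookkeeping: getting the signs in~\eqref{dglu} and~\eqref{mglu} to align so that the claimed cone is genuinely isomorphic (not merely abstractly) to $(C_1,C_2,\gamma)$ — but this is exactly the computation already carried out in the proof of Lemma~\ref{dpt}, so it can be invoked rather than redone.
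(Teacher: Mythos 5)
Your proof is correct in substance, and it takes a genuinely different route from the paper's. The paper establishes full faithfulness, semiorthogonality, and the identification of the gluing functor from the DG-adjunction relations~\eqref{i1i2}, then invokes Lemma~\ref{sod3} to get a three-term decomposition $[\cD] = \langle [\cD_1], {}^\perp[\cD_1]\cap[\cD_2]^\perp, [\cD_2]\rangle$, and finally kills the middle component by an explicit homotopy computation: given contracting homotopies $h_1$ of $i_1^*M$ and $h_2$ of $i_2^!M$, it exhibits $h_{21} = (h_2\circ\mu - \mu\circ h_1)\circ h_1$ and checks $d(h_{21}) = h_2\circ\mu - \mu\circ h_1$, so that $(h_1,h_2,h_{21})$ contracts $\id_M$. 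You instead directly produce the decomposition triangle for each object $(M_1,M_2,\mu)$ by realizing $M$ as (a shift of) the cone of $\mu$ regarded as a morphism between representables; this is precisely the argument the paper itself uses later in Lemma~\ref{objsod} and the triangle~\eqref{glutri} for the derived-category statement Proposition~\ref{derglu}, so your approach brings the two proofs into a single template. Both arguments are valid; yours is more constructive in that it exhibits the gluing triangle rather than deducing its existence from an orthogonality computation, while the paper's route keeps the projection functors $i_1 i_1^*$ and $i_2 i_2^!$ in focus, which it wants anyway.

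A few degree bookkeeping slips that are worth fixing (they do not affect the conclusion): since $\mu\in\varphi^0(M_2,M_1)$ and formula~\eqref{hglu} puts the $\varphi$-summand of $\Hom^k$ in degree $k-1$, the triple $(0,0,\mu)$ lives in $\Hom^1_\cD(i_1M_1,i_2M_2)$, not $\Hom^0$, so $\mu$ is a closed degree-one morphism $i_1M_1\to i_2M_2$, equivalently a closed degree-zero morphism $i_1M_1\to i_2M_2[1]$. Consequently the cone has components $(M_1[1],M_2[1],\gamma)$ and one recovers $M=(M_1,M_2,\mu)$ only after the shift by $[-1]$; the correct triangle to display is $i_2M_2\to M\to i_1M_1\to i_2M_2[1]$, which is exactly~\eqref{glutri}. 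Also, in the semiorthogonality computation the third summand should read $\varphi^{k-1}(N_2,M_1)=\varphi^{k-1}(0,0)$ for $i_2M_2=(0,M_2,0)$ mapping to $i_1N_1=(N_1,0,0)$; you wrote $\varphi^{k-1}(0,M_1)$, but since the first argument already vanishes the conclusion is unaffected.
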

\begin{proof}
We denote $\cT_i = [\cD_i]$, $\cT = [\cD]$ for short.
The DG-functors $i_1$, $i_2$, $i_1^*$, and $i_2^!$ descend to triangulated functors between the homotopy categories,
moreover the adjunctions are preserved. So, the first two equations of~\eqref{i1i2} prove
that $i_1:\cT_1 \to \cT$ and $i_2:\cT_2 \to \cT$ are fully faithful and have left and right adjoints respectively. 
The third equation shows that the essential images of $\cT_1$ and $\cT_2$ in $\cT$ are semiorthogonal. Moreover, 
the functor $i_1^!\circ i_2[1]:\cT_2 \to \cT_1$ is isomorphic to the functor $L\varphi$ 
induced by the bimodule $\varphi$ by the fourth equation. 
So, it remains to check that the subcategories
$\cT_1$ and $\cT_2$ generate $\cT$. For this note that by Lemma~\ref{sod3}
there is a semiorthogonal decomposition $\cT = \langle \cT_1,{}^\perp\cT_1 \cap \cT_2^\perp,\cT_2 \rangle$,
where 
$$
{}^\perp\cT_1 \cap \cT_2^\perp = \Ker i_1^*\cap \Ker i_2^!,
$$
so it remains to check that if both $i_1^*M = M_1$ and $i_2^!M = M_2$ are null-homotopic then $M$
is null-homotopic as well. Indeed, let $h_1 \in \Hom^{-1}_{\cD_1}(M_1,M_1)$ and $h_2 \in \Hom^{-1}_{\cD_2}(M_2,M_2)$
be the contracting homotopies of $\id_{M_1}$ and $\id_{M_2}$ respectively. In other words we assume that
$d_1(h_1) = \id_{M_1}$ and $d_2(h_2) = \id_{M_2}$. We want to extend it to a homotopy of $\id_{M}$.
This means that we have to find $h_{21} \in \varphi(M_2,M_1)$ such that
$$
d(h_{21}) = h_2\circ\mu - \mu\circ h_1.
$$
For this we first note that the RHS above is closed. Indeed, since $d\mu = 0$ we have
$$
d(h_2\circ\mu - \mu\circ h_1) = 
d(h_2)\circ\mu - \mu\circ d(h_1) =
\mu - \mu = 0.
$$
Let us take 
$$
h_{21} = (h_2\circ\mu - \mu\circ h_1) \circ h_1.
$$
Since the expression in parentheses is a closed morphism we have
$$
d(h_{21}) = 
(h_2\circ\mu - \mu\circ h_1) \circ d(h_1) = 
h_2\circ\mu - \mu\circ h_1
$$
and we are done.
\end{proof}

%

\subsection{Derived category}\label{ss-dglu}

One has also a semiorthogonal decomposition of the derived category of the gluing.
First note that by Proposition~\ref{fpb} the functors $i_1$ and $i_2$ defined by~\eqref{i12} 
extend to derived categories and give functors which we denote here by
\begin{equation*}
I_1 = \LInd_{i_1}:\bD(\cD_1) \to \bD(\cD),\quad
I_2 = \LInd_{i_2}:\bD(\cD_2) \to \bD(\cD),
\end{equation*}
and which have right adjoints denoted here by
\begin{equation*}
I_1^! = \Res_{i_1}:\bD(\cD) \to \bD(\cD_1),\quad
I_2^! = \Res_{i_2}:\bD(\cD) \to \bD(\cD_2),
\end{equation*}
induced by restriction of DG-modules. In other words,
\begin{equation}\label{Iadj}
I_1^!(F) = F_{i_1},
\qquad
I_2^!(F) = F_{i_2},
\end{equation} 
for any $F \in \cD\dgm$.

\begin{prop}\label{derglu}
Let $\cD = \cD_1\times_\varphi\cD_2$. Then one has 
a semiorthogonal decomposition
\begin{equation*}
\bD(\cD) = \left\langle \bD(\cD_1),\bD(\cD_2)\right\rangle
\end{equation*}
with the gluing functor isomorphic to $L\varphi:\bD(\cD_2) \to \bD(\cD_1)$.
Moreover, for any DG-module $F$ over $\cD$ there is a distinguished triangle
\begin{equation}\label{I1s}
F_{i_1} \to I_1^*F \to F_{i_2}\lotimes_{\cD_2}\varphi.
\end{equation} 
\end{prop}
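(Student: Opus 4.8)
The plan is to deduce the semiorthogonal decomposition of $\bD(\cD)$ from the already-established decomposition $[\cD]=\langle[\cD_1],[\cD_2]\rangle$ (Corollary~\ref{sodglu}), using the general machinery for compactly generated triangulated categories developed in Section~\ref{s-p-tri}. First I would recall that $\bD(\cD)$ is compactly generated with $\bD(\cD)^\comp=[\cD]$ (since $\cD$, being a gluing of pretriangulated categories, is pretriangulated, and we may replace it by its closure under homotopy direct summands without changing the derived category). By Proposition~\ref{fpb}, the DG-functors $i_1:\cD_1\to\cD$ and $i_2:\cD_2\to\cD$ induce $I_1=\LInd_{i_1}$ and $I_2=\LInd_{i_2}$, which are fully faithful because $[i_1]$ and $[i_2]$ are fully faithful by Corollary~\ref{sodglu}; both $I_j$ commute with direct sums and have right adjoints $I_j^!=\Res_{i_j}$ also commuting with direct sums. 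Moreover $I_j$ preserves compactness since $I_j(\sY^{X})\cong\sY^{i_j(X)}$, so $I_j$ maps $[\cD_j]=\bD(\cD_j)^\comp$ into $[\cD]=\bD(\cD)^\comp$.

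Next I would check semiorthogonality: $\Hom_{\bD(\cD)}(I_2 G, I_1 F)=\Hom_{\bD(\cD_1)}(I_1^!I_2 G, F)$, and it suffices to show $I_1^!I_2=\Res_{i_1}\LInd_{i_2}=0$, i.e. that ${}_{i_2}\cD\otimes_{\cD_2}(-)$ restricted along $i_1$ vanishes. By~\eqref{yoneltens} one reduces to $\Res_{i_1}(\sY^{i_2(X_2)})=\sY^{i_2(X_2)}(i_1(-))=\Hom_\cD(i_1(-),i_2(X_2))$, and from~\eqref{hglu} together with $i_2(X_2)=(0,X_2,0)$ this $\Hom$-complex is $\Hom_{\cD_1}(-,0)\oplus\Hom_{\cD_2}(0,X_2)\oplus\varphi(X_2,-)[-1]\cong\varphi(X_2,-)[-1]$ as a $\cD_1$-module; but wait — this is not zero. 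The correct statement is that on homotopy categories $\Hom_{[\cD]}(i_1(X_1),i_2(X_2))$ is acyclic, which holds precisely because that $\Hom$-complex is $\Hom_{\cD_1}(X_1,0)\oplus\Hom_{\cD_2}(0,X_2)\oplus\varphi(X_2,X_1)[-1]$ and the third summand, regarded via the distinguished triangle of Remark~\ref{triglu}, gives that $i_1^*M=M_1=0$ forces vanishing — more directly, Corollary~\ref{sodglu} already tells us $\Hom_{[\cD]}([\cD_1],[\cD_2])=0$ wait that is the wrong direction. Let me instead argue: Corollary~\ref{sodglu} gives $\Hom_{[\cD]}(i_2[\cD_2],i_1[\cD_1])=0$, i.e. $\Hom_\cD(i_2(X_2),i_1(X_1))$ is acyclic; since the $\sY^{i_j(X)}$ are h-projective and compact, $\Hom_{\bD(\cD)}(I_2\sY^{X_2},I_1\sY^{X_1})\cong\Hom_{[\cD]}(i_2(X_2),i_1(X_1))=0$. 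As the $\sY^{X_2}$ compactly generate $\bD(\cD_2)$ and $I_2$ commutes with sums while $I_1^!$ commutes with sums, $\Hom_{\bD(\cD)}(I_2\bD(\cD_2),I_1\bD(\cD_1))=0$.

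Then I would invoke Lemma~\ref{cff} (or Lemma~\ref{sod3}) to conclude the decomposition: let $\cT_1=I_1(\bD(\cD_1))$, $\cT_2=I_2(\bD(\cD_2))$; both are triangulated, closed under direct sums, and generated by compact objects of $\bD(\cD)$. By Lemma~\ref{sod3} applied to the adjoint pairs $(I_1^!$ wait I need a left adjoint to $i_1$ and a right adjoint to $i_2$ — indeed $I_1$ being fully faithful with right adjoint $I_1^!$ gives the embedding $\cT_1\hookrightarrow\bD(\cD)$ a right adjoint, and similarly $\cT_2$; to apply Lemma~\ref{sod3} I pair the left adjoint $i_1^*$ of the inclusion of $\cT_1$ — which exists since $I_1$ also has a left adjoint, as $\LInd_{i_1}$ preserves compactness so $\Res_{i_1}$ commutes with sums and itself has a further right adjoint, but more simply $i_1^*$ from~\eqref{iadj12} gives it directly on the nose) — so Lemma~\ref{sod3} yields $\bD(\cD)=\langle\cT_1,{}^\perp\cT_1\cap\cT_2^\perp,\cT_2\rangle$, and I must show the middle term is zero. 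An object $F$ lies there iff $I_1^!F\cong F_{i_1}$ is... no, iff $i_1^*$-type left-adjoint kills it and $I_2^!F=F_{i_2}=0$; since $\{i_1(X_1),i_2(X_2)\}$ generate $\bD(\cD)$ (their representables do, because the representables $\sY^{(M_1,M_2,\mu)}$ are built from $\sY^{i_1(M_1)}$ and $\sY^{i_2(M_2)}$ via the cone of $\mu$ — this is exactly the pretriangulated computation in Lemma~\ref{dpt}), the orthogonality to all of them forces $F=0$. \textbf{The main obstacle} is precisely this last point — identifying the left adjoint to the inclusion $\cT_1\hookrightarrow\bD(\cD)$ and proving the intersection ${}^\perp\cT_1\cap\cT_2^\perp$ vanishes; the clean way is to produce the projection triangle~\eqref{I1s} directly. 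For that, given $F\in\cD\dgm$, I would write the defining short exact sequence of bimodules $0\to{}_{i_2}\cD\otimes_{\cD_2}({}_{i_2}\cD)^{\vee}\text{-ish}$ — concretely, from~\eqref{hglu} the diagonal bimodule of $\cD$ sits in an exact triangle of $\cD$-bimodules relating $i_{1*}\cD_1 i_1^*$, $i_{2*}\cD_2 i_2^*$ and $\varphi$, and tensoring $F$ with it gives $F_{i_1}\to I_1^*F\to F_{i_2}\lotimes_{\cD_2}\varphi$ where $I_1^*=\LInd_{i_1}\Res_{i_1}$; applying this to $F$ compact recovers the decomposition on compacts, hence (by Proposition~\ref{cgenprop} and closure under sums) on all of $\bD(\cD)$, and reading off the connecting map identifies the gluing functor with $L\varphi$ by the last equation of~\eqref{i1i2}. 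This simultaneously delivers~\eqref{I1s} and finishes the proof.
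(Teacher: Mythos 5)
Your outline of the semiorthogonal decomposition is, after the mid-stream self-corrections, essentially the paper's argument: full faithfulness of $I_1,I_2$ via Proposition~\ref{fpb}, semiorthogonality from the vanishing of the appropriate restriction of the diagonal bimodule, and generation from the canonical triangle $\sY^{(0,M_2,0)}\to\sY^{(M_1,M_2,\mu)}\to\sY^{(M_1,0,0)}$ (the paper's~\eqref{glutri}), which shows representable $\cD$-modules lie in the subcategory generated by the two pieces so the complement ${}^\perp\cT_1\cap\cT_2^\perp$ dies. Two remarks about the false starts you caught yourself on: the adjunction $\Hom(I_2G,I_1F)=\Hom(I_1^!I_2G,F)$ you first wrote is the wrong one ($(I_1,I_1^!)=(\LInd_{i_1},\Res_{i_1})$ is a left-right pair, so you would need $\Hom(I_2G,I_1F)=\Hom(G,I_2^!I_1F)$ and hence $I_2^!I_1=0$, which is what the paper computes via ${}_{i_1}\cD_{i_2}=0$); and $I_1^!I_2$ is of course not zero --- it is the gluing functor $L\varphi[-1]$, which is exactly the point.

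The genuine gap is in the last part, the triangle~\eqref{I1s}. You propose to get it from an exact triangle of $\cD$-bimodules built from~\eqref{hglu} and to write $I_1^*=\LInd_{i_1}\Res_{i_1}$. The second identification is simply wrong: $\LInd_{i_1}\Res_{i_1}$ is an endofunctor of $\bD(\cD)$, whereas $I_1^*F$ in~\eqref{I1s} must be an object of $\bD(\cD_1)$ (all three vertices of~\eqref{I1s} are $\cD_1$-modules, and $I_1^*$ is the left adjoint of the inclusion $I_1:\bD(\cD_1)\hookrightarrow\bD(\cD)$, not the comonad $I_1I_1^!$ nor $\LInd_{i_1}\Res_{i_1}$). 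Moreover the bimodule triangle is never actually constructed, so the argument is not carried out. The paper's route is both simpler and uses only what is already in hand: take the decomposition triangle $I_2I_2^!F\to F\to I_1I_1^*F$ provided by Lemma~\ref{sod3}, apply $I_1^!$ to it, and use $I_1^!I_1\cong\id$ together with $I_1^!I_2\cong L\varphi[-1]$ and the formulas $I_k^!F=F_{i_k}$ from~\eqref{Iadj}; a one-step rotation then yields~\eqref{I1s}. You should replace the bimodule-triangle sketch with this argument, or else actually construct the claimed bimodule triangle and track carefully which category each term lives in.
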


Note that one does not need to assume that any of the categories is pretriangulated.

\begin{proof}
By Proposition~\ref{fpb} the functors $I_1$ and $I_2$ are fully faithful.
Moreover, 
\begin{equation*}
I_2^!I_1(F) = (F\lotimes_{\cD_1}{}_{i_1}\cD)_{i_2} = F\lotimes_{\cD_1}{}_{i_1}\cD_{i_2} = 0
\end{equation*}
since ${}_{i_1}\cD_{i_2} = 0$ by definition of gluing.
Applying two times Lemma~\ref{sod3} we conclude that there is a semiorthogonal decomposition $\bD(\cD) = \langle \cT,\bD(\cD_1),\bD(\cD_2) \rangle$,
where 
\begin{equation*}
\cT = \bD(\cD_1)^\perp \cap \bD(\cD_2)^\perp.
\end{equation*}
It remains to check that $\cT = 0$. For this we note that for any object $(M_1,M_2,\mu)$
of $\cD$ one can consider $\mu \in \varphi(M_2,M_1)$ as a morphism from $\sY^{(M_1,0,0)}$ to $\sY^{(0,M_2,0)}[1]$
in $\bD(\cD)$, and the cone of this morphism is a representable $\cD$-module, represented precisely
by the initial object $(M_1,M_2,\mu)$. In other words, in $\bD(\cD)$ there is a canonical distinguished
triangle
\begin{equation}\label{glutri}
\sY^{(0,M_2,0)} \xrightarrow{\quad} \sY^{(M_1,M_2,\mu)} \xrightarrow{\quad} \sY^{(M_1,0,0)} \xrightarrow{\ \mu\ } \sY^{(0,M_2,0)}[1].
\end{equation} 
In particular, it follows that all representable $\cD$-modules are contained in the subcategory of $\bD(\cD)$
generated by $\bD(\cD_1)$ and $\bD(\cD_2)$. Thus $\cT$ is orthogonal to all representable $\cD$-modules,
hence $\cT = 0$.

To establish the formula for the gluing functor $\phi:\bD(\cD_2) \to \bD(\cD_1)$ we
recall that as it was mentioned in section~\ref{ss-gf} we have $\phi = I_1^!I_2[1]$. 
%
By definition of $I_1^!$ and $I_2$ we have
\begin{equation*}
I_1^!(I_2(N)) = 
(N\lotimes_{\cD_2}{}_{i_2}\cD)_{i_1} = N\lotimes_{\cD_2} {}_{i_2}\cD_{i_1}
\end{equation*}
and it remains to note that by~\eqref{hglu} we have ${}_{i_2}\cD_{i_1} = \varphi[-1]$, hence 
$\phi(N) \cong N\lotimes_{\cD_2}\varphi$.

Finally, to check~\eqref{I1s} we first consider the standard triangle
\begin{equation*}
I_2I_2^!F \to F \to I_1I_1^*F.
\end{equation*}
Applying the functor $I_1^!$ we obtain 
\begin{equation*}
I_1^!I_2I_2^!F \to I_1^!F \to I_1^*F
\end{equation*}
since $I_1^!\circ I_1$ is the identity functor.
As we already checked that $I_1^!I_2$ is isomorphic to the derived tensor product by $\varphi[-1]$,
using~\eqref{Iadj} we deduce the claim.
%
%
%
%
%
\end{proof}

\subsection{Smoothness and properness}

It is useful to note that the quasiequivalence class of the gluing depends only on the quasiisomorphism
class of the gluing bimodule.

\begin{lemma}\label{qisqeq}
Assume that $\varphi$ and $\psi$ are quasiisomorphic $(\cD_2^\op\otimes\cD_1)$-modules.
Then the gluings $\cD_1\times_\varphi\cD_2$ and $\cD_1\times_\psi\cD_2$ are quasiequivalent.
\end{lemma}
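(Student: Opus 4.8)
The plan is to reduce the statement to the case of a single quasiisomorphism of bimodules, and for that case to write down an explicit comparison DG-functor and check it is a quasiequivalence.

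First I would record that any closed degree-$0$ morphism $f\colon\varphi\to\psi$ of $(\cD_2^\op\otimes\cD_1)$-modules induces a DG-functor
$$
F_f\colon \cD_1\times_\varphi\cD_2 \lto \cD_1\times_\psi\cD_2,
\qquad
(M_1,M_2,\mu)\longmapsto (M_1,M_2,f(\mu)),
$$
acting on morphism complexes by $(f_1,f_2,f_{21})\mapsto (f_1,f_2,f(f_{21}))$. Since $f$ is a chain map it carries closed degree-$0$ elements of $\varphi(M_2,M_1)$ to closed degree-$0$ elements of $\psi(M_2,M_1)$, so $F_f$ is well defined on objects; and because $f$ is a chain map and a morphism of bimodules (so $f(f_2\circ\mu)=f_2\circ f(\mu)$ and $f(\nu\circ f_1)=f(\nu)\circ f_1$) it is immediate from the formulas~\eqref{dglu} and~\eqref{mglu} that $F_f$ commutes with differentials and with composition, i.e.\ is a DG-functor.

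Next I would show that if in addition $f$ is a quasiisomorphism then $F_f$ is a quasiequivalence. For quasi-full-faithfulness one applies $F_f$ to the distinguished triangle of $\Hom$-complexes of Remark~\ref{triglu}: $F_f$ gives a morphism of triangles which is the identity on the middle term $\Hom_{\cD_1}(M_1,N_1)\oplus\Hom_{\cD_2}(M_2,N_2)$ and is $f$ on the right term $\varphi(N_2,M_1)\to\psi(N_2,M_1)$ (the square commutes because $f$ is a bimodule morphism), so the five lemma on the associated long exact cohomology sequences forces $F_f$ to be a quasiisomorphism on $\Hom$-complexes. For essential surjectivity up to homotopy, take $(N_1,N_2,\nu)\in\cD_1\times_\psi\cD_2$; surjectivity of $H^0(f)\colon H^0(\varphi(N_2,N_1))\to H^0(\psi(N_2,N_1))$ yields a closed degree-$0$ element $\mu\in\varphi(N_2,N_1)$ together with $\eta\in\psi^{-1}(N_2,N_1)$ satisfying $f(\mu)-\nu=d\eta$. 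Then $(N_1,N_2,\mu)$ is an object of $\cD_1\times_\varphi\cD_2$, and $(\id_{N_1},\id_{N_2},\eta)$ is, by a one-line check against~\eqref{dglu} and~\eqref{mglu}, a closed degree-$0$ isomorphism $(N_1,N_2,\nu)\to(N_1,N_2,f(\mu))=F_f(N_1,N_2,\mu)$ in $\cD_1\times_\psi\cD_2$, with inverse $(\id_{N_1},\id_{N_2},-\eta)$. Hence $F_f$ hits every object up to isomorphism, and is a quasiequivalence.

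Finally, for general quasiisomorphic $\varphi$ and $\psi$ I would pass to h-projective resolutions $P_\varphi\to\varphi$ and $P_\psi\to\psi$ in $(\cD_2^\op\otimes\cD_1)\dgm$: since $\varphi\cong\psi$ in $\bD(\cD_2^\op\otimes\cD_1)$, Theorem~\ref{hprhin} identifies $P_\varphi$ and $P_\psi$ as isomorphic objects of $[\hproj]$, hence they are linked by an honest homotopy equivalence, which is in particular a quasiisomorphism $P_\varphi\to P_\psi$. This gives a chain of quasiisomorphisms of bimodules $\varphi\xleftarrow{\ \sim\ }P_\varphi\xrightarrow{\ \sim\ }P_\psi\xrightarrow{\ \sim\ }\psi$, and applying the previous paragraph to each arrow produces a chain of quasiequivalences between $\cD_1\times_\varphi\cD_2$ and $\cD_1\times_\psi\cD_2$, as required. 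The only step that is more than bookkeeping is the essential-surjectivity argument, whose heart is the observation that two objects $(N_1,N_2,\nu)$ and $(N_1,N_2,\nu')$ of a gluing category with $\nu-\nu'$ a coboundary are already isomorphic in that DG-category; this is precisely what makes a lift of $\nu$ merely at the level of $H^0$ suffice, while DG-functoriality of $F_f$ and the five-lemma comparison are purely formal.
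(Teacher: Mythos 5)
Your proof is correct and follows essentially the same route as the paper's: define the comparison DG-functor $(M_1,M_2,\mu)\mapsto(M_1,M_2,f(\mu))$ for a single closed degree-$0$ quasiisomorphism $f$, check quasi-full-faithfulness, check homotopical essential surjectivity via the observation that $\nu-f(\mu)=d\eta$ gives an invertible closed morphism $(\id,\id,\eta)$, and then reduce the general case to a chain of quasiisomorphisms. The only places you elaborate beyond the paper are where it says ``it is very easy to check'': you make the quasi-full-faithfulness explicit via the morphism of triangles from Remark~\ref{triglu} and the two-out-of-three argument, and you make the reduction to a chain concrete via h-projective resolutions $\varphi\leftarrow P_\varphi\to P_\psi\to\psi$; both fillings are correct and match what the paper intends.
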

\begin{proof}
First assume that $\xi : \varphi \to \psi$ is a closed degree $0$ morphism in the category 
of bimodules which is a quasiisomorphism. Then we define a DG-functor 
from $\cD_1\times_\varphi\cD_2$ to $\cD_1\times_\psi\cD_2$ by
$$
(M_1,M_2,\mu) \mapsto (M_1,M_2,\xi(\mu))
$$
on objects and by
$$
(f_1,f_2,f_{21}) \mapsto (f_1,f_2,\xi(f_{21}))
$$
on morphisms. It is very easy to check that it is a DG-functor which acts by quasiisomorphisms on $\Hom$ complexes.
To check that it is a quasiequivalence we have to show that it is homotopically essentially surjective.
In other words, for any $M = (M_1,M_2,\nu) \in \cD_1\times_\psi\cD_2$ we have to construct an object
in $\cD_1\times_\varphi\cD_2$ whose image is homotopy equivalent to $M$. For this we choose $\mu \in \varphi(M_2,M_1)$
such that $\xi(\mu)$ is homologous to $\nu$ and take $(M_1,M_2,\mu)$. So, we have to show that
$(M_1,M_2,\xi(\mu))$ is homotopic to $(M_1,M_2,\nu)$. Since $\xi(\mu)$ and $\nu$ are homologous,
we have $\nu - \xi(\mu) = d\rho$ for some $\rho \in \psi(M_2,M_1)[-1]$.  Therefore
$(\id_{M_1},\id_{M_2},\rho)$ gives a morphism from $(M_1,M_2,\xi(\mu))$ to $(M_1,M_2,\nu)$
and $(\id_{M_1},\id_{M_2},-\rho)$ gives a morphism in the opposite direction. The compositions
are the identity morphisms, so we are done.

Thus we have proved the result when the quasiisomorphism is given by a morphism. In general case
we can connect $\varphi$ to $\psi$ by a chain of quasiisomorphisms, each of them gives a quasiequivalence.
Thus the gluings are connected by a chain of quasiequivalences, so the categories are quasiequivalent.
\end{proof}

For many purposes it is convenient to have an h-projective gluing bimodule.
By the above Lemma we can always replace $\varphi$ with its h-projective resolution
without changing the quasiequivalence class of the gluing DG-category.

\begin{remark}
Further we will also show that replacing the categories $\cD_1$ and $\cD_2$ by quasiequivalent DG-categories
one does not change the quasiequivalence class of the gluing, see Proposition~\ref{regluequi}.
\end{remark}

Recall the notion of smoothness and properness of a DG-category, see section~\ref{ss-smpr}.


\begin{prop}\label{utsm}
Let $\cD_1$ and $\cD_2$ be smooth DG-categories. A gluing $\cD = \cD_1\times_\varphi\cD_2$
is smooth if the gluing bimodule $\varphi$ is perfect in $\bD(\cD_2^\op\otimes\cD_1)$.
If $\cD_1$ and $\cD_2$ are proper and $\varphi$ is perfect then $\cD$ is proper.
\end{prop}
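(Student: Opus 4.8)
The plan is to treat properness first, since it is the easier assertion. Fix $M,N\in\cD$; by Remark~\ref{triglu} there is a distinguished triangle of complexes
\[
\Hom_\cD(M,N)\to\Hom_{\cD_1}(M_1,N_1)\oplus\Hom_{\cD_2}(M_2,N_2)\to\varphi(N_2,M_1).
\]
When $\cD_1,\cD_2$ are proper the middle term has bounded finite-dimensional cohomology, so it suffices to see the same for $\varphi(N_2,M_1)$. Since $\varphi$ is perfect it lies in the thick triangulated subcategory of $\bD(\cD_2^\op\otimes\cD_1)$ generated by representable bimodules, and the value of a representable bimodule $\sY^{(A_2,A_1)}$ at $(N_2,M_1)$ is $\Hom_{\cD_2}(A_2,N_2)\otimes_\kk\Hom_{\cD_1}(M_1,A_1)$, which has bounded finite-dimensional cohomology by properness of $\cD_1$ and $\cD_2$; this property passes to shifts, cones, and homotopy direct summands. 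Hence $\Hom_\cD(M,N)$ has bounded finite-dimensional cohomology and $\cD$ is proper.

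For smoothness we must show that the diagonal bimodule of $\cD$ is perfect in $\bD(\cD^\op\otimes\cD)$ (Definition~\ref{dgsmooth}). We plan to produce a finite filtration of it by $\cD$-bimodules with perfect subquotients. The gluing carries the DG-functors $i_1,i_2$ of~\eqref{i12} as well as the DG-functors $q_1=i_1^*$, $q_2=i_2^!$ of Lemma~\ref{iadj}, which satisfy $q_ai_a=\id_{\cD_a}$, and $[i_a]$ is fully faithful by Corollary~\ref{sodglu}. First we would record that the gluing construction is compatible with tensoring by a fixed DG-category and with passing to opposites: $(\cD_1\times_\varphi\cD_2)\otimes\cE$ is the gluing of $\cD_1\otimes\cE$ with $\cD_2\otimes\cE$ along the evident bimodule, and $\cD^\op$ is again a gluing, of $\cD_2^\op$ with $\cD_1^\op$. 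Applying the semiorthogonal decomposition of Proposition~\ref{derglu} to each of the two tensor factors of $\cD^\op\otimes\cD$ then yields a semiorthogonal decomposition of $\bD(\cD^\op\otimes\cD)$ with four components, each of the form $\LInd_{i_a^\op\otimes i_b}\big(\bD(\cD_a^\op\otimes\cD_b)\big)$ for $a,b\in\{1,2\}$.

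The heart of the argument is the computation of the components of the diagonal bimodule of $\cD$ under this four-term decomposition. I expect that the component in the $(a,b)$-piece is $\LInd_{i_a^\op\otimes i_b}$ applied to the iterated restriction $(M_a,M_b)\mapsto\Hom_\cD(i_bM_b,i_aM_a)$: this uses full faithfulness of $[i_a]$ and $[i_b]$ (so that restriction is a one-sided inverse of induction), together with the relations obtained from~\eqref{hglu}, namely $\Hom_\cD(i_a(-),i_a(-))=\Hom_{\cD_a}(-,-)$, $\Hom_\cD(i_2(-),i_1(-))=0$, and $\Hom_\cD(i_1(-),i_2(-))\cong\varphi[-1]$. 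Thus the four iterated restrictions are the diagonal bimodule of $\cD_1$, the diagonal bimodule of $\cD_2$, the bimodule $\varphi[-1]$, and $0$; the first two are perfect because $\cD_1$ and $\cD_2$ are smooth, the third is perfect by hypothesis, and the last is trivially perfect. Each functor $\LInd_{i_a^\op\otimes i_b}$ sends representable bimodules to representable bimodules (Proposition~\ref{fpb}), is triangulated, and commutes with direct sums, so it preserves perfectness; therefore all four subquotients of the filtration are perfect. Since perfect bimodules form a thick subcategory of $\bD(\cD^\op\otimes\cD)$, closed under extensions, the diagonal bimodule of $\cD$ is perfect and $\cD$ is smooth.

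The main obstacle will be the bookkeeping in the previous paragraph: identifying the components of the diagonal bimodule with the $\LInd$-inflations of the iterated restrictions, keeping track of the several adjunctions of Proposition~\ref{fpb} and of the vanishing relations above while passing through a nested pair of semiorthogonal decompositions. The decisive point is that the $\varphi$-data and the $\cD_a$-diagonal data must be carried along by the induction functors $\LInd_{i_a}$, not by the restriction functors $\Res_{q_a}$: while $\Res_{q_1}$ happens to take representable modules to representable modules, $\Res_{q_2}$ does not --- the relevant $\Hom$-complex acquires a $\varphi$-summand --- so a naive two-step reduction through $q_2$ would fail to preserve perfectness, and this is precisely where perfectness of $\varphi$ must be used, through the $(2,1)$-component.
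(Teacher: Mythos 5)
Your treatment of properness is correct and in fact supplies detail the paper omits (it only says properness ``follows immediately from the definition''): reducing via the triangle of Remark~\ref{triglu} to showing $\varphi(N_2,M_1)$ is $\Ext$-finite, and deducing this by d\'evissage in the thick subcategory generated by representable $(\cD_2^\op\otimes\cD_1)$-bimodules, is precisely the right argument.

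The smoothness part has a gap at the step you yourself flag as the ``heart of the argument.'' You assert that the filtration of the diagonal bimodule $\cD$ induced by the four-term semiorthogonal decomposition of $\bD(\cD^\op\otimes\cD)$ has subquotients $\LInd_{i_a^\op\otimes i_b}\bigl(\Res_{i_a^\op\otimes i_b}\cD\bigr)$, citing full faithfulness of $[i_a]$ and $[i_b]$. But full faithfulness only shows that restriction recovers an object from \emph{its own} induction; it says nothing about the semiorthogonal projection of an arbitrary object. That projection onto a non-terminal component is computed by a left adjoint, not by restriction --- exactly the discrepancy the triangle~\eqref{I1s} measures --- and the two differ in general. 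They already differ here: the $(2,1)$-subquotient is $\LInd_{i_2^\op\otimes i_1}(\varphi)$ (this is the third term of the paper's distinguished triangle), whereas your recipe predicts $\LInd_{i_2^\op\otimes i_1}(\varphi[-1])$, which is off by a shift $[1]$; an Euler-characteristic count in the toy case $\cD_1=\cD_2=\kk$, $\varphi=\kk[1]$ already rules out your prediction, since the claimed pieces then sum to $\begin{pmatrix}1&0\\3&1\end{pmatrix}$ while $\cD$ has table $\begin{pmatrix}1&0\\1&1\end{pmatrix}$. The shift does not harm perfectness, so the conclusion you want is true, but your argument does not actually produce a filtration with any specified subquotients. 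The paper sidesteps this by importing the distinguished triangle
\[
\cD_1\lotimes_{\cD_1^\op\otimes\cD_1}(\cD^\op\otimes\cD)\ \oplus\ \cD_2\lotimes_{\cD_2^\op\otimes\cD_2}(\cD^\op\otimes\cD)\ \longrightarrow\ \cD\ \longrightarrow\ \varphi\lotimes_{\cD_2^\op\otimes\cD_1}(\cD^\op\otimes\cD)
\]
from \cite[Prop.~3.11]{L10a}; to close your gap you must either establish this triangle (or your filtration) directly, which requires more than full faithfulness, or invoke that reference.
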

\begin{proof}
Our goal is to show that the diagonal bimodule $\cD$ is perfect. Using the same argument
as in~\cite[Prop.~3.11]{L10a} we conclude that $\cD$ fits into a distinguished triangle
\begin{equation*}
\cD_1\lotimes_{\cD_1^\op\otimes\cD_1} (\cD^\op\otimes\cD) \oplus
\cD_2\lotimes_{\cD_2^\op\otimes\cD_2} (\cD^\op\otimes\cD) \to
\cD \to
\varphi\lotimes_{\cD_2^\op\otimes\cD_1} (\cD^\op\otimes\cD)
\end{equation*}
and observe that in the first term both summands are perfect since $\cD_i$ is a perfect
bimodule over $\cD_i$ by smoothness of $\cD_i$. Thus $\cD$ is perfect if the third term is 
perfect. It remains to note that perfectness of $\varphi$ in $\bD(\cD_2^\op\otimes\cD_1)$ 
implies perfectness of $\varphi\lotimes_{\cD_2^\op\otimes\cD_1} (\cD^\op\otimes\cD)$
in $\bD(\cD^\op\otimes\cD)$ by extension of scalars.

The properness of $\cD$ follows immediately from the definition.
\end{proof}


%
%
%

\subsection{The gluing and semiorthogonal decompositions}

Here we show that any category with a semiorthogonal decomposition is quasiequivalent to a gluing.

\begin{prop}\label{uniglu2}
Assume that $\cD$ is a pretriangulated category and a semiorthogonal decomposition
of its homotopy category is given 
$$
[\cD] = \langle \cT_1, \cT_2 \rangle.
$$
Then DG-category $\cD$ is quasiequivalent to a gluing of two pretriangulated DG-categories $\cD_1$ and $\cD_2$
such that $[\cD_1] = \cT_1$ and $[\cD_2] = \cT_2$.
\end{prop}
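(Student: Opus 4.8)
The plan is to realize the two components of the given semiorthogonal decomposition as pretriangulated DG-subcategories of $\cD$ and then to identify $\cD$ with their gluing. First I would set $\cD_1 \subset \cD$ to be the full DG-subcategory on those objects $X \in \cD$ whose image in $[\cD]$ lies in $\cT_1$, and similarly $\cD_2$ the full DG-subcategory on objects landing in $\cT_2$. Since $\cT_1$ and $\cT_2$ are strictly full triangulated subcategories of $[\cD]$, these $\cD_i$ are pretriangulated DG-categories (closure under shifts and cones of closed degree-zero morphisms holds because the homotopy category is closed under shifts and cones, and the representing objects can be chosen in $\cD$, hence in $\cD_i$ up to homotopy equivalence — and one may pass to the homotopy-essential closure if one wants strict closure), and by construction $[\cD_i] = \cT_i$.

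Next I would produce the gluing bimodule. The natural candidate is $\varphi := {}_{j_1}\cD_{j_2}[1]$, where $j_i : \cD_i \hookrightarrow \cD$ are the inclusion DG-functors; that is, $\varphi(X_2, X_1) = \Hom_\cD(X_2, X_1)[1]$ for $X_1 \in \cD_1$, $X_2 \in \cD_2$. On homotopy categories this computes $H^\bullet\Hom_\cD(X_2, X_1[1])$, which is the gluing bifunctor $\Phi$ of the decomposition $[\cD] = \langle \cT_1, \cT_2\rangle$ from Definition~\ref{glubim}. Now I would write down a DG-functor $\Theta : \cD_1 \times_\varphi \cD_2 \to \cD$. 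An object $(M_1, M_2, \mu)$ of the gluing consists of $M_1 \in \cD_1$, $M_2 \in \cD_2$, and a closed degree-zero element $\mu \in \varphi(M_2, M_1) = \Hom^1_\cD(M_2, M_1)$, i.e. a closed morphism $M_2 \to M_1[1]$ in $\cD$; I send it to the cone $\Theta(M_1, M_2, \mu) := \Cone(M_2 \xrightarrow{\mu} M_1[1])[-1] \in \cD$, which exists because $\cD$ is pretriangulated, together with the evident choice of structure morphisms as in Remark~\ref{cone}. On morphism complexes $\Theta$ is defined using the decomposition of the Hom-complex of a cone: a triple $(f_1, f_2, f_{21})$ maps to the morphism of cones built from $f_1$, $f_2$, and the homotopy $f_{21}$, exactly mirroring the computation in the proof of Lemma~\ref{dpt}. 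Checking that $\Theta$ is a DG-functor is a routine but slightly tedious verification with the formulas~\eqref{dglu} and~\eqref{mglu}.

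Then I would verify that $\Theta$ is a quasiequivalence. For quasi-full-faithfulness: given two objects of the gluing, both their Hom-complex (formula~\eqref{hglu}, with the triangle of Remark~\ref{triglu}) and the Hom-complex in $\cD$ between the corresponding cones fit into compatible distinguished triangles of complexes, built from $\Hom_{\cD_1}(M_1, N_1)$, $\Hom_{\cD_2}(M_2, N_2)$, and $\varphi(N_2, M_1)$; one checks $\Theta$ induces a map of these triangles which is an isomorphism on the outer terms, hence a quasiisomorphism on the middle. For homotopy-essential surjectivity: any $F \in \cD$ lies, in $[\cD]$, in a distinguished triangle $\bp_2(F) \to F \to \bp_1(F)$ with $\bp_i(F) \in \cT_i$ (the semiorthogonal decomposition), so up to homotopy equivalence $F = \Cone(\bp_1(F) \to \bp_2(F)[1])[-1]$ for a suitable connecting morphism, and choosing representatives $M_1 \in \cD_1$ of $\bp_1(F)$, $M_2 \in \cD_2$ of $\bp_2(F)$, and a closed lift $\mu$ of the connecting map, we get $\Theta(M_1, M_2, \mu) \simeq F$.

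The main obstacle is really a bookkeeping one: matching the differential and composition conventions of the gluing category~\eqref{dglu}--\eqref{mglu} against the sign conventions in the cone formalism of Remark~\ref{cone}, so that $\Theta$ is genuinely a DG-functor and not merely one up to homotopy. A secondary subtlety is the existence of a \emph{closed} degree-zero lift of the homotopy class of the connecting morphism when proving essential surjectivity — here one uses that the images of objects of $\cD$ in $\cD$-$\mathrm{dgm}$ under Yoneda are h-projective (as noted after Theorem~\ref{hprhin}), so $\Hom$ in $[\cD]$ computes $H^0$ of the Hom-complex and any homotopy class is realized by an honest closed morphism; combined with Lemma~\ref{objsod} (the object side of the gluing functor formalism) this gives exactly what is needed. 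Finally, if one insists on $\varphi$ being h-projective one invokes Lemma~\ref{qisqeq} to replace it, which does not affect the quasiequivalence class of the gluing.
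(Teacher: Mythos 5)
Your overall strategy is exactly the paper's: take $\cD_i$ to be the full DG-subcategory of $\cD$ on the objects of $\cT_i$, take the gluing bimodule to be a shift of a restricted diagonal bimodule, send $(M_1,M_2,\mu)$ to a cone, and verify quasi-full-faithfulness by comparing the two distinguished triangles of $\Hom$-complexes. But there is a genuine error in the direction of the gluing bimodule which, as written, makes the argument fail. You set $\varphi(X_2,X_1)=\Hom_\cD(X_2,X_1)[1]$ and correspondingly read $\mu$ as a closed morphism $M_2\to M_1[1]$. By semiorthogonality $\Hom_{[\cD]}(\cT_2,\cT_1[k])=0$ for all $k$, so the complex $\Hom_\cD(X_2,X_1)$ is acyclic whenever $X_1\in\cD_1$, $X_2\in\cD_2$; your $\varphi$ is thus quasiisomorphic to $0$, and by Lemma~\ref{qisqeq} the gluing $\cD_1\times_\varphi\cD_2$ would be quasiequivalent to $\cD_1\times_0\cD_2$, which cannot recover $\cD$ unless the decomposition is completely orthogonal. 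What you actually want (and what the paper uses) is $\varphi={}_{j_2}\cD_{j_1}[1]$, i.e. $\varphi(M_2,M_1)=\Hom_\cD(M_1,M_2)[1]$, with $\mu$ a closed morphism $M_1\to M_2[1]$ (equivalently $M_1[-1]\to M_2$), and $\alpha(M_1,M_2,\mu)=\Cone(\mu:M_1[-1]\to M_2)$. Note that $\Hom_\cD(M_1,M_2)$ has the correct variance to be a $\cD_2^\op\otimes\cD_1$-module (covariant in $M_2$, contravariant in $M_1$), whereas $\Hom_\cD(M_2,M_1)$ does not.

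Your proposal is even internally inconsistent on this point: in the essential-surjectivity step you correctly identify the connecting morphism of the decomposition triangle $\bp_2(F)\to F\to \bp_1(F)$ as a map $\bp_1(F)\to \bp_2(F)[1]$, i.e. a lift $M_1\to M_2[1]$, which contradicts your earlier reading of $\mu$ as $M_2\to M_1[1]$ and does not match the input slot of the formula $\Theta(M_1,M_2,\mu)=\Cone(M_2\xrightarrow{\mu}M_1[1])[-1]$. Once the variance is fixed, the rest of the proposal goes through and matches the paper, with two minor remarks: the paper's $\alpha$ lands in $\cD\dgm$ rather than in $\cD$ itself, which spares one the non-canonical choice of a representing object for each cone (you can of course compose with a quasi-inverse of Yoneda at the end); and the parenthetical about passing to the homotopy-essential closure of $\cD_i$ is unnecessary, since $\cT_i$ is already strictly full so $\cD_i$ is automatically closed under homotopy equivalence.
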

\begin{proof}
We take $\cD_k$ to be the full DG-subcategory of $\cD$ having the same objects as $\cT_k$ 
and take 
\begin{equation*}
\varphi := {}_{i_2}\cD_{i_1}[1].
\end{equation*}
Note that both DG-categories $\cD_1$ and $\cD_2$ are pretriangulated.
We construct
the DG-functor $\alpha:\cD_1\times_\varphi\cD_2 \to \cD\dgm$ as follows. On objects it acts as
$$
\alpha(M_1,M_2,\mu) = \Cone (\mu:M_1[-1] \to M_2).
$$
Here we observe that $\mu \in \varphi(M_2,M_1) = \Hom_\cD(M_1,M_2)[1] = \Hom_\cD(M_1[-1],M_2)$ by definition of $\varphi$.
Note that if $\mu:M_1[-1] \to M_2$ and $\nu:N_1[-1] \to N_2$ are two closed morphisms of degree zero
then the cones $C_M = \Cone (\mu:M_1[-1] \to M_2)$ and $C_N = \Cone (\nu:N_1[-1] \to N_2)$
come with direct sum decompositions 
\begin{equation*}
\xymatrix@1{M_2 \ar@<.5ex>[r]^{j_M} & C_M \ar@<.5ex>[l]^{s_M}  \ar@<.5ex>[r]^{p_M} & M_1  \ar@<.5ex>[l]^{i_M} }
\quad\text{and}\quad
\xymatrix@1{N_2 \ar@<.5ex>[r]^{j_N} & C_N \ar@<.5ex>[l]^{s_N}  \ar@<.5ex>[r]^{p_N} & N_1  \ar@<.5ex>[l]^{i_N} }
\end{equation*}
with
\begin{equation*}
\arraycolsep=.5cm
\begin{array}{ccc}
dj_M = dp_M = 0, & di_M = j_M\mu, & ds_M = -\mu p_M,\\
dj_N = dp_N = 0, & di_N = j_N\nu, & ds_N = -\nu p_N,
\end{array}
\end{equation*}
It follows that any $f  \in \Hom_\cD(C_M,C_N)$ can be written as
\begin{equation*}
f = (i_Np_N + j_Ns_N) f (i_Mp_M + j_Ms_M) = 
i_N f_{11} p_M +
i_N f_{12} s_M +
j_N f_{21} p_M +
j_N f_{22} s_M,
\end{equation*}
where
\begin{equation*}
\arraycolsep = .5cm
\begin{array}{cc}
f_{11} = p_N f i_M \in \Hom_\cD(M_1,N_1), &
f_{12} = p_N f j_M \in \Hom_\cD(M_2,N_1), \\
f_{21} = s_N f i_M \in \Hom_\cD(M_1,N_2), &
f_{22} = s_N f j_M \in \Hom_\cD(M_2,N_2).
\end{array}
\end{equation*}
Therefore
\begin{multline*}
df = d(i_N f_{11} p_M + i_N f_{12} s_M + j_N f_{21} p_M + j_N f_{22} s_M) = \\
i_N (d f_{11} - (-1)^{\deg f} f_{12} \mu) p_M +
i_N d f_{12} s_M + \\
j_N (d f_{21} + \nu f_{11} - (-1)^{\deg f} f_{22} \mu) p_M +
j_N (d f_{22} + \nu f_{12}) s_M.
\end{multline*}
This formula shows that the map
\begin{equation*}
\Hom_\cD(C_M,C_N) \to \Hom_\cD(M_2,N_1),
\qquad
f \mapsto f_{12}
\end{equation*}
is a morphism of complexes. Moreover, the complex $\Hom_\cD(M_2,N_1)$ is acyclic,
as the cohomology of $\Hom_\cD(M_2,N_1)$ is just $\Ext$ spaces from $M_2$ to $N_1$ which are zero
since $M_2 \in \cT_2$ and $N_1 \in \cT_1$ and those subcategories of $\cT$ are semiorthogonal.
It follows that the map
\begin{equation*}
\begin{array}{c}
\alpha:\Hom_{\cD_1}(M_1,N_1) \oplus \Hom_{\cD_2}(M_2,N_2) \oplus \Hom_\cD(M_1,N_2) \to \Hom_\cD(C_M,C_N)\\
(f_{11},f_{22},f_{21})  \mapsto  i_N f_{11} p_M + j_N f_{21} p_M + j_N f_{22} s_M,
\end{array}
\end{equation*}
is a quasiisomorphism. But since $\Hom_\cD(M_1,N_2) = \cD(N_2,M_1) = \varphi(N_2,M_1)[-1]$
(and so the differentials of $\cD$ and $\varphi$ differ by a sign), it follows that the LHS
literally coincides with $\Hom_{\cD_1\times\varphi\cD_2}((M_1,M_2,\mu),(N_1,N_2,\nu))$ and we can consider
the above map as a definition of the functor $\alpha$ on morphisms. One can easily check that
this is compatible with the compositions, and so correctly defines a DG-functor.
And since we just checked that the morphism on $\Hom$-complexes is a quasiisomorphism, 
the functor $\alpha$ is quasi fully faithful. 
%
%
%
%
%
%
%
%

To show it is a quasiequivalence we have to check that any object of $\cD$ is homotopic to an object
in the image of $\alpha$. Take any $M$ in $\cD$ and let 
$$
M_1[-1] \to M_2 \to M \to M_1 \
$$
be its decomposition in $\cT$ with respect to the original semiorthogonal decomposition.
Let $\mu:M_1[-1] \to M_2$ be the morphism from that triangle. 
Then $\alpha(M_1,M_2,\mu) = \Cone(\mu)$ which is isomorphic to $M$ in $\cT = [\cD]$. 
Thus in $\cD$ there is a homotopy equivalence of $M$ and $\alpha(M_1,M_2,\mu)$.
\end{proof}

\subsection{Regluing}\label{sreglu}

Recall the notion of a partial categorical DG-resolution~\ref{pcdgr}.
Assume that $\cD_1,\cD_2$ are small pretriangulated DG-categories and
$\varphi\in (\cD_2^\op \otimes \cD_1)\dgm$ is a DG-bimodule.
Assume also $\tau_1 :\cD_1 \to \tilde\cD_1$ and $\tau_2 :\cD_2 \to \tilde\cD_2$ are 
partial categorical DG-resolutions and $\tilde\varphi \in (\tilde\cD_2^\op \otimes \tilde\cD_1)\dgm$
is another DG-bimodule. We will say that the bimodules $\varphi$ and $\tilde\varphi$ are {\sf compatible} if
a quasiisomorphism
\begin{equation}\label{compphi}
c:\varphi \xrightarrow{\ \cong\ } {}_{\tau_2}\tilde\varphi_{\tau_1}
\end{equation} 
is given. Consider the gluings $\cD = \cD_1\times_\varphi\cD_2$ and $\tilde\cD = \tilde\cD_1\times_{\tilde\varphi}\tilde\cD_2$.
We denote by $I_1,I_2$ the embedding functors of the semiorthogonal decomposition of $\bD(\cD)$ and by $\tilde I_1,\tilde I_2$
the embedding functors of the semiorthogonal decomposition of $\bD(\tilde\cD)$.

\begin{prop}\label{compglu}
If the bimodules $\varphi$ and $\tilde\varphi$ are compatible then 
there is a DG-functor $\tau:\cD_1\times_\varphi\cD_2 \to \tilde\cD_1\times_{\tilde\varphi}\tilde\cD_2$
which is a partial categorical DG-resolution, such that
\begin{enumerate}\renewcommand{\theenumi}{\alph{enumi}}
\item $\Res\tau_1\circ L\tilde\varphi \circ \LInd_{\tau_2} \cong L\varphi$, an isomorphism of functors $\bD(\cD_2) \to \bD(\cD_1)$;
\item $\tilde I_k\circ \LInd_{\tau_k} \cong \LInd_\tau\circ I_k$ for $k = 1,2$, an isomorphism of functors $\bD(\cD_k) \to \bD(\tilde\cD)$;
\item $\Res_{\tau_k}\circ \tilde I_k^! \cong I_k^!\circ\Res_\tau$ for $k = 1,2$, an isomorphism of functors $\bD(\tilde\cD) \to \bD(\cD_k)$;
\item $\Res_\tau\circ \tilde I_1 \cong I_1\circ\Res_{\tau_1}$, an isomorphism of functors $\bD(\tilde\cD_1) \to \bD(\cD)$;
\item if the canonical morphism $L\varphi \circ \Res_{\tau_2} \to \Res\tau_1\circ L\tilde\varphi$ of functors 
is an isomorphism then we have $\Res_\tau\circ \tilde I_2 \cong I_2\circ\Res_{\tau_2}$, an isomorphism of functors $\bD(\tilde\cD_2) \to \bD(\cD)$.
\end{enumerate}
%
%
\end{prop}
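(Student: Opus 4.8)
The plan is to write down the functor $\tau$ explicitly, deduce that it is a partial categorical DG-resolution from the semiorthogonal decomposition of a gluing (Corollary~\ref{sodglu}), and then obtain the isomorphisms (a)--(e) by elementary bimodule bookkeeping together with the structural results of Section~\ref{s-glu}. On objects put $\tau(M_1,M_2,\mu)=(\tau_1(M_1),\tau_2(M_2),c(\mu))$; here $c(\mu)\in{}_{\tau_2}\tilde\varphi_{\tau_1}(M_2,M_1)=\tilde\varphi(\tau_2(M_2),\tau_1(M_1))$ is again closed of degree $0$ since $c$ is a closed morphism of degree $0$. On morphisms put $\tau(f_1,f_2,f_{21})=(\tau_1(f_1),\tau_2(f_2),c(f_{21}))$. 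Comparing with~\eqref{dglu} and~\eqref{mglu}, this is a DG-functor: $\tau_1,\tau_2$ are DG-functors, and $c$, being a closed degree-$0$ morphism of bimodules, commutes with the differentials and with the left and right actions of morphisms of $\cD_1$ and $\cD_2$ (which on the target are performed through $\tau_1$ and $\tau_2$); it clearly preserves identities. By Lemma~\ref{dpt} both $\cD:=\cD_1\times_\varphi\cD_2$ and $\tilde\cD:=\tilde\cD_1\times_{\tilde\varphi}\tilde\cD_2$ are pretriangulated, and from~\eqref{i12} and~\eqref{iadj12} one reads off the identities of DG-functors $\tau\circ i_k=\tilde i_k\circ\tau_k$ and $\tilde i_1^*\circ\tau=\tau_1\circ i_1^*$, which will be used repeatedly.

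To see $\tau$ is a partial categorical DG-resolution I must check $[\tau]$ is fully faithful. By Corollary~\ref{sodglu}, $[\cD]=\langle[\cD_1],[\cD_2]\rangle$ and $[\tilde\cD]=\langle[\tilde\cD_1],[\tilde\cD_2]\rangle$; moreover $[\tau]$ maps the $k$-th component into the $k$-th component (as $[\tau]\circ i_k=\tilde i_k\circ[\tau_k]$), on which it coincides with the fully faithful functor $[\tau_k]$, and it respects the semiorthogonality $\Hom([\cD_2],[\cD_1])=0$. On the gluing complex one computes from~\eqref{hglu} and~\eqref{dglu} that $\Hom_\cD(i_1(M_1),i_2(M_2))\cong\varphi(M_2,M_1)[-1]$, and $[\tau]$ acts there by $c$ up to shift, hence induces isomorphisms $\Hom_{[\cD]}(i_1(M_1),i_2(M_2)[n])\cong H^{n-1}(\varphi(M_2,M_1))\xrightarrow{\ \sim\ }H^{n-1}({}_{\tau_2}\tilde\varphi_{\tau_1}(M_2,M_1))$ for all $n$, because $c$ is a quasiisomorphism. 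Representing an arbitrary $F\in[\cD]$ by its decomposition triangle $\bp_2(F)\to F\to\bp_1(F)$ and using that $[\tau]$ carries it to the decomposition triangle of $[\tau](F)$, the standard five-lemma argument for functors respecting semiorthogonal decompositions shows that $[\tau]$ is fully faithful; so $\tau$ is a partial categorical DG-resolution in the sense of Definition~\ref{pcdgr}.

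Properties (b) and (c) are formal: $\LInd_{(-)}$ is a covariant and $\Res_{(-)}$ a contravariant pseudofunctor, so using $\tau\circ i_k=\tilde i_k\circ\tau_k$ we get $\LInd_\tau\circ I_k=\LInd_\tau\circ\LInd_{i_k}\cong\LInd_{\tilde i_k\circ\tau_k}\cong\tilde I_k\circ\LInd_{\tau_k}$ and $\Res_{\tau_k}\circ\tilde I_k^!=\Res_{\tau_k}\circ\Res_{\tilde i_k}=\Res_{\tilde i_k\circ\tau_k}=\Res_{\tau\circ i_k}=I_k^!\circ\Res_\tau$. For (a), $L\tilde\varphi\circ\LInd_{\tau_2}(N)=(N\lotimes_{\cD_2}{}_{\tau_2}\tilde\cD_2)\lotimes_{\tilde\cD_2}\tilde\varphi\cong N\lotimes_{\cD_2}{}_{\tau_2}\tilde\varphi$ by the Yoneda reduction~\eqref{yoneltens}, and then $\Res_{\tau_1}$, which is evaluation at $\tau_1(-)$, turns this into $N\lotimes_{\cD_2}{}_{\tau_2}\tilde\varphi_{\tau_1}$; since $c$ is a quasiisomorphism and the derived tensor product preserves quasiisomorphisms, this is $\cong N\lotimes_{\cD_2}\varphi=L\varphi(N)$, naturally in $N$. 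For (d), the point is $I_1=\LInd_{i_1}\cong\Res_{i_1^*}$: the DG-adjunction $i_1^*\dashv i_1$ of Lemma~\ref{iadj}(i) gives a natural isomorphism of bimodules ${}_{i_1}\cD\cong(\cD_1)_{i_1^*}$, hence $\LInd_{i_1}(M_1)=M_1\lotimes_{\cD_1}{}_{i_1}\cD\cong M_1\lotimes_{\cD_1}(\cD_1)_{i_1^*}=M_1(i_1^*(-))=\Res_{i_1^*}(M_1)$ (the tensor product is underived since $(\cD_1)_{i_1^*}$ is representable, hence h-flat, as a left $\cD_1$-module; use~\eqref{yoneltens}), and likewise $\tilde I_1\cong\Res_{\tilde i_1^*}$; then $\Res_\tau\circ\tilde I_1\cong\Res_\tau\circ\Res_{\tilde i_1^*}=\Res_{\tilde i_1^*\circ\tau}=\Res_{\tau_1\circ i_1^*}=\Res_{i_1^*}\circ\Res_{\tau_1}\cong I_1\circ\Res_{\tau_1}$.

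For (e), set $F:=\Res_\tau(\tilde I_2(\tilde M_2))$ and feed it into the distinguished triangle~\eqref{I1s}, $F_{i_1}\to I_1^*F\to F_{i_2}\lotimes_{\cD_2}\varphi\to F_{i_1}[1]$. As for (c), $F_{i_2}=I_2^!F=\Res_{\tau_2}(\tilde I_2^!\tilde I_2(\tilde M_2))=\Res_{\tau_2}(\tilde M_2)$ since $\tilde I_2^!\tilde I_2\cong\id$; and $F_{i_1}=I_1^!F=\Res_{\tau_1}(\tilde I_1^!\tilde I_2(\tilde M_2))\cong\Res_{\tau_1}(L\tilde\varphi(\tilde M_2))[-1]$, because the gluing functor of $\bD(\tilde\cD)$ is $L\tilde\varphi=\tilde I_1^!\tilde I_2[1]$ (Proposition~\ref{derglu}). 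So~\eqref{I1s} becomes $\Res_{\tau_1}(L\tilde\varphi(\tilde M_2))[-1]\to I_1^*F\to L\varphi(\Res_{\tau_2}(\tilde M_2))\xrightarrow{\ g\ }\Res_{\tau_1}(L\tilde\varphi(\tilde M_2))$. The remaining step, which I expect to be the only genuinely delicate one, is to identify the boundary morphism $g$ with the canonical comparison morphism $L\varphi\circ\Res_{\tau_2}\to\Res_{\tau_1}\circ L\tilde\varphi$: this is a naturality computation tracing the construction of triangle~\eqref{I1s} (which encodes the gluing datum of $F$) back through the definition of $\tau$, i.e.\ through the quasiisomorphism $c$ of~\eqref{compphi}, and comparing it with the canonical $2$-cell assembled from $c$ and the adjunction (co)units. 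Granting the identification, the hypothesis of (e) makes $g$ an isomorphism, so $I_1^*F=0$; by the derived semiorthogonal decomposition $\bD(\cD)=\langle\bD(\cD_1),\bD(\cD_2)\rangle$ (Proposition~\ref{derglu}) this puts $F$ in $\bD(\cD_2)$, hence $F\cong I_2I_2^!F=I_2(\Res_{\tau_2}(\tilde M_2))$, functorially in $\tilde M_2$.
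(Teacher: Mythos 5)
Your proposal is correct, and for most parts it follows the paper's route: the DG-functor $\tau$ is defined by the same formula; parts (a), (b), (c) are established by the same bimodule bookkeeping; and the quasi full faithfulness of $\tau$ is, despite the different packaging, the same filtration argument (the paper works directly with the three-term decomposition of $\Hom_{\tilde\cD}(\tau M,\tau N)$ rather than phrasing it as a five-lemma at the level of the semiorthogonal decomposition). The one genuine divergence is part (d). The paper argues by semiorthogonality: it uses (c) and $\tilde I_2^!\tilde I_1 = 0$ to show $I_2^!\circ\Res_\tau\circ\tilde I_1 = 0$, infers that $\Res_\tau\circ\tilde I_1$ lands in the image of $I_1$, and then recovers the formula via $I_1 I_1^!$ and another application of (c). You instead observe that the DG left adjoint of $i_1$ gives a bimodule isomorphism ${}_{i_1}\cD \cong (\cD_1)_{i_1^*}$, whence $I_1 = \LInd_{i_1} \cong \Res_{i_1^*}$ (and likewise for $\tilde I_1$), and then (d) becomes a one-line consequence of the strict identity of DG-functors $\tilde\imath_1^*\circ\tau = \tau_1\circ i_1^*$, exactly parallel to (b) and (c). This is a cleaner and more uniform argument, exploiting the fact that $i_1$ has a DG-level adjoint on both sides (which is special to the gluing construction and has no analogue for $i_2$, explaining why (e) cannot be disposed of the same way). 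For (e) both proofs reduce to the triangle of Proposition~\ref{derglu} and identify $F_{i_1}$, $F_{i_2}$ via (c); you are honest in flagging that the identification of the boundary morphism with the canonical comparison morphism is left as a naturality check, whereas the paper passes over this point silently --- so the level of rigor here is the same.
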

\begin{proof}
First, we note that for any $(M_1,M_2,\mu) \in \cD_1\times_\varphi\cD_2$ we have
\begin{equation*}
c(\mu) \in {}_{\tau_2}\tilde\varphi_{\tau_1}(M_2,M_1) = \tilde\varphi(\tau_2(M_2),\tau_1(M_1)),
\end{equation*}
so $(\tau_1(M_1),\tau_2(M_2),c(\mu))$ is an object of the gluing $\tilde\cD_1\times_{\tilde\varphi}\tilde\cD_2$.
We define a DG-functor 
\begin{equation*}
\tau:\cD_1\times_\varphi\cD_2 \to \tilde\cD_1\times_{\tilde\varphi}\tilde\cD_2,
\qquad
(M_1,M_2,\mu) \mapsto (\tau_1(M_1),\tau_2(M_2),c(\mu)).
\end{equation*}
The fact that $\tau $ is a DG-functor is straightforward. Let us check that it is quasi fully faithful.
For this just note that $\Hom_{\tilde\cD}(\tau M,\tau N)$ equals to
\begin{equation*}
\Hom_{\tilde\cD_1}(\tau_1 M_1,\tau_1 N_1) \oplus
\Hom_{\tilde\cD_2}(\tau_2 M_2,\tau_2 N_2) \oplus
\tilde\varphi(\tau_2 N_2,\tau_1 M_1)[-1],
\end{equation*}
with appropriate differential.
The first summand here is quasiisomorphic to $\Hom_{\cD_1}(M_1,N_1)$ since $\tau_1 $ is quasi fully faithful,
the second summand is quasiisomorphic to $\Hom_{\cD_2}(M_2,N_2)$ since $\tau_2 $ is quasi fully faithful,
and the third summand is nothing but ${}_{\tau_2 }\tilde\varphi_{\tau_1 }(N_2,M_1)[-1]$ which is quasiisomorphic
via the morphism $c$ to $\varphi(N_2,M_1)[-1]$. Hence $\tau $ is quasi fully faithful.

It remains to verify properties (a)--(d). The first is clear:
\begin{equation*}
\Res_{\tau_1}(L\tilde\varphi(\LInd_{\tau_2}(M_2))) =
M_2\lotimes_{\cD_2}{}_{\tau_2}\tilde\cD_2\lotimes_{\tilde\cD_2}\tilde\varphi_{\tau_1} \cong
M_2\lotimes_{\cD_2}{}_{\tau_2}\tilde\varphi_{\tau_1} \cong
M_2\lotimes_{\cD_2}\varphi =
L\varphi(M_2),
\end{equation*}
hence the required isomorphism of functors.

Let $\tilde\imath_k:\tilde\cD_k \to \tilde\cD$ be the embedding DG-functor. Then
$\tilde\imath_k\circ\tau_k = \tau\circ i_k$ by definition of~$\tau$. This implies an isomorphism of the induction
functors $\LInd_{\tilde\imath_k}\circ\LInd_{\tau_k} \cong \LInd_\tau\circ\LInd_{i_k}$ which is precisely (b)
by definition of $I_k$ and $\tilde I_k$. Note also that (c) follows from (b) by passing to the right adjoint functors.

Further, let us prove (d). First consider the composition $I_2^!\circ\Res_\tau\circ\tilde I_1$. By (c) it is isomorphic
to $\Res_{\tau_2}\circ \tilde I_2^!\circ \tilde I_1$. But $\tilde I_2^!\circ\tilde I_1 = 0$ by semiorthogonality 
of $\bD(\tilde\cD_1)$ and $\bD(\tilde\cD_2)$ in $\bD(\tilde\cD)$. Thus we conclude that $I_2^!\circ\Res_\tau\circ\tilde I_1 = 0$.
It follows from the semiorthogonal decomposition of $\bD(\cD)$ that $\Res_\tau\circ\tilde I_1$ is in the image of $I_1$, hence
\begin{equation*}
\Res_\tau\circ\tilde I_1 =
I_1\circ I_1^!\circ\Res_\tau\circ\tilde I_1 \cong
I_1\circ \Res_{\tau_1}\circ\tilde I_1^!\circ\tilde I_1 \cong
I_1\circ \Res_{\tau_1},
\end{equation*}
the first is since $I_1$ is fully faithful, the second is by (c), the third is since $\tilde I_1$ is fully faithful.
%
%
%
%

Finally, let us prove (e). For this first note that
\begin{equation*}
I_2^!\circ\Res_\tau\circ\tilde I_2 \cong 
\Res_{\tau_2}\circ\tilde I_2^!\circ \tilde I_2 \cong 
\Res_{\tau_2}
\end{equation*}
by part (c) and full faithfulness of $\tilde I_2$. Therefore for any $N_2 \in \bD(\tilde\cD_2)$
the component of $\Res_\tau(\tilde I_2(N_2))$ in $\bD(\cD_2)$ equals $\Res_{\tau_2}(N_2)$. Hence its decomposition triangle 
looks as
\begin{equation*}
I_2(\Res_{\tau_2}(N_2)) \to \Res_\tau(\tilde I_2(N_2)) \to I_1(M_1)
\end{equation*}
for some $M_1 \in \bD(\tilde\cD_1)$. It remains to show that $M_1 = 0$. For this we apply the functor $I_1^!$ to the above triangle.
Since $I_1$ is fully faithful we get
\begin{equation*}
I_1^!(I_2(\Res_{\tau_2}(N_2))) \to I_1^!(\Res_\tau(\tilde I_2(N_2))) \to M_1.
\end{equation*}
Note that $I_1^!\circ I_2 \cong L\varphi[-1]$ by Proposition~\ref{derglu}, hence the first term is $L\varphi(\Res_{\tau_2}(N_2))[-1]$. 
On the other hand, $I_1^!\circ\Res_\tau\circ\tilde I_2 \cong \Res_{\tau_1}\circ\tilde I_1^!\circ\tilde I_2 \cong \Res_{\tau_1}\circ L\tilde\varphi[-1]$
(the first is by part (c) and the second again by Proposition~\ref{derglu}). So, if the canonical morphism 
$L\varphi\circ\Res_{\tau_2} \to \Res_{\tau_1}\circ L\tilde\varphi$ is an isomorphism then $M_1 = 0$ and we are done.
\end{proof}

\begin{remark}\label{parte}
Note that if $\tau_2 = \id$ then the condition of part (e) follows from (a). So, in this case the 
claim of part (e) holds.
\end{remark}

Now assume that $\varphi\in (\cD_2^\op \otimes \cD_1)\dgm$ is an h-projective bimodule 
(recall that by Lemma~\ref{qisqeq} we can replace the gluing bimodule
by its h-projective resolution without changing the quasiequivalence class
of the resulting category) and define the bimodule 
$\tilde\varphi \in (\tilde\cD_2^\op \otimes\tilde\cD_1)\dgm$ by
\begin{equation}\label{tiphi}
\tilde\varphi = \tilde\cD_{2\tau_2 } \otimes_{\cD_2} \varphi \otimes_{\cD_1} {}_{\tau_1 }\tilde\cD_1.
\end{equation}
Note that by definition of a DG-functor we have the canonical morphisms of bimodules
\begin{equation*}
\tau_1:\cD_1 \to {}_{\tau_1 }\tilde\cD_{1\tau_1 }
\qquad\text{and}\qquad
\tau_2:\cD_2 \to {}_{\tau_2 }\tilde\cD_{2\tau_2 }
\end{equation*}
which are quasiisomorphisms since $\tau_1 $ and $\tau_2 $ are quasi fully faithful.
Tensoring with $\varphi$ we obtain a morphism
\begin{equation*}
c:\varphi = \cD_2\otimes_{\cD_2}\varphi \otimes_{\cD_1} \cD_1 \xrightarrow{\ \tau_2\otimes 1\otimes \tau_1\ }
{}_{\tau_2 }\tilde\cD_{2\tau_2 } \otimes_{\cD_2} \varphi \otimes_{\cD_1} {}_{\tau_1 }\tilde\cD_{1\tau_1 } =
{}_{\tau_2 }\tilde\varphi_{\tau_1 }.
\end{equation*}
Note that since $\varphi$ is h-projective, this map is a quasiisomorphism, so $\tilde\varphi$ is compatible with~$\varphi$.
We call the gluing $\tilde\cD = \tilde\cD_1\times_{\tilde\varphi}\tilde\cD_2$ with $\tilde\varphi$ defined by~\eqref{tiphi}
the {\sf regluing} of $\cD$. By Proposition~\ref{compglu} the regluing $\tilde\cD$ is a partial categorical DG-resolution
of $\cD$. The following Proposition describes its properties.

\begin{prop}\label{reglu}
Let $\tilde\cD = \tilde\cD_1\times_{\tilde\varphi}\tilde\cD_2$ be the regluing of $\cD = \cD_1\times_{\varphi}\cD_2$. Then we have
\begin{enumerate}\renewcommand{\theenumi}{\alph{enumi}}
\item $L\tilde\varphi \cong \LInd_{\tau_1}\circ L\varphi\circ\Res_{\tau_2}$, an isomorphism of functors $\bD(\tilde\cD_2) \to  \bD(\tilde\cD_1)$;
\item $\Res_\tau\circ \tilde I_k \cong I_k\circ\Res_{\tau_1}$ for $k = 1,2$, an isomorphism of functors $\bD(\tilde\cD_k) \to \bD(\cD)$.
\end{enumerate}
%
%
\end{prop}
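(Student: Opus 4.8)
The plan is to prove part (a) by a direct manipulation of the defining formula~\eqref{tiphi} for $\tilde\varphi$, and then to deduce part (b) from Proposition~\ref{compglu} using (a). Recall that the quasiisomorphism $c:\varphi\xrightarrow{\ \cong\ }{}_{\tau_2}\tilde\varphi_{\tau_1}$ constructed just before the statement (using h-projectivity of $\varphi$) makes $\varphi$ and $\tilde\varphi$ compatible, so Proposition~\ref{compglu} applies to the DG-functor $\tau:\cD_1\times_\varphi\cD_2\to\tilde\cD$ and to the embedding functors $I_k$, $\tilde I_k$; moreover $[\tau_1]$ and $[\tau_2]$ are fully faithful, hence by Proposition~\ref{fpb} the functors $\LInd_{\tau_k}$ are fully faithful and $\Res_{\tau_k}\circ\LInd_{\tau_k}\cong\id$.

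For (a): for $N_2\in\bD(\tilde\cD_2)$ one computes, by the same kind of manipulation of derived tensor products as in the proof of Proposition~\ref{compglu}(a),
\[
L\tilde\varphi(N_2)=N_2\lotimes_{\tilde\cD_2}\tilde\varphi\cong
N_2\lotimes_{\tilde\cD_2}\tilde\cD_{2\tau_2}\lotimes_{\cD_2}\varphi\lotimes_{\cD_1}{}_{\tau_1}\tilde\cD_1 ,
\]
where the underived tensor products occurring in~\eqref{tiphi} compute the derived ones because $\varphi$ is h-flat on both sides (being h-projective) and $\tilde\cD_{2\tau_2}$, ${}_{\tau_1}\tilde\cD_1$ are representable in one of their two variables. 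Now $N_2\lotimes_{\tilde\cD_2}\tilde\cD_{2\tau_2}\cong\Res_{\tau_2}(N_2)$ --- this is the special case of $M\lotimes_{\cD}\cD_F\cong\Res_F(M)$ obtained from~\eqref{yoneltens} by testing on representables and extending by direct sums --- while $(-)\lotimes_{\cD_1}{}_{\tau_1}\tilde\cD_1=\LInd_{\tau_1}(-)$ by the very definition of the derived induction functor, and $\Res_{\tau_2}(N_2)\lotimes_{\cD_2}\varphi=L\varphi(\Res_{\tau_2}(N_2))$. Concatenating these identifications, which are natural in $N_2$, yields $L\tilde\varphi\cong\LInd_{\tau_1}\circ L\varphi\circ\Res_{\tau_2}$.

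For (b): the case $k=1$ is exactly Proposition~\ref{compglu}(d). For the case $k=2$ we invoke Proposition~\ref{compglu}(e), so we must check that the canonical morphism $L\varphi\circ\Res_{\tau_2}\to\Res_{\tau_1}\circ L\tilde\varphi$ is an isomorphism. Applying $\Res_{\tau_1}$ to the isomorphism of (a) and using $\Res_{\tau_1}\circ\LInd_{\tau_1}\cong\id$ produces an isomorphism $\Res_{\tau_1}\circ L\tilde\varphi\cong L\varphi\circ\Res_{\tau_2}$; what remains is to identify this with the \emph{canonical} comparison morphism. Unwinding the latter, it is induced on bimodules by the unit quasiisomorphism $\cD_1\to{}_{\tau_1}\tilde\cD_{1\tau_1}$ (a quasiisomorphism since $\tau_1$ is quasi fully faithful) together with $c$; testing it on a representable $\sY^X$ with $X\in\tilde\cD_2$, it becomes $\id\otimes\id\otimes(\cD_1\to{}_{\tau_1}\tilde\cD_{1\tau_1})$ on $\Res_{\tau_2}(\sY^X)\lotimes_{\cD_2}\varphi\lotimes_{\cD_1}(-)$, hence a quasiisomorphism; since both functors commute with direct sums and representables generate, the canonical morphism is an isomorphism, and Proposition~\ref{compglu}(e) applies.

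The main obstacle is this last point: confirming that the abstract isomorphism extracted from (a) actually agrees with the canonical comparison morphism required as hypothesis of Proposition~\ref{compglu}(e), rather than being merely \emph{some} isomorphism of the two functors. The remaining work --- the h-flatness assertions licensing the derived-tensor manipulations in (a), and the naturality of the identifications there --- is routine, being of exactly the same nature as in the proof of Proposition~\ref{compglu}.
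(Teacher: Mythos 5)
Your proposal follows the same route as the paper: part (a) by unwinding~\eqref{tiphi} into a chain of derived tensor products, and part (b) by citing Proposition~\ref{compglu}(d) for $k=1$ and Proposition~\ref{compglu}(e) for $k=2$ after establishing $\Res_{\tau_1}\circ L\tilde\varphi\cong L\varphi\circ\Res_{\tau_2}$ via (a) and $\Res_{\tau_1}\circ\LInd_{\tau_1}\cong\id$. Where you go beyond the paper is in flagging that Proposition~\ref{compglu}(e) asks for the \emph{canonical} comparison morphism $L\varphi\circ\Res_{\tau_2}\to\Res_{\tau_1}\circ L\tilde\varphi$ to be an isomorphism, whereas the paper only exhibits \emph{an} isomorphism between the two functors and declares the hypothesis satisfied; you then sketch why the constructed isomorphism indeed agrees with the canonical one (unwinding it to the unit map $\cD_1\to{}_{\tau_1}\tilde\cD_{1\tau_1}$ together with $c$, checking on representables, and extending by direct sums). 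That is a real subtlety, and your extra paragraph is the careful version of an argument the paper implicitly assumes; the conclusion is the same.
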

\begin{proof}
%
The first property is straightforward
\begin{multline*}
L\tilde\varphi(M) = 
M\lotimes_{\tilde\cD_2}\tilde\varphi =
M\lotimes_{\tilde\cD_2}(\tilde\cD_{2\tau_2}\lotimes_{\cD_2}\varphi\lotimes_{\cD_1}{}_{\tau_1}\tilde\cD_1) \cong \\ \cong
M_{\tau_2}\lotimes_{\cD_2}\varphi\lotimes_{\cD_1}{}_{\tau_1}\tilde\cD_1 \cong
\LInd_{\tau_1}(L\varphi(\Res_{\tau_2}(M)))
\end{multline*}
which is precisely the required formula. For (b) note that $k = 1$ case is given by Proposition~\ref{compglu}(d),
so it remains to consider the case $k = 2$. For this we note that
\begin{equation*}
\Res_{\tau_1}\circ L\tilde\varphi \cong
\Res_{\tau_1}\circ \LInd_{\tau_1}\circ L\varphi\circ \Res_{\tau_2} \cong
L\varphi\circ \Res_{\tau_2},
\end{equation*}
so the condition of Proposition~\ref{compglu}(e) is satisfied. Hence we have 
the required isomorphism by Proposition~\ref{compglu}(e).
\end{proof}

The following result is not needed for the rest of the paper, but we add it for completeness.

\begin{prop}\label{regluequi}
In the assumptions of Proposition~$\ref{compglu}$ the functor 
\begin{equation*}
\tau:\cD_1\times_\varphi\cD_2 \to \tilde\cD_1\times_{\tilde\varphi}\tilde\cD_2
\end{equation*}
is a quasiequivalence if and only if both $\tau_1$ and $\tau_2$ are quasiequivalences.
\end{prop}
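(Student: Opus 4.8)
The plan is to prove both implications directly from the construction of $\tau$. For the "if" direction, suppose $\tau_1$ and $\tau_2$ are quasiequivalences. We already know from Proposition~\ref{compglu} that $\tau$ is quasi fully faithful (this used only that $\tau_1,\tau_2$ are quasi fully faithful and that $c$ is a quasiisomorphism), so it remains to check that $\tau$ is homotopically essentially surjective. Take any object $(\tilde M_1,\tilde M_2,\tilde\mu)$ of $\tilde\cD_1\times_{\tilde\varphi}\tilde\cD_2$. Since $\tau_1$ is a quasiequivalence there is $M_1\in\cD_1$ with $\tau_1(M_1)$ homotopy equivalent to $\tilde M_1$, and similarly $M_2\in\cD_2$ with $\tau_2(M_2)$ homotopy equivalent to $\tilde M_2$. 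Transporting $\tilde\mu$ along these homotopy equivalences and using the identification ${}_{\tau_2}\tilde\varphi_{\tau_1}(M_2,M_1)=\tilde\varphi(\tau_2(M_2),\tau_1(M_1))$ together with the quasiisomorphism $c$, one obtains a closed degree-zero element $\mu\in\varphi(M_2,M_1)$ whose image $c(\mu)$ is homologous to the element of $\tilde\varphi(\tau_2(M_2),\tau_1(M_1))$ corresponding to $\tilde\mu$; then $(M_1,M_2,\mu)$ maps under $\tau$ to an object homotopy equivalent to $(\tilde M_1,\tilde M_2,\tilde\mu)$, by the same kind of argument as in the proof of Lemma~\ref{qisqeq} (a homotopy $\rho$ between $c(\mu)$ and $\tilde\mu$ produces mutually inverse morphisms $(\id,\id,\pm\rho)$). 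Hence $\tau$ is a quasiequivalence.

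For the "only if" direction, suppose $\tau$ is a quasiequivalence. The cleanest route is to recover $\tau_k$ from $\tau$ via the semiorthogonal decompositions. The DG-functors $i_k:\cD_k\to\cD$, $\tilde\imath_k:\tilde\cD_k\to\tilde\cD$ satisfy $\tilde\imath_k\circ\tau_k=\tau\circ i_k$, so on homotopy categories $[\tilde\imath_k][\tau_k]=[\tau][i_k]$. By Corollary~\ref{sodglu} the functors $[i_k]$ and $[\tilde\imath_k]$ identify $[\cD_k]$ and $[\tilde\cD_k]$ with the components of the semiorthogonal decompositions $[\cD]=\langle[\cD_1],[\cD_2]\rangle$ and $[\tilde\cD]=\langle[\tilde\cD_1],[\tilde\cD_2]\rangle$; in particular $[i_k]$ and $[\tilde\imath_k]$ are fully faithful. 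Since $[\tau]$ is an equivalence, $[\tau_k]=[\tilde\imath_k]^{-1}\circ[\tau]\circ[i_k]$ is fully faithful, so $\tau_k$ is quasi fully faithful. For essential surjectivity of $[\tau_k]$: given $\tilde M_k\in[\tilde\cD_k]$, regard it in $[\tilde\cD]$; as $[\tau]$ is essentially surjective there is $M\in[\cD]$ with $[\tau](M)\cong\tilde\imath_k(\tilde M_k)$. Now apply the adjoint functors of Lemma~\ref{iadj} to the other component: for $k=2$ use $i_1^*$ and the relation $\tilde\imath_1^*\circ\tau=\tau_1\circ i_1^*$ (which holds because $i_1^*$ and $\tilde\imath_1^*$ just read off the first component) to get $\tau_1(i_1^*M)\cong\tilde\imath_1^*(\tilde\imath_2(\tilde M_2))=0$; since $\tau_1$ is quasi fully faithful this forces $i_1^*M\cong 0$, i.e. $M$ lies in the essential image of $i_2$, say $M\cong i_2(M_2)$, and then $\tau_2(M_2)\cong\tilde M_k$. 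The case $k=1$ is symmetric using $i_2^!$ and $\tilde\imath_2^!$. Hence each $\tau_k$ is a quasiequivalence.

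The main obstacle is the essential-surjectivity bookkeeping in the "only if" direction: one must be careful that the projection functors $i_1^*$, $i_2^!$ (and their tilde counterparts) genuinely commute with $\tau$ on the nose — this is immediate from the explicit formulas \eqref{iadj12} since $\tau$ acts componentwise on objects — and that "$\tau_1(i_1^*M)$ is null-homotopic" really does imply "$i_1^*M$ is null-homotopic", which is where quasi full faithfulness of $\tau_1$ (already established) is used. Everything else is a routine transport-of-structure argument modeled on the proof of Lemma~\ref{qisqeq}.
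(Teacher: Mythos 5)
Your proof is correct. The ``if'' direction reproduces the paper's argument essentially verbatim: find $M_1,M_2$ up to homotopy, transport $\tilde\mu$ through the chain of quasiisomorphisms back to a closed $\mu\in\varphi(M_2,M_1)$, and invoke the $(\id,\id,\pm\rho)$ trick from Lemma~\ref{qisqeq}.

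For the ``only if'' direction you take a genuinely different route. The paper simply applies essential surjectivity of $\tau$ to the object $(\tilde M_1,0,0)\in\tilde\cD$, obtains a preimage $(M_1,M_2,\mu)$, and reads off that the first component of a homotopy equivalence of triples is itself a homotopy equivalence $\tau_1(M_1)\simeq\tilde M_1$ (this last step uses that the differential~\eqref{dglu} acts componentwise on the first two coordinates of a morphism, a point the paper passes over with ``by definition of the gluing''). You instead route the argument through the semiorthogonal projection functors: apply essential surjectivity of $\tau$ to $\tilde\imath_k(\tilde M_k)$, use the on-the-nose commutation $\tilde\imath_1^*\circ\tau=\tau_1\circ i_1^*$ and $\tilde\imath_2^!\circ\tau=\tau_2\circ i_2^!$ to show the complementary component of the preimage $M$ is null-homotopic (here quasi full faithfulness of the $\tau_k$, already given in the hypotheses of Proposition~\ref{compglu}, is essential), so $M$ lies in the image of $i_k$ by the SOD triangle $i_2i_2^!M\to M\to i_1i_1^*M$, and then the other projection recovers the required preimage. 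Your version is more systematic and avoids any hand-waving about homotopies of triples, at the modest cost of deploying Lemma~\ref{iadj} and Corollary~\ref{sodglu}; the paper's version is shorter but buries the componentwise-homotopy observation in the phrase ``by definition.'' Both are valid, and everything you assert can be justified from the formulas~\eqref{iadj12}, \eqref{i1i2} and the triangle~\eqref{glutri}.
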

\begin{proof}
Assume both $\tau_1$ and $\tau_2$ are quasiequivalences. Since we already know that $\tau$ is quasi fully faithful,
it only suffices to check that it is essentially surjective on the homotopy categories. So, let
$(\tilde M_1,\tilde M_2,\tilde\mu)$ be an object of $\tilde\cD_1\times_{\tilde\varphi}\tilde\cD_2$.
Since $\tau_1$ and $\tau_2$ are quasiequivalences there are objects $M_1 \in \cD_1$ and $M_2 \in \cD_2$
such that $\tilde M_1$ is homotopic to $\tau_1(M_1)$ and $\tilde M_2$ is homotopic to $\tau_2(M_2)$.
The above homotopies induce a homotopy equivalence of $\tilde\varphi(\tilde M_2,\tilde M_1)$ 
and $\tilde\varphi(\tau_2(M_2),\tau_1(M_1))$. The latter by assumption is quasiisomorphic
to $\varphi(M_2,M_1)$.
Thus, we can find
a closed element of zero degree $\mu \in \varphi(M_2,M_1)$ such that $c(\mu)$ goes to $\tilde\mu$
under the homotopy equivalence of the complexes mentioned above. It is quite easy to check then
that $\tau(M_1,M_2,\mu)$ is homotopic to $(\tilde M_1,\tilde M_2,\tilde\mu)$, so $\tau$
is essentially surjective on the homotopy categories.

Vice versa, assume $\tau$ is a quasiequivalence. We have to check that both $\tau_1$ and $\tau_2$
are essentially surjective on homotopy categories. Take any $\tilde M_1 \in \tilde\cD_1$.
Then $(\tilde M_1,0,0)$ is an object of $\tilde\cD_1\times_{\tilde\varphi}\tilde\cD_2$.
Hence there is an object $(M_1,M_2,\mu)$ of $\cD_1\times_\varphi\cD_2$ such that 
$\tau(M_1,M_2,\mu)$ is homotopic to $(\tilde M_1,0,0)$. By definition of the gluing it follows 
that $\tilde M_1$ is homotopic to $\tau_1(M_1)$. Thus $\tau_1$ is essentially surjective
on homotopy categories. Analogously one considers $\tau_2$.
%
\end{proof}


\section{Partial categorical resolution of a nonreduced scheme}\label{s-nred}

The goal of this section is to construct a categorical resolution
for a nonreduced scheme for which the corresponding reduced scheme is smooth. 
Our construction is inspired by the result of Auslander~\cite{A}.

\subsection{$\csA$-modules}

Let $S$ be a separated nonreduced scheme of finite type over $\kk$. Let $\fr \subset \cO_S$ 
be a nilpotent sheaf of ideals. Choose an integer $n$ such that 
$$
\fr^n = 0.
$$
Note that we do not require $n$ to be the minimal integer with this property.
In particular we can take $\fr = 0$ and arbitrary $n \ge 1$.

Consider a sheaf of noncommutative algebras
\begin{equation}\label{cadef}
\csA = \csA_{S,\fr,n} :=
\begin{pmatrix}
\cO_S & \fr & \fr^2 & \dots & \fr^{n-1} \\
\cO_S/\fr^{n-1} & \cO_S/\fr^{n-1} & \fr/\fr^{n-1} & \dots & \fr^{n-2}/\fr^{n-1} \\
\cO_S/\fr^{n-2} & \cO_S/\fr^{n-2} & \cO_S/\fr^{n-2} & \dots & \fr^{n-3}/\fr^{n-2} \\
\vdots & \vdots & \vdots & \ddots & \vdots \\
\cO_S/\fr & \cO_S/\fr & \cO_S/\fr & \dots & \cO_S/\fr 
\end{pmatrix}
\end{equation}
with the algebra structure induced by the embedding
\begin{equation}\label{caemb}
\csA_{S,\fr,n}
\subset
\End_{\cO_S}(\cO_S \oplus \cO_S/\fr^{n-1} \oplus \cO_S/\fr^{n-2} \oplus \dots \oplus \cO_S/\fr ).
\end{equation}
In other words, $\csA_{ij} = \fr^{\max(j-i,0)}/\fr^{n+1-i}$ and
the multiplication is the map
\begin{equation*}
\csA_{ij}\otimes\csA_{jk} = (\fr^{\max(j-i,0)}/\fr^{n+1-i}) \otimes (\fr^{\max(k-j,0)}/\fr^{n+1-j}) \to \fr^{\max(k-i,0)}/\fr^{n+1-i} = \csA_{ik}
\end{equation*}
induced by the multiplication map $\fr^a\otimes\fr^b \to \fr^c$ which is defined for all $c \le a+b$ --- we note that in our case
this condition is satisfied since
\begin{equation*}
\max(k-i,0) \le \max(j-i,0) + \max(k-j,0).
\end{equation*}
Also note that 
\begin{equation*}
\max(j-i,0)+(n+1-j) \ge n+1-i
\quad\text{and}\quad
\max(k-j,0) + (n+1-i) \ge n+1-i, 
\end{equation*}
so the above map descends to a map of the quotients.

We will denote 
\begin{equation}\label{ei}
\epsilon_i = 1 \in \csA_{ii} = \cO_S/\fr^{n+1-i}.
\end{equation}
Note that $\epsilon_1$, \dots, $\epsilon_n$ is a system of orthogonal idempotents with
\begin{equation*}
\epsilon_1 + \dots + \epsilon_n = 1.
\end{equation*}

The algebra $\csA = \csA_{S,\fr,n}$ will be called {\sf the Auslander algebra} ($\csA$ is for Auslander!).
The scheme $S$ will be referred to as the {\sf underlying scheme}, the ideal $\fr$ as {\sf the defining ideal} 
and the integer $n$ as {\sf the width} of the Auslander algebra.

\begin{remark}
Auslander in his paper~\cite{A} in case of a zero-dimensional scheme $S$ considered 
the algebra~$\End_{\cO_S}(\cO_S \oplus \cO_S/\fr^{n-1} \oplus \cO_S/\fr^{n-2} \oplus \dots \oplus \cO_S/\fr )$
and showed that it has finite global dimension. One can show (see section~\ref{ss-gldim}) that a similar statement
is true for the algebra $\csA$ for the scheme $S$ of arbitrary dimension as soon as the reduced scheme $S_\red$
is smooth (note that this holds automatically if $S$ is zero-dimensional).
Moreover, we show in section~\ref{ss-ausgen} that there is a more general class of generalized Auslander algebras which have 
similar properties. Now we just note that the algebra we consider coincides with the original Auslander algebra
if and only if $\fr^a:\fr^b = \fr^{a-b}$ for all $a > b$ (the LHS is the ideal of all elements multiplication by which
takes $\fr^b$ to $\fr^a$).
\end{remark}

In what follows we always consider {\em right} $\csA$-modules, unless the opposite is specified.

An $\csA$-module is called {\sf quasicoherent} ({\sf coherent}) if it is such as an $\cO$-module.
We denote by $\Qcoh(\csA)$ the category of quasicoherent $\csA$-modules on $S$ and by $\coh(\csA)$ 
its subcategory of coherent $\csA$-modules.

\begin{remark}\label{oa}
Note that taking $\fr = 0$ and $n = 1$ we obtain 
\begin{equation*}
\csA_{S,0,1} \cong \cO_S. 
\end{equation*}
Thus the category of $\cO_S$-modules is just a very special case of the category of $\csA$-modules.
\end{remark}

%
%
%

\begin{example}\label{dn}
A simple but rather instructive example is the case of $S = \Spec \kk[t]/t^2$, $\fr = t\kk[t]/t^2$, $n = 2$.
In this case the category $\Qcoh(\csA_S)$ is the category of representations of the quiver with relation
\begin{equation*}
\xymatrix@1@C=40pt{\bullet \ar@<-.5ex>[r]_\beta & \bullet \ar@<-.5ex>[l]_\alpha},
\qquad \beta\alpha = 0
\end{equation*} 
and $\csA_{S,\fr,2}$ is just the path algebra of this quiver. On the other hand, if we take the same $S$, $\fr$,
and choose $n = 3$ we will get a quiver
\begin{equation*}
\xymatrix@1@C=40pt{\bullet \ar@<-.5ex>[r]_{\beta_2} & \bullet \ar@<-.5ex>[l]_{\alpha_2} \ar@<-.5ex>[r]_{\beta_1} & \bullet \ar@<-.5ex>[l]_{\alpha_1}},
\qquad \beta_2\alpha_2 = \alpha_1\beta_1,\ \beta_1\alpha_1 = 0,\ \beta_1\beta_2 = 0
\end{equation*} 
and $\csA_{S,\fr,3}$ is again the path algebra of this quiver.
\end{example}

\subsection{The category of $\csA$-spaces}\label{ss-afun}

An {\sf $\csA$-space} is a triple $(S,\fr_S,n_S)$, where $S$ is a scheme, $\fr$ is a nilpotent ideal in $\cO_S$ 
and $n$ is an integer such that $\fr^n = 0$. An $\csA$-space $(S,\fr_S,n_S)$ can be thought of as a ringed space 
with the underlying topological space of $S$ and the sheaf of (noncommutative) rings $\csA_S = \csA_{S,\fr,n}$. 
Sometimes we will denote an $\csA$-space $(S,\fr_S,n_S)$ simply by $(S,\cA_S)$.
With any $\csA$-space we associate the category $\Qcoh(\csA)$ of quasicoherent sheaves of right $\csA$-modules.




There are several ways how one can define morphisms of $\csA$-spaces. 
Here we will use the simplest possible definition.
More general definition will be discussed in section~\ref{ss-genmaps}.

Assume that $(S,\fr_S,n_S)$ and $(T,\fr_T,n_T)$ is a pair of $\csA$-spaces
and let $f:T \to S$ be a morphism of underlying schemes.
Assume that the defining ideals $\fr_T$ and $\fr_S$ and the widths $n_T$ and $n_S$
are compatible in the following sense:
\begin{equation}\label{rst}
f^{-1}(\fr_S) \subset \fr_T
\quad\text{and}\quad
n_S \le n_T.
\end{equation} 
It follows from~\eqref{rst} that $f^{-1}(\fr_S^i) \subset \fr_T^i$ for all $i$.
Therefore the morphism of schemes induces maps
\begin{equation*}
f^\bull(\fr_S^i/\fr_S^j) = f^\bull(\fr_S^i)/f^\bull(\fr_S^j) \to  \fr_T^i/\fr_T^j
\end{equation*}
for all $i$ and $j$ which are compatible with the multiplication maps.
Combining all these together we obtain a morphism of sheaves of algebras $f^\bull(\csA_S) \to \csA_T$
which takes the unit of the first to the idempotent $\epsilon_1 + \dots + \epsilon_{n_S}$ of the second.
In particular, $(\epsilon_1 + \dots + \epsilon_{n_S})\csA_T$
has a natural structure of a $f^{-1}\csA_S$-$\csA_T$-bimodule which is projective over $\csA_T$.


We define the pullback and the pushforward functors with respect
to a morphism $f$ by
\begin{equation}\label{ffpp}
\begin{array}{lll}
f^*: & \Qcoh(\csA_S) \to \Qcoh(\csA_T), \quad &
M \mapsto f^\bull M\otimes_{f^\bull\csA_S}(\epsilon_1 + \dots + \epsilon_{n_S})\csA_T,\\
f_*: & \Qcoh(\csA_T) \to \Qcoh(\csA_S), \quad & 
N \mapsto (f_*N)(\epsilon_1 + \dots + \epsilon_{n_S}).
\end{array}
\end{equation}

\begin{lemma}\label{apbpf}
The functor $f^*$ is right exact and the functor $f_*$ is left exact.
Moreover, the functor $f^*$ is the left adjoint of the functor $f_*$.
\end{lemma}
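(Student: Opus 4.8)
The statement is the standard pattern for pullback/pushforward along a morphism of ringed spaces, adapted to the noncommutative rings $\csA_S$, $\csA_T$ and the bimodule $P := (\epsilon_1 + \dots + \epsilon_{n_S})\csA_T$, which as noted before the lemma is a $f^{-1}\csA_S$-$\csA_T$-bimodule that is projective (in particular flat) as a right $\csA_T$-module. So the plan is to factor both functors through the classical sheaf-theoretic adjunction $(f^{-1}, f_*)$ and then handle the extra tensor-product layer by the tensor-Hom adjunction.

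First I would check the exactness statements. For $f^*$: the functor $M \mapsto f^\bull M$ (i.e. $f^{-1}M \otimes_{f^{-1}\cO_S}\cO_T$, here $f^\bull$ applied to $\csA_S$-modules meaning $f^{-1}(-) \otimes_{f^{-1}\csA_S} \cdots$ — but actually it is cleaner to note $f^\bull M = f^{-1}M \otimes_{f^{-1}\cO_S}\cO_T$ with its induced $f^{-1}\csA_S$-action, hence $f^\bull M \otimes_{f^\bull\csA_S} P \cong f^{-1}M \otimes_{f^{-1}\csA_S} P$) is a composition of $f^{-1}$, which is exact, and $-\otimes_{f^{-1}\csA_S} P$; the latter need not be exact in general, but it is right exact since tensor product is always right exact. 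Hence $f^*$ is right exact. For $f_*$: the functor $N \mapsto f_* N$ on sheaves is left exact, and $-\cdot(\epsilon_1+\dots+\epsilon_{n_S})$ is just the functor $N \mapsto N e$ for an idempotent $e = \epsilon_1+\dots+\epsilon_{n_S}$ viewed inside $\csA_T$ acting on the right; but more precisely $f_* N$ is an $f^\bull \csA_S$-module via the algebra map $f^\bull\csA_S \to f_*\csA_T$... I would instead observe directly that $N \mapsto (f_*N)e$ is the composition $\Qcoh(\csA_T) \to \Qcoh(\csA_T) \xrightarrow{f_*} \Qcoh(f^{-1}\csA_S\text{-mod})$, or even more simply note it equals $\Hom_{\csA_T}(P, f_*(-))$ up to the identification below, so left-exactness is inherited from $f_*$ being left exact; restriction/multiplication by an idempotent is exact, so the composition is left exact.

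The adjunction is the main content. I would proceed in two steps. Step one: the classical adjunction gives, for $M \in \Qcoh(\csA_S)$ and $N \in \Qcoh(\csA_T)$, a natural isomorphism $\Hom_{f^{-1}\csA_S}(f^{-1}M, \Res\, N) \cong \Hom_{\csA_S}(M, f_* N)$, where $\Res\, N$ denotes $f_* N$ viewed as an $f^{-1}\csA_S$-module via the structure map $f^{-1}\csA_S \to f_*\csA_T$ and the $\csA_T$-module structure of $N$; this is the $(f^{-1}, f_*)$-adjunction on sheaves of modules over sheaves of rings, which is standard. Step two: the tensor-Hom (extension-restriction of scalars) adjunction along the algebra map $f^{-1}\csA_S \to \csA_T$ — or rather along the bimodule $P$ — gives $\Hom_{\csA_T}(f^{-1}M \otimes_{f^{-1}\csA_S} P,\ N) \cong \Hom_{f^{-1}\csA_S}(f^{-1}M,\ \Hom_{\csA_T}(P, N))$. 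Finally I would identify $\Hom_{\csA_T}(P, N) = \Hom_{\csA_T}((\epsilon_1+\dots+\epsilon_{n_S})\csA_T, N) \cong N(\epsilon_1+\dots+\epsilon_{n_S}) = \Res\,N$ as $f^{-1}\csA_S$-modules (evaluation at the idempotent; this is the analogue of $\Hom_R(eR, N) \cong Ne$ for an idempotent $e$). Composing the isomorphisms of steps one and two and using $f^* M = f^\bull M \otimes_{f^\bull\csA_S} P \cong f^{-1}M\otimes_{f^{-1}\csA_S} P$ yields
\[
\Hom_{\csA_T}(f^* M, N) \cong \Hom_{\csA_S}(M, f_* N),
\]
natural in both arguments, which is the claim.

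The only genuinely delicate point is bookkeeping: making sure the $f^{-1}\csA_S$-module structure produced by the sheaf-level adjunction on $f_* N$ agrees with the one produced by $\Hom_{\csA_T}(P, N)$ via the idempotent, and that the map $f^\bull\csA_S \to \csA_T$ carrying $1$ to $\epsilon_1+\dots+\epsilon_{n_S}$ makes everything $\cO_T$-linear and compatible with the multiplication maps on the strata $\fr^i/\fr^j$ — but all of this is exactly the content already assembled in the paragraph preceding the lemma (condition~\eqref{rst} and the induced algebra morphism), so it is routine verification rather than a real obstacle. I would therefore expect the proof to be short: cite the classical $(f^{-1},f_*)$ adjunction, add the one-line idempotent computation $\Hom_{\csA_T}(e\csA_T, N)\cong Ne$, and invoke right/left exactness of $\otimes$ and $f_*$ for the exactness claims.
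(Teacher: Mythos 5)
Your proposal is correct and matches the paper's proof essentially verbatim: both factor the adjunction through the classical $(f^{-1},f_*)$ adjunction and the tensor--Hom adjunction, use the idempotent identification $\cHom_{\csA_T}((\epsilon_1+\dots+\epsilon_{n_S})\csA_T,N)\cong N(\epsilon_1+\dots+\epsilon_{n_S})$, and derive the exactness statements from exactness of $f^{-1}$, right exactness of $\otimes$, left exactness of $f_*$, and exactness of multiplication by an idempotent. The extra care you take in reconciling $f^\bull$ with $f^{-1}$ is a sensible bookkeeping remark but does not constitute a different route.
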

\begin{proof}
The functor $f^*$ is the composition of the exact functor $f^{-1}$ and the right exact functor of tensor product,
so it is right exact. Analogously, the functor $f_*$ is the composition of the 
left exact functor $f_*$ and of the exact functor $N \mapsto N(\epsilon_1 + \dots + \epsilon_{n_S})$. 
Finally, the adjunction is classical
\begin{multline*}
\Hom_{\csA_T}(f^*M,N) =
\Hom_{\csA_T}(f^{-1}M\otimes_{f^{-1}\csA_S}(\epsilon_1 + \dots + \epsilon_{n_S})\csA_T,N) \cong \\ \cong
\Hom_{f^{-1}\csA_S}(f^{-1}M,\cHom_{\csA_T}((\epsilon_1 + \dots + \epsilon_{n_S})\csA_T,N)) \cong 
\\ \cong
\Hom_{f^{-1}\csA_S}(f^{-1}M,N(\epsilon_1 + \dots + \epsilon_{n_S})) \cong
\Hom_{\csA_S}(M,f_*N(\epsilon_1 + \dots + \epsilon_{n_S})).
\end{multline*}
Here the first isomorphism is the classical adjunction of $\cHom$ and $\otimes$, 
the second is evident, and the third is the classical adjunction of $f^{-1}$ and $f_*$.
\end{proof}

\begin{lemma}\label{ppgf}
Assume that
\begin{equation*}
(U,\fr_U,n_U) \xrightarrow{\ g\ } (T,\fr_T,n_T) \xrightarrow{\ f\ } (S,\fr_S,n_S)
\end{equation*}
are morphisms of $\csA$-spaces. 
Then there is an isomorphism of functors $g^*\circ f^* \cong (f\circ g)^*$ from $\Qcoh(\csA_S)$ to $\Qcoh(\csA_U)$.
\end{lemma}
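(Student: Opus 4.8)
The plan is to unwind the definition~\eqref{ffpp} of the pullback functors and to reduce the desired isomorphism $g^*\circ f^*\cong(f\circ g)^*$ to a single ``cocycle'' compatibility between the algebra morphisms underlying $f^*$, $g^*$ and $(f\circ g)^*$. Write $\phi_f\colon f^\bull\csA_S\to\csA_T$ for the morphism of sheaves of algebras constructed just before~\eqref{ffpp}: it is built on each $(i,j)$-component from the canonical maps $f^\bull(\fr_S^a/\fr_S^b)\to\fr_T^a/\fr_T^b$, and it carries the unit to the idempotent $e_f:=\epsilon_1+\dots+\epsilon_{n_S}\in\csA_T$. Thus the bimodule $(\epsilon_1+\dots+\epsilon_{n_S})\csA_T$ appearing in~\eqref{ffpp} is $e_f\csA_T$ with left $f^\bull\csA_S$-action $a\cdot x=\phi_f(a)x$, and — since $f^\bull M$ is a unital module — one has $f^*M\cong f^\bull M\otimes_{f^\bull\csA_S}{}_{\phi_f}\csA_T$, where ${}_{\phi_f}\csA_T$ denotes $\csA_T$ regarded as an $f^\bull\csA_S$-$\csA_T$-bimodule via $\phi_f$ on the left. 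Let $\phi_g\colon g^\bull\csA_T\to\csA_U$ and $\phi_{fg}\colon(f\circ g)^\bull\csA_S\to\csA_U$ be the analogous morphisms for $g$ and for $f\circ g$.

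First I would invoke the two standard formal properties of the topological inverse image: a natural ring-compatible isomorphism $g^\bull\circ f^\bull\cong(f\circ g)^\bull$, and the fact that $g^\bull$ is strong monoidal for the relative tensor product, i.e.\ $g^\bull(A\otimes_R B)\cong g^\bull A\otimes_{g^\bull R}g^\bull B$ naturally, for a sheaf of rings $R$ on $T$, a right $R$-module $A$ and a left $R$-module $B$. Applying the monoidal property to $f^*M=f^\bull M\otimes_{f^\bull\csA_S}{}_{\phi_f}\csA_T$, then tensoring by ${}_{\phi_g}\csA_U$ over $g^\bull\csA_T$ and using associativity of the tensor product, I obtain a natural isomorphism
\[
g^*(f^*M)\ \cong\ (f\circ g)^\bull M\ \otimes_{(f\circ g)^\bull\csA_S}\ \Big({}_{g^\bull\phi_f}\big(g^\bull\csA_T\big)\otimes_{g^\bull\csA_T}{}_{\phi_g}\csA_U\Big),
\]
where ${}_{g^\bull\phi_f}(g^\bull\csA_T)$ is $g^\bull\csA_T$ with left $(f\circ g)^\bull\csA_S$-structure via $g^\bull(\phi_f)$. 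Comparing this with $(f\circ g)^*M\cong(f\circ g)^\bull M\otimes_{(f\circ g)^\bull\csA_S}{}_{\phi_{fg}}\csA_U$, the proof is reduced to establishing an isomorphism of $(f\circ g)^\bull\csA_S$-$\csA_U$-bimodules
\[
{}_{g^\bull\phi_f}\big(g^\bull\csA_T\big)\otimes_{g^\bull\csA_T}{}_{\phi_g}\csA_U\ \cong\ {}_{\phi_{fg}}\csA_U .
\]

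As a right $\csA_U$-module this is immediate, since the left-hand side is $g^\bull\csA_T\otimes_{g^\bull\csA_T}\csA_U\cong\csA_U$. What must be checked is that the two left $(f\circ g)^\bull\csA_S$-actions coincide, i.e.\ that
\[
\phi_{fg}\ =\ \phi_g\circ g^\bull(\phi_f)
\]
after identifying $(f\circ g)^\bull\csA_S$ with $g^\bull f^\bull\csA_S$. This is a routine verification on the components: both sides are the map induced on the $\fr^a/\fr^b$-type pieces by the inclusions $(f\circ g)^\bull(\fr_S)\subset\fr_U$, which factor through $g^\bull(f^\bull\fr_S)\subset g^\bull\fr_T\subset\fr_U$, and one uses the width inequalities $n_S\le n_T\le n_U$ to see that the truncations defining $\csA_S,\csA_T,\csA_U$ match up. Granting this, combining the displayed isomorphisms gives $g^*(f^*M)\cong(f\circ g)^*M$; and since every step used — the identification $g^\bull f^\bull\cong(f\circ g)^\bull$, the monoidal structure map, and associativity — is natural in $M$, this is an isomorphism of functors.

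I expect the only genuinely non-formal point to be the cocycle identity $\phi_{fg}=\phi_g\circ g^\bull(\phi_f)$, a short diagram chase through the $(i,j)$-components and the nested ideal inclusions; everything else is bookkeeping with inverse images and relative tensor products. An essentially equivalent variant avoids the ${}_{\phi}$-notation: compute $g^\bull(e_f\csA_T)=g^\bull(e_f)\cdot g^\bull\csA_T$ and note that $\phi_g$ sends the idempotent $g^\bull(e_f)$ to $e_{fg}=\epsilon_1+\dots+\epsilon_{n_S}\in\csA_U$, with $e_{fg}e_g=e_{fg}$ because $n_S\le n_T$, so the tensor product collapses to $e_{fg}\csA_U$ — again leaving exactly the cocycle identity to check.
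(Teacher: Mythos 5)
Your proof is correct and follows essentially the same route as the paper's: unwind the definition~\eqref{ffpp}, use the monoidal property of the topological inverse image, and collapse the resulting bimodule tensor product via the idempotent relation coming from $n_S\le n_T$. The ``essentially equivalent variant'' you sketch at the end is literally the computation the paper gives; the main difference is that you make explicit the cocycle compatibility $\phi_{fg}=\phi_g\circ g^\bull(\phi_f)$ of the algebra morphisms, which the paper leaves implicit in its one-line justification.
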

\begin{proof}
Indeed, 
\begin{multline*}
g^*(f^*(F)) =
g^{-1}(f^{-1}F\otimes_{f^{-1}\csA_S} (\epsilon_1+\dots+\epsilon_{n_S})\csA_T) \otimes_{g^{-1}\csA_T} (\epsilon_1+\dots+\epsilon_{n_T})\csA_U \cong \\ \cong
g^{-1}f^{-1}F\otimes_{g^{-1}f^{-1}\csA_S} ((\epsilon_1+\dots+\epsilon_{n_S})g^{-1}\csA_T \otimes_{g^{-1}\csA_T} (\epsilon_1+\dots+\epsilon_{n_T})\csA_U) \cong 
\\ \cong
g^{-1}f^{-1}F\otimes_{g^{-1}f^{-1}\csA_S} (\epsilon_1+\dots+\epsilon_{n_S})\csA_U 
= (f\circ g)^*(F)
\end{multline*}
since $(\epsilon_1+\dots+\epsilon_{n_S})(\epsilon_1 + \dots + \epsilon_{n_T}) = \epsilon_1 + \dots + \epsilon_{n_S}$ by~\eqref{rst}.
\end{proof}

\subsection{Derived category}

Let $\com(\csA)$ denote the category of (unbounded) complexes of quasicoherent $\csA$-modules.

\begin{lemma}\label{ahf}
$(1)$ The category $\Qcoh(\csA)$ is a Grothendieck category.

\noindent
$(2)$ The category $\com(\csA)$ has enough h-injectives complexes.

\noindent
$(3)$ The category $\com(\csA)$ has enough h-flat complexes.
\end{lemma}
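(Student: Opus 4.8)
We must show that for the Auslander algebra $\csA = \csA_{S,\fr,n}$ on a separated scheme of finite type $S$: (1) $\Qcoh(\csA)$ is a Grothendieck category; (2) $\com(\csA)$ has enough h-injective complexes; (3) $\com(\csA)$ has enough h-flat complexes.

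The plan is to reduce everything to classical facts about module categories over sheaves of rings, using the observation that $\csA$ is a sheaf of $\cO_S$-algebras which is coherent over $\cO_S$ (each entry $\fr^{\max(j-i,0)}/\fr^{n+1-i}$ is a coherent $\cO_S$-module, being a subquotient of $\cO_S$). First, for part (1), I would check the Grothendieck axioms for $\Qcoh(\csA)$ directly. The category is abelian because kernels and cokernels of $\csA$-module maps, computed in $\Qcoh(\cO_S)$, carry a natural $\csA$-structure and $\Qcoh(\cO_S)$ is abelian. It has arbitrary (small) coproducts, again computed as $\cO_S$-module coproducts, and filtered colimits are exact because they are exact in $\Qcoh(\cO_S)$ (which is Grothendieck) and the $\csA$-action passes through. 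The only real point is the existence of a generator: here I would use that $\csA$ is a sheaf of algebras of ``finite type'' in the appropriate sense and adapt the standard argument (as for $\Qcoh(\cO_S)$, cf. \cite{AJL} or the Stacks project) that produces a generator as a set of ``twisted structure sheaves'' $j_!(\csA|_U)$ for $U$ ranging over an affine open cover and the $\csA|_U$ twisted by powers of an ample-type family — or, more cheaply, note that $\Qcoh(\csA)$ is equivalent to the category of quasicoherent modules over the ringed space $(S,\csA)$, and the forgetful functor to $\Qcoh(\cO_S)$ has an exact left adjoint $-\otimes_{\cO_S}\csA$, so a generator of $\Qcoh(\cO_S)$ produces a generator of $\Qcoh(\csA)$ by applying this left adjoint.

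For part (2), once $\Qcoh(\csA)$ is known to be a Grothendieck category, the existence of enough h-injective complexes is a formal consequence: by a theorem of Spaltenstein (and in the form needed here, of Serpé, or Alonso–Jeremías–Souto, see \cite{AJL}), the unbounded derived category of any Grothendieck abelian category has h-injective (= K-injective) resolutions — every complex admits a quasi-isomorphism to an h-injective complex, and moreover one may take the h-injective complexes to consist of injective objects. So part (2) follows immediately from part (1) together with this general result, which I would simply cite.

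For part (3), the point is to produce, for an arbitrary complex $M$ of quasicoherent $\csA$-modules, a quasi-isomorphism $F \to M$ with $F$ h-flat (meaning $F\otimes_\csA A$ is acyclic for every acyclic complex $A$ of left $\csA$-modules). I would build $F$ from ``flat'' $\csA$-modules locally of the form $j_!(\cF|_U)\otimes_{\cO_S}\csA$ with $\cF$ flat over $\cO_S$ and $j:U\hookrightarrow S$ an affine open immersion; these are flat over $\csA$ on the right, and $j_!$-type modules are enough to resolve in the quasicoherent setting. Concretely, the cleanest route is again via the left adjoint $-\otimes_{\cO_S}\csA\colon\Qcoh(\cO_S)\to\Qcoh(\csA)$: since $\csA$ is locally free of finite rank over $\cO_S$? — no, it is only coherent, so I instead use that $\csA$ has a filtration (by the ``strictly upper triangular'' part) whose graded pieces are $\fr^a/\fr^b$, which are flat over $\cO_S$ away from the support of torsion; to avoid this subtlety, I would instead invoke directly the general existence of h-flat resolutions in $\com$ of modules over a sheaf of (not necessarily commutative) rings on a ringed space with enough ``$j_!$-flat'' generators, exactly as Spaltenstein does for $\com(\cO_S)$ in \cite{Sp88} and as used in \cite{AJL}; the argument is: for each complex $M$ take the canonical free resolution built from $\bigoplus j_!(\cO_U)\otimes_{\cO_S}\csA$ summands, which are h-flat $\csA$-modules, and assemble a Cartan–Eilenberg-type bicomplex whose totalization maps quasi-isomorphically to $M$ and is h-flat because it is a filtered colimit (bounded-above totalization) of bounded-above complexes of h-flat modules.

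The main obstacle is purely bookkeeping: verifying that the standard $j_!$-style generators and flat resolutions of $\Qcoh(\cO_S)$ continue to work after tensoring up to $\csA$, i.e.\ that $-\otimes_{\cO_S}\csA$ carries flats to (right-)flats and preserves the relevant colimit/exactness properties, and that the resulting complexes genuinely satisfy the h-flatness condition against acyclic \emph{left} $\csA$-modules (not merely acyclic $\cO_S$-modules). None of this is deep — it is the noncommutative-sheaf analogue of \cite{Sp88},\cite{AJL} — so in the write-up I would state the three parts, give the one-line reduction of (1) to the existence of a generator via the adjunction, cite Spaltenstein/AJL for (2) and (3) once (1) is in hand, and relegate the compatibility checks to a remark.
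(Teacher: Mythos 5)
Your proposal takes essentially the same route as the paper: for (1) use extension of scalars $\Ind_S=-\otimes_{\cO_S}\csA$ (left adjoint to the forgetful functor) to push a generator of $\Qcoh(S)$ to a generator of $\Qcoh(\csA)$, then cite the general Grothendieck-category result for (2), and observe for (3) that the Spaltenstein/AJL construction of h-flat resolutions carries over verbatim to $\Qcoh(\cB)$ for any sheaf of $\cO_S$-algebras $\cB$ quasi-coherent over $\cO_S$. One small slip worth noting: $-\otimes_{\cO_S}\csA$ is only right exact, not exact as you assert (the entries of $\csA$ such as $\cO_S/\fr$ are not $\cO_S$-flat), but this does not matter for the generator argument, which only needs that the right adjoint (the forgetful functor) is faithful and reflects proper subobjects.
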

\begin{proof} Note that all these assertions hold for the category $\Qcoh(S)$
instead of $\Qcoh(\csA)$.

(1) The category $\Qcoh(\csA)$ has arbitrary direct sums and filtered direct limits 
in $\Qcoh(\csA)$ are exact. It remains to show that it has a generator.
Recall that $\csA$ is an $\cO_S$-algebra. Consider the corresponding extension and restriction
of scalars functors
\begin{equation*}
\begin{array}{rclrcl}
\Ind_S &:& \Qcoh(S) \to \Qcoh(\csA),\quad &\cM &\mapsto& \cM \otimes_{\cO_S}\csA,\\
\Res_S &:& \Qcoh(\csA) \to \Qcoh(S),\quad &\cN &\mapsto& \cN_{\cO_S}.
\end{array}
\end{equation*}
It is a classical fact that $\Ind_S$ is the left adjoint of $\Res_S$.

Let $U\in \Qcoh(S)$ be a generator. We claim that $\Ind_S(U)$ is a generator of
$\Qcoh(\csA)$. Indeed, let $\cM _0\varsubsetneq \cM$ be objects in $\Qcoh(\csA)$. Then
$\Res_S (\cM _0)\varsubsetneq \Res_S (\cM)$. Hence the map
$\Hom (U,\Res_S (\cM _0))\to \Hom (U,\Res_S (\cM))$ is not surjective. But then
the map $\Hom (\Ind_S(U),\cM _0)\to \Hom (\Ind_S(U),\cM)$ is also not surjective. So
$\Ind_S(U)$ is a generator and hence $\Qcoh(\csA)$ is a Grothendieck category.

(2) This follows from (1) and~\cite{KSch}.

(3) The proof given in~\cite{AJL} for the category $\Qcoh(S)$ works also for $\Qcoh(\cB)$
for any sheaf of $\cO_S$-algebras $\cB$ which is quasi-coherent as $\cO_S$-module.
\end{proof}

We denote by $\ASch$ the category of $\csA$-spaces with the underlying scheme being separated of finite type
and with morphisms defined in section~\ref{ss-afun}. As we already observed, each scheme can be thought of as an $\csA$-space
of width $1$ and each morphism of schemes gives a morphism of the corresponding $\csA$-spaces. 
Thus the category $\Sch$ is a subcategory of $\ASch$. Our goal is to extend
the derived category pseudofunctor $\bD:\Sch^\opp \to \Tria$ to the category $\ASch$.

As we know by Lemma~\ref{ahf} the category $\com(\Qcoh(\csA_S))$ has enough h-flat objects. Hence 
$[\com(\csA_S)]/[\coma(\csA_S)] \cong [\hflat(\csA_S)]/[\hflata(\csA_S)]$, where
$\coma(\csA_S) \subset \com(\csA_S)$ is the DG-subcategory of acyclic complexes,
$\hflat(\csA_S)$ is the DG-category of h-flat complexes of $\csA_S$-modules
and $\hflata(\csA_S)$ is its DG-subcategory of h-flat acyclic complexes.
So, analogously to the case of schemes we can identify the derived category as
\begin{equation*}
\bD(\csA_S) \cong [\hflat(\csA_S)/\hflata(\csA_S)],
\end{equation*}
the homotopy category of the Drinfeld quotient.
It is clear that $\bD^b(\coh(\csA_S))$, the bounded derived category of coherent $\csA_S$-modules
is equivalent to the subcategory of $\bD(\csA_S)$ consisting of complexes with finite number
of cohomology sheaves all of which are coherent. Indeed, by a standard argument (see~\cite[II.2.2]{SGA6})
it is enough to check that a quotient of a coherent $\csA_S$-module is coherent and that any quasicoherent
$\csA_S$-module is a filtered direct limit of coherent $\csA_S$-modules. The first is evident and for 
the second note that if $\cN$ is an $\csA_S$-module and $\Res\cN = \lim \cM_\lambda$ with $\cM_\lambda$
coherent sheaves on $S$, then $\cN$ is the limit of images of $\Ind(\cM_\lambda)$ and each of those
is a coherent $\csA_S$-module.


\begin{prop}
Associating with an $\csA$-space $(S,\csA_S)$ its derived category $\bD(\csA_S)$ is a pseudofunctor
$\ASch^\opp \to \Tria$ from the category opposite to $\csA$-spaces to the $2$-category of triangulated categories,
extending the same named pseudofunctor $\Sch^\opp \to \Tria$.
\end{prop}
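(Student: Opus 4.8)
The plan is to imitate, essentially verbatim, the construction of the derived category pseudofunctor $\bD:\Sch^\opp\to\Tria$ from Section~\ref{ss-dqs}, replacing $\cO_S$-modules by $\csA_S$-modules throughout. For each $\csA$-space $(S,\csA_S)$ we have already identified $\bD(\csA_S)\cong[\hflat(\csA_S)/\hflata(\csA_S)]$ as the homotopy category of a Drinfeld quotient, using that $\com(\csA_S)$ has enough h-flat complexes (Lemma~\ref{ahf}(3)). So what remains is: (i) to attach to each morphism $f$ of $\csA$-spaces a triangulated functor $Lf^*:\bD(\csA_S)\to\bD(\csA_T)$; (ii) to supply the coherence $2$-isomorphisms making $\bD$ a pseudofunctor; and (iii) to check that on the subcategory $\Sch\subset\ASch$ one recovers the scheme-theoretic data.

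The one genuinely new step is the $\csA$-analogue of Spaltenstein's Lemma~\ref{pbfa}: for a morphism $f:(T,\csA_T)\to(S,\csA_S)$ in $\ASch$, the termwise pullback $f^*:\com(\csA_S)\to\com(\csA_T)$ of~\eqref{ffpp} carries h-flat complexes to h-flat complexes, h-flat acyclic complexes to h-flat acyclic complexes, and the $\csA_T$-tensor product of an h-flat acyclic complex with an arbitrary complex is acyclic. I would obtain this by adapting Spaltenstein's argument to modules over a sheaf of $\cO$-algebras that is quasicoherent over the structure sheaf — the same kind of generalization already invoked in the proof of Lemma~\ref{ahf}(3) — together with the following observation about the idempotent truncation built into~\eqref{ffpp}. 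Set $e=\epsilon_1+\dots+\epsilon_{n_S}\in\csA_T$, the image of $1$ under $f^{-1}\csA_S\to\csA_T$; by~\eqref{rst} this is an idempotent with complementary idempotent $1-e=\epsilon_{n_S+1}+\dots+\epsilon_{n_T}$, so $e\csA_T$ is a direct summand of $\csA_T$ both as a right $\csA_T$-module (hence flat) and as a complex of abelian sheaves. Consequently, for any left $\csA_T$-module complex $A$,
\begin{equation*}
f^*(F)\otimes_{\csA_T}A=\bigl(f^{-1}F\otimes_{f^{-1}\csA_S}e\csA_T\bigr)\otimes_{\csA_T}A\;\cong\;f^{-1}F\otimes_{f^{-1}\csA_S}(eA),
\end{equation*}
where $eA$ is a direct summand of $A$ as a complex of sheaves, so acyclic whenever $A$ is. Feeding this into the stalkwise part of Spaltenstein's argument (applied to $f^{-1}$ over the sheaf of algebras $f^{-1}\csA_S$, which is quasicoherent over $f^{-1}\cO_S$) yields the three assertions.

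Granting the lemma, the construction proceeds exactly as for schemes. The termwise pullback restricts to a DG-functor $f^*:\hflat(\csA_S)\to\hflat(\csA_T)$ with $f^*(\hflata(\csA_S))\subset\hflata(\csA_T)$; by Proposition~\ref{dq-dgf} it induces a DG-functor of Drinfeld quotients $\hflat(\csA_S)/\hflata(\csA_S)\to\hflat(\csA_T)/\hflata(\csA_T)$, and passing to homotopy categories gives $Lf^*:\bD(\csA_S)\to\bD(\csA_T)$. This functor commutes with direct sums, so (as in the scheme case, though this is not part of the statement) it has a right adjoint $Rf_*$ by Brown representability, Theorem~\ref{brown}. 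Pseudofunctoriality is inherited from Lemma~\ref{ppgf}: the isomorphisms $g^*\circ f^*\cong(f\circ g)^*$ of termwise pullback functors descend to the Drinfeld quotients, hence to the homotopy categories, giving the structural isomorphisms $Lg^*\circ Lf^*\cong L(f\circ g)^*$; these satisfy the cocycle identity because the isomorphisms of Lemma~\ref{ppgf} are assembled from associativity of the tensor product, pseudofunctoriality of $f\mapsto f^{-1}$ on sheaves, and the identity $(\epsilon_1+\dots+\epsilon_{n_S})(\epsilon_1+\dots+\epsilon_{n_T})=\epsilon_1+\dots+\epsilon_{n_S}$ coming from~\eqref{rst}, all of which are coherent.

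Finally, for a scheme $S$ regarded as the $\csA$-space of width $1$ with zero defining ideal, we have $\csA_S=\cO_S$ by Remark~\ref{oa}, so $\bD(\csA_S)=\bD(S)$; and for a morphism of schemes $f$ the idempotent $e$ equals $1$, so $f^*$ of~\eqref{ffpp} is the ordinary termwise $\cO$-module pullback and the resulting $Lf^*$ coincides with the derived pullback of Section~\ref{ss-dqs}, the coherence isomorphisms matching as well. Hence $\bD:\ASch^\opp\to\Tria$ is a pseudofunctor extending the derived category pseudofunctor on $\Sch^\opp$. I expect the only real obstacle to be the Spaltenstein-type lemma above — i.e., controlling h-flatness simultaneously under $f^{-1}$ and under the idempotent truncation $e(-)$ over the noncommutative sheaf of rings $\csA$; everything else is the formal bookkeeping already carried out for schemes.
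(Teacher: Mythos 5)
Your proposal follows the same route as the paper: identify $\bD(\csA_S)$ with the homotopy category of the Drinfeld quotient $\hflat(\csA_S)/\hflata(\csA_S)$, define $Lf^*$ via the termwise pullback of~\eqref{ffpp}, invoke an $\csA$-analogue of Spaltenstein's Lemma~\ref{pbfa} to see that $f^*$ preserves h-flat and h-flat acyclic complexes, pass to Drinfeld quotients by Proposition~\ref{dq-dgf}, and inherit pseudofunctoriality from Lemma~\ref{ppgf}. The paper merely asserts the Spaltenstein analogue; your reduction $f^*(F)\otimes_{\csA_T}A\cong f^{-1}F\otimes_{f^{-1}\csA_S}(eA)$ with $eA$ a direct summand of $A$, followed by the stalkwise argument over the quasicoherent sheaf of algebras $f^{-1}\csA_S$, is a clean way to actually supply it, and is consistent with the mechanism the authors leave implicit.
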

\begin{proof}
To define a pseudofunctor $\ASch^\opp \to \Tria$ we have to associate with a morphism $f:(T,\csA_T) \to (S,\csA_S)$
of $\csA$-spaces a functor $\bD(\csA_S)\to  \bD(\csA_T)$. This role will be played by the derived pullback 
functor $Lf^*$ which we will define along the same lines as the usual derived pullback functor
was defined in section~\ref{ss-dqs}.

Using analogue of Lemma~\ref{pbfa} we deduce that the functor $f^*$ takes $\hflat(\csA_S)$ to $\hflat(\csA_T)$
and $\hflata(\csA_S)$ to $\hflata(\csA_T)$, hence by Proposition~\ref{dq-dgf} induces a DG-functor
$\hflat(\csA_S)/\hflata(\csA_S) \to \hflat(\csA_T)/\hflata(\csA_T)$ of Drinfeld quotients.
Passing to homotopy categories we define the derived functor
\begin{equation*}
Lf^*:\bD(\csA_S) = [\hflat(\csA_S)/\hflata(\csA_S)] \xrightarrow{\ f^*\ } [\hflat(\csA_T)/\hflata(\csA_T)] = \bD(\csA_T).
\end{equation*}
The pseudofunctoriality and compatibility with the derived category pseudofunctor on schemes is evident.
\end{proof}


Let $f:(T,\csA_T) \to (S,\csA_S)$ be a morphism of $\csA$-spaces.
By construction the derived pullback functor $Lf^*$ commutes with arbitrary direct sums.
Therefore, by Brown Representability it has a right adjoint functor 
$Rf_*:\bD(\csA_T) \to \bD(\csA_S)$.

Consider a slight generalization of the induction functor 
$\Ind^k_S:\Qcoh(S) \to \Qcoh(\csA_S)$, $M \mapsto M\otimes_{\cO_S}(\epsilon_1+\dots+\epsilon_k)\csA_S$.
Its right adjoint functor $\Res^k_S:\Qcoh(\csA_S)\to \Qcoh(S)$ is given by $N \mapsto \Res_S(N(\epsilon_1+\dots+\epsilon_{k}))$.
Note that it is exact. The above argument allows to define the derived induction functor $\LInd^k_S:\bD(S) \to \bD(\csA_S)$
by applying $\Ind^k_S$ to h-flat complexes. On the other hand, the exactness of the restriction functor shows that
it descends to the derived category $\Res^k_S:\bD(\csA_S)\to \bD(S)$ and the adjunction is preserved.




\begin{lemma}\label{rfscoh}
If $f:(T,\fr_T,n_T) \to (S,\fr_S,n_S)$ is a morphism of $\csA$-spaces then 
\begin{equation*}
\Res_S(Rf_*(F)) \cong Rf_*(\Res^{n_S}_T(F)).
\end{equation*}
In particular, if $f$ is proper and of finite type then $Rf_*(\bD^b(\coh(\csA_T)) \subset \bD^b(\coh(\csA_S))$.
\end{lemma}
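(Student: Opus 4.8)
The plan is to identify each side of the claimed isomorphism with the right adjoint of one and the same functor and then to invoke uniqueness of adjoints. The underived statement $\Res_S\circ f_*\cong f_*\circ\Res^{n_S}_T$, which is immediate from the definitions of $f_*$ on $\csA$-spaces and on schemes together with the fact that $f_*$ commutes with the idempotent decomposition $N=\bigoplus_i N\epsilon_i$, suggests this route.

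First, all the functors involved possess the adjoints we need. Since $\epsilon_1+\dots+\epsilon_{n_S}=1$ in $\csA_S$, the functor $\Res_S$ is the (exact) restriction of scalars along $\cO_S\hookrightarrow\csA_S$, i.e.\ $\Res_S=\Res^{n_S}_S$, and on derived categories it has left adjoint $\LInd_S$, the left derived functor of $\Ind_S(-)=-\otimes_{\cO_S}\csA_S$; likewise $\Res^{n_S}_T$ is exact with left adjoint $\LInd^{n_S}_T$. By Brown representability $Lf^*$ has a right adjoint $Rf_*$, both for $\csA$-spaces and for schemes. Consequently the two composites $\Res_S\circ Rf_*\colon\bD(\csA_T)\to\bD(S)$ and $Rf_*\circ\Res^{n_S}_T\colon\bD(\csA_T)\to\bD(S)$ admit left adjoints, namely $Lf^*\circ\LInd_S$ and $\LInd^{n_S}_T\circ Lf^*$ respectively, both functors $\bD(S)\to\bD(\csA_T)$. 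Hence it is enough to produce a natural isomorphism $Lf^*\circ\LInd_S\cong\LInd^{n_S}_T\circ Lf^*$.

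For this, for $\cM\in\Qcoh(S)$ one has the natural identity
\begin{multline*}
f^*(\Ind_S\cM)=f^\bull(\cM\otimes_{\cO_S}\csA_S)\otimes_{f^\bull\csA_S}(\epsilon_1+\dots+\epsilon_{n_S})\csA_T\cong\\
(f^*\cM)\otimes_{\cO_T}(\epsilon_1+\dots+\epsilon_{n_S})\csA_T=\Ind^{n_S}_T(f^*\cM),
\end{multline*}
where the middle isomorphism uses that $f^\bull$ commutes with tensor products and that $(f^\bull\csA_S)\otimes_{f^\bull\csA_S}(\epsilon_1+\dots+\epsilon_{n_S})\csA_T=(\epsilon_1+\dots+\epsilon_{n_S})\csA_T$. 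Now replace $\cM$ by an h-flat resolution $P$ over $\cO_S$. Then $\Ind_S P$ is h-flat over $\csA_S$, since $(\Ind_S P)\otimes_{\csA_S}A\cong P\otimes_{\cO_S}A$ for every left $\csA_S$-module $A$ (restricted to $\cO_S$ on the right), and h-flatness of $P$ over $\cO_S$ then forces acyclicity of the left-hand side whenever $A$ is acyclic; so $f^*(\Ind_S P)$ computes $Lf^*(\LInd_S\cM)$. On the other hand $f^*P$ is h-flat over $\cO_T$ by the $\csA$-space analogue of Lemma~\ref{pbfa}, so $\Ind^{n_S}_T(f^*P)=(f^*P)\otimes_{\cO_T}(\epsilon_1+\dots+\epsilon_{n_S})\csA_T$ computes $\LInd^{n_S}_T(Lf^*\cM)$. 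Thus the displayed identity descends to the derived level, giving $Lf^*\circ\LInd_S\cong\LInd^{n_S}_T\circ Lf^*$, and by uniqueness of adjoints we obtain the asserted isomorphism $\Res_S\circ Rf_*\cong Rf_*\circ\Res^{n_S}_T$. The one slightly delicate point here is the bookkeeping that a single h-flat resolution over $\cO_S$ simultaneously computes $\LInd_S$, the derived pullback on $\csA$-spaces, and, after pullback, $\LInd^{n_S}_T$ and the derived pullback on schemes; this rests on the Auslander algebras being quasicoherent over their base schemes and on Spaltenstein's preservation of h-flatness.

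Finally, for the last assertion observe that a complex $G\in\bD(\csA_S)$ lies in $\bD^b(\coh(\csA_S))$ if and only if $\Res_S G\in\bD^b(\coh(S))$: indeed $\Res_S$ is faithful and exact, so $H^i(\Res_S G)=\Res_S(H^i G)$ and $\Res_S$ reflects vanishing, while by definition an $\csA_S$-module is coherent precisely when its underlying $\cO_S$-module is. Now let $F\in\bD^b(\coh(\csA_T))$. The functor $\Res^{n_S}_T$ is exact and sends coherent $\csA_T$-modules to coherent $\cO_T$-modules (it is the direct summand $\bigoplus_{i=1}^{n_S}(-)\epsilon_i$ of $\Res_T$), so $\Res^{n_S}_T F\in\bD^b(\coh(T))$; as $f$ is proper of finite type, the classical coherence theorem for proper morphisms gives $Rf_*(\Res^{n_S}_T F)\in\bD^b(\coh(S))$. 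By the isomorphism just established, $\Res_S(Rf_* F)\in\bD^b(\coh(S))$, hence $Rf_* F\in\bD^b(\coh(\csA_S))$, as claimed.
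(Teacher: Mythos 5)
Your proof is correct and follows essentially the same route as the paper: both pass to left adjoints, reduce $Lf^*\circ\LInd_S\cong\LInd^{n_S}_T\circ Lf^*$ to the underived identity $f^*\circ\Ind_S\cong\Ind^{n_S}_T\circ f^*$, observe that the induction of an h-flat $\cO_S$-complex is h-flat over $\csA_S$, and deduce the coherence claim from the fact that a complex of $\csA$-modules is bounded coherent iff its underlying $\cO$-complex is. You merely spell out a few steps the paper leaves implicit (the verification that $\Ind_S P$ is h-flat, and the appeal to the proper coherence theorem).
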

\begin{proof}
Both sides are compositions of right adjoint functors, so it is enough to check 
an isomorphism of their left adjoint functors
$Lf^*\circ \LInd_S \cong \LInd^{n_S}_T\circ Lf^*$.
Since the induction of an h-flat complex of $\cO_S$-modules is evidently h-flat over $\csA_S$, it is enough to note that
we have an isomorphism for underived functors. The latter is clear as
\begin{multline*}
f^*(\Ind_S(M)) = f^*(M\otimes_{\cO_S}\csA_S) = 
f^{-1}(M\otimes_{\cO_S}\csA_S) \otimes_{f^{-1}\csA_S}(\epsilon_1+\dots+\epsilon_{n_S})\csA_T \cong \\ \cong
f^{-1}M\otimes_{f^{-1}\cO_S}f^{-1}\csA_S \otimes_{f^{-1}\csA_S}(\epsilon_1+\dots+\epsilon_{n_S})\csA_T \cong
f^{-1}M\otimes_{f^{-1}\cO_S}(\epsilon_1+\dots+\epsilon_{n_S})\csA_T \cong \\ \cong
f^{-1}M\otimes_{f^{-1}\cO_S}\cO_T\otimes_{\cO_T}(\epsilon_1+\dots+\epsilon_{n_S})\csA_T =
\Ind^{n_S}_T(f^*M).
\end{multline*}
%
%
The second statement is clear since $M \in \bD(\csA_T)$ being bounded and coherent implies $\Res^k_T(M)$ is such
for any $k$, and vice versa, if $\Res_S(M) \in \bD(S)$ is bounded and coherent then so is $M$.
\end{proof}

\subsection{Semiorthogonal decomposition}\label{ss-aussod}


Let $S_0$ and $S'$ be the subschemes of $S$ corresponding to the ideals $\fr$ and $\fr^{n-1}$ respectively.
For both the underlying topological space is $S$ but the sheaves of rings are
\begin{equation*}
\cO_{S_0} := \cO_S/\fr,
\quad\text{and}\quad
\cO_{S'} = \cO_S/\fr^{n-1}.
\end{equation*}

Consider the two-sided ideal $\csI$ in $\csA$ generated by 
the idempotent $1-\epsilon_1=\epsilon_2+\dots+\epsilon_n$.

\begin{lemma}\label{csi}
We have 
\begin{equation}\label{cidef}
\csI := \csA(1-\epsilon_1)\csA = 
\begin{pmatrix}
\fr & \fr & \fr^2 & \dots & \fr^{n-1} \\
\cO_S/\fr^{n-1} & \cO_S/\fr^{n-1} & \fr/\fr^{n-1} & \dots & \fr^{n-2}/\fr^{n-1} \\
\cO_S/\fr^{n-2} & \cO_S/\fr^{n-2} & \cO_S/\fr^{n-2} & \dots & \fr^{n-3}/\fr^{n-2} \\
\vdots & \vdots & \vdots & \ddots & \vdots \\
\cO_S/\fr & \cO_S/\fr & \cO_S/\fr & \dots & \cO_S/\fr 
\end{pmatrix}
\end{equation}
In particular, $\csA/\csI \cong \cO_S/\fr \cong \cO_{S_0}$.
\end{lemma}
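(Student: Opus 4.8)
The plan is to compute $\csI$ entry by entry with respect to the matrix description~\eqref{cadef} of $\csA$, using the orthogonal idempotents $\epsilon_1,\dots,\epsilon_n$ to isolate matrix components. Since $1-\epsilon_1 = \epsilon_2+\dots+\epsilon_n$, the two-sided ideal is $\csI = \sum_{i=2}^n \csA\epsilon_i\csA$, and cutting out components via $\epsilon_k(-)\epsilon_j$ gives
\[
(\csI)_{kj} \;=\; \epsilon_k\csI\epsilon_j \;=\; \sum_{i=2}^n \im\big(\csA_{ki}\otimes_{\cO_S}\csA_{ij}\to\csA_{kj}\big),
\]
where the maps are the multiplication maps of $\csA$ recalled in~\eqref{cadef} (induced by $\fr^a\cdot\fr^b = \fr^{a+b}$). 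So everything reduces to identifying these images inside $\csA_{kj} = \fr^{\max(j-k,0)}/\fr^{n+1-k}$.

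Next I would split into the cases $k\ge 2$ and $k=1$. If $k\ge 2$, the index $i=k$ occurs in the sum, and the corresponding term is the image of $\csA_{kk}\otimes_{\cO_S}\csA_{kj}\to\csA_{kj}$, which is surjective because $\csA_{kk}=\cO_S/\fr^{n+1-k}$ is a unital ring acting on $\csA_{kj}$; hence $(\csI)_{kj}=\csA_{kj}$, and rows $2,\dots,n$ of $\csI$ agree with those of $\csA$. If $k=1$, every $i$ in the sum satisfies $i\ge 2$, so $\csA_{1i}=\fr^{i-1}/\fr^{n}$ and the underlying ideal of the $i$-th term is $\fr^{i-1}\cdot\fr^{\max(j-i,0)}=\fr^{\max(i-1,\,j-1)}$. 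For $j\ge 2$ the term with $i=2$ already produces $\fr^{j-1}/\fr^{n}=\csA_{1j}$, so $(\csI)_{1j}=\csA_{1j}$; for $j=1$ the $i$-th term is $\fr^{i-1}/\fr^{n}$, and summing over $i=2,\dots,n$ gives $\fr/\fr^{n}=\fr$ (using $\fr^{n}=0$), so $(\csI)_{11}=\fr$. This is exactly the matrix in~\eqref{cidef}: it differs from $\csA$ only in the $(1,1)$-entry, where $\cO_S$ is replaced by $\fr$.

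Finally, for the quotient I would observe that $\epsilon_i\in\csI$ for $i\ge 2$ (indeed $\epsilon_i=\epsilon_i(1-\epsilon_1)\epsilon_i$), so in $\csA/\csI$ the unit is $\bar 1=\bar\epsilon_1$; hence $\csA/\csI=\bar\epsilon_1(\csA/\csI)\bar\epsilon_1$ is the image of $\csA_{11}=\cO_S/\fr^{n}$, namely $\csA_{11}/(\csI)_{11}=\cO_S/\fr$, with the ring structure descending from the multiplication on $\cO_S/\fr^{n}$ --- that is, $\csA/\csI\cong\cO_S/\fr\cong\cO_{S_0}$ as sheaves of rings. I do not expect a genuine obstacle here: the only points needing a little care are that the multiplication maps $\csA_{ki}\otimes_{\cO_S}\csA_{ij}\to\csA_{kj}$ are well defined and land where claimed (already checked when $\csA$ was constructed), and that a single well-chosen index $i$ in the sum already exhausts $\csA_{kj}$ in every entry except $(1,1)$, where instead the geometric-progression sum $\sum_{i\ge 2}\fr^{i-1}$ collapses to $\fr$.
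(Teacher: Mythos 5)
Your computation is correct, and it fills in exactly the details the paper leaves to the reader: the paper's entire proof reads ``Straightforward.'' Your approach — reducing $\csI$ to its components $(\csI)_{kj}=\sum_{i\ge 2}\csA_{ki}\csA_{ij}$ via the idempotents, observing that the $i=k$ term already gives all of $\csA_{kj}$ when $k\ge 2$ and the $i=2$ term does when $k=1, j\ge 2$, with only the $(1,1)$-entry dropping from $\cO_S$ to $\fr$, and then reading off $\csA/\csI=\csA_{11}/(\csI)_{11}$ from $\bar\epsilon_i=0$ for $i\ge 2$ — is the natural one and is precisely the verification the authors had in mind.
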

\begin{proof}
Straightforward.
\end{proof}

By Lemma~\ref{csi} the sheaf $\cO_{S_0}$ has a natural structure of an algebra over $\csA_S$.
Thus we have the restriction of scalars functor
\begin{equation}\label{sia}
\si:\Qcoh({S_0}) \to \Qcoh(\csA),
\qquad
M \mapsto M.
\end{equation} 
On the other hand, consider the algebra
\begin{equation}\label{csap}
\csA'= \csA_{S',\fr,n-1} = \oplus_{i,j \ge 2}\csA_{ij} = (1-\epsilon_1)\csA(1-\epsilon_1)
\end{equation}
(which is just the same type algebra constructed from the scheme $S'$), and note that the following $(n-1)\times n$ matrix
\begin{equation}\label{omea}
(1-\epsilon_1)\csA = 
\begin{pmatrix}
\cO_S/\fr^{n-1} & \cO_S/\fr^{n-1} & \fr/\fr^{n-1} & \dots & \fr^{n-2}/\fr^{n-1} \\
\cO_S/\fr^{n-2} & \cO_S/\fr^{n-2} & \cO_S/\fr^{n-2} & \dots & \fr^{n-3}/\fr^{n-2} \\
\vdots & \vdots & \vdots & \ddots & \vdots \\
\cO_S/\fr & \cO_S/\fr & \cO_S/\fr & \dots & \cO_S/\fr 
\end{pmatrix}
\end{equation}
is an $\csA'$-$\csA$-bimodule, so we can also define a functor
\begin{equation}\label{sea}
\se:\Qcoh(\csA') \to \Qcoh(\csA),
\qquad
M \mapsto M\otimes_{\csA'}(1-\epsilon_1)\csA.
\end{equation} 

\begin{remark}\label{eproj}
It is useful to note that the functor $\se$ takes projective $\csA'$-modules to projective $\csA$-modules.
This follows immediately from the fact that $(1-\epsilon_1)\csA$ is projective as an $\csA$-module.
\end{remark}

\begin{lemma}
The functor $\sip$ has a left adjoint functor $\sip^*:\Qcoh(\csA) \to \Qcoh(S_0)$ 
\begin{equation}\label{sisa}
\sip^*(M) = M\otimes_{\csA}\cO_{S_0},
\end{equation} 
and the functor $\sep$ has a right adjoint functor $\sep^!:\Qcoh(\csA) \to \Qcoh(\csA')$ 
\begin{equation}\label{sesa}
\se^!(N) = N(1-\epsilon_1).
\end{equation} 
\end{lemma}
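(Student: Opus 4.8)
The plan is to read both halves of the statement as instances of the tensor--$\cHom$ adjunction for sheaves of modules over sheaves of (noncommutative) $\cO_S$-algebras, applied to the evident bimodules, and then to rewrite the right adjoint in each case so that it takes the asserted explicit form. Throughout, all the sheaves involved are quasi-coherent over $\cO_S$, so the sheaf-theoretic $\otimes$ and $\cHom$ behave exactly as in the affine case and the adjunctions below are genuine; the only point requiring attention is keeping the left/right module and bimodule structures straight.

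For $\sip = \si$: by Lemma~\ref{csi} we have $\csA/\csI \cong \cO_{S_0}$, so $\cO_{S_0}$ is naturally an $\csA$-$\cO_{S_0}$-bimodule and the functor $\si$ of~\eqref{sia} is restriction of scalars along the surjection $\csA \to \cO_{S_0}$. I would take $\sip^*(M) := M\otimes_\csA\cO_{S_0}$, a right $\cO_{S_0}$-module, and for $M \in \Qcoh(\csA)$, $N \in \Qcoh(S_0)$ compute
\[
\Hom_{\cO_{S_0}}(M\otimes_\csA\cO_{S_0},N) \cong \Hom_\csA(M,\cHom_{\cO_{S_0}}(\cO_{S_0},N)) \cong \Hom_\csA(M,\si(N)),
\]
where the first isomorphism is the $\otimes$--$\cHom$ adjunction for the bimodule $\cO_{S_0}$ and the second uses $\cHom_{\cO_{S_0}}(\cO_{S_0},N)=N$, its $\csA$-module structure being precisely the one defining $\si$. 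This is functorial in $M$ and $N$, so $\sip^* = -\otimes_\csA\cO_{S_0}$ is left adjoint to $\si$, which is the formula~\eqref{sisa}.

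For $\se$: write $e = 1-\epsilon_1$. By~\eqref{csap} we have $\csA' = e\csA e$, so $e\csA$ is an $\csA'$-$\csA$-bimodule and $\se(M) = M\otimes_{\csA'}e\csA$. Its right adjoint is $N \mapsto \cHom_\csA(e\csA,N)$, and the point is the standard ``corner ring'' identification: since $e$ is idempotent, evaluation at $e$ gives an isomorphism $\cHom_\csA(e\csA,N) \xrightarrow{\ \cong\ } Ne$ of right $\csA' = e\csA e$-modules --- a right $\csA$-module map $f\colon e\csA\to N$ is determined by $f(e) = f(e)e \in Ne$, and conversely $m = ne \in Ne$ defines $f(ea) := ma$, which is well defined because $e(a-a') = 0$ forces $m(a-a') = (ne)(a-a') = 0$. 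Hence $\cHom_\csA(e\csA,N) \cong Ne = N(1-\epsilon_1) = \se^!(N)$ and therefore
\[
\Hom_\csA(\se(M),N) \cong \Hom_{\csA'}(M,\cHom_\csA(e\csA,N)) \cong \Hom_{\csA'}(M,\se^!(N)),
\]
which gives~\eqref{sesa}. Both computations are elementary, and I do not anticipate a genuine obstacle; the only mild nuisance is the bookkeeping of the bimodule structures on $\cO_{S_0}$ and on $(1-\epsilon_1)\csA$, together with the routine check that these sheafy $\otimes$/$\cHom$ adjunctions are valid, which follows from quasi-coherence over $\cO_S$.
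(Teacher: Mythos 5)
Your proof is correct and follows the same route as the paper: for $\si$ the left adjoint is recognized as extension of scalars along the surjection $\csA\to\cO_{S_0}$ (your tensor--$\cHom$ computation is just the verification of that standard fact), and for $\se$ the right adjoint is $\cHom_\csA((1-\epsilon_1)\csA,-)$, identified with $N\mapsto N(1-\epsilon_1)$. The paper asserts the corner-ring identification $\Hom_\csA(e\csA,N)=Ne$ without comment; you spell it out, but otherwise the arguments coincide.
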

\begin{proof}
The left adjoint to the restriction of scalars is the extension of scalars functor,
which is precisely the functor $\si^*$. On the other hand, it is clear that
\begin{equation*}
\Hom_{\csA}(\se(M),N) =
\Hom_{\csA}(M\otimes_{\csA'}(1-\epsilon_1)\csA,N) \cong
\Hom_{\csA'}(M,\Hom_{\csA}((1-\epsilon_1)\csA,N)) 
\end{equation*}
hence the right adjoint of $\se$ is given by
$N \mapsto \Hom_{\csA}((1-\epsilon_1)\csA,N) = N(1-\epsilon_1)$
which gives~\eqref{sesa}.
%
%
%
\end{proof}

\begin{remark}
In fact all these functors are just pullbacks and pushforward for generalized morphisms of $\csA$-spaces,
see Example~\ref{egm}.
\end{remark}

It is clear from~\eqref{sia}, \eqref{sea} and~\eqref{sesa} that the functors $\si$, $\se$, and $\se^!$ are exact. We extend them termwise
to the categories of complexes, and to the derived categories. On the other hand, the functor 
$\si^*$ is only right exact. We extend it termwise to the category of h-flat complexes of quasicoherent $\csA$-modules
and thus obtain its left derived functor $L\si^*:\bD(\csA) \to \bD(S_0)$,
\begin{equation}\label{lsesa}
L\si^*(M) = M\lotimes_{\csA}\cO_{S_0}.
\end{equation} 

\begin{prop}\label{ie-equ}
The functor $L\si^*:\bD(\csA) \to \bD(S_0)$ defined by~\eqref{lsesa} is left adjoint to the functor $\si:\bD(S_0) \to \bD(\csA)$,
and the functor $\se^!:\bD(\csA) \to \bD(\csA')$ is right adjoint to $\se:\bD(\csA') \to \bD(\csA)$.
Moreover, we have
\begin{equation}\label{iec}
L\sip^*\circ\sip \cong \id_{\bD(S_0)},
\qquad
\sep^!\circ\sip = 0,
\qquad
L\sip^*\circ\sep = 0,
\qquad
\sep^!\circ\sep \cong \id_{\bD(\csA')}.
\end{equation} 
In particular, the functors $\si$ and $\se$ are fully faithful.
\end{prop}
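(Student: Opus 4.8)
The plan is to obtain the whole proposition from the two underived adjunctions just proved above — $(\sip^*,\sip)$ and $(\sep,\sep^!)$ — together with a single short exact sequence of left $\csA$-modules read off the matrix descriptions \eqref{cadef}, \eqref{cidef}. First I would upgrade the adjunctions to the derived level. Since $\sip$, $\sep$ and $\sep^!$ are exact they coincide with their derived functors, and only $\sip^*$ must be derived, by h-flat resolution (Lemma~\ref{ahf}(3)). The adjunction $L\sip^*\dashv\sip$ is then the derived form of ``extension of scalars $\dashv$ restriction of scalars'' along the surjection $\csA\twoheadrightarrow\csA/\csI\cong\cO_{S_0}$ of Lemma~\ref{csi}: it follows from the underived adjunction in the standard way (the underived $\sip^*=-\otimes_\csA\cO_{S_0}$ is, at the level of complexes, left adjoint to the exact functor $\sip$, hence sends h-projective complexes to h-projective complexes, so that both $\Hom$'s can be computed on such representatives where the termwise adjunction applies; Brown representability gives another route). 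For $(\sep,\sep^!)$ I would use that $\sep^!$, being right adjoint to the exact $\sep$, preserves h-injectives, so that for an h-injective resolution $I\simeq F$ (Lemma~\ref{ahf}(2)) one has $\Hom_{\bD(\csA)}(\sep M,F)=\Hom_{[\csA]}(\sep M,I)=\Hom_{[\csA']}(M,\sep^!I)=\Hom_{\bD(\csA')}(M,\sep^!F)$, the middle $\Hom$'s being taken in the homotopy categories of $\csA$- resp.\ $\csA'$-module complexes, the middle equality being the termwise adjunction, and $\sep^!I$ resolving $\sep^!F$ by exactness of $\sep^!$.

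Two of the four identities in \eqref{iec} hold essentially before deriving anything: $\sep^!(\sip(M))=\sip(M)(1-\epsilon_1)=0$, because $1-\epsilon_1=\epsilon_2+\dots+\epsilon_n$ lies in $\csI$ and hence annihilates every $\cO_{S_0}$-module, and
\[
\sep^!(\sep(M))=\bigl(M\otimes_{\csA'}(1-\epsilon_1)\csA\bigr)(1-\epsilon_1)=M\otimes_{\csA'}(1-\epsilon_1)\csA(1-\epsilon_1)=M\otimes_{\csA'}\csA'=M
\]
by \eqref{csap}; both pass to the derived categories since all functors involved are exact.

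For the two identities involving $L\sip^*$ I would exploit that, by \eqref{cadef} and \eqref{cidef}, right multiplication by the unit $\bar 1_{21}\in\csA_{21}=\cO_S/\fr^{n-1}$ fits into a short exact sequence of left $\csA$-modules
\[
0\longrightarrow\csA\epsilon_2\xrightarrow{\ \bar 1_{21}\ }\csA\epsilon_1\longrightarrow\cO_{S_0}\longrightarrow 0,
\]
its image being precisely the first column $\csI\epsilon_1$ of $\csI$ and its cokernel being $\csA/\csI\cong\cO_{S_0}$. As $\csA\epsilon_1$ and $\csA\epsilon_2$ are direct summands of $\csA$, hence flat left $\csA$-modules, this two-term complex is an h-flat resolution of $\cO_{S_0}$, so for every $F\in\bD(\csA)$ the object $L\sip^*(F)=F\lotimes_\csA\cO_{S_0}$ sits in a triangle $F\epsilon_2\xrightarrow{\bar 1_{21}}F\epsilon_1\to L\sip^*(F)$ (using $F\lotimes_\csA\csA\epsilon_j=F\epsilon_j$). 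Taking $F=\sip(N)$: then $\sip(N)\epsilon_2=0$ (again $\epsilon_2\in\csI$) while $\sip(N)\epsilon_1=\sip(N)=N$, so $L\sip^*\sip(N)\cong N$. Taking $F=\sep(M)$: the map $\sep(M)\epsilon_2\to\sep(M)\epsilon_1$ is induced by $(1-\epsilon_1)\csA\epsilon_2\to(1-\epsilon_1)\csA\epsilon_1$, which on each entry $\csA_{i2}=\cO_S/\fr^{n+1-i}=\csA_{i1}$ ($i\ge2$) is the identity, hence an isomorphism, so $L\sip^*\sep(M)=0$. (Alternatively, $L\sip^*\sep=0$ follows from Remark~\ref{eproj}: $\sep$ sends projectives to projective $\csA$-modules, which are summands of sums of the $\epsilon_j\csA$ with $j\ge2$, and $\epsilon_j\csA\otimes_\csA\cO_{S_0}=\epsilon_j(\csA/\csI)=0$ for such $j$.) This establishes \eqref{iec}; full faithfulness of $\sip$ and $\sep$ is then immediate, since the counit $L\sip^*\sip\to\id$ of $(L\sip^*,\sip)$ and the unit $\id\to\sep^!\sep$ of $(\sep,\sep^!)$ are exactly the isomorphisms just proved.

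The only genuinely non-formal input is the displayed short exact sequence — equivalently the identification $\csI\epsilon_1\cong\csA\epsilon_2$ of left $\csA$-modules — which I would verify entry by entry from \eqref{cadef}/\eqref{cidef}; and, since we work with sheaves of $\csA$-modules rather than a single algebra, the routine checks that $\csA\epsilon_j$ is flat over $\csA$ and that the h-flat/h-injective resolution arguments carry over to $\Qcoh(\csA)$, which is guaranteed by Lemma~\ref{ahf}. Everything else is formal bookkeeping with the idempotent $1-\epsilon_1$.
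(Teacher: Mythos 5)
Your proposal is correct, and for the substantive part — the two identities involving $L\sip^*$ — you take a genuinely different route from the paper. The paper works entirely with the two-sided ideal $\csI$: it tensors the bimodule sequence $0\to\csI\to\csA\to\cO_{S_0}\to 0$ with $\cO_{S_0}$ and proves $\csI\lotimes_\csA\cO_{S_0}=0$ via the identification $\csI\cong\csA(1-\epsilon_1)\lotimes_{\csA'}(1-\epsilon_1)\csA$ together with projectivity of $(1-\epsilon_1)\csA$ over $\csA'$; this is essentially the idempotent calculus behind recollement. You instead produce an explicit length-two flat resolution
\[
0\longrightarrow\csA\epsilon_2\xrightarrow{\ \cdot\bar 1_{21}\ }\csA\epsilon_1\longrightarrow\cO_{S_0}\longrightarrow 0
\]
of the left $\csA$-module $\cO_{S_0}$ (equivalently, the identification $\csI\epsilon_1\cong\csA\epsilon_2$, which you rightly single out as the one nontrivial entry-by-entry check). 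This has the virtue of giving a closed formula $L\sip^*(F)\cong\Cone\bigl(F\epsilon_2\to F\epsilon_1\bigr)$ for \emph{every} $F\in\bD(\csA)$, after which $L\sip^*\sip\cong\id$ and $L\sip^*\sep=0$ are read off by inspection; the paper's bimodule argument is less concrete but generalizes a bit more smoothly (e.g.\ to the generalized Auslander algebras of the Appendix, where the analogue of your map $\bar 1_{21}$ still works but one has to re-derive the entries). Your alternative derivation of $L\sip^*\sep=0$ via Remark~\ref{eproj} also works and amounts to the same observation that $(1-\epsilon_1)\csA\otimes_\csA\cO_{S_0}=0$.

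One small imprecision in your discussion of the adjunctions, which the paper also glosses over but which is worth flagging: $\Qcoh(\csA)$ is only guaranteed (Lemma~\ref{ahf}) to have enough h-injectives and h-flats, not h-projectives (think of $\Qcoh$ of a non-affine scheme), so the argument ``$\sip^*$ preserves h-projectives, compute $\Hom$'s on such representatives'' is not available in general. Your fallback via Brown representability is fine, as is the general fact that a derived left adjoint of an exact right adjoint is again adjoint on the derived level; for $(\sep,\sep^!)$ your h-injective argument is the clean one and matches the setting.
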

\begin{proof}
The adjunctions of the functors on the derived level follow from those on the abelian level.
So, let us verify equations~\eqref{iec}. The first composition is the functor 
\begin{equation*}
M \mapsto M\lotimes_{\cO_{S_0}} (\cO_{S_0}\lotimes_{\csA}\cO_{S_0}),
\end{equation*}
so we have to compute the derived tensor product $\cO_{S_0}\lotimes_{\csA}\cO_{S_0}$.
For this we use the defining exact sequence
\begin{equation}\label{cicos0}
0 \to \csI \to \csA\to \cO_{S_0}\to 0
\end{equation}
and note that
\begin{equation}\label{csieq}
\csI = \csA(1-\epsilon_1)\csA \cong \csA(1-\epsilon_1)\lotimes_{\csA'}(1-\epsilon_1)\csA.
\end{equation}
Indeed, it is clear from~\eqref{omea} that $(1-\epsilon_1)\csA$ is isomorphic to $\csA'\epsilon_2 \oplus \csA'$ as an $\csA'$-module
(if we remove the first column from~\eqref{omea} we get $\csA'$, the first column itself just repeats the first column 
of $\csA'$, so the corresponding $\csA'$-module is the projective module corresponding to the first idempotent of $\csA'$
which is the second idempotent of $\csA$), hence it is projective and the RHS above is isomorphic to
\begin{equation*}
\csA(1-\epsilon_1)\lotimes_{\csA'}(\csA'\epsilon_2 \oplus \csA') \cong
\csA(1-\epsilon_1)\epsilon_2 \oplus \csA(1-\epsilon_1) =
\csA\epsilon_2 \oplus \csA(1-\epsilon_1) 
\end{equation*}
which clearly coincides with $\csI$. Now it follows that 
\begin{equation*}
\csI\lotimes_{\csA}\cO_{S_0} \cong 
\csA(1-\epsilon_1)\lotimes_{\csA'}(1-\epsilon_1)\csA\lotimes_{\csA}\cO_{S_0} \cong 
\csA(1-\epsilon_1)\lotimes_{\csA'}(1-\epsilon_1)\cO_{S_0} = 0
\end{equation*}
since $1-\epsilon_1$ acts trivially on $\cO_{S_0}$. Now tensoring~\eqref{cicos0} with $\cO_{S_0}$
we conclude that 
\begin{equation*}
\cO_{S_0}\lotimes_{\csA}\cO_{S_0} \cong
\csA\lotimes_{\csA}\cO_{S_0} \cong
\cO_{S_0}.
\end{equation*}
This shows that the composition $L\si^*\circ\si$ is isomorphic to the identity.

All the other compositions are evident. Indeed,
$\se^!(\si(M)) = M(1-\epsilon_1) = 0$ for any object $M \in \bD(S_0)$ since as we already
observed $1-\epsilon_1$ acts on all $\cO_{S_0}$-modules trivially. Analogously,
\begin{equation*}
L\si^*(\se(M)) = 
M\lotimes_{\csA'}(1-\epsilon_1)\csA\lotimes_{\csA}\cO_{S_0} \cong
M\lotimes_{\csA'}(1-\epsilon_1)\cO_{S_0} = 0
\end{equation*}
and the reason is the same.
Finally, by~\eqref{csap}
\begin{equation*}
\se^!(\se(M)) = 
M\lotimes_{\csA'}(1-\epsilon_1)\csA(1-\epsilon_1) =
M\lotimes_{\csA'}\csA' \cong M
\end{equation*}
so the composition $\se^!\circ\se$ is the identity.
\end{proof}

Now we can prove that the category $\bD(\csA)$ has a semiorthogonal decomposition.

\begin{prop}\label{dqcsod}
There are semiorthogonal decompositions
$$
\bD(\csA) = \langle \si(\bD(S_0)), \se(\bD(\csA')) \rangle,
\qquad
\bD^b(\coh(\csA)) =  \langle \si(\bD^b(\coh(S_0))), \se(\bD^b(\coh(\csA'))) \rangle.
$$
\end{prop}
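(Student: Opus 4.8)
The plan is to deduce both semiorthogonal decompositions from Lemma~\ref{sod3}, using the adjunctions and the four identities of Proposition~\ref{ie-equ}. Set $\cT = \bD(\csA)$, $\cT_1 = \si(\bD(S_0))$, $\cT_2 = \se(\bD(\csA'))$. By Proposition~\ref{ie-equ} the functor $\si$ is fully faithful with left adjoint $L\si^*$, and $\se$ is fully faithful with right adjoint $\se^!$, so $\cT_1$ and $\cT_2$ are strictly full triangulated subcategories; the identity $\se^!\circ\si = 0$ says exactly that $\Hom_\cT(\cT_2,\cT_1) = 0$ (apply adjunction: $\Hom(\se(N),\si(M)) \cong \Hom(N,\se^!\si(M)) = 0$), so $(\cT_1,\cT_2)$ is a semiorthogonal pair. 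Lemma~\ref{sod3} then gives a semiorthogonal decomposition $\cT = \langle \cT_1, {}^\perp\cT_1 \cap \cT_2^\perp, \cT_2\rangle$ with middle component $\Ker(L\si^*) \cap \Ker(\se^!)$, and it remains to show this middle component vanishes.

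The main step is therefore: if $F \in \bD(\csA)$ satisfies $L\si^*(F) \cong F\lotimes_\csA \cO_{S_0} = 0$ and $\se^!(F) = F(1-\epsilon_1) = 0$, then $F = 0$. First I would observe that $F(1-\epsilon_1) = 0$ means $F = F\epsilon_1$, i.e. $F$ is supported on the first idempotent. Then using the exact sequence~\eqref{cicos0}, namely $0 \to \csI \to \csA \to \cO_{S_0} \to 0$, tensored with $F$ over $\csA$, together with the factorization $\csI \cong \csA(1-\epsilon_1)\lotimes_{\csA'}(1-\epsilon_1)\csA$ from~\eqref{csieq}, I get $F\lotimes_\csA\csI \cong F\lotimes_\csA\csA(1-\epsilon_1)\lotimes_{\csA'}(1-\epsilon_1)\csA \cong F(1-\epsilon_1)\lotimes_{\csA'}(1-\epsilon_1)\csA = 0$ since $F(1-\epsilon_1) = 0$. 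Hence from the triangle $F\lotimes_\csA\csI \to F\lotimes_\csA\csA \to F\lotimes_\csA\cO_{S_0}$ we obtain $F \cong F\lotimes_\csA\csA \cong F\lotimes_\csA\cO_{S_0} = L\si^*(F) = 0$. This proves the first decomposition $\bD(\csA) = \langle \si(\bD(S_0)), \se(\bD(\csA'))\rangle$.

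For the bounded coherent version I would restrict the same decomposition. One checks that $\si$, $\se$, $L\si^*$ and $\se^!$ all preserve the subcategories of complexes with bounded coherent cohomology: $\si$ and $\se^!(N) = N(1-\epsilon_1)$ are exact and clearly preserve coherence and boundedness; $\se(M) = M\otimes_{\csA'}(1-\epsilon_1)\csA$ is exact ($(1-\epsilon_1)\csA$ being projective over $\csA'$, cf. Remark~\ref{eproj}) and preserves coherence; and $L\si^*(M) = M\lotimes_\csA\cO_{S_0}$ has bounded coherent cohomology when $M$ does, since $\cO_{S_0}$ admits a finite resolution by projective $\csA$-modules — this uses that $S_0$ is a closed subscheme and the explicit matrix form~\eqref{cadef}, or one argues via the two-term presentation $0\to\csI\to\csA\to\cO_{S_0}\to 0$ plus $\csI \cong \csA(1-\epsilon_1)\lotimes_{\csA'}(1-\epsilon_1)\csA$ and induction on the width $n$. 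Given these facts, the decomposition triangle of any $F \in \bD^b(\coh(\csA))$ from the first part has both its projection $\bp_2(F) = \se\,\se^!(F)$ and $\bp_1(F) = \si\,L\si^*(F)$ in $\bD^b(\coh)$ of the respective categories, which yields $\bD^b(\coh(\csA)) = \langle \si(\bD^b(\coh(S_0))), \se(\bD^b(\coh(\csA')))\rangle$. The main obstacle I anticipate is the finiteness claim for $L\si^*$ on coherent complexes, i.e. verifying that $\cO_{S_0}$ is a perfect $\csA$-module (equivalently that $\csA$ has finite global dimension near $S_0$), which is where the smoothness hypothesis on $S_0$ and the structure of the Auslander algebra really enter; the rest is formal manipulation of adjunctions and Lemma~\ref{sod3}.
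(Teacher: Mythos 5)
Your proof is correct and its computational core matches the paper's, but you package the argument a bit differently, and in the bounded--coherent part you over-anticipate the need for smoothness of $S_0$.

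For the first decomposition, the paper does not go through Lemma~\ref{sod3}: after establishing semiorthogonality, it tensors the short exact sequence $0 \to \csI \to \csA \to \cO_{S_0} \to 0$ with an arbitrary $M$ and identifies the resulting triangle as
$\se(\se^!(M)) \to M \to \si(L\si^*(M))$
using exactly the isomorphism $\csI \cong \csA(1-\epsilon_1)\lotimes_{\csA'}(1-\epsilon_1)\csA$ that you invoke. That directly exhibits the decomposition triangle, whereas you use Lemma~\ref{sod3} to reduce to vanishing of the middle orthogonal $\Ker L\si^* \cap \Ker \se^!$ and then prove that vanishing by the same algebra. The two routes are essentially equivalent in difficulty; the paper's is marginally more explicit because it identifies the projection functors $\bp_1 = \si L\si^*$, $\bp_2 = \se\se^!$ along the way (which it then re-uses in the bounded part).

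For the bounded--coherent decomposition, the issue is your assessment that ``verifying $\cO_{S_0}$ is perfect over $\csA$ \dots is where the smoothness hypothesis on $S_0$ \dots really enters.'' This proposition does not assume $S_0$ is smooth, and the paper's argument never uses perfection of $\cO_{S_0}$. The correct route is your second alternative, which is the paper's: from the triangle $\se\se^!(M) \to M \to \si L\si^*(M)$, both $M$ and $\se\se^!(M)$ lie in $\bD^b(\coh(\csA))$ (because $\se^!$ is exact and coherence-preserving, and $(1-\epsilon_1)\csA$ is a projective $\csA'$-module), so the third vertex $\si L\si^*(M)$ does too; then one concludes $L\si^*(M)\in\bD^b(\coh(S_0))$ from the conservativity of $\si$. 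No smoothness, no finite projective resolution of $\cO_{S_0}$, and no induction on the width $n$ are needed. (Smoothness of $S_0$ enters only later, in Propositions~\ref{bperf}, \ref{cperf} and Theorem~\ref{dasm}.) I would drop alternative (a) entirely and state alternative (b) as the proof, making the conservativity step explicit.
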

\begin{proof}
First consider the category $\bD(\csA)$.
We already know that the functors $\si$ and $\se$ are fully faithful and have the left
and the right adjoint respectively. Moreover, 
\begin{equation*}
\Hom_{\bD(\csA)}(\se(M'),\si(M_0)) \cong
\Hom_{\bD(\csA')}(M',\se^!(\si(M_0))) = 0
\end{equation*}
by~\eqref{iec}, hence the subcategories are semiorthogonal.
It remains to check that each object $M \in \bD(\csA)$
fits into a triangle with the other vertices in $\si(\bD(S_0))$ and $\se(\bD(\csA'))$.
For this we tensor the sequence~\eqref{cicos0} with $M$ to get a distinguished triangle
\begin{equation*}
M\lotimes_{\csA}\csI \to M \to M\lotimes_{\csA}\cO_{S_0}.
\end{equation*}
Note that the last term is nothing but $\si(L\si^*(M))$, so it remains to note that
\begin{equation*}
\se(\se^!(M)) \cong
(M\lotimes_{\csA}\csA(1-\epsilon_1))\lotimes_{\csA'}(1-\epsilon_1)\csA \cong
M\lotimes_{\csA}\csI
\end{equation*}
(the first isomorphism is by definitions of $\se$ and $\se^!$, see~\eqref{sea}and~\eqref{sesa},
and the second follows from~\eqref{csieq}). So, we see that the above triangle can be rewritten
as
\begin{equation}\label{eemii}
\se(\se^!(M)) \to M \to \si(L\si^*(M))
\end{equation}
which proves the first decomposition.

For the second decomposition note that we clearly have $\si(\bD^b(\coh(S_0))) \subset \bD^b(\coh(\csA))$.
Further, since $(1-\epsilon_1)\csA$ is a projective $\csA'$-module (see the proof of Proposition~\ref{ie-equ}),
we deduce that $\se(\bD^b(\coh(\csA'))) \subset \bD^b(\coh(\csA))$. So, it remains to check 
inclusions for the adjoint functors.

The inclusion $\se^!(\bD^b(\coh(\csA))) \subset \bD^b(\coh(\csA'))$
follows immediately from~\eqref{sesa}. To prove the inclusion for $L\si^*$ consider the triangle~\eqref{eemii}
with $M \in \bD^b(\coh(\csA))$. Then from the above it follows that its first vertex is also in $\bD^b(\coh(\csA))$,
hence the same is true for the last vertex. It remains to note that $\si(L\si^*(M)) \in \bD^b(\coh(\csA))$
implies $L\si^*(M) \in \bD^b(\coh(S_0))$ since the functor $\si$ is evidently conservative.
\end{proof}

Iterating the above argument we deduce the following 

\begin{cor}\label{sodqqc}
There are semiorthogonal decompositions
\begin{equation*}
\begin{array}{lll}
\bD(\csA_{S,\fr,n}) &=& \langle \underbrace{\bD(S_0),\bD(S_0),\dots,\bD(S_0)}_{\text{$n$ components}} \rangle,\\
\bD^b(\coh(\csA_{S,\fr,n})) &=& 
\langle \underbrace{\bD^b(\coh(S_0)), \bD^b(\coh(S_0)), \dots, \bD^b(\coh(S_0))}_{\text{$n$ components}} \rangle.
\end{array}
\end{equation*}
\end{cor}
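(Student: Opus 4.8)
The plan is to argue by induction on the width $n$, with Proposition~\ref{dqcsod} providing the inductive step. For $n = 1$ the matrix~\eqref{cadef} degenerates to the single entry $\csA_{11} = \cO_S/\fr^n = \cO_S/\fr = \cO_{S_0}$ (here $\fr = \fr^n = 0$ and $S_0 = S$), so $\csA_{S,\fr,1} \cong \cO_{S_0}$ and $\bD(\csA_{S,\fr,1}) = \bD(S_0)$, which is the asserted decomposition with a single component; the coherent version is the same statement. This is the base of the induction.

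For the inductive step, assume the statement holds for all Auslander algebras of width $n-1$. Proposition~\ref{dqcsod} gives a semiorthogonal decomposition $\bD(\csA_{S,\fr,n}) = \langle \si(\bD(S_0)), \se(\bD(\csA')) \rangle$, where $\csA' = \csA_{S',\fr,n-1}$ is the Auslander algebra of the scheme $S'$ (cut out in $S$ by $\fr^{n-1}$), with defining ideal the image $\fr' = \fr/\fr^{n-1}$ of $\fr$ in $\cO_{S'}$ and width $n-1$. The two elementary identifications that feed the recursion are: $(\fr')^{n-1} = \fr^{n-1}/\fr^{n-1} = 0$, so the width is admissible; and the subscheme of $S'$ cut out by $\fr'$ is the very same closed subscheme $S_0 \subset S$, since $\cO_{S'}/\fr' = (\cO_S/\fr^{n-1})/(\fr/\fr^{n-1}) = \cO_S/\fr = \cO_{S_0}$. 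Hence the inductive hypothesis applies to $\csA'$ and produces a semiorthogonal decomposition of $\bD(\csA')$ into $n-1$ copies of $\bD(S_0)$, and likewise of $\bD^b(\coh(\csA'))$ into $n-1$ copies of $\bD^b(\coh(S_0))$.

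It remains to splice. By Proposition~\ref{ie-equ} the functor $\se$ is fully faithful, and being triangulated it carries the semiorthogonal decomposition of $\bD(\csA')$ to a semiorthogonal collection of subcategories of $\bD(\csA_{S,\fr,n})$, each equivalent to $\bD(S_0)$ and all lying in the right component $\se(\bD(\csA'))$; the filtration triangles transport along $\se$ as well. Combining this with $\bD(\csA_{S,\fr,n}) = \langle \si(\bD(S_0)), \se(\bD(\csA')) \rangle$ and the transitivity of semiorthogonal decompositions (refining a component by its own semiorthogonal decomposition) yields the claimed decomposition into $1 + (n-1) = n$ copies of $\bD(S_0)$. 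The bounded coherent case is identical, using the second decomposition of Proposition~\ref{dqcsod} together with the inclusions $\si(\bD^b(\coh(S_0))) \subset \bD^b(\coh(\csA))$ and $\se(\bD^b(\coh(\csA'))) \subset \bD^b(\coh(\csA))$ already established in its proof.

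There is no real obstacle here --- all the substance is in Proposition~\ref{dqcsod}. The only points demanding a moment's care are the bookkeeping of which scheme and ideal appear at each recursion level (so that the component labelled ``$\bD(S_0)$'' is literally the same subscheme throughout, which the identification $\cO_{S'}/\fr' = \cO_{S_0}$ above guarantees at every stage), and the purely formal transitivity of nested semiorthogonal decompositions; both are routine.
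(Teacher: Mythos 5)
Your proof is correct and takes exactly the route the paper intends when it says ``iterating the above argument'': induct on the width $n$ using Proposition~\ref{dqcsod} as the inductive step, after verifying that $\csA' = \csA_{S',\fr/\fr^{n-1},n-1}$ has width $n-1$ and that its distinguished subscheme $S'_0$ coincides with $S_0$. Your accounting of the ideals and the splicing of nested semiorthogonal decompositions is the same bookkeeping the paper relies on.
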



\begin{example}
Let $S = \Spec \kk[t]/t^2$, $\fr = t\kk[t]/t^2$, $n=2$. Then $S_0 = \Spec\kk$, so the semiorthogonal decomposition
is just an exceptional pair $(E_1,E_2)$ generating the category. The corresponding exceptional objects are 
\begin{equation*}
E_1 = (\xymatrix@1{\kk \ar@<-.5ex>[r] & 0 \ar@<-.5ex>[l]})
\quad\text{and}\quad
E_2 = (\xymatrix@1{\kk \ar@<-.5ex>[r]_0 & \kk \ar@<-.5ex>[l]_1 }),
\end{equation*}
the simple module of the first vertex and the projective module of the second vertex respectively.
The easiest way to check that the pair is exceptional is by using the projective resolution
$0 \to P_2 \to P_1 \to E_1 \to 0$.
\end{example}

\subsection{Perfect and compact $\csA$-modules}


We will say that a complex of $\csA$-modules is {\sf perfect} if it is locally quasiisomorphic to a finite complex
of projective $\csA$-modules. All perfect complexes form a triangulated subcategory of $\bD^b(\coh(\csA))$ which 
we denote by $\bD^\perf(\csA)$. 


\begin{prop}\label{bperf}
If $S_0$ is smooth then $\bD^b(\coh(\csA)) = \bD^\perf(\csA)$.
\end{prop}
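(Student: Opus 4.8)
The plan is to show that every coherent $\csA$-module admits a finite resolution by projective $\csA$-modules, using the semiorthogonal decomposition of Corollary~\ref{sodqqc} together with the smoothness of $S_0$. Recall that for smooth $S_0$ we have $\bD^b(\coh(S_0)) = \bD^\perf(S_0)$, i.e.\ every coherent sheaf on $S_0$ has finite projective (= locally free) dimension. The inclusion $\bD^\perf(\csA) \subset \bD^b(\coh(\csA))$ is automatic (a bounded complex of projective, hence coherent, $\csA$-modules is coherent), so the content is the reverse inclusion $\bD^b(\coh(\csA)) \subset \bD^\perf(\csA)$.

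\textbf{Main steps.} First I would reduce, via the semiorthogonal decomposition $\bD^b(\coh(\csA)) = \langle \si(\bD^b(\coh(S_0))), \se(\bD^b(\coh(\csA'))) \rangle$ of Proposition~\ref{dqcsod} and induction on the width $n$, to checking two things: (1) the functor $\si:\bD^b(\coh(S_0)) \to \bD^b(\coh(\csA))$ sends perfect complexes to perfect complexes; (2) the functor $\se:\bD^b(\coh(\csA')) \to \bD^b(\coh(\csA))$ sends perfect complexes to perfect complexes. For (2), by Remark~\ref{eproj} the functor $\se$ takes projective $\csA'$-modules to projective $\csA$-modules (since $(1-\epsilon_1)\csA$ is $\csA$-projective), so it takes a finite complex of projectives to a finite complex of projectives; combined with the inductive hypothesis that $\bD^b(\coh(\csA')) = \bD^\perf(\csA')$ (note $\csA' = \csA_{S',\fr,n-1}$ has smaller width, and its associated reduced scheme is the same $S_0$, still smooth), this gives (2). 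The base case $n=1$ is $\csA = \cO_S$ with $S$ itself smooth, where it is classical. The heart of the matter is therefore (1): I must show that for a coherent sheaf $\cM$ on $S_0$, the $\csA$-module $\si(\cM)$ — which is $\cM$ regarded as an $\csA$-module via $\csA \twoheadrightarrow \cO_{S_0}$ — is perfect over $\csA$.

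\textbf{The hard part} is precisely the construction of a finite projective $\csA$-resolution of $\si(\cM)$ for $\cM$ coherent on $S_0$. Since $\cM$ has finite locally free resolution over $\cO_{S_0}$ (smoothness of $S_0$) and $\si$ is exact, this reduces to showing that $\si(\cO_{S_0})$ itself is perfect over $\csA$; more precisely, that the $\csA$-module $\cO_{S_0} = \csA/\csI$ admits a finite projective resolution. Here I would use the exact sequence~\eqref{cicos0}, $0 \to \csI \to \csA \to \cO_{S_0} \to 0$, together with the identification $\csI \cong \csA(1-\epsilon_1)\lotimes_{\csA'}(1-\epsilon_1)\csA$ from the proof of Proposition~\ref{ie-equ}: since $\csA$ is $\csA$-projective and $(1-\epsilon_1)\csA$ is $\csA$-projective, it suffices to know that $\csA(1-\epsilon_1)$ has a finite projective resolution as a right $\csA'$-module — but $\csA(1-\epsilon_1)$, read off from the matrix form, is a direct sum of columns each of which is (a shift/truncation of) a column of $\csA'$ tensored with an $\cO_{S_0}$-sheaf, so one can express it through modules of the form $\si'(\text{l.f. sheaf on }S_0)$ for the analogous functor $\si'$ on $\csA'$, and then invoke induction on $n$ once more. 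This bookkeeping — tracking how the generator $\cO_{S_0}$ decomposes into projectives after repeatedly peeling off the first idempotent — is the genuinely technical step; everything else is formal manipulation with the functors $\si$, $\se$ and their adjoints established above. I would organize it as a single induction on $n$ in which the statement proved is simultaneously "$\bD^b(\coh(\csA_{S,\fr,n})) = \bD^\perf(\csA_{S,\fr,n})$" and "$\si(\bD^\perf(S_0)) \subset \bD^\perf(\csA_{S,\fr,n})$", so that the two halves feed each other.
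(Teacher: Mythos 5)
Your broad strategy coincides with the paper's: the inclusion $\bD^\perf(\csA)\subset\bD^b(\coh(\csA))$ is automatic, and for the reverse one inducts on the width $n$, uses the semiorthogonal decomposition of Proposition~\ref{dqcsod} via the triangle~\eqref{eemii}, handles the $\se$-component with Remark~\ref{eproj} plus the induction hypothesis, and reduces the $\si$-component (smoothness of $S_0$) to the single assertion that $\si(\cO_{S_0})$ is perfect over $\csA$. All of that is correct and matches the paper.

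Where you diverge is in the closing step, and this is where your proposal has a genuine gap. You propose to analyze $\csI\cong\csA(1-\epsilon_1)\lotimes_{\csA'}(1-\epsilon_1)\csA$ by decomposing $\csA(1-\epsilon_1)$ "column by column," claiming the pieces are shifts/truncations of columns of $\csA'$ tensored with $\cO_{S_0}$-sheaves, and then you scaffold this inside a second induction with an auxiliary statement $\si(\bD^\perf(S_0))\subset\bD^\perf(\csA)$. Two problems. First, the decomposition as stated is not correct: as a right $\csA'$-module, $\csA(1-\epsilon_1)\cong\csA'\oplus\epsilon_1\csA(1-\epsilon_1)$ (decompose by rows, not columns), and the summand $\epsilon_1\csA(1-\epsilon_1)=(\fr,\fr^2,\dots,\fr^{n-1})$ is \emph{not} of the form $\si'(\text{locally free on }S_0)$, so the proposed reduction does not go through as written. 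Second, the whole bookkeeping is unnecessary. The paper closes the loop with one observation: $\cO_{S_0}\cong L\si^*(\csA)$, so applying the same triangle~\eqref{eemii} to $M=\csA$ gives
\begin{equation*}
\se(\se^!(\csA))\to\csA\to\si(\cO_{S_0}).
\end{equation*}
Here $\se^!(\csA)=\csA(1-\epsilon_1)$ lies in $\bD^b(\coh(\csA'))$ (already established: $\se^!$ preserves boundedness and coherence), hence is perfect over $\csA'$ by the \emph{same} induction hypothesis you are already using; then $\se$ preserves perfectness and $\csA$ is tautologically perfect, so $\si(\cO_{S_0})$ is perfect. Note that $\se(\se^!(\csA))=\csI$, so this is literally the object you were manipulating — but the paper sees that its perfectness is a free consequence of the induction already in place, with no need for a column calculation nor for strengthening the induction statement.
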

\begin{proof}
As we already observed, the embedding $\bD^\perf(\csA) \subset \bD^b(\coh(\csA))$ is automatic.
For the opposite we use induction on $n$. For $n = 1$ the fact is well known. Now assume that $n > 1$.
Take any $M \in \bD^b(\coh(\csA))$ and consider the triangle~\eqref{eemii}. 
In the proof of Proposition~\ref{dqcsod} we checked that $\se^!(M) \in \bD^b(\coh(\csA))$ and
$L\si^*(M) \in \bD^b(\coh(S_0))$. Hence by induction hypothesis both are perfect.
%
On one hand, as it was observed in Remark~\ref{eproj} the functor $\se$
takes projective $\csA'$-modules to projective $\csA$-modules, hence $\se(\se^!(M))$ is perfect.
%
On the other hand, it follows that $L\si^*(M)$ is locally quasiisomorphic to a finite complex of free $\cO_{S_0}$-modules.
So, it remains to check that $\si(\cO_{S_0})$ is perfect. 

For this we note that $\cO_{S_0} \cong L\si^*(\csA)$, 
hence the triangle~\eqref{eemii} for $M = \csA$ reads as
\begin{equation*}
\se(\se^!(\csA)) \to \csA \to \si(\cO_{S_0}).
\end{equation*}
Its first vertex is perfect by the above argument and its second vertex is evidently perfect. Hence the third
vertex is perfect and we are done.
%
%
%
%
%
\end{proof}

It is also clear that every perfect complex of $\csA$-modules is a compact object of $\bD(\csA)$
(the argument of Neeman \cite[Example 1.13]{N96} can be easily adjusted to our situation).
Therefore the category $\bD(\csA)$ is compactly generated.
Moreover, if $S_0$ is smooth then one can also check that all compact objects are perfect.

\begin{prop}\label{cperf}
If $S_0$ is smooth then $\bD(\csA)^\comp = \bD^\perf(\csA)$.
\end{prop}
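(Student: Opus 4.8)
The plan is to use the semiorthogonal decomposition $\bD(\csA) = \langle \si(\bD(S_0)), \se(\bD(\csA')) \rangle$ from Proposition~\ref{dqcsod} together with induction on the width $n$, exactly as in the proof of Proposition~\ref{bperf}. The inclusion $\bD^\perf(\csA) \subset \bD(\csA)^\comp$ is already noted in the text (the argument of Neeman applies verbatim), so the content is the reverse inclusion. First I would record the base case $n = 1$: here $\csA_{S,0,1} \cong \cO_S$ with $S = S_0$ smooth, so $\bD(\csA)^\comp = \bD^\perf(S) = \bD^b(\coh(S))$ by~\eqref{compperf} and~\eqref{compcoh}, and by Proposition~\ref{bperf} this equals $\bD^\perf(\csA)$.

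For the inductive step ($n > 1$), take a compact object $M \in \bD(\csA)^\comp$ and apply the decomposition triangle~\eqref{eemii}:
\begin{equation*}
\se(\se^!(M)) \to M \to \si(L\si^*(M)).
\end{equation*}
I would show that both outer vertices are compact in their respective categories. The functor $\se^!(N) = N(1-\epsilon_1)$ is the restriction along the bimodule $(1-\epsilon_1)\csA$, which is projective (hence h-flat) as an $\csA$-module, so $\se^!$ commutes with arbitrary direct sums; since moreover $\se$ is fully faithful with right adjoint $\se^!$ and $\bD(\csA')$ is compactly generated, Lemma~\ref{cprops}(2) applied to $\se$ shows that $\se$ preserves compactness and, dually, $\se^!$ preserves compactness of objects lying in the image of $\se$. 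Concretely: the triangle exhibits $\se(\se^!(M))$ as the cone of a morphism between $M$ and $\si(L\si^*(M))$, so once we know $\si(L\si^*(M))$ is compact it follows that $\se(\se^!(M))$ is, and then by full faithfulness of $\se$ together with the fact that $\se$ commutes with direct sums (Lemma~\ref{cprops}(1)) we conclude $\se^!(M) \in \bD(\csA')^\comp$. Similarly $L\si^*$ is left adjoint to $\si$ and commutes with direct sums, while $\si$ (restriction of scalars along $\csA \to \cO_{S_0}$) commutes with direct sums and is fully faithful, so once $\si(L\si^*(M))$ is compact, Lemma~\ref{cprops}(1) gives $L\si^*(M) \in \bD(S_0)^\comp = \bD^b(\coh(S_0))$, using smoothness of $S_0$.

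The remaining point — and the one requiring the most care — is to verify that $\si(L\si^*(M))$ is compact when $M$ is. The cleanest route is to observe that $\si$ preserves compactness: its right adjoint is $\si^!(N) = \cHom_{\csA}(\cO_{S_0}, N)$, which is (derived) restriction along the bimodule $\cO_{S_0}$, and one checks directly from the resolution~\eqref{csieq} of $\csI$ and the exact sequence~\eqref{cicos0} that $\cO_{S_0}$ is a perfect $\csA$-module (this is exactly the computation $\se(\se^!(\csA)) \to \csA \to \si(\cO_{S_0})$ made in the proof of Proposition~\ref{bperf}, which shows $\si(\cO_{S_0}) \in \bD^\perf(\csA)$); hence the right adjoint of $\si$ commutes with direct sums, so by Lemma~\ref{cprops}(2) the functor $\si$ preserves compactness. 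Therefore $\si$ carries the compact object $L\si^*(M) \in \bD(S_0)^\comp$ to a compact object; but we applied this having only concluded $L\si^*(M)$ is compact from compactness of $\si(L\si^*(M))$, so to break the circularity I would instead argue directly: $L\si^*$ has a right adjoint $\si$ which commutes with direct sums (being restriction of scalars along a surjection of algebras that is finite as a map of sheaves), hence by Lemma~\ref{cprops}(2) $L\si^*$ preserves compactness, giving $L\si^*(M) \in \bD(S_0)^\comp = \bD^b(\coh(S_0))$ immediately. By the induction hypothesis $\bD(\csA')^\comp = \bD^\perf(\csA')$ (applied to the width-$(n-1)$ Auslander algebra $\csA' = \csA_{S',\fr,n-1}$, whose associated reduced scheme still has smooth reduction — in fact $S'_0 = S_0$), so $\se^!(M)$, being compact, is perfect over $\csA'$, and $\se$ takes it to a perfect $\csA$-module by Remark~\ref{eproj}. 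On the other side, $\si(L\si^*(M))$ is perfect by the argument recalled above. Hence $M$, sitting in a triangle between two perfect $\csA$-modules, is itself perfect, completing the induction. Finally, combining with the automatic inclusion $\bD^\perf(\csA) \subset \bD(\csA)^\comp$ yields the equality. The main obstacle, as indicated, is disentangling which adjoint is used where so as to avoid circular reasoning when transferring compactness across the semiorthogonal components; once the adjunctions $(L\si^*, \si)$ and $(\se, \se^!)$ and the perfectness of the bimodules $(1-\epsilon_1)\csA$ and $\cO_{S_0}$ are in hand, everything reduces to Lemma~\ref{cprops} plus the induction hypothesis.
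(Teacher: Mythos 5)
Your argument is correct and, once you self-correct past the initial circular detour, it coincides with the paper's proof: induction on the width $n$ using the triangle~\eqref{eemii}, Lemma~\ref{cprops}(2) applied to $L\si^*$ (right adjoint $\si$ commutes with direct sums) to get $L\si^*(M)\in\bD^b(\coh(S_0))$, then Lemma~\ref{cprops}(1) for $\se$ to get $\se^!(M)\in\bD(\csA')^\comp$, the induction hypothesis, and Proposition~\ref{bperf} plus Remark~\ref{eproj} to pass from bounded coherent to perfect. The only difference is stylistic: the excursion through perfectness of $\cO_{S_0}$ as an $\csA$-module and the ``$\si$ preserves compactness'' route is ultimately not needed once you break the circularity, and could be excised.
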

\begin{proof}
We will use induction on $n$. If $n = 1$ the $\bD(\csA) = \bD(S) = \bD(S_0)$ and the statement follows from~\eqref{compperf}.

Assume $n > 1$. Let $M$ be an arbitrary compact object in $\bD(\csA)$. By Lemma~\ref{cprops}(2) we know that $L\si^*(M) \in \bD(S_0)^\comp$ 
since the functor $\si$ commutes with arbitrary direct sums. It follows that $L\si^*(M) \in \bD^b(\coh(S_0))$,
hence $\si(L\si^*(M)) \in \bD^b(\coh(\csA))$. Thus by Proposition~\ref{bperf}
\begin{equation*}
\si(L\si^*(M)) \in \bD^\perf(\csA). 
\end{equation*}
In particular, $\si(L\si^*(M))$ is compact.
Using triangle~\eqref{eemii} we then conclude that $\se(\se^!(M))$ is compact as well. 
Further, Lemma~\ref{cprops}(1) shows that $\se^!(M) \in \bD(\csA')^\comp$ since $\se$ is fully faithful and commutes with direct sums, 
hence by induction assumption $\se^!(M) \in \bD(\csA')^\perf$. Again, it follows that
$\se(\se^!(M)) \in \bD^b(\coh(\csA))$, hence by Proposition~\ref{bperf}
\begin{equation*}
\se(\se^!(M)) \in \bD^\perf(\csA). 
\end{equation*}
Looking again at triangle~\eqref{eemii} we conclude that $M$ is perfect.
\end{proof}

\subsection{DG-enhancement}

We have an analogue of Theorem~\ref{cd-pf}.


\begin{theo}\label{cda-pf}
There is a pseudofunctor $\cD:\ASch^\opp \to \sDG$ extending the pseudofunctor of Theorem~\ref{cd-pf}, such that the diagram
\begin{equation*}
\xymatrix{
\ASch^\opp \ar[rr]^\cD \ar[dr]_{\bD} && \sDG \ar[dl]^{\bD} \\ & \Tria
}
\end{equation*}
is commutative.
\end{theo}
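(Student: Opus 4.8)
The plan is to mimic the proof of Theorem~\ref{cd-pf} as closely as possible, replacing schemes by $\csA$-spaces throughout. First I would construct, for each $\csA$-space $(S,\csA_S)$, a small DG-category $\cD(\csA_S)$ sitting inside $\hflat(\csA_S)/\hflata(\csA_S)$ whose homotopy category is $\bD^\perf(\csA_S)$; here I use Lemma~\ref{ahf}(3) (enough h-flat complexes) and the identification $\bD(\csA_S)\cong[\hflat(\csA_S)/\hflata(\csA_S)]$ established before Proposition~\ref{dqcsod}, together with the analogue of Proposition~\ref{cperf} telling us that compact objects in $\bD(\csA_S)$ are the perfect complexes. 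The essential smallness of $\bD^\perf(\csA_S)$ follows as for schemes. To make the assignment functorial I would, exactly as in Theorem~\ref{cd-pf}, first pick an arbitrary small DG-subcategory $\cD_0(\csA_S)$ with $[\cD_0(\csA_S)]=\bD^\perf(\csA_S)$ and then enlarge it to
\begin{equation*}
\cD(\csA_S) := \bigcup_{g:(S,\csA_S)\to(S',\csA_{S'})} g^*(\cD_0(\csA_{S'})),
\end{equation*}
the union over isomorphism classes of morphisms of $\csA$-spaces with target of finite type. One must check this is still a set: isomorphism classes of $\csA$-spaces with separated finite-type underlying scheme form a set (an $\csA$-space is a scheme plus a nilpotent ideal plus an integer), and for fixed target the morphisms of $\csA$-spaces form a set since they are morphisms of underlying schemes satisfying condition~\eqref{rst}.

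Next I would define the DG-functor on morphisms. Given a morphism $f:(T,\csA_T)\to(S,\csA_S)$ of $\csA$-spaces, I need that the termwise pullback $f^*$ of~\eqref{ffpp} sends h-flat complexes to h-flat complexes and h-flat acyclics to h-flat acyclics — this is the analogue of Lemma~\ref{pbfa} for $\csA$-spaces which is already invoked in the proof of the preceding Proposition (pseudofunctoriality of $Lf^*$), and it holds because $(\epsilon_1+\dots+\epsilon_{n_S})\csA_T$ is projective over $\csA_T$ so tensoring with it over $f^{\bull}\csA_S$ after applying the exact functor $f^{-1}$ preserves flatness and acyclicity. Then by Proposition~\ref{dq-dgf} we get an induced DG-functor on Drinfeld quotients, and by the compatibility~\eqref{fsds}-analogue built into the construction of $\cD(\csA_S)$, this restricts to $f^*:\cD(\csA_S)\to\cD(\csA_T)$. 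Pseudofunctoriality (compatibility of compositions up to coherent isomorphism) follows from Lemma~\ref{ppgf} applied at the level of the chosen DG-subcategories. The extension claim — that this $\cD$ restricted to $\Sch^\op$ agrees with the pseudofunctor of Theorem~\ref{cd-pf} — holds by construction provided one performs the enlargement step using the union over \emph{all} morphisms of $\csA$-spaces, since $\Sch$ is a full subcategory of $\ASch$ (width-$1$ spaces); one simply chooses $\cD_0$ compatibly.

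Finally, the commutativity of the triangle requires producing, for each $\csA$-space, an equivalence $\Psi_S:\bD(\csA_S)\xrightarrow{\ \cong\ }\bD(\cD(\csA_S))$ compatible with pullback. This is verbatim the argument in Theorem~\ref{cd-pf}: define the DG-functor $\hflat(\csA_S)/\hflata(\csA_S)\to\cD(\csA_S)\dgm$ by $M\mapsto\Hom_{\hflat(\csA_S)/\hflata(\csA_S)}(-,M)$, compose to get $\Psi_S$, observe it commutes with direct sums (objects of $\cD(\csA_S)$ are compact in $\bD(\csA_S)$ by the perfect $=$ compact statement), preserves compactness and sends $\cD(\csA_S)$ to generators (by the Yoneda computation $\Psi_S(M)\cong\sY^M$ for perfect $M$), and is fully faithful on $[\cD(\csA_S)]$; then conclude by Lemma~\ref{cff}. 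The compatibility square with $Lf^*$ and $\LInd_{f^*}$ is checked on $[\cD(\csA_S)]$ via $\LInd_{f^*}(\sY^M)\cong\sY^{f^*(M)}$ (Proposition~\ref{fpb}) and extended to all of $\bD(\csA_S)$ by Proposition~\ref{cgenprop} since both sides commute with direct sums and agree on a generating set of compacts.

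I expect the main obstacle to be purely bookkeeping rather than conceptual: verifying that the enlarged $\cD(\csA_S)$ is genuinely a set and that the strict compatibility $f^*(\cD(\csA_S))\subset\cD(\csA_T)$ holds for \emph{all} morphisms of $\csA$-spaces (not just those factoring through the chosen $S'$), exactly as in the scheme case; and ensuring the h-flat pullback lemma for $\csA$-spaces is actually available — it is the one input here that the paper uses implicitly and which must be checked at the termwise level using projectivity of $(\epsilon_1+\dots+\epsilon_{n_S})\csA_T$ over $\csA_T$ together with the classical Spaltenstein argument (Lemma~\ref{pbfa}) applied over the sheaf of rings $\csA_S$ rather than $\cO_S$. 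Everything else is a faithful transcription of the proof of Theorem~\ref{cd-pf}.
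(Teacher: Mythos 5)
Your proof is correct and takes essentially the same approach as the paper; the paper's own proof of Theorem~\ref{cda-pf} simply says ``the proof is analogous to that of Theorem~\ref{cd-pf}'' and records the choice of small DG-subcategories $\cD(\csA_S)\subset\hflatperf(\csA_S)/\hflata(\csA_S)$ with the compatibility $f^*(\cD(\csA_S))\subset\cD(\csA_T)$, so your proposal is a faithful and carefully spelled-out expansion of exactly the argument the authors have in mind, including the set-theoretic union trick, the appeal to the $\csA$-space analogue of Lemma~\ref{pbfa}, and the use of Lemma~\ref{cff} via the Yoneda computation.
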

\begin{proof}
The proof is analogous to that of Theorem~\ref{cd-pf}.
We define
\begin{equation*}
\cD(\csA_S) \subset \hflatperf(\csA)/\hflata(\csA)
\end{equation*}
to be a small DG-subcategory such that $[\cD(\csA_S)] \cong \bD^\perf(\csA_S)$ and
such that for any morphism of $\csA$-spaces $f:(T,\csA_T) \to (S,\csA_S)$
we have $f^*(\cD(\csA_S)) \subset \cD(\csA_T)$. 
This gives a DG-functor $f^*:\cD(\csA_S) \to \cD(\csA_T)$. 
The commutativity of the diagram is proved by the arguments of Theorem~\ref{cd-pf}
and compatibility with the pseudofunctor on schemes is by construction.
\end{proof}




The following observation is very important.

\begin{theo}\label{dasm}
If the scheme $S_0$ is smooth then the category $\cdcf(\csA)$ is a smooth DG-category.
If additionally $S_0$ is proper then the category $\cdcf(\csA)$ is proper.
\end{theo}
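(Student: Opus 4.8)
The plan is to prove smoothness of $\cdcf(\csA)$ by induction on the width $n$, exactly mirroring the inductive structure already used for Propositions~\ref{dqcsod},~\ref{bperf},~\ref{cperf}. For $n=1$ we have $\csA = \cO_S$ with $S = S_0$ smooth, so $\cdcf(\csA) = \cdcf(S)$ is smooth by the remark following Theorem~\ref{cd-pf} (i.e. by~\cite[Lemma~3.27]{TV}). For $n > 1$, the key input is the semiorthogonal decomposition of Proposition~\ref{dqcsod}: on the level of derived categories $\bD(\csA) = \langle \si(\bD(S_0)), \se(\bD(\csA')) \rangle$, and this lifts to a semiorthogonal decomposition of the DG-enhancement $\cdcf(\csA) = \langle \cdcf(S_0), \cdcf(\csA') \rangle$ (the functors $\si$ and $\se$ take perfect complexes to perfect complexes, using Remark~\ref{eproj} for $\se$ and Proposition~\ref{bperf} for $\si$, and the components are the DG-subcategories of $\cdcf(\csA)$ with the appropriate objects). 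By Proposition~\ref{uniglu2} this realizes $\cdcf(\csA)$ — up to quasiequivalence — as a gluing $\cdcf(S_0) \times_\psi \cdcf(\csA')$, where $\psi$ is the DG-bimodule computing the gluing functor, which by Proposition~\ref{derglu} (or directly from~\eqref{phiii},~\eqref{iec}) corresponds to the functor $\phi = i_1^!\,i_2[1]$ of the decomposition.

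The next step is to apply Proposition~\ref{utsm}: the gluing of two smooth DG-categories along a perfect bimodule is smooth. By induction $\cdcf(\csA')$ is smooth (since $\csA' = \csA_{S',\fr,n-1}$ has width $n-1$ and the same $S_0$, which is smooth), and $\cdcf(S_0)$ is smooth because $S_0$ is smooth. So the only thing left to check is that the gluing bimodule $\psi$ is perfect in $\bD(\cdcf(\csA')^\op \otimes \cdcf(S_0))$. For this I would invoke Proposition~\ref{tv} of To\"en--Vaqui\'e: since $\cdcf(\csA')$ is smooth, it suffices to show that the functor $L\psi : \bD(\cdcf(\csA')) \to \bD(\cdcf(S_0))$, which under the enhancement corresponds to $\phi = i_1^!\,i_2[1]$, i.e. to (a shift of) the composite $L\si^* \circ \se$ of functors on the big derived categories — wait, more precisely the gluing functor sends $\se(\bD(\csA')) \to \si(\bD(S_0))$ and is $i_1^!\circ i_2[1]$. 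But by~\eqref{iec} we have $L\sip^* \circ \sep = 0$, which says $\phi = 0$ on the nose! Hence $\psi$ is the zero bimodule, trivially perfect, and the gluing is in fact a direct sum $\cdcf(\csA) \simeq \cdcf(S_0) \times_0 \cdcf(\csA')$. So $\cdcf(\csA)$ is smooth by Proposition~\ref{utsm}, completing the induction.

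For the properness statement, assume additionally $S_0$ is proper. Then $\cdcf(S_0)$ is proper by the remark after Theorem~\ref{cd-pf} (or~\cite[Lemma~3.27]{TV} again), and by induction $\cdcf(\csA')$ is proper (same $S_0$, width $n-1$). Since the gluing bimodule is zero, hence perfect, Proposition~\ref{utsm} gives that the gluing of two proper DG-categories along a perfect bimodule is proper; alternatively properness is immediate from~\eqref{hglu}, which expresses each $\Hom$-complex in the gluing as a direct sum of $\Hom$-complexes of the components (shifted), each of which has finite-dimensional bounded cohomology. This finishes the proof.

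The main obstacle — and the step deserving the most care — is verifying that the semiorthogonal decomposition of Proposition~\ref{dqcsod} genuinely lifts to the chosen DG-enhancement $\cdcf(\csA)$, i.e. that $\si, \se$ preserve the subcategories of h-flat perfect complexes (up to quasiisomorphism) and that the resulting DG-subcategories are pretriangulated with the correct homotopy categories, so that Proposition~\ref{uniglu2} applies. Once that bookkeeping is in place, the vanishing $L\sip^* \circ \sep = 0$ from~\eqref{iec} makes the gluing bimodule zero and the smoothness/properness conclusions are formal consequences of Propositions~\ref{utsm} and~\ref{tv}; there is no hard computation, only the careful identification of enhancements.
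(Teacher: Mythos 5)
The proof strategy is the right one --- reduce to the gluing via Proposition~\ref{uniglu2}, then invoke Propositions~\ref{utsm} and~\ref{tv} --- but there is a fatal error at the decisive step. You claim the gluing bimodule $\psi$ is zero because ``$L\sip^*\circ\sep = 0$ by~\eqref{iec}.'' This conflates the \emph{left} adjoint of $\si$ with its \emph{right} adjoint. By~\eqref{phiii}, the gluing functor of the semiorthogonal decomposition $\langle \si(\bD(S_0)),\se(\bD(\csA'))\rangle$ is $\phi = i_1^!\,i_2[1] = \si^!\circ\se[1]$, where $\si^!$ is the \emph{right} adjoint of $\si$ (which exists by Brown representability because $\si$, being restriction of scalars, commutes with direct sums). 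The relation $L\si^*\circ\se = 0$ from~\eqref{iec} involves instead the left adjoint $L\si^*$ and is precisely the semi-orthogonality constraint $\Hom(\cT_2,\cT_1)=0$; it says nothing about the gluing functor. In fact $\si^!\circ\se$ is emphatically nonzero: roughly, $\Res_S\circ\si\circ\si^!$ computes $\RCHom_\csA(\cO_{S_0},-)$, which does not vanish on $\se(\bD(\csA'))$. (If the gluing bimodule were zero, $\bD(\csA)$ would split as an orthogonal direct sum $\bD(S_0)\oplus\bD(\csA')$, which is manifestly false.)

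The real content of the theorem is exactly the step your argument short-circuits: one must show that the gluing functor $\si^!\circ\se:\bD(\csA')\to\bD(S_0)$ preserves compactness, so that Proposition~\ref{tv} applies. This is where the smoothness of $S_0$ enters nontrivially. The paper does it roughly as follows: $\si$ is fully faithful and sum-preserving, so by Lemma~\ref{cprops}(1) it suffices to show $\si\circ\si^!\circ\se$ preserves compactness, and since $\se$ preserves compactness (its right adjoint $\se^!$ is exact and sum-preserving, so Lemma~\ref{cprops}(2) applies) one reduces to showing $\si\circ\si^!$ preserves compactness; identifying compact objects with bounded coherent complexes via Propositions~\ref{bperf} and~\ref{cperf}, this boils down to showing that $\RCHom_\csA(\cO_{S_0},N)$ is bounded and coherent for $N\in\bD^b(\coh(\csA))$, which holds precisely because $\cO_{S_0}$ admits a finite resolution by locally projective $\csA$-modules of finite rank (Proposition~\ref{bperf}, which uses $S_0$ smooth). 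The enhancement bookkeeping you flag as ``the main obstacle'' is in fact routine; the actual obstacle is the one you erroneously dismissed.
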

\begin{proof}
We use induction in $n$. The case $n = 1$ is evident as $\cdcf(\csA) \cong \cdcf(S)$ in this case
and $S = S_0$ is smooth. So we assume $n > 1$. By Proposition~\ref{uniglu2} and Proposition~\ref{regluequi} 
the semiorthogonal decomposition of Proposition~\ref{dqcsod} implies that the DG-category $\cdcf(\csA)$ is quasiequivalent to the gluing
\begin{equation*}
\cdcf(\csA) \cong \cdcf(S_0)\times_\varphi \cdcf(\csA')
\end{equation*}
for appropriate bimodule $\varphi$. Moreover, the categories
$\cdcf(S_0)$ and $\cdcf(\csA')$ are smooth (the first by smoothness of $S_0$ and 
the second by the induction hypothesis), so by Proposition~\ref{utsm} we only have to check 
that the gluing bimodule $\varphi$ is perfect. 

Note that the functor $\si:\bD(S_0) \to \bD(\csA)$ commutes with direct sums, hence has 
a right adjoint functor $\si^!:\bD(\csA) \to \bD(S_0)$.
By Proposition~\ref{tv} to check that $\varphi$ is perfect it suffices to check that the functor
$L\varphi:\bD(\csA') \to \bD(S_0)$ preserves compactness. By Proposition~\ref{derglu} this functor
is isomorphic to the gluing functor $\si^!\circ\se:\bD(\csA') \to \bD(S_0)$ of the semiorthogonal decomposition.
So, we need to check that the composition $\si^!\circ\se$ preserves compactness.

Since the functor $\si$ is fully faithful and commutes with direct sums, by Lemma~\ref{cprops}(1) this is equivalent to checking that the functor
$\si\circ\si^!\circ\se:\bD(\csA') \to \bD(\csA)$ preserves compactness. Note that $\se$ preserves
compactness by Lemma~\ref{cprops}(2), since its right adjoint $\se^!$ commutes with direct sums.
Hence it suffices to check that $\si\circ\si^!:\bD(\csA) \to \bD(\csA)$ preserves compactness. 
Since an object in $\bD(\csA)$ is compact if and only if it is bounded and coherent 
(by Propositions~\ref{bperf} and~\ref{cperf}), and since the latter by definition is equivalent to boundedness 
and coherence of its image under $\Res_S$, it is enough to check that for any compact $N \in \bD(\csA)^\comp$
one has $\Res_S(\si(\si^!(N))) \in \bD^b(\coh(S))$.

Note that $\Res_S\circ\si\circ\si^!$ is right adjoint to the functor $\si\circ L\si^*\circ\LInd_S:\bD(S)\to \bD(\csA)$, which is given by
$M \mapsto (M\lotimes_{\cO_S}\csA)\lotimes_{\csA}\cO_{S_0} \cong M\lotimes_{\cO_S}\cO_{S_0}$.
It follows immediately that the functor $\Res_S\circ\si\circ\si^!$ is isomorphic to
\begin{equation*}
\Res_S(\si(\si^!(N))) = \RCHom_{\csA}(\cO_{S_0},N).
\end{equation*}
It remains to note that by Proposition~\ref{bperf} the sheaf $\cO_{S_0}$ considered as a right $\csA$-module
has a finite resolution by locally projective $\csA$-modules of finite rank, which means that
$\RCHom_{\csA}(\cO_{S_0},N)$ is bounded and coherent. And this is precisely what we had to check.
\end{proof}

\subsection{The resolution}\label{ss-rhos}

%

Let us consider one very special morphism of $\csA$-spaces, namely the morphism of $\csA$-spaces 
%
%
\begin{equation*}
(S,\fr,n) \xrightarrow{\ \rho_S } (S,0,1)
\end{equation*}
induced by the identity morphism $\id_S:S \to S$ of the scheme $S$.
In the target we just take the same underlying reducible scheme $S$ as in the source, 
while the defining ideal is taken to be $0$ and the width is taken to be $1$.
Note that $\csA_{S,0,1} = \cO_S$ as it was observed in Remark~\ref{oa}.
Consider the corresponding pullback functor $\rho_S^*:\Qcoh(S) \to \Qcoh(\csA_S)$ 
and the corresponding pushforward functor $\rho_{S*}:\Qcoh(\csA_S) \to \Qcoh(S)$. By definition~\eqref{ffpp} we have
\begin{equation}\label{auspis}
\rho_S^*(M) = M\otimes_{\cO_S}(\epsilon_1\csA),
\qquad
\rho_{S*}(N) = N\epsilon_1.
\end{equation} 

\begin{lemma}\label{rrs}
The functor $\rho_S^*$ is the left adjoint of $\rho_{S*}$ and $\rho_{S*}\circ\rho_S^*\cong \id$.
\end{lemma}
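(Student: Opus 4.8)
The plan is to deduce the adjunction from the general Lemma~\ref{apbpf} and to compute the composition by hand using the explicit corner description of the Auslander algebra. First I would observe that $\rho_S$ is indeed a morphism of $\csA$-spaces in the sense of section~\ref{ss-afun}: its underlying morphism of schemes is $\id_S$, and the compatibility condition~\eqref{rst} holds trivially since the defining ideal of the target is $0$, so $\id_S^{-1}(0) = 0 \subset \fr$, while the widths satisfy $1 = n_S \le n = n_T$. Moreover, specializing formulas~\eqref{ffpp} to this morphism (with $n_S = 1$, so that the relevant idempotent is $\epsilon_1$) recovers exactly formulas~\eqref{auspis}. Hence the first assertion — that $\rho_S^*$ is left adjoint to $\rho_{S*}$ — is an immediate instance of Lemma~\ref{apbpf}.

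For the second assertion I would compute directly:
\begin{equation*}
\rho_{S*}(\rho_S^*(M)) = \big(M\otimes_{\cO_S}(\epsilon_1\csA)\big)\epsilon_1 \cong M\otimes_{\cO_S}(\epsilon_1\csA\epsilon_1),
\end{equation*}
so everything comes down to identifying the corner $\epsilon_1\csA\epsilon_1$. By the description $\csA_{ij} = \fr^{\max(j-i,0)}/\fr^{n+1-i}$ from~\eqref{cadef}, and since $\epsilon_1$ is the idempotent projecting onto the first row and column, we have $\epsilon_1\csA\epsilon_1 = \csA_{11} = \cO_S/\fr^n = \cO_S$, using $\fr^n = 0$. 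The multiplication identifies this corner with $\cO_S$ as an $\cO_S$-algebra, hence $M\otimes_{\cO_S}(\epsilon_1\csA\epsilon_1) \cong M\otimes_{\cO_S}\cO_S \cong M$, and chasing the identifications shows this isomorphism is functorial in $M$. (Equivalently, one notes $\epsilon_1\csA$ is a $\cO_S$-$\csA$-bimodule, projective over $\csA$, with $(\epsilon_1\csA)\epsilon_1 \cong \cO_S$, which is the statement.)

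There is essentially no hard part here: the only point requiring a moment of care is checking that the identification $\epsilon_1\csA\epsilon_1 \cong \cO_S$ respects the $\cO_S$-module (indeed $\cO_S$-algebra) structure, so that the resulting natural isomorphism $\rho_{S*}\circ\rho_S^* \cong \id$ is compatible with the $\cO_S$-module structure of $M$. This is immediate from the definition of the algebra structure on $\csA$ via the embedding~\eqref{caemb}. I would also remark, although it is not strictly needed, that this is a categorical shadow of the fact that $\cO_{S_0} = \csA/\csI$ corresponds to the last vertex while $\epsilon_1$ corresponds to the first vertex, and that the functor $\rho_S^*$ lands in the subcategory generated by the projective $\csA$-module $\epsilon_1\csA$.
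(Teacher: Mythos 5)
Your proof is correct and follows the same strategy as the paper: deduce the adjunction from Lemma~\ref{apbpf} and compute $\rho_{S*}\rho_S^*(M) = M\otimes_{\cO_S}(\epsilon_1\csA\epsilon_1) = M\otimes_{\cO_S}\cO_S = M$ directly. The extra checks you include (that $\rho_S$ satisfies the compatibility condition~\eqref{rst} and that the corner $\epsilon_1\csA\epsilon_1$ is $\cO_S$ as an algebra) are correct and merely make explicit what the paper leaves implicit.
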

\begin{proof}
The adjunction is the particular case of Lemma~\ref{apbpf}.
%
The composition of functors is also easy to compute
\begin{equation*}
\rho_{S*}(\rho_S^*(M)) = (M\otimes_{\cO_S}(\epsilon_1\csA))\epsilon_1 = M\otimes_{\cO_S}(\epsilon_1\csA\epsilon_1) = M\otimes_{\cO_S}\cO_S = M,
\end{equation*}
so $\rho_{S*}\circ\rho_S^* = \id$.
\end{proof}


\begin{example}\label{dnrho}
Let $S = \Spec\kk[t]/t^2$. Then 
$\rho_S^*(M) = (\xymatrix@1{M \ar@<-.5ex>[r] & M/tM \ar@<-.5ex>[l]_-{t}})$.
Alternatively, it can be written as $\rho_S^*(M) = M\otimes_{\kk[t]/t^2} P_1$, 
where $P_1 = (\xymatrix@1{\kk[t]/t^2 \ar@<-.5ex>[r] & k \ar@<-.5ex>[l]_-{t}})$
is the projective module of the first vertex of the quiver.
\end{example}


In the same way as general pullback functors do the functor $\rho_S^*$ extends 
to a DG-functor $\rho_S^*:\cdcf(S) \to \cdcf(\csA_S)$
as well as 
to a functor on derived categories $L\rho_S^*:\bD(S) \to \bD(\csA_S)$.
The functor $\rho_{S*}$ is exact by~\eqref{auspis} and so automatically descends
to a functor on derived categories $\rho_{S*}:\bD(\csA_S) \to \bD(S)$.
The adjunction between $\rho_S^*$ and $\rho_{S*}$ induces an adjunction
between $L\rho_S^*$ and $\rho_{S*}$ and isomorphism of Lemma~\ref{rrs}
gives an isomorphism
%
%
%
%
\begin{equation}\label{lrrs}
\rho_{S*}\circ L\rho_S^* \cong \id_{\bD(S)}.
\end{equation}
The following result now easily follows.

\begin{theo}\label{ausres}
If the scheme $S_0$ is smooth then 
\begin{enumerate}
\item the functor $\rho_S^*:\cdcf(S) \to \cdcf(\csA_S)$ is a categorical DG-resolution;
\item the functor $L\rho_S^*:\bD(S) \to \bD(\csA_S)$ is a categorical resolution.
\end{enumerate}
In particular, the functor $\rho_{S*}$ takes $\bD^b(\coh(\csA_S))$ to $\bD^b(\coh(S))$.
Finally, if $S_0$ is proper then so is the category $\cdcf(\csA_S)$.
\end{theo}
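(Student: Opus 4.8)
The plan is to deduce everything from the results already in place. First I would establish~(1): the DG-functor $\rho_S^*:\cdcf(S)\to\cdcf(\csA_S)$ computes $L\rho_S^*$ on h-flat perfect complexes, so it is enough to show that $L\rho_S^*$ is fully faithful on $\bD^\perf(S)$. For this I would invoke the adjunction between $L\rho_S^*$ and $\rho_{S*}$ together with~\eqref{lrrs}, which says precisely that the unit $\id_{\bD(S)}\to\rho_{S*}\circ L\rho_S^*$ is an isomorphism; hence $L\rho_S^*$ is fully faithful, and therefore $\rho_S^*$ is a partial categorical DG-resolution in the sense of Definition~\ref{pcdgr}. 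Since $S_0$ is smooth, Theorem~\ref{dasm} tells us that $\cdcf(\csA_S)$ is smooth, so $\rho_S^*$ is in fact a categorical DG-resolution, giving~(1).

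For~(2) I would apply Proposition~\ref{dgcrcr} with $Y=S$, $\cD=\cdcf(\csA_S)$ and $\pi=\rho_S^*$. One first identifies the induced functors: by the commutativity of the diagram in Theorem~\ref{cda-pf} the derived induction $\LInd_{\rho_S^*}$ agrees with $L\rho_S^*$, hence by uniqueness of adjoints $\Res_{\rho_S^*}$ agrees with $\rho_{S*}$. It then remains to check the hypothesis $\rho_{S*}([\cdcf(\csA_S)])\subset\bD^b(\coh(S))$. Here $[\cdcf(\csA_S)]=\bD^\perf(\csA_S)$, which by Proposition~\ref{bperf} (using smoothness of $S_0$) equals $\bD^b(\coh(\csA_S))$; and since $\rho_{S*}$ is the exact, coherence-preserving functor $N\mapsto N\epsilon_1$ of~\eqref{auspis}, it maps $\bD^b(\coh(\csA_S))$ into $\bD^b(\coh(S))$. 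This inclusion is exactly the ``in particular'' assertion, and Proposition~\ref{dgcrcr} then yields that $L\rho_S^*$ together with $\rho_{S*}$ is a categorical resolution, which is~(2). The final sentence about properness is immediate from the second half of Theorem~\ref{dasm}.

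The routine verifications (preservation of h-flatness by $\rho_S^*$, exactness and coherence-preservation of $\rho_{S*}$) are already recorded in the excerpt. The only mildly delicate point is the matching of the DG-level adjoint pair $(\LInd_{\rho_S^*},\Res_{\rho_S^*})$ with the geometric pair $(L\rho_S^*,\rho_{S*})$; this is what Theorem~\ref{cda-pf} is designed to provide, and once it is in hand the rest is bookkeeping.
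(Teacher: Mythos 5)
Your proposal is correct and follows essentially the same route as the paper: full faithfulness of $L\rho_S^*$ via~\eqref{lrrs}, smoothness and properness of $\cdcf(\csA_S)$ from Theorem~\ref{dasm}, and then Proposition~\ref{dgcrcr} after identifying $\bD^\perf(\csA_S)=\bD^b(\coh(\csA_S))$ via Proposition~\ref{bperf}. The one small divergence is that you check $\rho_{S*}(\bD^b(\coh(\csA_S)))\subset\bD^b(\coh(S))$ directly from the explicit exact, coherence-preserving formula $N\mapsto N\epsilon_1$ of~\eqref{auspis}, whereas the paper cites Lemma~\ref{rfscoh}; both are valid and yours is arguably a touch more elementary, but it does not constitute a genuinely different argument.
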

\begin{proof}
The category $\cdcf(\csA_S)$ is smooth by Theorem~\ref{dasm}. Moreover, by~\eqref{lrrs} 
the functor $L\rho_S^*$ is fully faithful, hence $\rho_S^*$ is quasi fully faithful.
So, part $(1)$ follows.

By Proposition~\ref{dgcrcr} to deduce part (2) we only have to check that the functor $\rho_{S*}$
takes $[\cD(\csA_S)] = \bD(\csA_S)^\perf$ to $\bD^b(\coh(S))$. By Proposition~\ref{bperf}
$\bD(\csA_S)^\perf = \bD^b(\coh(\csA_S))$ and by Lemma~\ref{rfscoh} we have
$\rho_{S*}(\bD^b(\coh(\csA_S)) \subset \bD^b(\coh(S))$ which completes the proof.


Finally, the properness of $\cdcf(\csA_S)$ is also proved in Theorem~\ref{dasm}.
\end{proof}


We will also need one more observation. Assume that $f:(T,\fr_T,n_T) \to (S,\fr_S,n_S)$ is a morphism of $\csA$-spaces.
Then we have a commutative diagram 
\begin{equation*}
\xymatrix{
(T,\fr_T,n_T) \ar[d]_f \ar[r]^{\rho_T} & (T,0,1) \ar[d]^f \\
(S,\fr_S,n_S) \ar[r]^{\rho_S} & (S,0,1)
}
\end{equation*}
of morphisms of $\csA$-spaces which includes the resolution morphisms $\rho_S$ and $\rho_T$.

\begin{lemma}\label{pprho}
There is an isomorphism of DG-functors $f^*\rho_S^* \cong \rho_T^*f^*:\cdcf(S) \to \cdcf(\csA_T)$.
\end{lemma}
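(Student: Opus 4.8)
The plan is to unwind both sides of the claimed isomorphism $f^*\rho_S^* \cong \rho_T^*f^*$ using the explicit formulas for the pullback functors. Recall from~\eqref{ffpp} and~\eqref{auspis} that for $M \in \Qcoh(S)$ we have $\rho_S^*(M) = M\otimes_{\cO_S}(\epsilon_1\csA_S)$, while for a morphism $g:(T',\csA_{T'})\to(T,\csA_T)$ of $\csA$-spaces the functor $g^*$ is given by $N\mapsto g^\bull N\otimes_{g^\bull\csA_T}(\epsilon_1+\dots+\epsilon_{n_T})\csA_{T'}$. Since these are right exact functors built from tensor products and the exact functor $f^{-1}$, they extend termwise to h-flat complexes, so it suffices to establish the isomorphism at the level of the underived functors on $\Qcoh(S)$; the DG-functor statement then follows because all the functors in sight are defined on $\cdcf(S)$, $\cdcf(\csA_T)$ by applying the termwise constructions to h-flat perfect complexes, exactly as in the construction preceding Theorem~\ref{cda-pf}.

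First I would compute $f^*(\rho_S^*(M))$. By definition this is
\begin{equation*}
f^*(M\otimes_{\cO_S}\epsilon_1\csA_S) = f^{-1}(M\otimes_{\cO_S}\epsilon_1\csA_S)\otimes_{f^{-1}\csA_S}(\epsilon_1+\dots+\epsilon_{n_S})\csA_T,
\end{equation*}
and one rewrites $f^{-1}(M\otimes_{\cO_S}\epsilon_1\csA_S) \cong f^{-1}M\otimes_{f^{-1}\cO_S}f^{-1}(\epsilon_1\csA_S)$, so that after cancelling the tensor product over $f^{-1}\csA_S$ this becomes $f^{-1}M\otimes_{f^{-1}\cO_S}\bigl(\epsilon_1(\epsilon_1+\dots+\epsilon_{n_S})\csA_T\bigr) = f^{-1}M\otimes_{f^{-1}\cO_S}\epsilon_1\csA_T$, using that $\epsilon_1$ is among $\epsilon_1,\dots,\epsilon_{n_S}$ (here $n_S\ge 1$) and the image of $\epsilon_1\in\csA_S$ under $f^\bull\csA_S\to\csA_T$ is $\epsilon_1\in\csA_T$ since the algebra map is built from the maps $\fr_S^i/\fr_S^j\to\fr_T^i/\fr_T^j$ compatibly with idempotents. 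On the other side, $\rho_T^*(f^*(M)) = \rho_T^*\bigl(f^{-1}M\otimes_{f^{-1}\cO_S}(\epsilon_1+\dots+\epsilon_{n_S})\csA_T\bigr)$, where I now regard the inner object as an $\cO_T$-module via $\Res_T$; applying $\rho_T^*(N) = N\otimes_{\cO_T}\epsilon_1\csA_T$ and simplifying $N\otimes_{\cO_T}\epsilon_1\csA_T$ with $N = f^{-1}M\otimes_{f^{-1}\cO_S}(\epsilon_1+\dots+\epsilon_{n_S})\csA_T$ again yields $f^{-1}M\otimes_{f^{-1}\cO_S}\epsilon_1\csA_T$ after using $\epsilon_1\csA_T\cdot\epsilon_1 \cdots$ — more precisely the point is that $\bigl((\epsilon_1+\dots+\epsilon_{n_S})\csA_T\bigr)\otimes_{\cO_T}\epsilon_1\csA_T$ collapses, via the multiplication $\csA_T\otimes_{\cO_T}\csA_T\to\csA_T$, to $\epsilon_1\csA_T$ because $\epsilon_1$ is a left-absorbing idempotent and $\rho_T^*$ only remembers the first column. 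Both computations therefore give the same functor $M\mapsto f^{-1}M\otimes_{f^{-1}\cO_S}\epsilon_1\csA_T$, and chasing through the natural identifications shows the two chains of isomorphisms agree, giving the desired natural isomorphism.

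The main obstacle, such as it is, will be bookkeeping: one must be careful that the tensor-product cancellations respect the bimodule structures (the $f^\bull\csA_S$-$\csA_T$-bimodule $(\epsilon_1+\dots+\epsilon_{n_S})\csA_T$ versus the $\cO_T$-module used in $\rho_T^*$), and that under the algebra homomorphism $f^\bull\csA_S\to\csA_T$ the idempotent $\epsilon_1$ really maps to $\epsilon_1$ rather than to a sum of idempotents — this is where the hypothesis that $\rho_S,\rho_T$ are induced by the identity on underlying schemes (so that $n$ drops to $1$ in the target, not just decreases) is used. There is no genuine difficulty beyond this; the statement is, in essence, the compatibility $\rho_S\circ f = f\circ\rho_T$ of morphisms of $\csA$-spaces (the commutative square displayed just before the Lemma) combined with the pseudofunctoriality of pullback established in Lemma~\ref{ppgf}, and indeed an alternative, cleaner proof would be to invoke Lemma~\ref{ppgf} directly: both $f^*\circ\rho_S^*$ and $\rho_T^*\circ f^*$ are canonically isomorphic to $(\rho_S\circ f)^* = (f\circ\rho_T)^*$, whence to each other. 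I would present this shorter argument as the main proof and relegate the explicit module computation to a remark if needed.
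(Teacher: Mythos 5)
Your closing paragraph is exactly the paper's proof: the paper's entire argument is ``Follows from Lemma~\ref{ppgf}'', i.e.\ both sides are pullbacks along the two equal compositions $\rho_S\circ f=f\circ\rho_T$ in the displayed commuting square of $\csA$-spaces. Since you explicitly say you would make that shorter argument the main proof, your proposal coincides with the paper's.

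The explicit module computation you lead with, however, has a concrete error in the ``other side''. In the composition $\rho_T^*\circ f^*:\cdcf(S)\to\cdcf(T)\to\cdcf(\csA_T)$, the $f^*$ is the pullback along the morphism of $\csA$-spaces $(T,0,1)\to(S,0,1)$, i.e.\ the usual scheme pullback $M\mapsto f^{-1}M\otimes_{f^{-1}\cO_S}\cO_T$ (both widths equal $1$, so formula~\eqref{ffpp} degenerates to the familiar one). You instead substituted $f^{-1}M\otimes_{f^{-1}\cO_S}(\epsilon_1+\dots+\epsilon_{n_S})\csA_T$, which is the bimodule appearing in the pullback along $(T,\fr_T,n_T)\to(S,\fr_S,n_S)$ (and even that is not quite right, since the tensor there is over $f^{-1}\csA_S$, not $f^{-1}\cO_S$); then passing to $\cO_T$-modules ``via $\Res_T$'' before applying $\rho_T^*$ computes $\rho_T^*\circ\Res_T\circ f^*_{\csA}$, a genuinely different functor from $\rho_T^*\circ f^*$. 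The correct direct computation is shorter: $\rho_T^*(f^*M)=(f^{-1}M\otimes_{f^{-1}\cO_S}\cO_T)\otimes_{\cO_T}\epsilon_1\csA_T\cong f^{-1}M\otimes_{f^{-1}\cO_S}\epsilon_1\csA_T$, which indeed agrees with your computation of $f^*(\rho_S^*M)$ after the cancellation over $f^{-1}\csA_S$. None of this affects the validity of the proof you ultimately recommend.
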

\begin{proof}
Follows from Lemma~\ref{ppgf}.
\end{proof}

\section{A categorical resolution of a scheme}\label{s-int}

Let $Y$ be a separated scheme of finite type over $\kk$.
Let $f:X \to Y$ be a blow up of a smooth subvariety $Z \subset Y$.
We consider an appropriate thickening $S$ of $Z$ in $Y$ and construct
a partial categorical DG-resolution of $\cdcf(Y)$ by gluing $\cdcf(\csA)$ 
(the DG-resolution of the scheme $S$ constructed in section~\ref{s-nred})
with~$\cdcf(X)$.
Then we use an inductive procedure to combine such partial resolutions into
a categorical DG-resolution of $\cdcf(Y)$.

\subsection{Nonrational centers}

For a morphism $f:X\to Y$ of schemes and a subscheme $S\subset Y$ we denote by $f^{-1}(S)$
the scheme-theoretic preimage of $S$. By definition $f^{-1}(S)$ is the subscheme of $X$
defined by the ideal $f^{-1}I_S\cdot\cO_X$, where $I_S\subset \cO_Y$ is the ideal of $S$.

If $f:X \to Y$ is a proper morphism of (possibly nonreduced and reducible) schemes we say that $f$
is birational if for any irreducible component $Y' \subset Y$ the scheme-theoretic preimage $X' := f^{-1}(Y')$
is irreducible and the restriction of $f$, $X'_\red \to Y'_\red$ is birational.

\begin{defi}
Let $f:X \to Y$ be a proper birational morphism.
A subscheme $S \subset Y$ is called {\sf a nonrational center of $Y$ with respect to $f$}
if the canonical morphism
$$
I_S \to Rf_*(I_{f^{-1}(S)})
$$
is an isomorphism.
\end{defi}

\begin{remark}
It is easy to see that if $Y$ is integral and $f:X \to Y$ is a resolution of singularities then the empty subscheme 
is a nonrational center for $f:X\to Y$ if and only if $Y$ has rational singularities. Thus if $S$ is a nonrational 
center for $f:X\to Y$ and $X$ is smooth then $Y \setminus S$ has rational singularities. This justifies the name.
%
\end{remark}

The following lemma shows that a nonrational center exists in a pretty large generality.

\begin{lemma}\label{innrc}
Assume $f:X \to Y$ is a blow up of a sheaf of ideals $I$ on $Y$.
Then for $n \gg 0$ the subscheme of $Y$ corresponding to the ideal $I^n$ is a nonrational center with respect to $f$.
\end{lemma}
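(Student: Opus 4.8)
The statement to prove is Lemma~\ref{innrc}: if $f: X \to Y$ is the blow up of an ideal sheaf $I$, then for $n \gg 0$ the subscheme cut out by $I^n$ is a nonrational center for $f$. By definition this means the canonical map $I^n \to Rf_*(I_{f^{-1}(S)})$ is an isomorphism, where $S$ is the subscheme defined by $I^n$. Since $f$ is the blow up of $I$, the preimage ideal $I\cdot\cO_X$ is invertible; write $\cO_X(-E)$ for it, where $E$ is the exceptional Cartier divisor. Then the scheme-theoretic preimage $f^{-1}(S)$ is defined by $(I\cdot\cO_X)^n = \cO_X(-nE)$, so $I_{f^{-1}(S)} = \cO_X(-nE)$. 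Thus the claim to establish is that for $n\gg 0$ the natural morphism
\begin{equation*}
I^n \to Rf_*\cO_X(-nE)
\end{equation*}
is a quasi-isomorphism in $\bD(Y)$.

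The plan is to argue this in two halves. First, for the $H^0$ (underived) statement: there is the Rees-algebra / projectivity picture, $X = \Proj_Y\big(\bigoplus_{k\ge 0} I^k\big)$ with $\cO_X(-kE)$ corresponding to the graded piece, and the natural map $I^k \to f_*\cO_X(-kE)$ is an isomorphism for $k$ large by the standard fact that the Rees algebra is eventually equal to $\bigoplus f_*\cO_X(-kE)$ in high degrees (this is saturation: $f_*\cO_X(-kE) = \widetilde{(I^\bullet)}_k$ agrees with $I^k$ once $k$ exceeds the regularity-type bound of the finitely generated graded module). Second, and this is the main point, I would show the higher direct images vanish: $R^qf_*\cO_X(-nE) = 0$ for all $q>0$ once $n\gg 0$. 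This is where I expect the real work to be. The clean way is to invoke Serre vanishing relative to $f$: since $-E$ is $f$-ample (it is the tautological $\cO(1)$ for the $\Proj$ over $Y$, up to the usual sign conventions on blow ups — $\cO_X(-E)$ is $f$-ample), and $Y$ is noetherian, there is $n_0$ such that $R^qf_*\cO_X(-nE)=0$ for all $q>0$ and all $n\ge n_0$. Combining: for $n\ge \max$ of the two bounds, $Rf_*\cO_X(-nE)$ is concentrated in degree $0$ and equals $I^n$, i.e. the canonical map $I^n \to Rf_*\cO_X(-nE)$ is an isomorphism, which is exactly the assertion that $S = V(I^n)$ is a nonrational center.

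I would organize the write-up as follows. Step 1: identify $f^{-1}(S)$. Since $f$ is a blow up of $I$, $I\cdot\cO_X$ is an invertible ideal $\cO_X(-E)$; hence $I^n\cdot\cO_X = \cO_X(-nE)$ and $I_{f^{-1}(S)} = \cO_X(-nE)$; also the canonical morphism in the definition of a nonrational center is the adjunction unit composed with the natural map, and it factors through $f^{-1}(I^n)\cdot\cO_X$. Step 2: $f$-ampleness of $-E$. Recall $X = \Proj_Y(\bigoplus_{k\geq 0} I^k)$ and $\cO_X(1) = \cO_X(-E)$; the tautological sheaf is relatively ample, so $-E$ is $f$-ample. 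Step 3: apply relative Serre vanishing (EGA III.2.2.1 or similar) to get $n_0$ with $R^qf_*\cO_X(-nE)=0$ for $q>0$, $n\ge n_0$. Step 4: for the $H^0$ part, apply the theorem on finiteness of the Proj construction: the graded $\cO_Y$-algebra $\bigoplus_k f_*\cO_X(-kE)$ is the saturation of the Rees algebra $\bigoplus_k I^k$, and a finitely generated graded algebra agrees with its saturation in all sufficiently large degrees; hence there is $n_1$ with $I^n \xrightarrow{\ \sim\ } f_*\cO_X(-nE)$ for $n\ge n_1$. Step 5: take $n\ge\max(n_0,n_1)$ and conclude.

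\textbf{Main obstacle.} The delicate point is Step 4 — carefully justifying that the canonical map $I^n\to f_*\cO_X(-nE)$ is an isomorphism (not merely that both are "eventually equal" abstractly), in a context where $Y$ is allowed to be nonreduced, reducible, and where $I$ is arbitrary; one must be a little careful that "the blow up of $I$" means $\Proj$ of the Rees algebra and that the map under consideration really is the adjunction unit. The cleanest reference-style argument is: the graded module $M = \bigoplus_k I^k$ is finitely generated over the noetherian Rees algebra $R = \bigoplus_k I^k$ (it equals $R$), so $\bigoplus_k H^0(X, \widetilde M(k)) = \bigoplus_k f_*\cO_X(-kE)$ differs from $M$ only in finitely many (low) degrees — this is the standard comparison between a graded module and the sections of its associated sheaf. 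I would spell this out using that $X$ is projective over the affine-locally noetherian $Y$, reducing to the affine case $Y = \Spec A$ where it is the classical statement about graded $A$-algebras and their Proj. The rest (Steps 1–3, 5) is essentially bookkeeping with standard facts about blow ups, relative ampleness, and Serre vanishing.
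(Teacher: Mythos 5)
Your proof takes essentially the same approach as the paper: identify $I_{f^{-1}(S)}$ with $\cO_{X/Y}(n)$ via the Rees-algebra description $X = \Proj_Y(\bigoplus_k I^k)$, then show $Rf_*\cO_{X/Y}(n) \cong I^n$ for $n\gg 0$. The paper simply cites \cite[Ex.~II.5.9]{Ha} for this last step, whereas you correctly unpack it into its two ingredients (agreement of $I^n$ with $f_*\cO_{X/Y}(n)$ in large degrees by the saturation comparison, and vanishing of $R^{>0}f_*$ by relative Serre vanishing with respect to the $f$-ample tautological sheaf), so your write-up is a more detailed version of the same argument.
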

\begin{proof}
Recall that $X$ is the projective spectrum of the sheaf of graded algebras $\oplus_{k=0}^\infty I^k$ on $Y$.
Further, it is clear that the graded sheaf of modules corresponding to the sheaf $f^{-1}I^n\cdot\cO_X$
is $\oplus_{k=0}^\infty I^{k+n}$ which is nothing but the line bundle $\cO_{X/Y}(n)$.
On the other hand, for $n \gg 0$ we have $Rf_*\cO_{X/Y}(n) = I^n$, see e.g.~\cite[Ex.~II.5.9]{Ha}.
\end{proof}

Consider the Cartesian square
\begin{equation}\label{xyts}
\vcenter{\xymatrix{
X \ar[d]_f & T \ar[l]_j \ar[d]^p \\ Y & S \ar[l]_i
}}
\end{equation} 
where $S$ is a subscheme of $Y$, $T = f^{-1}(S)$ is its scheme-theoretic preimage,
$i$ and $j$ are the closed embeddings, and $p$ is the restriction of $f$ to $T$.

\begin{lemma}\label{nrc-tri}
If $S$ is a nonrational center for the morphism $f$ and $T = f^{-1}(S)$ is its scheme-theoretic preimage
then there is a distinguished triangle in $\bD(Y)$
\begin{equation}\label{triy}
\cO_Y \to Rf_*\cO_X \oplus i_*\cO_S \to Rf_*j_*\cO_T.
\end{equation}
\end{lemma}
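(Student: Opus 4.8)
The plan is to deduce the triangle~\eqref{triy} purely formally in $\bD(Y)$ from the two short exact sequences defining the ideals $I_S$ and $I_{f^{-1}(S)}$, with the nonrational center hypothesis supplying the one nontrivial input. First I would write down the short exact sequence $0 \to I_S \to \cO_Y \to i_*\cO_S \to 0$ on $Y$ and the short exact sequence $0 \to I_T \to \cO_X \to j_*\cO_T \to 0$ on $X$, where $T = f^{-1}(S)$ and, as usual, $i_*$ and $j_*$ are exact since $i$ and $j$ are closed immersions. Applying $Rf_*$ to the distinguished triangle attached to the second sequence gives a distinguished triangle $Rf_*I_T \to Rf_*\cO_X \to Rf_*j_*\cO_T \to Rf_*I_T[1]$ in $\bD(Y)$. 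Since $Lf^*\cO_Y \cong \cO_X$ and, because $f^*$ is right exact, $f^*(i_*\cO_S) = \cO_X/(f^{-1}I_S\cdot\cO_X) = j_*\cO_T$, the triangle $I_S\to\cO_Y\to i_*\cO_S\to I_S[1]$ can be pulled back via $Lf^*$, mapped on $X$ by the evident quotient and truncation maps to the triangle $I_T \to \cO_X \to j_*\cO_T \to I_T[1]$, and then pushed forward by $Rf_*$; precomposing with the adjunction unit $\id \to Rf_*Lf^*$ one obtains, by naturality of the unit and functoriality of the truncations, a morphism of distinguished triangles from $I_S \to \cO_Y \to i_*\cO_S \to I_S[1]$ to $Rf_*I_T \to Rf_*\cO_X \to Rf_*j_*\cO_T \to Rf_*I_T[1]$ whose first component is exactly the canonical morphism occurring in the definition of a nonrational center.

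By hypothesis $S$ is a nonrational center for $f$, so this first component $I_S \to Rf_*I_T$ is an isomorphism. Rotating the two triangles one step, one gets that the map $\Cone(\cO_Y \xrightarrow{\ \beta\ } i_*\cO_S) \to \Cone(Rf_*\cO_X \xrightarrow{\ \beta'\ } Rf_*j_*\cO_T)$ induced by the middle and right components — which on cones is just a shift of the isomorphism $I_S \to Rf_*I_T$ — is an isomorphism; here $\beta,\beta'$ denote the two quotient maps, so this is the map of horizontal cones of the commuting square whose horizontal arrows are $\beta,\beta'$ and whose vertical arrows are the adjunction morphisms $\cO_Y \to Rf_*\cO_X$ and $i_*\cO_S \to Rf_*j_*\cO_T$. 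By the standard criterion (a consequence of the octahedral axiom, or of the $3\times 3$-lemma in triangulated categories) a commuting square is homotopy Cartesian if and only if the map it induces on the cones of its two horizontal arrows is an isomorphism; hence the square with vertices $\cO_Y$, $Rf_*\cO_X$, $i_*\cO_S$, $Rf_*j_*\cO_T$ — the two arrows out of $\cO_Y$ being the adjunction unit and the quotient map $\beta$, the other two being $\beta'$ and the third adjunction morphism — is homotopy Cartesian.

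The defining property of a homotopy Cartesian square then gives, with the evident maps, a distinguished triangle $\cO_Y \to Rf_*\cO_X \oplus i_*\cO_S \to Rf_*j_*\cO_T \to \cO_Y[1]$, which is~\eqref{triy}. I expect the only real obstacle to be the bookkeeping in the first step: checking that the three canonical adjunction morphisms are the components of an honest morphism of distinguished triangles (in particular that they are compatible with the connecting morphisms), since once this is in place the whole statement reduces, via the nonrational center hypothesis, to the elementary fact that a square which induces an isomorphism on horizontal cones is homotopy Cartesian.
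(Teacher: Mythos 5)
Your proposal is correct and follows essentially the same route as the paper: both form the morphism of distinguished triangles induced by the adjunction units on the two ideal-sheaf sequences, use the nonrational center hypothesis to identify $I_S \to Rf_*I_T$ as an isomorphism, and deduce the Mayer--Vietoris triangle. The paper phrases the final step by explicitly noting that the composite $i_*\cO_S[-1] \to I_S \cong Rf_*I_T \to Rf_*\cO_X$ vanishes (two consecutive arrows of the lower triangle) and then invoking the octahedron axiom on the resulting square, which is exactly the content of your homotopy-Cartesian-square criterion.
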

\begin{proof}
We have a commutative diagram
\begin{equation*}
\xymatrix{
i_*\cO_S[-1] \ar@{..>}[drr] \ar[r] \ar[d] & I_S \ar[r] \ar@{=}[d] & \cO_Y \ar[r] \ar[d] & i_*\cO_S \ar[d] \\
Rf_*j_*\cO_T[-1] \ar[r] & Rf_*I_T \ar[r] & Rf_*\cO_X \ar[r] & Rf_*j_*\cO_T
}
\end{equation*}
with rows being distinguished triangles. 
It follows that the dotted arrow is zero.
Extending the commutative square marked with $\star$ to the diagram of the octahedron axiom
\begin{equation*}
\xymatrix{ 
i_*\cO_S[-1] \ar@{}[dr]|-\bigstar \ar[r] \ar@{=}[d] & I_S \ar[d] \ar[r] & \cO_Y \ar[d] \\
i_*\cO_S[-1] \ar[r]_-0 & Rf_*\cO_X \ar[r] \ar[d] & Rf_*\cO_X \oplus i_*\cO_S \ar[d] \\
& Rf_*j_*\cO_T \ar@{=}[r] & Rf_*j_*\cO_T
}
\end{equation*}
we see the required triangle~\eqref{triy} as its right column.
%
\end{proof}

\subsection{The naive gluing}

Let $f:X \to Y$ be a blowup with smooth center $Z \subset Y$ and let $S$ be a nonrational center for~$f$.
Let $T$ be the scheme-theoretic preimage of $S$ and consider the diagram~\eqref{xyts}. 
This diagram allows to define a gluing of $\cdcf(S)$ with $\cdcf(X)$ by using $\cdcf(T)$ 
to construct the gluing DG-bimodule. To be more precise we define a DG-bimodule
$\varphi_0 \in \cdcf(X)^\op \otimes \cdcf(S)\dgm$ by
\begin{equation}\label{phi0}
\varphi_0(G_X,G_S) = \Hom_{\cdcf(T)}(p^*G_S,j^*G_T)
\end{equation}
with the bimodule structure given by DG-functors $j^*$ and $p^*$,
and consider the gluing
\begin{equation*}
\cD_0 := \cdcf(S) \times_{\varphi_0} \cdcf(X).
\end{equation*}
The key observation is that $\cD_0$ provides a partial categorical DG-resolution of $\cdcf(Y)$ as soon as $S$ is a nonrational center for $f$.
Thus adding the category of a nonrational center to the category of the blowup allows to cure nonrationality of the singularity.

Indeed, let us construct a DG-functor from $\cdcf(Y)$ to $\cD_0$. Let
\begin{equation*}
t = f\circ j = i\circ p:T \to Y
\end{equation*}
and note that for any $F \in \cdcf(Y)$ we have
\begin{equation*}
\varphi_0(f^*F,i^*F) = \Hom_{\cdcf(T)}(p^*i^*F,j^*f^*F) = \Hom_{\cdcf(T)}(t^*F,t^*F).
\end{equation*}
This allows to define
\begin{equation}
\mu_F := 1_{t^*F} \in \Hom_{\cdcf(T)}(t^*F,t^*F) = \varphi_0(f^*F,i^*F).
\end{equation} 
By construction $\mu_F$ is closed of degree zero, so $(i^*F,f^*F,\mu_F)$ is a well defined object of the gluing $\cD_0$.
Moreover, it is clear that
\begin{equation}
\pi_0(F) := (i^*F,f^*F,\mu_F)
\end{equation} 
is a well defined DG-functor $\pi_0:\cdcf(Y) \to \cD_0$.
We denote by $\pi_0^*$ its extension to derived categories:
\begin{equation*}
\pi_0^* = \LInd_{\pi_0}:\bD(Y) \to \bD(\cD_0).
\end{equation*}

\begin{prop}\label{pisfai}
The functor $\pi_0^*:\bD(Y) \to \bD(\cD_0)$ is fully faithful.
\end{prop}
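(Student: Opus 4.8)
The plan is to show that the composite functor $\bD(Y)\xrightarrow{\LInd_{\pi_0}}\bD(\cD_0)$ is fully faithful by using the semiorthogonal decomposition $\bD(\cD_0)=\langle\bD(\csA_S),\bD(X)\rangle$ coming from Proposition~\ref{derglu} together with the explicit description of the gluing functor as $L\varphi_0$. Since both $\bD(Y)$ and $\bD(\cD_0)$ are compactly generated and $\pi_0^*=\LInd_{\pi_0}$ commutes with direct sums and preserves compactness (Proposition~\ref{fpb}), by Lemma~\ref{cff} it suffices to prove that $\pi_0^*$ is fully faithful on the subcategory $[\cdcf(Y)]=\bD^\perf(Y)$ of compact objects. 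Equivalently, for perfect complexes $F,F'$ on $Y$ one must show that the natural map
\begin{equation*}
\Hom_{\bD(Y)}(F,F')\to \Hom_{\bD(\cD_0)}(\pi_0^*F,\pi_0^*F')
\end{equation*}
is an isomorphism. Here $\pi_0^*F\cong\pi_0(F)=(i^*F,f^*F,\mu_F)$ is representable in $\cD_0$, so the right-hand side is just $H^0$ of the Hom-complex $\Hom_{\cD_0}(\pi_0(F),\pi_0(F'))$.

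First I would compute this Hom-complex using Remark~\ref{triglu}: there is a distinguished triangle of complexes
\begin{equation*}
\Hom_{\cD_0}(\pi_0(F),\pi_0(F'))\to \Hom_{\cdcf(S)}(i^*F,i^*F')\oplus \Hom_{\cdcf(X)}(f^*F,f^*F')\to \varphi_0(f^*F,i^*F').
\end{equation*}
By definition $\varphi_0(f^*F,i^*F')=\Hom_{\cdcf(T)}(p^*i^*F,j^*f^*F')=\Hom_{\cdcf(T)}(t^*F,t^*F')$, and the connecting map sends $(g_1,g_2)$ to $g_2\circ\mu_F-\mu_{F'}\circ g_1 = j^*g_2 - p^*g_1$ (under the identification of both with maps $t^*F\to t^*F'$). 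Passing to $\bD(Y)$ via adjunction ($i^*\dashv Ri_*$, etc.), the triangle becomes, on the level of the derived category,
\begin{equation*}
\Hom_{\bD(\cD_0)}(\pi_0^*F,\pi_0^*F')\to \Hom_{\bD(Y)}(F,Ri_*i^*F')\oplus\Hom_{\bD(Y)}(F,Rf_*f^*F')\to \Hom_{\bD(Y)}(F,Rt_*t^*F').
\end{equation*}
Since $F$ is perfect the functor $\Hom_{\bD(Y)}(F,-\otimes F')$ applied to $Ri_*\cO_S$, $Rf_*\cO_X$, $Rt_*\cO_T$ computes exactly these groups (projection formula), so the fibre of the right-hand map is $\Hom_{\bD(Y)}(F,C\lotimes F')$ where $C=\mathrm{fib}\big(Rf_*\cO_X\oplus i_*\cO_S\to Rf_*j_*\cO_T\big)$. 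By Lemma~\ref{nrc-tri}, since $S$ is a nonrational center, $C\cong\cO_Y$, hence $C\lotimes F'\cong F'$ and the fibre is $\Hom_{\bD(Y)}(F,F')$. Identifying the maps with the canonical ones (i.e.\ checking that the triangle from Remark~\ref{triglu} is compatible, via $\pi_0$, with the triangle of Lemma~\ref{nrc-tri}) gives the desired isomorphism.

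The main obstacle, I expect, will be the bookkeeping in the second step: one must genuinely verify that the triangle of Hom-complexes attached to the gluing category, after applying the various adjunctions, maps isomorphically onto $\Hom_{\bD(Y)}(F,(-)\lotimes F')$ applied to the triangle~\eqref{triy} of Lemma~\ref{nrc-tri} — in particular that the connecting morphism $(g_1,g_2)\mapsto j^*g_2-p^*g_1$ corresponds, under $\mathrm{Hom}(F,-)$ and the projection formula, to the map $Rf_*\cO_X\oplus i_*\cO_S\to Rf_*j_*\cO_T$ appearing there. Once this compatibility of triangles is established, fully faithfulness on $\bD^\perf(Y)$ is immediate, and then Lemma~\ref{cff} upgrades it to fully faithfulness on all of $\bD(Y)$, completing the proof.
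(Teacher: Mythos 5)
Your proposal is correct and follows essentially the same route as the paper: reduce (via Lemma~\ref{cff}, which is exactly what the paper's appeal to Proposition~\ref{fpb} uses internally) to quasi full faithfulness on the perfect complexes $[\cdcf(Y)]$, then write down the triangle of $\Hom$-complexes from Remark~\ref{triglu}, identify its last two vertices with $\Ext^\bullet_{\bD(Y)}(F,-\lotimes G)$ of the three pushforwards via adjunction and the projection formula, and invoke Lemma~\ref{nrc-tri}. The only small organizational difference is that the paper sidesteps the compatibility bookkeeping you flag by inserting $\Hom_{\cdcf(Y)}(F,G)$ as the first vertex of a candidate triangle~\eqref{tridi} and proving \emph{that} triangle is distinguished (by identifying it with $\Ext^\bullet(F,G\lotimes(-))$ applied to~\eqref{triy}), rather than first computing $\Hom_{\cD_0}(\pi_0 F,\pi_0 G)$ as a fibre and then separately matching the natural map; the underlying computation is identical.
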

\begin{proof}
By Proposition~\ref{fpb} it suffices to check that $\pi_0$ is quasi fully faithful. 
According to Remark~\ref{triglu} we have a distinguished triangle
\begin{equation*}
\Hom_{\cD_0}(\pi_0(F),\pi_0(G)) 
\xrightarrow{\qquad} 
\begin{array}{c}
\Hom_{\cdcf(S)}(i^*F,i^*G) \\
\oplus \\
\Hom_{\cdcf(X)}(f^*F,f^*G) 
\end{array}
\xrightarrow[\mu_F]{\ -\mu_G\ } 
\varphi_0(f^*G,i^*F).
\end{equation*}
Since the DG-bimodule structure on $\varphi_0$ is given by the pullback functors $p^*$ and $j^*$
respectively and since the elements $\mu_F$ and $\mu_G$ are given by the units, we can rewrite this triangle as
\begin{equation*}
\Hom_{\cD_0}(\pi_0(F),\pi_0(G)) 
\xrightarrow{\qquad} 
\begin{array}{c}
\Hom_{\cdcf(S)}(i^*F,i^*G) \\
\oplus \\
\Hom_{\cdcf(X)}(f^*F,f^*G) 
\end{array}
\xrightarrow[j^*]{\ -p^*\ } 
\Hom_{\cdcf(T)}(t^*F,t^*G).
\end{equation*}
Note that the composition of the first arrow of this triangle with the action of the DG-functor 
$\pi_0:\Hom_{\cdcf(Y)}(F,G) \to \Hom_{\cD_0}(\pi_0(F),\pi_0(G))$ 
is given by DG-functors $i^*$ and $f^*$, so to prove that $\pi_0$ is quasi fully faithful it is enough to check that the following
triangle 
\begin{equation}\label{tridi}
\Hom_{\cdcf(Y)}(F,G) 
\xrightarrow[f^*]{\ i^*\ } 
\begin{array}{c}
\Hom_{\cdcf(S)}(i^*F,i^*G) \\
\oplus \\
\Hom_{\cdcf(X)}(f^*F,f^*G) 
\end{array}
\xrightarrow[j^*]{\ -p^*\ } 
\Hom_{\cdcf(T)}(t^*F,t^*G)
\end{equation}
is distinguished.
%
%
%
%
%
%
%

For this we identify the vertices of the triangle in terms of $Y$. 
Indeed, these vertices compute $\Ext$ groups in the derived categories 
$\Ext^\bullet_{\bD(Y)}(F,G)$, 
$\Ext^\bullet_{\bD(S)}(Li^*F,Li^*G)$, 
$\Ext^\bullet_{\bD(X)}(Lf^*F,Lf^*G)$, and
$\Ext^\bullet_{\bD(T)}(Lt^*F,Lt^*G) \cong \Ext^\bullet_{\bD(T)}(Lj^*Lf^*F,Lj^*Lf^*G)$ respectively.
Using 
the pullback--pushforward adjunctions
and the projection formula we rewrite
\begin{equation*}
\Ext^\bullet_{\bD(S)}(Li^*F,Li^*G) \cong 
\Ext^\bullet_{\bD(Y)}(F,i_*Li^*G) \cong
\Ext^\bullet_{\bD(Y)}(F,G\lotimes i_*\cO_S)
\end{equation*}
for the first summand of the second vertex,
\begin{equation*}
\Ext^\bullet_{\bD(X)}(Lf^*F,Lf^*G) \cong 
\Ext^\bullet_{\bD(Y)}(F,Rf_*Lf^*G) \cong
\Ext^\bullet_{\bD(Y)}(F,G\lotimes Rf_*\cO_X),
\end{equation*}
for the second summand of the second vertex, and
\begin{equation*}
\Ext^\bullet_{\bD(T)}(Lj^*Lf^*F,Lj^*Lf^*G) \cong 
\Ext^\bullet_{\bD(Y)}(F,Rf_*j_*Lj^*Lf^*G) \cong
\Ext^\bullet_{\bD(Y)}(F,G\lotimes Rf_*j_*\cO_T)
\end{equation*}
for the third vertex. It is clear that the morphism from the first vertex 
of~\eqref{tridi} to the summands of the second correspond
under above isomorphisms to the maps obtained by tensoring morphisms $\cO_Y \to i_*\cO_S$
and $\cO_Y \to Rf_*\cO_X$ with $G$ and applying $\Ext_{\bD(Y)}^\bullet(F,-)$. Analogously, 
the morphisms from the summands of the second vertex to the third
are obtained by the tensoring morphisms $i_*\cO_S \to i_*Rp_*\cO_T = Rf_*j_*\cO_T$
and $Rf_*\cO_X \to Rf_*j_*\cO_T$ with $G$ and applying $\Ext_{\bD(Y)}^\bullet(F,-)$. 
Therefore the whole triangle~\eqref{tridi} is obtained by tensoring with $G$
the triangle
\begin{equation*}
\cO_Y \to i_*\cO_S \oplus Rf_*\cO_X \to Rf_*j_*\cO_T
\end{equation*}
and applying $\Ext_{\bD(Y)}^\bullet(F,-)$. The above triangle is distinguished by Lemma~\ref{nrc-tri}
which implies that the triangle~\eqref{tridi} is distinguished as well.
%
%
%
\end{proof}

Note that by Brown representability the functor $\pi_0^*$ 
has a right adjoint which we denote by 
\begin{equation*}
\pi_{0*} = \Res_{\pi_0}:\bD(\cD_0) \to \bD(Y). 
\end{equation*}
Recall that the derived category $\bD(\cD_0)$ of the gluing $\cD_0$ has a semiorthogonal decomposition
$\bD(\cD_0) = \langle \bD(S),\bD(X) \rangle$, see section~\ref{ss-dglu}. In particular, we have 
embedding functors $I_1:\bD(S) \to  \bD(\cD_0)$ and $I_2:\bD(X) \to \bD(\cD_0)$ commuting with 
arbitrary direct sums. We will implicitly identify the categories $\bD(S)$ and $\bD(X)$
with their images in $\bD(\cD_0)$.

%

\begin{prop}\label{pils}
The functor $\pi_{0*}$ restricted to $\bD(S) \subset \bD(\cD_0)$ is isomorphic to the pushforward $i_*:\bD(S) \to \bD(Y)$.
The functor $\pi_{0*}$ restricted to $\bD(X) \subset \bD(\cD_0)$ is isomorphic 
to the composition $Rf_*(-\lotimes I_T):\bD(X) \to \bD(Y)$. In particular,
$\pi_{0*}$ takes both $\bD^b(\coh(S))$ and $\bD^b(\coh(X))$ to $\bD^b(\coh(Y))$.
\end{prop}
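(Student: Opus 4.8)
The plan is to reduce, by the standard compactly-generated-category argument, to a computation on representable modules and then carry that computation out using the explicit $\Hom$-complexes~\eqref{hglu} of the gluing. First I would record that $\pi_{0*}=\Res_{\pi_0}$ commutes with arbitrary direct sums by Proposition~\ref{fpb}, and likewise $I_1=\LInd_{i_1}$ and $I_2=\LInd_{i_2}$; the target functors $i_*$ and $Rf_*(-\lotimes I_T)$ commute with direct sums as well, since $i$ is a closed immersion of finite type, $f$ is proper, and tensoring with a fixed object is continuous. Because $\bD(S)$ and $\bD(X)$ are compactly generated by the images of $\cdcf(S)$ and $\cdcf(X)$, Proposition~\ref{cgenprop} reduces the statement to producing natural isomorphisms $\pi_{0*}(I_1(\sY^{G_S}))\cong i_*G_S$ for $G_S\in\cdcf(S)$ and $\pi_{0*}(I_2(\sY^{G_X}))\cong Rf_*(G_X\lotimes I_T)$ for $G_X\in\cdcf(X)$.

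For the $\bD(S)$-component I would use $I_1(\sY^{G_S})=\sY^{i_1(G_S)}=\sY^{(G_S,0,0)}$, so that $\pi_{0*}(I_1(\sY^{G_S}))$ is the $\cdcf(Y)$-module $F\mapsto\Hom_{\cD_0}(\pi_0(F),(G_S,0,0))=\Hom_{\cD_0}((i^*F,f^*F,\mu_F),(G_S,0,0))$. In~\eqref{hglu} both the $\cdcf(X)$-summand and the $\varphi_0$-summand vanish (one is $\Hom_{\cdcf(X)}(f^*F,0)=0$, the other $\varphi_0(0,i^*F)=\Hom_{\cdcf(T)}(t^*F,0)=0$), and the differential is the naive one because the gluing datum of the target is $0$; so this module is $F\mapsto\Hom_{\cdcf(S)}(i^*F,G_S)$. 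Since $F$ is h-flat, $i^*F$ models $Li^*F$, so this complex computes $\RHom_{\bD(S)}(Li^*F,G_S)\cong\RHom_{\bD(Y)}(F,i_*G_S)$ by the adjunction $Li^*\dashv i_*$ (with $Ri_*=i_*$); via the equivalence $\bD(\cdcf(Y))\cong\bD(Y)$ of Theorem~\ref{cd-pf} this identifies $\pi_{0*}(I_1(\sY^{G_S}))$ with $i_*G_S$. I would obtain naturality from the DG-natural transformation $\pi_0\to i_1\circ i^*$, $(i^*F,f^*F,\mu_F)\mapsto(\id_{i^*F},0,0)$.

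The $\bD(X)$-component carries the main content. Here $I_2(\sY^{G_X})=\sY^{(0,G_X,0)}$, and the same bookkeeping gives $\pi_{0*}(I_2(\sY^{G_X}))(F)=\Hom_{\cD_0}((i^*F,f^*F,\mu_F),(0,G_X,0))$; now the $\cdcf(S)$-summand is $0$ and $\varphi_0(G_X,i^*F)=\Hom_{\cdcf(T)}(t^*F,j^*G_X)$, so Remark~\ref{triglu} yields a triangle, functorial in $F$ and $G_X$,
\[
\pi_{0*}(I_2(\sY^{G_X}))(F)\ \longrightarrow\ \Hom_{\cdcf(X)}(f^*F,G_X)\ \xrightarrow{\ j^*\ }\ \Hom_{\cdcf(T)}(t^*F,j^*G_X),
\]
the key point being that, because $\mu_F=1_{t^*F}$, the connecting map is exactly the DG-functor $j^*$. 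Passing to derived categories, using $Lt^*=Lj^*Lf^*$, the adjunctions $Lf^*\dashv Rf_*$ and $Lj^*\dashv j_*$, and the projection formula $Rj_*Lj^*G_X\cong G_X\lotimes j_*\cO_T$, this triangle becomes $\RHom_{\bD(Y)}(F,-)$ applied to $Rf_*$ of the triangle $G_X\lotimes I_T\to G_X\to G_X\lotimes j_*\cO_T$ obtained by applying $G_X\lotimes(-)$ to the defining triangle $I_T\to\cO_X\to j_*\cO_T$. Hence $\pi_{0*}(I_2(\sY^{G_X}))(F)\cong\RHom_{\bD(Y)}(F,Rf_*(G_X\lotimes I_T))$, so again by Theorem~\ref{cd-pf} the module is $Rf_*(G_X\lotimes I_T)$, naturally in $G_X$.

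Finally, $i_*$ sends $\bD^b(\coh(S))$ into $\bD^b(\coh(Y))$ since $i$ is a finite-type closed immersion, and $Rf_*(-\lotimes I_T)$ sends $\bD^b(\coh(X))$ into $\bD^b(\coh(Y))$ because for the nonrational centers used here $I_T=(f^{-1}(I_Z)\cdot\cO_X)^n$ is an invertible sheaf (cf.\ the proof of Lemma~\ref{innrc}), so $-\lotimes I_T$ is an autoequivalence, and $Rf_*$ preserves boundedness and coherence by properness of $f$. The step I expect to be the main obstacle is the third paragraph: tracking the differentials and connecting morphisms in the gluing $\Hom$-complexes so as to identify them with the honest pullback functors (in particular getting $j^*$ as the connecting map), and then recognizing the resulting derived triangle as the $f$-pushforward of $G_X\lotimes(I_T\to\cO_X\to j_*\cO_T)$.
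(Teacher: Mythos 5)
Your proposal is correct and follows essentially the same route as the paper's own proof: reduce by continuity to representable modules, unwind $\Hom_{\cD_0}(\pi_0(F),-)$ using the explicit complex of the gluing and Remark~\ref{triglu}, recognize the connecting map as $j^*$ because $\mu_F=1_{t^*F}$, and finish via the triangle $I_T\to\cO_X\to j_*\cO_T$ and invertibility of $I_T$. The only cosmetic differences are that the paper invokes a uniqueness theorem from To\"en's paper for the reduction step where you invoke Proposition~\ref{cgenprop} together with an explicit natural transformation, and the paper writes $I_T\cong\cO_{X/Y}(1)$ where your $I_T=(f^{-1}(I_Z)\cdot\cO_X)^n$ is the literal formula; neither difference affects the argument.
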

\begin{proof}
The functor $\pi_{0*} = \Res_{\pi_0}$ commutes with arbitrary direct sums by Proposition~\ref{fpb}.
The same is true for the embedding functors $\bD(S) \to  \bD(\cD_0)$ and $\bD(X) \to \bD(\cD_0)$.
Hence the restrictions of the functor $\pi_{0*}$ onto $\bD(S)$ and $\bD(X)$ commute with arbitrary direct sums. 
By \cite[Thm.~7.2.]{T07} it is enough to prove the claim only for objects of subcategories 
$[\cdcf(S)] \subset \bD(S)$ and $[\cdcf(X)] \subset \bD(X)$ respectively.

So, take arbitrary $F \in \cdcf(Y)$ and $G_S \in \cdcf(S)$. Since $\pi_{0*}$ is the right adjoint of $\pi_0^*$
which coincides with $\pi_0$ on $\cdcf(Y)$ we have 
$\Hom_{\bD(Y)}(F,\pi_{0*}(G_S,0,0)) \cong \Hom_{\bD(\cD_0)}(\pi_0F,(G_S,0,0))$ which is just the cohomology of the complex
\begin{equation*}
\Hom_{\cD_0}(\pi_0(F),(G_S,0,0)) =
\Hom_{\cD_0}((i^*F,f^*F,\mu_F),(G_S,0,0)) =
\Hom_{\cdcf(S)}(i^*F,G_S)
\end{equation*}
that is nothing but
$\Hom_{\bD(S)}(Li^*F,G_S) \cong \Hom_{\bD(Y)}(F,i_*G_S)$
which proves the first claim.

Now let $G_X \in \cdcf(X)$. Then
$\Hom_{\bD(Y)}(F,\pi_{0*}(0,G_X,0)) \cong \Hom_{\bD(\cD_0)}(\pi_0F,(0,G_X,0))$ which is just the cohomology of the complex
$\Hom_{\cD_0}(\pi_0(F),(0,G_X,0))$ which by Remark~\ref{triglu} fits into a distinguished triangle
\begin{equation*}
\Hom_{\cD_0}(\pi_0(F),(0,G_X,0)) \xrightarrow{\quad}
\Hom_{\cdcf(X)}(f^*F,G_X) \xrightarrow{\ \mu_F\ }
\varphi_0(G_X,i^*F).
\end{equation*}
Further the argument goes along the lines of that of Proposition~\ref{pisfai}.
First, we rewrite the triangle as
\begin{equation}\label{tripix}
\Hom_{\cD_0}(\pi_0(F),(0,G_X,0)) \xrightarrow{\ }
\Hom_{\cdcf(X)}(f^*F,G_X) \xrightarrow{j^*}
\Hom_{\cdcf(T)}(j^*f^*F,j^*G_X).
\end{equation}
Further we identify the second vertex with
%
$\Ext^\bullet_{\bD(X)}(Lf^*F,G_X) \cong \Ext^\bullet_{\bD(Y)}(F,Rf_*G_X)$, the third vertex with
\begin{equation*}
\Ext^\bullet_{\bD(T)}(Lj^*Lf^*F,Lj^*G_X) \cong 
\Ext^\bullet_{\bD(Y)}(F,Rf_*j_*Lj^*G_X) \cong 
\Ext^\bullet_{\bD(Y)}(F,Rf_*(G_X\lotimes j_*\cO_T)),
\end{equation*}
and the map between them with the map obtained by tensoring morphism $\cO_X \to j_*\cO_T$ with $G_X$
and applying $\Ext_{\bD(Y)}^\bullet(F,Rf_*(-))$. Looking at the triangle
\begin{equation*}
I_T \to \cO_X \to j_*\cO_T
\end{equation*}
we see that the first term of~\eqref{tripix} is quasiisomorphic to $\Ext^\bullet_{\bD(Y)}(F,Rf_*(G_X\lotimes I_T))$
which proves the second claim of the Proposition.


For the last claim note that the functors 
$i_*$ and $Rf_*$ preserve boundedness and coherence since the morphisms $i$ and $f$ are proper, 
and  the same is true for the functor $-\lotimes I_T$ because $I_T \cong \cO_{X/Y}(1)$ is a line bundle.
\end{proof}

We know by Proposition~\ref{pisfai} that $\cD_0$ is a partial categorical DG-resolution of $\cdcf(Y)$.
It is not a resolution yet since both components $\cdcf(S)$ and $\cdcf(X)$ of $\cD_0$
are not smooth in general. Of course we can resolve $\cdcf(S)$ by appropriate Auslander algebra $\csA_S$
as discussed in section~\ref{s-nred} and we can apply the induction to resolve $\cdcf(X)$ by appropriate
smooth DG-category $\cD'$. Then we can apply the regluing procedure to obtain a (partial) resolution of $\cdcf(Y)$
by $\cdcf(\csA_S)\times_{\tilde\varphi_0}\cD'$ and to check that it is smooth we will have to check that
the induced bimodule $\tilde\varphi_0$ is perfect. It turns out, however, that this is not the case.
The reason for this is that the pullback functor $L\rho_S^*:\bD(S) \to \bD(\csA_S)$ does not preserve
boundedness (for details one can see the proof of Theorem~\ref{mainind} below). So, to get rid of this problem
we replace the naive gluing $\cD_0 = \cdcf(S)\times_{\varphi_0}\cdcf(X)$ with the gluing of $\cdcf(\csA_S)$
and $\cdcf(X)$ which is not the regluing of $\cD_0$ (see Remark~\ref{ff0}).

\subsection{The gluing which works}

Again, let $f:X \to Y$ be a blowup with smooth center $Z \subset Y$. Denote $I = I_Z$. 
Let $S$ be a nonrational center for~$f$ such that $I_S = I^n$ (such $n$ exists by Lemma~\ref{innrc}). 
Let $\fr_S = I/I^n \subset \cO_S$ and consider the $\csA$-space $(S,\fr,n)$. 

Consider also the ideal $\cJ = f^{-1}I\cdot\cO_X$. Then by definition of the blowup this is an invertible sheaf of ideals,
$\cJ = \cO_{X/Y}(1)$, and $I_T = \cJ^n$, where $T =f^{-1}(S)$. We also put $\fr_T = \cJ/\cJ^n$ and $n_T = n_S = n$.
Then the morphism $p:T \to S$ gives a morphism $(T,\fr_T,n) \to (S,\fr_S,n)$ of $\csA$-spaces which we denote by $\barp$.
So, we have a commutative diagram
\begin{equation}\label{xyats}
\vcenter{\xymatrix{
X \ar[d]_f & T \ar[l]_j \ar[d]^p & (T,\csA_T) 
\ar[l]_{\rho_T} \ar[d]^{\barp} \\ Y & S \ar[l]_i & (S,\csA_S) \ar[l]_{\rho_S}
}}
\end{equation} 
of $\csA$-spaces. Now we replace $\cdcf(S)$ by $\cdcf(\csA_S)$ and $\cdcf(T)$ by $\cdcf(\csA_T)$ in the construction
of the previous section and show that the corresponding gluing works.

%

%
%
%

Explicitly, we define the gluing DG-bimodule 
$\varphi \in \cdcf(X)^\op\otimes\cdcf(\csA_S)\dgm$
as
\begin{equation}\label{defbfi}
\varphi(G_X,G_S) := \Hom_{\cdcf(\csA_T)}(\barp^*G_S,\rho_T^*j^*G_X)
\end{equation} 
for all $G_S \in \cdcf(\csA_S)$, $G_X \in \cdcf(X)$. 
Consider DG-functor $\tau_1 := \rho_S:\cdcf(S) \to \cdcf(\csA_S)$ defined in section~\ref{ss-rhos} 
and the identity DG-functor $\tau_2 := \id:\cdcf(X) \to \cdcf(X)$.

\begin{lemma}\label{ff0comp}
The bimodules $\varphi_0 \in \cdcf(X)^\op\otimes\cdcf(S)\dgm$ and $\varphi \in \cdcf(X)^\op\otimes\cdcf(\csA_S)\dgm$
are compatible in the sense of section~\ref{sreglu}.
\end{lemma}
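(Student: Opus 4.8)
The plan is to recall the definition of compatibility from section~\ref{sreglu}: since $\tau_1 = \rho_S:\cdcf(S)\to\cdcf(\csA_S)$ and $\tau_2 = \id:\cdcf(X)\to\cdcf(X)$ are partial categorical DG-resolutions (for $\tau_1$ this is Theorem~\ref{ausres}(1), for $\tau_2$ trivially), what I must produce is a quasiisomorphism of $\cdcf(X)$-$\cdcf(S)$-bimodules
\begin{equation*}
c:\varphi_0 \xrightarrow{\ \cong\ } {}_{\tau_2}\varphi_{\tau_1} = {}_{\id}\varphi_{\rho_S}.
\end{equation*}
Unwinding the definitions, for $G_S \in \cdcf(S)$ and $G_X\in\cdcf(X)$ the left side is $\Hom_{\cdcf(T)}(p^*G_S,j^*G_X)$ by~\eqref{phi0}, while the right side is $\varphi(G_X,\rho_S^*G_S) = \Hom_{\cdcf(\csA_T)}(\barp^*\rho_S^*G_S,\rho_T^*j^*G_X)$ by~\eqref{defbfi}. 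So the task is to construct, functorially in $G_S$ and $G_X$, a quasiisomorphism between these two $\Hom$-complexes and check it respects the bimodule structures given by the pullback DG-functors.

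First I would use the commutativity $\barp^*\circ\rho_S^* \cong \rho_T^*\circ p^*$ of DG-functors, which is exactly Lemma~\ref{pprho} applied to the morphism $p:(T,\fr_T,n)\to(S,\fr_S,n)$ of $\csA$-spaces (the diagram~\eqref{xyats}/\eqref{xyats} relating $\rho_S,\rho_T$ to $p,\barp$ is the one displayed just before Lemma~\ref{pprho}). Hence the right-hand complex is quasiisomorphic to $\Hom_{\cdcf(\csA_T)}(\rho_T^*p^*G_S,\rho_T^*j^*G_X)$. It then remains to show that the DG-functor $\rho_T^*:\cdcf(T)\to\cdcf(\csA_T)$ induces a quasiisomorphism
\begin{equation*}
\Hom_{\cdcf(T)}(p^*G_S,j^*G_X) \xrightarrow{\ \rho_T^*\ } \Hom_{\cdcf(\csA_T)}(\rho_T^*p^*G_S,\rho_T^*j^*G_X),
\end{equation*}
and this is immediate from Theorem~\ref{ausres}(1): $\rho_T^*$ is a categorical DG-resolution, in particular quasi fully faithful, so it acts by quasiisomorphisms on all $\Hom$-complexes. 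Composing these two maps, with an h-projective replacement of $\varphi_0$ in between if one wants $c$ to be an honest closed degree-zero morphism rather than a zig-zag, produces the desired $c$.

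The only point requiring care is functoriality and bimodule-compatibility: the isomorphism $\barp^*\rho_S^* \cong \rho_T^*p^*$ of Lemma~\ref{pprho} is natural, and $\rho_T^*$ is a DG-functor, so the induced map on $\Hom$-complexes is automatically compatible with the left $\cdcf(X)$-action (via $j^*$ and $\rho_T^*$) and the right $\cdcf(S)$-action (via $p^*$, $\rho_S^*$, $\barp^*$); thus $c$ is a morphism of bimodules. This is the main---indeed essentially the only---obstacle, and it is routine once Lemmas~\ref{pprho} and Theorem~\ref{ausres} are in hand; the rest is bookkeeping with the definitions~\eqref{phi0} and~\eqref{defbfi}.
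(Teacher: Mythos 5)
Your proof is correct and follows essentially the same route as the paper: unwinding the definitions of $\varphi$ and $\varphi_0$, applying the commutativity $\barp^*\circ\rho_S^* \cong \rho_T^*\circ p^*$ from the right square of~\eqref{xyats} (Lemma~\ref{pprho}), and invoking quasi full faithfulness of $\rho_T^*$ from Theorem~\ref{ausres}. The paper's proof is a four-step chain of quasiisomorphisms with one-line justifications, leaving the bimodule-functoriality implicit; your elaboration of that last point is a welcome but not essential addition.
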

\begin{proof}
Indeed,
\begin{multline*}
\varphi(G_X,\rho_S^*G_S) = 
\Hom_{\cdcf(\csA_T)}(\barp^*\rho_S^*G_S,\rho_T^*j^*G_X) \cong
\Hom_{\cdcf(\csA_T)}(\rho_T^*p^*G_S,\rho_T^*j^*G_X) \cong 
\\ \cong
\Hom_{\cdcf(T)}(p^*G_S,j^*G_X) =
\varphi_0(G_X,G_S),
\end{multline*}
the first is the definition of $\varphi$, the second is the commutativity of the right square of~\eqref{xyats},
the third is quasi full faithfulness of $\rho^*_T$, and the fourth is the definition of $\varphi_0$.
\end{proof}

Consider the gluing DG-category
\begin{equation}\label{defcd}
\cD = \cdcf(\csA_S)\times_\varphi \cdcf(X).
\end{equation} 
By Proposition~\ref{compglu} we have a quasi fully faithful DG-functor $\tau:\cD_0 \to \cD$
which takes $(G_S,G_X,\mu)$ to $(\rho_S^*(G_S),G_X,c(\mu))$ where $c$ is the quasiisomorphism
of Lemma~\ref{ff0comp}. Composing it with the DG-functor $\pi_0:\cdcf(Y) \to \cD_0$ we obtain 
a DG-functor 
\begin{equation*}
\pi = \tau\circ\pi_0:\cdcf(Y) \to \cD,
\qquad
\pi(F) = (\rho_S^*i^*F,f^*F,c(\mu_F)).
\end{equation*}

\begin{theo}\label{mainres}
Let $f:X \to Y$ be the blowing up of an ideal $I$ and $S$ be the subscheme of $Y$ defined by a power
of the ideal $I$ which is a nonrational center for $f$. Then the DG-functor
$\pi:\cdcf(Y) \to \cD = \cdcf(\csA_S)\times_\varphi\cdcf(X)$
is a partial categorical DG-resolution of singularities. Moreover,
the functor $\pi_* = \Res_\pi:\bD(\cD) \to \bD(Y)$ takes both $\bD^b(\coh(\csA_S))$ and $\bD^b(\coh(X))$
to $\bD^b(\coh(Y))$.
\end{theo}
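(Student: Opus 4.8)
The plan is to break the statement into its two parts and, in each case, reduce everything to facts already established for the naive gluing $\cD_0=\cdcf(S)\times_{\varphi_0}\cdcf(X)$ and for the Auslander resolution $\rho_S$, exploiting the fact that $\pi$ factors as the composite $\cdcf(Y)\xrightarrow{\ \pi_0\ }\cD_0\xrightarrow{\ \tau\ }\cD$.

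For the assertion that $\pi$ is a partial categorical DG-resolution I would argue as follows. The DG-categories $\cdcf(Y)$ and $\cD_0$ are pretriangulated (the latter by Lemma~\ref{dpt}), and $\pi_0$ induces a fully faithful functor on homotopy categories by Proposition~\ref{pisfai}; hence $\pi_0$ is a partial categorical DG-resolution in the sense of Definition~\ref{pcdgr}. Next, $\tau$ is the regluing DG-functor supplied by Proposition~\ref{compglu}, applied with $\tau_1=\rho_S$, $\tau_2=\id$, and the bimodules $\varphi_0$ and $\varphi$, which are compatible by Lemma~\ref{ff0comp}; that same Proposition tells us $\tau$ is a partial categorical DG-resolution. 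Finally Lemma~\ref{compres} shows that $\pi=\tau\circ\pi_0$ is a partial categorical DG-resolution, which is the first claim.

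For the second assertion, the factorization $\pi=\tau\circ\pi_0$ gives $\pi_*=\Res_\pi=\Res_{\pi_0}\circ\Res_\tau=\pi_{0*}\circ\Res_\tau$. Denote by $\tilde I_1\colon\bD(\csA_S)\to\bD(\cD)$, $\tilde I_2\colon\bD(X)\to\bD(\cD)$ and $I_1\colon\bD(S)\to\bD(\cD_0)$, $I_2\colon\bD(X)\to\bD(\cD_0)$ the embedding functors of the semiorthogonal decompositions of $\bD(\cD)$ and $\bD(\cD_0)$. By Proposition~\ref{compglu}(d), $\Res_\tau\circ\tilde I_1\cong I_1\circ\Res_{\tau_1}$, where $\Res_{\tau_1}$ is the pushforward $\rho_{S*}$; and by Proposition~\ref{compglu}(e), whose hypothesis is automatic here because $\tau_2=\id$ (Remark~\ref{parte}), $\Res_\tau\circ\tilde I_2\cong I_2$. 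Composing with Proposition~\ref{pils}, which identifies $\pi_{0*}|_{\bD(S)}$ with $i_*$ and $\pi_{0*}|_{\bD(X)}$ with $Rf_*(-\lotimes I_T)$, one obtains $\pi_*|_{\bD(\csA_S)}\cong i_*\circ\rho_{S*}$ and $\pi_*|_{\bD(X)}\cong Rf_*(-\lotimes I_T)$. Now Theorem~\ref{ausres} gives $\rho_{S*}\bigl(\bD^b(\coh(\csA_S))\bigr)\subset\bD^b(\coh(S))$, and since $i$ and $f$ are proper and $I_T\cong\cO_{X/Y}(1)$ is a line bundle, $i_*$ and $Rf_*(-\lotimes I_T)$ preserve boundedness and coherence; this yields the second claim.

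The first part is essentially bookkeeping; the step that demands care is the identification of $\Res_\tau$ on the two semiorthogonal components, and in particular the appeal to Proposition~\ref{compglu}(e): one must verify that its hypothesis — that the canonical morphism $L\varphi_0\circ\Res_{\tau_2}\to\Res_{\tau_1}\circ L\varphi$ be an isomorphism — holds, which is precisely where the choice $\tau_2=\id$ is essential, via Remark~\ref{parte}. Everything else follows by chaining the compatibilities of $\pi_0$, $\tau$ and $\rho_S$ that are already in hand.
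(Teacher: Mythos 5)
Your argument is correct and follows essentially the same route as the paper's proof: both factor $\pi=\tau\circ\pi_0$, invoke Propositions~\ref{pisfai} and~\ref{compglu} for quasi full faithfulness, and identify $\Res_\tau$ on the two components via Proposition~\ref{compglu}(d),(e) (with Remark~\ref{parte}) before applying Proposition~\ref{pils} and Theorem~\ref{ausres}. The only difference is cosmetic — you spell out the appeal to Lemma~\ref{compres} and restate the properness/line-bundle facts that the paper folds into Proposition~\ref{pils}.
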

\begin{proof}
The DG-functor $\pi$ is a composition of the DG-functor $\pi_0$ which is quasi fully faithful by Proposition~\ref{pisfai}
and of the DG-functor $\tau:\cD_0 \to \cD$ which is quasi fully faithful by Proposition~\ref{compglu}. Hence $\pi$ is quasi
fully faithful, and so it is a partial categorical DG-resolution. Further, the functor $\Res_\pi$ is isomorphic to the composition
$\Res_{\pi_{0}}\circ\Res_\tau$. By Proposition~\ref{compglu}(d) the functor $\Res_\tau$ on the component $\bD(\csA_S)$ equals $\rho_{S*}$,
hence takes $\bD^b(\coh(\csA_S))$ to $\bD^b(\coh(S))$ by Theorem~\ref{ausres}. Similarly, by Proposition~\ref{compglu}(e) 
(see also Remark~\ref{parte}) the functor $\Res_\tau$ on the component $\bD(X)$ is the identity. By Proposition~\ref{pils}
the functor $\Res_{\pi_{0}}$ take both $\bD^b(\coh(S))$ and $\bD^b(\coh(X))$ to $\bD^b(\coh(Y))$, so the claim follows.
\end{proof}

The following property of the gluing bimodule $\varphi$ is crucial.

\begin{lemma}\label{phicoh}
The functor $L\varphi:\bD(X) \to \bD(\csA_S)$ takes $\bD^b(\coh(X))$ to $\bD^b(\coh(\csA_S))$.
\end{lemma}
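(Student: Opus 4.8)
The plan is to compute $L\varphi$ explicitly in terms of geometrically meaningful functors and then use the fact that all the ingredients preserve boundedness and coherence. First I would recall that by the construction of $\varphi$ in~\eqref{defbfi}, for $G_X \in \cdcf(X)$ and $G_S \in \cdcf(\csA_S)$ we have $\varphi(G_X,G_S) = \Hom_{\cdcf(\csA_T)}(\barp^*G_S,\rho_T^*j^*G_X)$, and so the derived tensor product functor $L\varphi:\bD(X) \to \bD(\csA_S)$ should be identified with a composition $R\barp_* \circ L\rho_T^* \circ Lj^*$, where $\barp:(T,\csA_T) \to (S,\csA_S)$ is the morphism of $\csA$-spaces from~\eqref{xyats}. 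More precisely, writing $Lj^*$ for the (derived) pullback to $\cdcf(X) \to \cdcf(T)$, $L\rho_T^*$ for the Auslander resolution pullback $\bD(T) \to \bD(\csA_T)$ of section~\ref{ss-rhos}, and $R\barp_*$ for the right adjoint to $L\barp^*$ provided by Brown representability after the proposition defining $\bD$ on $\ASch$, I claim $L\varphi \cong R\barp_* \circ L\rho_T^* \circ Lj^*$. This should follow by adjunction: $\varphi$ is represented as $\Hom$ in the DG-enhancement with bimodule structure given precisely by the DG-functors $\barp^*$ and $\rho_T^*j^*$, and $L\varphi(-) = -\lotimes_{\cdcf(X)}\varphi$ is the left adjoint to the restriction-of-bimodules functor; comparing with the adjunctions $(L\barp^*, R\barp_*)$ and $(Lj^*, Rj_*)$ and $(L\rho_T^*, \rho_{T*})$ identifies the functor on representable objects, which is enough by $\LInd$-compatibility (Proposition~\ref{fpb}) and the argument of Lemma~\ref{cff}.

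Granting this identification, the rest is a chain of preservation statements. The functor $Lj^*:\bD(X) \to \bD(T)$ takes $\bD^b(\coh(X))$ to $\bD^b(\coh(T))$ --- here one uses that $T = f^{-1}(S)$ is cut out by the invertible ideal $\cJ^n = I_T$ inside $X$, so $j$ is a regular closed immersion and $Lj^*$ has finite Tor-dimension, hence preserves boundedness, and it obviously preserves coherence. Next, $L\rho_T^*:\bD(T) \to \bD(\csA_T)$ takes $\bD^b(\coh(T))$ to $\bD^b(\coh(\csA_T))$: this requires that $(T)_0 = Z_T$, the subscheme of $T$ defined by $\fr_T = \cJ/\cJ^n$, be smooth so that Proposition~\ref{bperf} applies --- but $Z_T = f^{-1}(Z)$ is the exceptional divisor of the blowup of the smooth $Z$, hence a projective bundle over $Z$, hence smooth. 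By Theorem~\ref{ausres} (or directly by Proposition~\ref{bperf}, since $\rho_T^*(M) = M\otimes_{\cO_T}(\epsilon_1\csA_T)$ and $\epsilon_1\csA_T$ is a locally free $\cO_T$-module of finite rank, placed in a bounded resolution once $Z_T$ is smooth) the functor $L\rho_T^*$ has finite Tor-dimension and preserves coherence, hence takes $\bD^b(\coh(T))$ into $\bD^b(\coh(\csA_T))$. Finally $R\barp_*:\bD(\csA_T) \to \bD(\csA_S)$ takes $\bD^b(\coh(\csA_T))$ to $\bD^b(\coh(\csA_S))$ by Lemma~\ref{rfscoh}, since $\barp$ is proper of finite type (it is the base change of the proper morphism $p:T\to S$, which is the restriction of the blowup $f$). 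Composing these three preservation statements gives the claim.

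The step I expect to be the main obstacle is the precise identification $L\varphi \cong R\barp_* \circ L\rho_T^* \circ Lj^*$, because one must be careful that the DG-bimodule $\varphi$ given by~\eqref{defbfi} really computes the derived composition rather than some underived or partially-derived version. The safe way around this is not to prove an equality of abstract functors but to check agreement on the generating subcategory $[\cdcf(X)] = \bD^\perf(X) \subset \bD(X)$, where $\varphi(-, G_S)$ literally computes $\Hom$ in $\cdcf(\csA_T)$, and then invoke that $L\varphi$ and the candidate composition both commute with arbitrary direct sums (Proposition~\ref{fpb} for $L\varphi = \LInd_\cdot$-type functors, and for $R\barp_*$ one uses that $\barp$ is proper so $R\barp_*$ commutes with direct sums, or alternatively work with the left adjoints throughout, comparing $L\barp^* \circ L\rho_S^* \circ (\text{induction})$ against $\LInd$ of the DG-functors $\rho_T^* j^*$ on the nose). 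Once the functors agree on compact generators and both commute with coproducts, Lemma~\ref{cff}-style reasoning forces them to agree everywhere. With that in hand, the preservation of $\bD^b(\coh(-))$ is entirely routine via the three lemmas cited above, and no genuinely new computation is needed; the smoothness of $Z_T = f^{-1}(Z)$, which is exactly what makes Proposition~\ref{bperf} applicable to $\csA_T$, is the geometric input that makes the middle step go through.
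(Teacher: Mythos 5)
Your identification $L\varphi \cong R\barp_*\circ L\rho_T^*\circ Lj^*$ agrees with what the paper establishes (and the paper's derivation, via $\varphi = {}_{\rho_T^*j^*}\cD(\csA_T)_{\barp^*}$ and the bimodule calculus, is actually short and painless --- the step you flag as the ``main obstacle'' is the easy one). Your treatment of the outer factors is fine: $Lj^*$ preserves $\bD^b(\coh)$ since $T$ is cut out by the invertible ideal $\cJ^n$ on $X$, and $R\barp_*$ preserves $\bD^b(\coh)$ by Lemma~\ref{rfscoh}. The genuine gap is the middle step: the functor $L\rho_T^*:\bD(T)\to\bD(\csA_T)$ does \emph{not} preserve boundedness, and no smoothness of $Z_T$ can rescue that.

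You assert that $\epsilon_1\csA_T$ is a locally free, or at worst perfect, $\cO_T$-module. It is not. Take $X=\bbA^1$, $\cJ=(t)$, $n=2$, so $T=\Spec\kk[t]/t^2$ and $\fr_T=(t)/(t^2)\cong\kk$ as an $\cO_T$-module; the minimal free resolution of $\kk$ over $\kk[t]/t^2$ is the infinite periodic complex $\cdots\to\kk[t]/t^2\xrightarrow{\ t\ }\kk[t]/t^2\xrightarrow{\ t\ }\kk[t]/t^2\to\kk\to 0$. Thus $\epsilon_1\csA_T=\cO_T\oplus\fr_T$ has infinite Tor-dimension over $\cO_T$ and $L\rho_T^*$ sends bounded complexes to complexes unbounded from below. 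The paper says this explicitly for $\rho_S$ in several places --- at the start of section~\ref{ss-bcaus}, in the paragraph preceding the definition of the gluing that works, and in Remark~\ref{ff0} --- and the same applies to $\rho_T$. The smoothness of $Z_T$ enters Proposition~\ref{bperf} only to equate $\bD^\perf(\csA_T)$ with $\bD^b(\coh(\csA_T))$; it has no bearing on whether $\epsilon_1\csA_T$ is perfect \emph{over $\cO_T$}. And Theorem~\ref{ausres} yields boundedness preservation for the pushforward $\rho_{T*}$, not for $L\rho_T^*$.

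The paper's way out is to refuse to factor through $\bD(T)$: one computes the combined functor $L\rho_T^*\circ Lj^*$ as a single derived tensor product $F\mapsto F\lotimes_{\cO_X}\csA_T$ \emph{over $\cO_X$}. Each component $\cJ^k/\cJ^l$ of $\csA_T$ (with $l\ge k$) fits in the exact sequence $0\to\cJ^l\to\cJ^k\to\cJ^k/\cJ^l\to 0$ of $\cO_X$-modules in which both $\cJ^k$ and $\cJ^l$ are invertible on $X$ --- here the blowup hypothesis is exactly what is being used. Hence $\cJ^k/\cJ^l$ is a perfect $\cO_X$-module, $\csA_T$ is perfect over $\cO_X$, and the combined functor preserves boundedness and coherence. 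That perfectness-over-$\cO_X$ is the one piece of geometry in the proof, and it is precisely the piece your argument is missing.
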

\begin{proof}
By definition~\eqref{lphi} the functor $L\varphi$ is the functor of derived tensor product over $\cD(X)$ with $\varphi$.
On the other hand, by~\eqref{defbfi} we have $\varphi = {}_{\rho_T^*j^*}\cD(\csA_T)_{\barp^*}$,
the restriction of the diagonal bimodule over $\cD(\csA_T)$. Hence
\begin{equation*}
L\varphi(M) = 
M\lotimes_{\cD(X)}\varphi =
M\lotimes_{\cD(X)}{}_{\rho_T^*j^*}\cD(\csA_T)_{\barp^*} =
\Res_{\barp^*}(\LInd_{\rho_T^*j^*}(M)).
\end{equation*}
The derived induction functor $\LInd_{\rho_T^*j^*}$ is isomorphic to the composition of the derived pullback functors
$L\rho_T^*Lj^*$ by Theorem~\ref{cda-pf}. The restriction functor $\Res_{\barp^*}$ is right adjoint 
to the derived induction functor $\LInd_{\barp^*}$ which by the same Theorem is isomorphic to $L\barp^*$.
Hence $\Res_{\barp^*} \cong R\barp_*$. Thus
\begin{equation*}
L\varphi \cong R\barp_*\circ L\rho_T^* \circ Lj^*:\bD(X) \to  \bD(\csA_S).
\end{equation*}
%
%
%
The composition of functors $L\rho_T^*\circ Lj^*$ is given by $F \mapsto F\lotimes_{\cO_X}\csA_T$.
Note that all ideals $\cJ^k \subset \cO_X$ are invertible, hence all the components $\cJ^k/\cJ^l$
of the sheaf of algebras $\csA_T$ are perfect $\cO_X$-modules. Therefore the functor
$L\rho_T^*\circ Lj^*$ preserves both boundedness and coherence of sheaves.
On the other hand, the functor $R\barp_*$ preserves boundedness and coherence by Lemma~\ref{rfscoh}.
The Lemma follows.
%
\end{proof}

\begin{remark}\label{ff0}
Now it is already clear that the gluing $\cD$ is not the regluing of $\cD_0$.
Indeed, if it were so then we would have an isomorphism $L\varphi \cong L\rho_S^*\circ L\varphi_0$.
But it is easy to find an object $M \in \bD^b(\coh(X))$ such that $L\varphi_0(M)$ is not perfect
and its pullback under $L\rho_S^*$ is unbounded.
\end{remark}

%
%
%

\subsection{The inductive construction of a categorical DG-resolution}

%
%
%

Now finally we are ready to prove the main result of the paper, Theorem~\ref{cr}.

We start with a separated scheme $Y$ of finite type over a field $\kk$ of characteristic $0$.
By Theorem~1.6 of \cite{BM97} there exists a chain 
\begin{equation}\label{frs}
\vcenter{\xymatrix{
Y_m \ar[r] & Y_{m-1} \ar[r] & \dots \ar[r] & Y_1 \ar[r] & Y_0 \ar@{=}[r] & Y \\
& Z_{m-1} \ar@{^{(}->}[u] & & Z_{1} \ar@{^{(}->}[u] & Z_{0} \ar@{^{(}->}[u] 
}}
\end{equation}
of blowups with smooth centers $Z_i$ such that $(Y_m)_\red$ is smooth. 
Indeed, the following Lemma shows that the usual resolution of the reduced part of $Y$ 
by a chain of smooth blowups does the job.


\begin{lemma}
Let $Y$ be a possibly nonreduced scheme and $Z \subset Y_\red$ a smooth subscheme of its reduced part.
Then $(\BL_Z Y)_\red = \BL_Z(Y_\red)$, the reduced part of the blowup equals the blowup of the reduced part.
\end{lemma}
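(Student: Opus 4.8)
The plan is to reduce to the affine case and make a direct computation with Rees algebras, since $Y_\red\hookrightarrow Y$ is not flat and one cannot simply invoke compatibility of blow-ups with base change. So I would take $Y=\Spec A$ with $A$ of finite type over $\kk$, write $\mathfrak n=\operatorname{nil}(A)$, so that $Y_\red=\Spec(A/\mathfrak n)$, and let $I\subset A$ be the ideal of $Z$ in $Y$; since $Z$ is a closed subscheme of $Y_\red$ we have $\mathfrak n\subset I$, and the ideal of $Z$ in $Y_\red$ is $\bar I:=I/\mathfrak n\subset \bar A:=A/\mathfrak n$. By the description of a blow-up as a projective spectrum of a Rees algebra (as recalled in the proof of Lemma~\ref{innrc}), $\BL_Z Y=\Proj R$ with $R=\bigoplus_{k\ge 0}I^k\subset A[t]$ ($I^k$ placed in degree $k$), and $\BL_Z(Y_\red)=\Proj\bar R$ with $\bar R=\bigoplus_{k\ge 0}\bar I^{\,k}\subset\bar A[t]$. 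Everything in sight is natural, so it suffices to treat this affine situation and the global statement follows by gluing.

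Next I would invoke the standard fact that for a $\bbZ_{\ge0}$-graded ring $R$ one has $(\Proj R)_\red=\Proj\bigl(R/\operatorname{nil}(R)\bigr)$: on the chart $D_+(h)$ with $h$ homogeneous the coordinate ring is $(R_h)_0$, and forming the nilradical commutes with localization and with passage to the degree-zero part, while the relevant homogeneous primes of $R$ and of $R/\operatorname{nil}(R)$ coincide. Granting this, the task becomes to identify $R/\operatorname{nil}(R)$ with $\bar R$ as graded algebras over $Y$. Here I would compute the nilradical of $R$: nilpotency is absolute and $R$ is a subring of $A[t]$, so $\operatorname{nil}(R)=R\cap\operatorname{nil}(A[t])=R\cap\mathfrak n[t]$, using $\operatorname{nil}(A[t])=\mathfrak n[t]$ (the polynomials with nilpotent coefficients, obtained by comparing top coefficients of a nilpotent polynomial). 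Intersecting graded pieces gives $\operatorname{nil}(R)=\bigoplus_k(I^k\cap\mathfrak n)\,t^k$, hence
\[
\bigl(R/\operatorname{nil}(R)\bigr)_k=I^k/(I^k\cap\mathfrak n)\cong (I^k+\mathfrak n)/\mathfrak n=\bar I^{\,k},
\]
the last equality because $(I^k+\mathfrak n)/\mathfrak n$ is the image of $I^k$ under the surjection $A\to\bar A$, which is $\bar I^{\,k}$. These isomorphisms are compatible with multiplication, so $R/\operatorname{nil}(R)\cong\bar R$ canonically, compatibly with the structure maps to $\Spec A$; taking $\Proj$ yields $(\BL_Z Y)_\red=\Proj\bigl(R/\operatorname{nil}(R)\bigr)=\Proj\bar R=\BL_Z(Y_\red)$ as $Y$-schemes.

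The computation uses only the hypothesis $Z\subset Y_\red$ (not smoothness, or even reducedness, of $Z$), so there is no serious obstacle; the two points that need care are (i) that the reduced structure on a $\Proj$ is computed by the reduced structure of the graded ring, and (ii) that the identification $R/\operatorname{nil}(R)\cong\bar R$ is the tautological one and respects the maps down to $Y$, so that the conclusion is an equality of $Y$-schemes — which is the form in which the lemma is applied in the inductive construction, where each center $Z_i$ is chosen inside $(Y_i)_\red$. For consistency one may also note that $\bar R\subset\bar A[t]$ is reduced, so $\BL_Z(Y_\red)$ is genuinely reduced, in agreement with $(\BL_Z Y)_\red$.
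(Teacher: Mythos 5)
Your proof is correct and follows essentially the same route as the paper's: reduce to the affine case and compare the Rees algebra $\bigoplus I^k$ of $Z$ in $Y$ with the Rees algebra $\bigoplus \bar I^k \cong \bigoplus I^k/(I^k\cap\fr)$ of $Z$ in $Y_\red$. The only cosmetic difference is in the endgame: the paper observes that $\BL_Z(Y_\red)$ is a reduced closed subscheme of $\BL_Z Y$ cut out by the nilpotent ideal $\bigoplus(I^k\cap\fr)$ and so must be the reduction, whereas you compute $\operatorname{nil}(R)$ directly and invoke $(\Proj R)_\red = \Proj(R/\operatorname{nil} R)$ — both are valid, and the paper's version sidesteps the explicit nilradical computation.
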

\begin{proof}
The claim is local with respect to $Y$ so we can assume that $Y = \Spec R$ is affine.
Let $I \subset R$ be the ideal of $Z$ and let $\fr \subset R$ be the nilpotent radical of $R$. 
By assumption $\fr \subset I$. By definition of a blowup
we have $\BL_Z Y = \Proj(\oplus I^k)$. On the other hand, the ideal of $Z$ on $Y_\red = \Spec (R/\fr)$ is $I/\fr$, hence
$\BL_Z(Y_\red) = \Proj(\oplus I^k/(I^k\cap\fr))$. The natural epimorphism of graded algebras
\begin{equation*}
\oplus I^k \xrightarrow{\ \ \ } \oplus I^k/(I^k\cap\fr)
\end{equation*}
shows that $\BL_Z(Y_\red)$ is a closed reduced subscheme of $\BL_Z(Y)$. Its ideal is given by
$\oplus (I^k \cap \fr)$, hence is nilpotent. But a reduced subscheme with nilpotent ideal 
is nothing but the reduced part of the scheme, hence the claim.
%
%
%
\end{proof}


As we already know by Theorem~\ref{mainres} one can construct 
a partial categorical DG-resolution of $\cdcf(Y)$ by appropriate gluing of $\cdcf(Y_1)$ 
with $\cdcf(\csA_S)$, where $S$ is an appropriate thickening of $Z_0$. Moreover, the component
$\cdcf(\csA_S)$ of this gluing is smooth by Theorem~\ref{ausres} and one can also check that
the gluing bimodule is perfect. So, all nonsmoothness of the gluing comes from nonsmoothness 
of the scheme $Y_1$. Thus, to obtain a resolution we only have to replace (by using the regluing procedure) 
the component $\cdcf(Y_1)$ by its categorical resolution. This shows that one can use induction to construct the resolution.
In fact, it turns out that technically it is much more convenient to include in the induction
hypotheses some properties of the resolution as well. So, we state the following

\begin{theo}\label{mainind}
There is a small pretriangulated DG-category $\cD$ glued from several copies of $\cdcf(Y_m)$ and several copies of $\cdcf(Z_i)$ for $0\le i\le m-1$, 
and a DG-functor $\pi:\cdcf(Y) \to \cD$ such that
\begin{enumerate}
\item $\cD$ is a categorical DG-resolution of $\cdcf(Y)$;
\item the functor $\pi_*=\Res_\pi:\bD(\cD) \to \bD(Y)$ takes $[\cD] = \bD(\cD)^\comp$ to $\bD^b(\coh(Y))$;
\item if $Y$ is proper then so is $\cD$.
\end{enumerate}
\end{theo}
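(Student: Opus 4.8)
The plan is to prove the theorem by induction on the length $m$ of the resolution~\eqref{frs}. For the base case $m=0$ the scheme $(Y_0)_\red=Y_\red$ is smooth, and I would take $\fr\subset\cO_Y$ to be the nilradical, choose $n$ with $\fr^n=0$ (so that $\csA_Y=\cO_Y$ when $Y$ is reduced), and set $\cD:=\cdcf(\csA_Y)$ together with $\pi:=\rho_Y^*:\cdcf(Y)\to\cdcf(\csA_Y)$. Theorem~\ref{ausres} then gives at once that $\pi$ is a categorical DG-resolution and that $\rho_{Y*}$ carries $\bD^b(\coh(\csA_Y))$ into $\bD^b(\coh(Y))$; since $S_0=Y_\red$ is smooth, $[\cD]=\bD(\csA_Y)^\comp=\bD^\perf(\csA_Y)=\bD^b(\coh(\csA_Y))$ by Propositions~\ref{bperf} and~\ref{cperf}, which is~(2), while~(3) is the last assertion of Theorem~\ref{ausres}. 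That $\cdcf(\csA_Y)$ is, up to quasiequivalence, an iterated gluing of $n$ copies of $\cdcf(Y_\red)$ follows from Corollary~\ref{sodqqc} via Propositions~\ref{uniglu2} and~\ref{regluequi} (when $Y$ is reduced this is simply $\cdcf(Y)=\cdcf(Y_m)$); no centers $Z_i$ occur since $m=0$.

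For the inductive step let $m\ge 1$ and let $f:Y_1\to Y$ be the first blowup in~\eqref{frs}, with smooth center $Z:=Z_0$ and $I:=I_Z$. By Lemma~\ref{innrc} I would choose $n\gg 0$ so that the subscheme $S\subset Y$ with $I_S=I^n$ is a nonrational center for $f$, and form the $\csA$-space $(S,I/I^n,n)$, for which $S_0=Z$ is smooth. Theorem~\ref{mainres} then supplies a partial categorical DG-resolution $\pi_1:\cdcf(Y)\to\cD':=\cdcf(\csA_S)\times_\varphi\cdcf(Y_1)$ such that $\Res_{\pi_1}$ sends both $\bD^b(\coh(\csA_S))$ and $\bD^b(\coh(Y_1))$ into $\bD^b(\coh(Y))$; replacing $\varphi$ by an h-projective resolution (Lemma~\ref{qisqeq}) we may assume $\varphi$ h-projective. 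The scheme $Y_1$ inherits from~\eqref{frs} a resolution of length $m-1$ with centers $Z_1,\dots,Z_{m-1}$, so by the induction hypothesis there is a categorical DG-resolution $\sigma:\cdcf(Y_1)\to\cE$ satisfying~(1)--(3), with $\cE$ glued from copies of $\cdcf(Y_m)$ and of $\cdcf(Z_i)$, $1\le i\le m-1$. Now reglue the second component of $\cD'$ along $\sigma$ while keeping the first fixed: take $\tau_1=\id_{\cdcf(\csA_S)}$, $\tau_2=\sigma$, let $\tilde\varphi$ be the bimodule~\eqref{tiphi}, and set $\cD:=\cdcf(\csA_S)\times_{\tilde\varphi}\cE$ with regluing functor $\bar\tau:\cD'\to\cD$, which is quasi fully faithful by Proposition~\ref{compglu}. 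Then $\pi:=\bar\tau\circ\pi_1:\cdcf(Y)\to\cD$ is quasi fully faithful, and since $\bD(\cD)=\langle\bD(\csA_S),\bD(\cE)\rangle$ by Proposition~\ref{derglu}, refining this by the decomposition of $\cdcf(\csA_S)$ into $n$ copies of $\cdcf(Z)$ (Corollary~\ref{sodqqc}) and by the inductive shape of $\cE$ exhibits $\cD$, up to quasiequivalence, as an iterated gluing of copies of $\cdcf(Y_m)$ and of $\cdcf(Z_i)$, $0\le i\le m-1$ (again via Propositions~\ref{uniglu2} and~\ref{regluequi}).

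The main obstacle is to prove $\cD$ is smooth, i.e.\ property~(1). The component $\cdcf(\csA_S)$ is smooth because $S_0=Z$ is smooth (Theorem~\ref{dasm}), and $\cE$ is smooth by the induction hypothesis, so by Proposition~\ref{utsm} it suffices to show the gluing bimodule $\tilde\varphi$ is perfect. By Proposition~\ref{reglu}(a), using $\tau_1=\id$, one has $L\tilde\varphi\cong L\varphi\circ\Res_\sigma:\bD(\cE)\to\bD(\csA_S)$, and since $\cE$ is smooth, Proposition~\ref{tv} reduces the claim to showing $L\tilde\varphi$ preserves compactness. This is exactly where the extra induction hypothesis~(2) is needed: $\Res_\sigma=\sigma_*$ sends $[\cE]=\bD(\cE)^\comp$ into $\bD^b(\coh(Y_1))$ by~(2); the functor $L\varphi$ sends $\bD^b(\coh(Y_1))$ into $\bD^b(\coh(\csA_S))$ by Lemma~\ref{phicoh}; and $\bD^b(\coh(\csA_S))=\bD^\perf(\csA_S)=\bD(\csA_S)^\comp$ because $Z$ is smooth (Propositions~\ref{bperf} and~\ref{cperf}). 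Hence $L\tilde\varphi([\cE])\subset\bD(\csA_S)^\comp$, so $L\tilde\varphi$ preserves compactness, $\tilde\varphi$ is perfect, and $\cD$ is smooth. (It is precisely in order that $\bD^b(\coh(\csA_S))$ consist of compact objects that one glues with $\cdcf(\csA_S)$ rather than $\cdcf(S)$, cf.\ Remark~\ref{ff0}, and it is this step that forces property~(2) into the inductive statement.) Thus $\pi:\cdcf(Y)\to\cD$ is a categorical DG-resolution, which is~(1).

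It remains to verify~(2) and~(3). For~(2), since the embeddings $\tilde I_1,\tilde I_2$ of the decomposition $\bD(\cD)=\langle\bD(\csA_S),\bD(\cE)\rangle$ and their right adjoints commute with direct sums, $[\cD]=\bD(\cD)^\comp$ is generated by $\tilde I_1([\cdcf(\csA_S)])$ and $\tilde I_2([\cE])$, so it is enough to evaluate $\Res_\pi=\Res_{\pi_1}\circ\Res_{\bar\tau}$ on these. By Proposition~\ref{compglu}(d), $\Res_{\bar\tau}\circ\tilde I_1\cong I_1\circ\Res_{\tau_1}=I_1$ (as $\tau_1=\id$), so $\Res_\pi\circ\tilde I_1\cong\Res_{\pi_1}\circ I_1$ sends $[\cdcf(\csA_S)]=\bD^b(\coh(\csA_S))$ into $\bD^b(\coh(Y))$ by Theorem~\ref{mainres}; and by Proposition~\ref{compglu}(e), whose hypothesis holds here since $\Res_{\tau_1}\circ L\tilde\varphi\cong L\varphi\circ\Res_{\tau_2}$ is immediate from Proposition~\ref{reglu}(a), one gets $\Res_{\bar\tau}\circ\tilde I_2\cong I_2\circ\Res_{\tau_2}=I_2\circ\sigma_*$, so $\Res_\pi\circ\tilde I_2\cong\Res_{\pi_1}\circ I_2\circ\sigma_*$, where $\sigma_*([\cE])\subset\bD^b(\coh(Y_1))$ by the induction hypothesis and $\Res_{\pi_1}\circ I_2$ maps $\bD^b(\coh(Y_1))$ into $\bD^b(\coh(Y))$, again by Theorem~\ref{mainres}. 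Hence $\Res_\pi([\cD])\subset\bD^b(\coh(Y))$. For~(3), if $Y$ is proper then so are all $Y_i$ and all $Z_i$, as well as $S$ with $S_0=Z$ proper and smooth; then $\cdcf(\csA_S)$ is proper by Theorem~\ref{dasm}, $\cE$ is proper by the induction hypothesis, and since $\tilde\varphi$ is perfect, Proposition~\ref{utsm} gives that $\cD$ is proper. This closes the induction.
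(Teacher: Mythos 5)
Your proof is correct and follows essentially the same inductive strategy as the paper: same base case via $\cdcf(\csA_Y)$, same first-blowup plus nonrational-center setup, same regluing of the second component by the inductive resolution, and the same mechanism by which the extra induction hypothesis~(2), combined with Lemma~\ref{phicoh} and Propositions~\ref{tv} and~\ref{utsm}, yields perfectness of $\tilde\varphi$ and hence smoothness. The only cosmetic differences are that you invoke Proposition~\ref{compglu}(d,e) directly where the paper cites Proposition~\ref{reglu}(b), and you record explicitly that one first replaces $\varphi$ by an h-projective resolution before regluing -- a detail the paper leaves tacit.
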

%
%
%
\begin{proof}
We fix a chain of blowups~\eqref{frs} and use induction on $m$.
If $m = 0$ then the scheme $Y_\red$ is smooth, so $\cD = \cdcf(\csA_Y)$ with $\pi = \rho_Y$ 
provide a categorical DG-resolution of $\cdcf(Y)$ by Theorem~\ref{ausres} and moreover 
the properties (2) and (3) are satisfied.

Assume that $m > 0$. In this case we let $X = Y_1$, $Z = Z_0$ and denote the blowup map by $f:X \to Y$.
Since $f$ is a blowup of a subscheme $Z \subset Y$, by Lemma~\ref{innrc} an appropriate infinitesimal neighborhood 
$S$ of $Z$ (i.e. the subscheme given by the ideal $I_Z^n$ for some $n$, where $I_Z \subset \cO_Y$ is the ideal of~$Z$)
is a nonrational center for $f$. Therefore Theorem~\ref{mainres} applies 
and we have a partial categorical DG-resolution which we denote here by
\begin{equation*}
\pi_1:\cdcf(Y) \to \cdcf(\csA_S)\times_\varphi\cdcf(X)
\end{equation*}
with bimodule $\varphi$ defined by~\eqref{defbfi}.
Since the scheme $X = Y_1$ can be resolved by $m-1$ smooth blowups the induction applies,
hence  we have a categorical DG-resolution 
\begin{equation*}
\tau_2:\cdcf(X) \to \cD_2.
\end{equation*}
Taking $\tau_1 = \id:\cdcf(\csA_S) \to \cdcf(\csA_S)$ and applying the regluing procedure (see section~\ref{sreglu}) 
we obtain a partial categorical DG-resolution
\begin{equation}
\tau:\cdcf(\csA_S) \times_\varphi \cdcf(X) \to \cD(\csA_S)\times_{\tilde\varphi}\cD_2
\end{equation} 
for appropriate bimodule $\tilde\varphi$. By Lemma~\ref{compres} the composition
\begin{equation*}
\pi = \tau\circ\pi_1:\cdcf(Y) \to \cD := \cdcf(\csA_S)\times_{\tilde\varphi}\cD_2
\end{equation*}
is a partial categorical DG-resolution, so the only thing to check for part (1) 
is that the category $\cD$ is smooth.

For this we note that $\cD_2$ is smooth by part (1) of the induction hypothesis and $\cD(\csA_S)$ is smooth
by Theorem~\ref{ausres}. So, by Proposition~\ref{utsm} it remains to check that the bimodule $\tilde\varphi$ is perfect.
For this we use Proposition~\ref{tv}. Since $\cD_2$ is smooth it suffices to check
that the functor $L\tilde\varphi:\bD(\cD_2) \to \bD(\csA_S)$ induced by the bimodule $\tilde\varphi$ takes $[\cD_2]$ 
to $\bD(\csA_S)^\comp$ which by Propositions~\ref{cperf} and~\ref{bperf} is $\bD^b(\coh(\csA_S))$.
For this we note that by Proposition~\ref{reglu} this functor is isomorphic to the composition
\begin{equation*}
\bD(\cD_2) \xrightarrow{\ \Res_{\tau_2}\ } \bD(X) \xrightarrow{\ L\varphi\ } \bD(\csA_S). 
\end{equation*}
The functor
$\Res_{\tau_2}$ takes $[\cD_2]$ to $\bD^b(\coh(X))$ by part (2) of the induction hypothesis 
and the functor $L\varphi$ takes $\bD^b(\coh(X))$ to $\bD^b(\coh(\csA_S))$ by Lemma~\ref{phicoh}.
Thus we have proved part~(1).



Further, 
we have to check that the pushforward functor $\Res_\pi:\bD(\cD) \to \bD(Y)$
takes $[\cD]$ to $\bD^b(\coh(Y))$. Since each object of $[\cD]$ sits in a triangle
with the other vertices in $\cdcf(\csA_S)$ and $[\cD_2]$ it suffices to check 
the statement for those. Let $F \in [\cdcf(\csA_S)]$. By Proposition~\ref{reglu}(b)
we have $\Res_{\tau}(F) = \Res_{\tau_1}(F) = F$ since $\tau_1 = \id$, hence
\begin{equation*}
\Res_{\pi}(F) = \Res_{\pi_1}(\Res_\tau(F)) = \Res_{\pi_1}(F) \in \bD^b(\coh(Y)) 
\end{equation*}
by Proposition~\ref{pils}.
On the other hand, let $F \in [\cD_2]$. Then again by Proposition~\ref{reglu}(b) we have
$\Res_\tau(F) = \Res_{\tau_2}(F)$ and this is in $\bD^b(\coh(X))$ by the induction hypothesis.
Then $\Res_\pi(F) = \Res_{\pi_1}(\Res_{\tau_2}(F))$ is in $\bD^b(\coh(Y))$ again by Proposition~\ref{pils}.
This completes part (2).

Finally, we have to check that $\cD$ is proper as soon as $Y$ is. For this we note that the first center $Z_0$
and the first blowup $X = Y_1$ are both proper. Hence by induction hypothesis the resolution $\cD_2$ of $\cdcf(X)$ is proper.
On the other hand, the category $\cdcf(\csA_S)$ is proper by Theorem~\ref{ausres}. So, as we have already seen
that the gluing bimodule $\tilde\varphi$ is perfect, Proposition~\ref{utsm} applies and we conclude that $\cD$ is proper.
\end{proof}

Now we are ready to prove Theorem~\ref{cr}. Indeed, we take $\cT = \bD(\cD)$
and apply Theorem~\ref{mainind} together with Proposition~\ref{dgcrcr}.
The semiorthogonal decompositions of $\cT$ and $\cT^\comp$ are given by 
(iterations of) Corollary~\ref{sodglu} and Proposition~\ref{derglu}:
\begin{equation}\label{sods}
\begin{array}{rcl}
\bD(\cD) & = & \langle 
\underbrace{\bD(Z_0),\dots,\bD(Z_0)}_{\text{$n_0$ times}},
\dots,\\
&& \qquad\qquad
\underbrace{\bD(Z_{m-1}),\dots,\bD(Z_{m-1})}_{\text{$n_{m-1}$ times}},
\underbrace{\bD((Y_{m})_\red),\dots,\bD((Y_{m})_\red)}_{\text{$n_{m}$ times}} \rangle \\
\bD(\cD)^\comp & = & \langle \underbrace{\bD^b(\coh(Z_0)),\dots,\bD^b(\coh(Z_0))}_{\text{$n_0$ times}},\dots,\\
&& \qquad\qquad
\underbrace{\bD^b(\coh(Z_{m-1})),\dots,\bD^b(\coh(Z_{m-1}))}_{\text{$n_{m-1}$ times}},\\
&& \qquad\qquad\qquad\qquad
\underbrace{\bD^b(\coh((Y_m)_\red)),\dots,\bD^b(\coh((Y_m)_\red))}_{\text{$n_m$ times}}, \rangle 
\end{array}
\end{equation}
Here $n_i$ for $0 \le i \le m-1$ denotes the power of the ideal $I_{Z_i}$ which gives $I_{S_i}$, the ideal
of the nonrational center at step $i$, and $n_m$ stands for the nilpotence degree of the nilradical
of the scheme $Y_m$.

\subsection{Properties of the resolution}\label{s-props}

In this section we discuss some properties of the categorical resolution constructed above.


For each open subset $U \subset Y$ we consider the DG-resolution of $U$ obtained by a base change from the diagram~\eqref{frs}.
To be more precise we define $U_0 = U$ and $U_{k+1}$ to be the blowup of $U_k$ with center $Z_k^U := U_k\cap Z_k$.
Moreover, for each step we choose as a nonrational center for this blowup the subscheme $S_k^U := U_k \cap S_k$.
We denote by $\cD_U$ the obtained categorical DG-resolution of $\cdcf(U)$.

\begin{prop}
The association $U \mapsto \cD_U$ defines a presheaf of DG-categories on $Y$.
\end{prop}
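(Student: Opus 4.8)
The plan is to unwind the construction of $\cD_U$ step by step and check functoriality of each ingredient with respect to restriction to smaller open subsets. Given an inclusion $V \subset U$ of open subsets of $Y$, I would first observe that the base-changed chains of blowups are compatible: since blowing up commutes with flat base change (in particular with restriction to an open subset), $V_k = U_k \times_U V$ is the blowup of $V_{k-1}$ with center $Z_{k-1}^V = V_{k-1} \cap Z_{k-1}^U$, and $S_k^V = V_k \cap S_k^U$ is the corresponding nonrational center. So the data used to build $\cD_V$ is literally the restriction of the data used to build $\cD_U$.

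Next I would produce the restriction DG-functors on all the building blocks. For a smooth variety (or $\csA$-space) $W$ occurring in the construction and an open $W' \subset W$, the open inclusion induces a DG-functor $\cdcf(W) \to \cdcf(W')$ (this is just the pullback DG-functor along the open immersion, already available from Theorem~\ref{cd-pf} and Theorem~\ref{cda-pf}), and these are strictly functorial for a composition $W'' \subset W' \subset W$ up to the coherent isomorphisms of the pseudofunctor $\cD$. One then needs to check that these restriction functors are compatible with the two operations used to assemble $\cD_U$: the Auslander construction $S \mapsto \cdcf(\csA_S)$ and the gluing/regluing $\cdcf(\csA_S) \times_{\tilde\varphi} \cD'$. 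For the first, restricting an $\csA$-space along an open immersion is covered by the morphisms of $\csA$-spaces of section~\ref{ss-afun}, so $\cdcf(\csA_S) \to \cdcf(\csA_{S \cap U})$ is a DG-functor fitting into the commutative squares with $\rho_S$ (Lemma~\ref{pprho}). For the gluing, I would use that the gluing bimodule $\varphi$ of~\eqref{defbfi} is defined via pullback functors ($\barp^*$, $\rho_T^*$, $j^*$) and the diagonal bimodule of $\cdcf(\csA_T)$, all of which restrict compatibly; hence there is a canonical (quasi-iso) compatibility morphism between $\varphi_U$ restricted to the $V$-blocks and $\varphi_V$, and Proposition~\ref{compglu} (or rather its underlying construction of the DG-functor $\tau$ on gluings from compatible bimodules) yields the restriction DG-functor $\cD_U \to \cD_V$.

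Finally I would assemble these: by induction on $m$ (the length of the blowup chain), one gets a DG-functor $\operatorname{res}^U_V : \cD_U \to \cD_V$ for every $V \subset U$, and the cocycle condition $\operatorname{res}^U_W = \operatorname{res}^V_W \circ \operatorname{res}^U_V$ for $W \subset V \subset U$ follows from the corresponding cocycle conditions for the pullback pseudofunctors on schemes and $\csA$-spaces, together with uniqueness in the gluing construction (the DG-functor $\tau$ attached to a compatible pair of bimodules is determined on objects and morphisms by the formulas in the proof of Proposition~\ref{compglu}). I expect the main obstacle to be purely bookkeeping: the construction of $\cD_U$ involves several choices (the small DG-subcategories $\cD(S)$, $\cD(\csA_S)$ fixed in Theorems~\ref{cd-pf} and~\ref{cda-pf}, and the h-projective replacements of gluing bimodules), and one must either show these choices can be made once and for all compatibly over all open subsets of $Y$ — which is exactly why $\cD(S)$ was defined in~\eqref{fsds} to be stable under all pullbacks, so this causes no real trouble — or content oneself with a presheaf valued in DG-categories up to quasi-equivalence. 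I would take the former route, noting that the union construction in the proof of Theorem~\ref{cd-pf} already guarantees $f^*(\cD(S)) \subset \cD(T)$ strictly, so the restriction functors are honestly defined and honestly functorial, giving a genuine presheaf of DG-categories.
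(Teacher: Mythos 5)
The paper's own proof of this proposition is simply ``Evident,'' so there is no detailed argument to compare against; what you have written is exactly the careful unwinding that the authors are implicitly invoking, and your approach is correct. You identify the right three ingredients: base change of blowups along open immersions commutes with the construction of centers and nonrational centers, the pullback pseudofunctors on $\Sch$ and $\ASch$ supply the restriction DG-functors on each building block, and Proposition~\ref{compglu} (via compatibility of the gluing bimodules $\varphi$ of~\eqref{defbfi} with restriction) supplies the induced DG-functor on gluings; then the cocycle condition propagates by induction on the length of the blowup chain.

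One small correction to your final paragraph: the condition~\eqref{fsds} (that $f^*(\cD(S)) \subset \cD(T)$) is what makes the restriction functor \emph{well-defined} as a DG-functor between the fixed small DG-subcategories, but it is not by itself the source of strict functoriality. The strictness of the cocycle condition for open immersions comes from a different, more elementary fact: restriction of (complexes of quasicoherent) sheaves along open immersions is literally transitive, $(F|_U)|_V = F|_V$ on the nose, unlike general derived pullback which only commutes with composition up to coherent natural isomorphism. It is this strict transitivity, combined with~\eqref{fsds}, that lets the restriction functors on the building blocks compose strictly. You are also right to flag the choice of h-projective replacements in the regluing step of Theorem~\ref{mainind} as the place where one must either fix choices once for all of $Y$ or settle for a presheaf of DG-categories up to quasi-equivalence; this is indeed the one piece of bookkeeping that the paper's ``Evident'' suppresses.
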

\begin{proof}
Evident.
\end{proof}

\begin{remark}
One can check that $\cD_U$ is quasiequivalent to the Drinfeld quotient of $\cD$
by the subcategory generated by all objects in the components $\cdcf(Z_k)$ of $\cD$
which are cohomologically supported on $Z_k \setminus Z_k^U$ as well as by all objects in $\cdcf(Y_m)$
which are cohomologically supported on $Y_m \setminus U_m$.
\end{remark}

Note that for $U$ sufficiently small (contained in the complement of the union of images of all $Z_k$ in $Y$)
we have $\cD_U \cong \cdcf(U)$. Thus the constructed resolution is ``birational''.


\begin{prop}
Let $g:Y \to Y$ be an automorphism which preserves the resolution~\eqref{frs}.
Then there is a quasiautoequivalence $g^*:\cD \to \cD$ which preserves 
the semiorthogonal decompositions~\eqref{sods} of $\bD(\cD)$ and $\bD(\cD)^\comp$
and on each component of these decompositions is compatible with the pullback functor 
induced by the restriction of $g$ onto the corresponding scheme $Z_i$ or $Y_m$.
Moreover, the quasiautoequivalence $g^*$ extends to a quasiautoequivalence of the presheaf $\cD_U$.
\end{prop}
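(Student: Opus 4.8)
The plan is to trace the $G$-action (or rather the action of the chosen automorphism $g$) through each step of the inductive construction in Theorem~\ref{mainind}, observing that every ingredient is canonical enough to carry a compatible $g$-action. First I would set up the induction on the length $m$ of the chain~\eqref{frs}. Since $g$ preserves the resolution, it restricts to automorphisms $g_k:Y_k\to Y_k$ compatible with the blowup maps and preserving the centers $Z_k$; in particular $g_k(S_k)=S_k$, and $g_k$ preserves the ideals $I_{Z_k}$, $I_{S_k}$, hence induces an automorphism of the $\csA$-space $(S_k,\csA_{S_k})$ (the Auslander algebra $\csA_{S,\fr,n}$ is functorial in the data $(S,\fr,n)$ by the very definition~\eqref{cadef}, so a scheme automorphism fixing $\fr$ lifts to an automorphism of $\csA_S$, and then to an autoequivalence $g^*$ of $\cdcf(\csA_S)$). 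In the base case $m=0$ one has $\cD=\cdcf(\csA_Y)$ and $\pi=\rho_Y$; the functor $\rho_Y$ commutes with $g^*$ by Lemma~\ref{pprho}, so $g^*$ acts on $\cD$ compatibly with $\pi$, and its action on the unique semiorthogonal component $\bD((Y)_\red)$ is the pullback along $g$ by construction.

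Next, for the inductive step, I would propagate $g^*$ through the two gluing constructions. By the induction hypothesis $g^*$ acts on the categorical DG-resolution $\cD_2$ of $\cdcf(X)=\cdcf(Y_1)$ compatibly with $\tau_2$ and with the pullback functors on the components. The gluing bimodules $\varphi_0$ and $\varphi$ defined by~\eqref{phi0} and~\eqref{defbfi} are built entirely out of pullback DG-functors along the morphisms in the diagrams~\eqref{xyts} and~\eqref{xyats}, all of which are $g$-equivariant; hence $g^*$ acts on these bimodules and, using Lemma~\ref{qisqeq} and the construction of the gluing, on the glued categories $\cD_0$ and $\cdcf(\csA_S)\times_\varphi\cdcf(X)$, compatibly with the DG-functors $\pi_0$ and $\pi$. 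The regluing bimodule $\tilde\varphi$ of~\eqref{tiphi} is a tensor product of $\varphi$ with the equivariant transfer bimodules ${}_{\tau_1}\tilde\cD_1$, ${}_{\tau_2}\tilde\cD_2$, so it too carries a compatible $g^*$-action, and by Proposition~\ref{compglu} the regluing functor $\tau$ intertwines these. Composing, $g^*$ acts on $\cD=\cdcf(\csA_S)\times_{\tilde\varphi}\cD_2$ compatibly with $\pi=\tau\circ\pi_1$. Since the semiorthogonal decompositions~\eqref{sods} of $\bD(\cD)$ and $\bD(\cD)^\comp$ are assembled (by iterating Corollary~\ref{sodglu} and Proposition~\ref{derglu}) from the embedding functors $I_1,I_2$ of the individual gluings, which $g^*$ respects by Proposition~\ref{compglu}(b)--(c), the autoequivalence $g^*$ preserves the decompositions and on each component $\bD(Z_i)$ or $\bD((Y_m)_\red)$ restricts to the pullback along the corresponding restriction of $g$ — this is exactly what the functoriality statements of Theorems~\ref{cd-pf} and~\ref{cda-pf} and Corollary~\ref{sodqqc} give when one iterates the semiorthogonal decomposition of $\bD(\csA_S)$ as well.

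Finally, for the presheaf statement, I would note that for each open $U\subset Y$ one repeats the same construction with the base-changed data $(U_k,Z_k^U,S_k^U)$, and that an automorphism $g$ preserving~\eqref{frs} carries each $U$ to $g(U)$ and the data over $U$ to the data over $g(U)$ in a way compatible with the restriction maps $\cD_U\to\cD_{U'}$ for $U'\subset U$; hence the collection of autoequivalences $g^*:\cD_U\to\cD_{g(U)}$ is a morphism of presheaves, and when $g(U)=U$ it is a quasiautoequivalence of the restricted presheaf. I would phrase this by noting that $\cD_U$ is the Drinfeld quotient of $\cD$ by the subcategory of objects cohomologically supported away from the relevant preimages (as in the Remark preceding this Proposition), and that $g^*$ sends this subcategory to itself when $g(U)=U$, so it descends to the quotient by Proposition~\ref{dq-dgf}.

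The main obstacle I expect is bookkeeping rather than conceptual: one must check at each gluing and regluing step that the chosen quasiisomorphisms (the compatibility isomorphism $c$ of~\eqref{compphi}, the h-projective resolution of the gluing bimodule, the homotopy equivalences produced in Lemma~\ref{qisqeq}) can themselves be chosen $g$-equivariantly, or else that the resulting autoequivalence is well-defined only up to the ambiguity already inherent in ``quasiautoequivalence''. The cleanest route is to work strictly at the level of DG-functors and canonical (un-resolved) bimodules wherever possible — the bimodules $\varphi_0$, $\varphi$, $\tilde\varphi$ are given by honest formulas in pullback functors, so $g$ acts on them on the nose — and to invoke Lemma~\ref{qisqeq} and Proposition~\ref{regluequi} only at the very end to identify the quasiequivalence classes, so that the $g$-action need only be tracked up to quasiequivalence. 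With that discipline the verification is a routine, if lengthy, diagram chase through the proof of Theorem~\ref{mainind}.
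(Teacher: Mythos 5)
The paper's own proof of this Proposition is the single word ``Evident.'', so your proposal supplies an argument that the authors omitted rather than competing with one. Your approach --- inducting on the length of the chain~\eqref{frs}, lifting $g$ to an automorphism of each $\csA$-space via the evident functoriality of $\csA_{S,\fr,n}$ in $(S,\fr,n)$, and then propagating the action through each gluing and regluing by noting that the bimodules $\varphi_0$, $\varphi$, $\tilde\varphi$ are given by explicit formulas in pullback DG-functors that commute with $g^*$ --- is exactly the argument the paper has in mind, and it is correct.

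Your diagnosis of where care is needed is also on target: the regluing step in section~\ref{sreglu} first replaces $\varphi$ with an h-projective resolution before forming $\tilde\varphi$ via~\eqref{tiphi}, and a priori that resolution need not be $g$-equivariant. The two remedies you sketch both work: either choose a functorial (e.g.\ bar-type) resolution so that $g^*$ acts on the nose, or accept that $g^*$ is first produced as a quasiequivalence $\cD\to\cD'$ onto an a priori different model and then composed with the canonical quasiequivalence $\cD'\simeq\cD$ furnished by Lemma~\ref{qisqeq} and Proposition~\ref{regluequi}; the statement only requires a \emph{quasi}autoequivalence, so the second route suffices. One small point of precision on the presheaf claim: since $g$ moves open sets, what you actually produce is a system of quasiequivalences $\cD_U\to\cD_{g^{-1}(U)}$ compatible with restriction, which is what ``quasiautoequivalence of the presheaf'' should mean here; your phrasing via the Drinfeld-quotient description of $\cD_U$ and Proposition~\ref{dq-dgf} gives this cleanly.
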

\begin{proof}
Evident.
\end{proof}

\section{Appendix A. More on gluings}

In this Appendix we show that the operation of gluing described in section~\ref{s-glu}
is compatible with tensor products of DG-categories and provide a description of quasifunctors
to and from the gluing. We use freely the notation from section~4.

\subsection{The gluing and tensor products}

First we observe that the opposite DG-category of the gluing is itself obtained by a gluing.
For this note that 
\begin{equation*}
(\cD_2^\op\otimes\cD_1)\dgm = ((\cD_1^\op)^\op\otimes(\cD_2)^\op)\dgm,
\end{equation*}
thus any bimodule $\varphi \in (\cD_2^\op\otimes\cD_1)\dgm$ can be also used 
for the gluing of $\cD_2^\op$ with $\cD_1^\op$.

\begin{lemma}\label{gluop}
One has a DG-equivalence
$
(\cD_1\times_\varphi\cD_2)^\op \cong \cD_2^\op \times_\varphi\cD_1^\op.
$
\end{lemma}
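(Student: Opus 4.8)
The statement is essentially a bookkeeping identity: both sides are DG-categories whose objects are triples and whose morphism complexes are built from $\Hom$-complexes in $\cD_1$, $\cD_2$ and values of $\varphi$, so the plan is to write down an explicit DG-functor in each direction and check they are mutually inverse. First I would unwind the definitions on both sides. An object of $(\cD_1\times_\varphi\cD_2)^\op$ is the same datum as an object of $\cD_1\times_\varphi\cD_2$, namely a triple $(M_1,M_2,\mu)$ with $M_i\in\cD_i$ and $\mu\in\varphi(M_2,M_1)$ closed of degree $0$; an object of $\cD_2^\op\times_\varphi\cD_1^\op$ is a triple $(N_2,N_1,\nu)$ with $N_2\in\cD_2^\op$, $N_1\in\cD_1^\op$ and $\nu\in\varphi(N_2,N_1)$ closed of degree $0$ --- here one uses the observation preceding Lemma~\ref{gluop} that $\varphi$, viewed as a $(\cD_2^\op)^\op\otimes(\cD_1^\op)$-module, is literally the same bimodule, so the constraint data match on the nose. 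The obvious bijection on objects is $(M_1,M_2,\mu)\mapsto(M_2,M_1,\mu)$.

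Next I would define the functor on morphism complexes and verify it respects gradings, differentials and composition. By~\eqref{hglu}, $\Hom^k_{\cD_1\times_\varphi\cD_2}(M,N)=\Hom^k_{\cD_1}(M_1,N_1)\oplus\Hom^k_{\cD_2}(M_2,N_2)\oplus\varphi^{k-1}(N_2,M_1)$, and passing to the opposite category reverses source and target, so $\Hom^k_{(\cD_1\times_\varphi\cD_2)^\op}(M,N)=\Hom^k_{\cD_1}(N_1,M_1)\oplus\Hom^k_{\cD_2}(N_2,M_2)\oplus\varphi^{k-1}(M_2,N_1)$. On the other hand $\Hom^k_{\cD_2^\op\times_\varphi\cD_1^\op}((M_2,M_1,\mu),(N_2,N_1,\nu))=\Hom^k_{\cD_2^\op}(M_2,N_2)\oplus\Hom^k_{\cD_1^\op}(M_1,N_1)\oplus\varphi^{k-1}(M_2,N_1)$, and by definition of the opposite DG-category $\Hom_{\cD_2^\op}(M_2,N_2)=\Hom_{\cD_2}(N_2,M_2)$ and $\Hom_{\cD_1^\op}(M_1,N_1)=\Hom_{\cD_1}(N_1,M_1)$. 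So the underlying graded spaces agree after the evident permutation of summands $(f_1,f_2,f_{21})\mapsto(f_2,f_1,f_{21})$. It remains to check that this identification intertwines the differentials~\eqref{dglu} and the multiplications~\eqref{mglu} of the two gluings, where on the right the multiplication in $\cD_i^\op$ is the one twisted by $(-1)^{\deg f\deg g}$ and the roles of $\mu,\nu$ as ``left'' and ``right'' constraints have swapped. This is the one genuinely computational point: one substitutes the opposite-category sign conventions into~\eqref{dglu} and~\eqref{mglu} and checks that the formulas for $\cD_2^\op\times_\varphi\cD_1^\op$ reproduce exactly the opposite of the formulas for $\cD_1\times_\varphi\cD_2$. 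I expect the signs in the third ($\varphi$-)component --- the Leibniz term $-f_2\circ\mu+\nu\circ f_1$ and the mixed term $(-1)^{\deg f_2}f_2\circ g_{21}$ --- to be exactly where care is needed, since composition in $\cD_i^\op$ carries an extra degree sign and reverses order, but the left/right swap of $\mu$ and $\nu$ compensates.

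Finally I would record that the construction in the other direction is the same recipe applied to $\cD_2^\op$, $\cD_1^\op$ and $\varphi$ (using $(\cD_i^\op)^\op=\cD_i$), and that the two DG-functors are inverse to each other on objects and on morphisms by inspection; hence each is a DG-isomorphism, in particular a DG-equivalence. The main obstacle, such as it is, is purely the sign/variance bookkeeping in the previous paragraph --- there is no conceptual difficulty and no appeal to pretriangulatedness or to any derived-category statement is needed, so the proof can be stated as ``straightforward verification'' once the correct dictionary between the two sets of structure constants is exhibited.
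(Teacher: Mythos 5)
Your proposal is correct and takes essentially the same approach as the paper, whose entire proof is the one-line remark that ``the objects and the $\Hom$-complexes are the same by definition.'' You have simply made the definitional unwinding explicit --- the object bijection $(M_1,M_2,\mu)\mapsto(M_2,M_1,\mu)$, the permutation of summands in~\eqref{hglu}, and the sign bookkeeping in~\eqref{dglu} and~\eqref{mglu} --- and correctly flagged the $\varphi$-component signs as the only point requiring care, which both you and the paper reasonably leave as a routine verification.
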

\begin{proof}
The objects and the $\Hom$-complexes are the same by definition.
\end{proof}

Let $\cC$ be a small DG-category. Consider the bimodule 
\begin{equation*}
\bar\varphi := \cC\otimes_\kk \varphi \in (\cC^\op\otimes\cC\otimes\cD_2^\op\otimes\cD_1)\dgm = ((\cC\otimes\cD_2)^\op \otimes (\cC\otimes\cD_1))\dgm.
\end{equation*}
We can use it to form the gluing $(\cC\otimes\cD_1)\times_{\bar\varphi}(\cC\otimes\cD_2)$.

\begin{prop}\label{glutens}
There is an equivalence of categories
\begin{equation*}
\bD(\cC\otimes(\cD_1\times_\varphi\cD_2)) \cong \bD((\cC\otimes\cD_1)\times_{\bar\varphi}(\cC\otimes\cD_2)).
\end{equation*}
\end{prop}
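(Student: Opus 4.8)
The plan is to produce the equivalence from a single natural DG-functor. Write $\cE=\cD_1\times_\varphi\cD_2$ and $\tilde\cE=(\cC\otimes\cD_1)\times_{\bar\varphi}(\cC\otimes\cD_2)$, and define
\[
\Phi\colon \cC\otimes\cE\longrightarrow\tilde\cE,\qquad
\Phi\bigl(C,(M_1,M_2,\mu)\bigr)=\bigl((C,M_1),\,(C,M_2),\,\id_C\otimes\mu\bigr)
\]
on objects; here $\id_C\otimes\mu$ is a closed degree-zero element of $\bar\varphi\bigl((C,M_2),(C,M_1)\bigr)\cong\Hom_\cC(C,C)\otimes_\kk\varphi(M_2,M_1)$ since $\bar\varphi=\cC\otimes_\kk\varphi$. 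On morphisms $\Phi$ sends $h\otimes(f_1,f_2,f_{21})$ to $(h\otimes f_1,\,h\otimes f_2,\,h\otimes f_{21})$, extended $\kk$-linearly. The first step is to check that this is a DG-functor which is quasi fully faithful: unwinding the exterior tensor product $\bar\varphi$, the gluing differential~\eqref{dglu} and multiplication~\eqref{mglu} of $\tilde\cE$ restricted to the image of $\Phi$ are precisely the $\kk$-tensor product of the identity on $\Hom_\cC$ with the differential and multiplication of $\cE$. The same computation identifies
\[
\Hom_{\cC\otimes\cE}\bigl((C,N),(C',N')\bigr)=\Hom_\cC(C,C')\otimes_\kk\Hom_\cE(N,N')\ \xrightarrow{\ \Phi\ }\ \Hom_{\tilde\cE}\bigl(\Phi(C,N),\Phi(C',N')\bigr)
\]
with an \emph{isomorphism} of complexes, because the three summands of $\Hom_{\tilde\cE}$ (cf.~\eqref{hglu}) are exactly $\Hom_\cC(C,C')$ tensored with the three summands of $\Hom_\cE(N,N')$. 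Thus $\Phi$ is strictly, hence quasi, fully faithful.

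By Proposition~\ref{fpb} the derived induction $\LInd_\Phi\colon\bD(\cC\otimes\cE)\to\bD(\tilde\cE)$ is then fully faithful, commutes with arbitrary direct sums, and carries representable modules to representable modules. It remains to see that $\LInd_\Phi$ is essentially surjective. Since its image is a triangulated subcategory closed under arbitrary direct sums, by Proposition~\ref{cgenprop} it is enough to show that this image contains a generating set of compact objects of $\bD(\tilde\cE)$, for which I would take all the representables $\sY^{((C_1,M_1),(C_2,M_2),\nu)}$.

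To reach these I would use the semiorthogonal decomposition $\bD(\tilde\cE)=\langle\bD(\cC\otimes\cD_1),\bD(\cC\otimes\cD_2)\rangle$ of Proposition~\ref{derglu}, with embeddings $\tilde I_1,\tilde I_2$: the canonical triangle~\eqref{glutri} for $\tilde\cE$ exhibits $\sY^{((C_1,M_1),(C_2,M_2),\nu)}$ inside the triangulated span of $\sY^{((C_1,M_1),0,0)}=\tilde I_1\bigl(\sY^{(C_1,M_1)}\bigr)$ and $\sY^{(0,(C_2,M_2),0)}=\tilde I_2\bigl(\sY^{(C_2,M_2)}\bigr)$. Now $\Phi$ intertwines the embedding $\id_\cC\otimes i_k\colon\cC\otimes\cD_k\to\cC\otimes\cE$ (with $i_k$ as in~\eqref{i12}) with the embedding $\tilde\imath_k\colon\cC\otimes\cD_k\to\tilde\cE$ up to a natural isomorphism of objects (they differ only in the choice of zero object in $\cC\otimes\cD_{3-k}$), so by pseudofunctoriality of $\LInd$ one gets $\tilde I_k\cong\LInd_\Phi\circ\LInd_{\id_\cC\otimes i_k}$. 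Hence the images of $\tilde I_1$ and $\tilde I_2$, and therefore all representables of $\tilde\cE$, lie in the image of $\LInd_\Phi$; together with full faithfulness this yields the equivalence.

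The only real work is the bookkeeping in the first step: matching the Koszul signs and the left/right module actions of the exterior tensor product $\bar\varphi=\cC\otimes_\kk\varphi$ against the gluing formulas~\eqref{dglu} and~\eqref{mglu}; a variance mismatch, if it appears, can be absorbed using Lemma~\ref{gluop}. A minor set-theoretic point is that the formulas~\eqref{i12} for the gluing embeddings presuppose zero objects in $\cD_1,\cD_2$ (and in $\cC$); one may formally adjoin them, which changes neither side. Alternatively, one can avoid $\Phi$ and verify the semiorthogonal decomposition of $\bD(\cC\otimes\cE)$ with components $\bD(\cC\otimes\cD_k)$ directly via Proposition~\ref{derglu}, using ${}_{\id_\cC\otimes i_2}(\cC\otimes\cE)_{\id_\cC\otimes i_1}=0$ for semiorthogonality and ${}_{\id_\cC\otimes i_1}(\cC\otimes\cE)_{\id_\cC\otimes i_2}\cong\bar\varphi[-1]$ to identify the gluing functor with $L\bar\varphi$; the functor $\Phi$ simply packages all of this at once.
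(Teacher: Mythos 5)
Your proposal is correct and takes essentially the same approach as the paper: the paper defines the identical DG-functor $\Delta=\Phi$, verifies full faithfulness on Hom-complexes by the same exterior-tensor computation, and then concludes essential surjectivity by observing that every representable DG-module over the gluing lies in the triangulated subcategory generated by the image of $\LInd_\Delta$, using the canonical triangle~\eqref{glutri}. Your detour through the intertwining $\tilde I_k\cong\LInd_\Phi\circ\LInd_{\id_\cC\otimes i_k}$ is a slightly more elaborate way to phrase the last step; the paper simply notes directly that $(C_1\otimes M_1,0,0)=\Delta(C_1\otimes(M_1,0,0))$ and $(0,C_2\otimes M_2,0)=\Delta(C_2\otimes(0,M_2,0))$.
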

\begin{proof}
Consider a DG-functor $\Delta:\cC\otimes(\cD_1\times_\varphi\cD_2) \to (\cC\otimes\cD_1)\times_{\bar\varphi}(\cC\otimes\cD_2)$ defined by
\begin{equation*}
C \otimes (M_1,M_2,\mu) \mapsto (C \otimes M_1,C \otimes M_2,1_C\otimes\mu).
\end{equation*}
Note that
\begin{multline*}
\Hom_{(\cC\otimes\cD_1)\times_{\bar\varphi}(\cC\otimes\cD_2)}((C \otimes M_1,C \otimes M_2,1_C\otimes\mu),(D \otimes N_1,D \otimes N_2,1_D\otimes\nu)) = \\ =
\Hom_{\cC\otimes\cD_1}(C \otimes M_1,D \otimes N_1) \oplus \Hom_{\cC\otimes\cD_2}(C \otimes M_2,D \otimes N_2) \oplus \bar\varphi(D \otimes N_2,C \otimes M_1) = \\ =
\Hom_\cC(C,D)\otimes \Big( \Hom_{\cD_1}(M_1,N_1) \oplus \Hom_{\cD_2}(M_2,N_2) \oplus \varphi(N_2,M_1) \Big) = \\ =
\Hom_{\cC\otimes(\cD_1\times_\varphi\cD_2)}(C \otimes (M_1,M_2,\mu),D \otimes (N_1,N_2,\nu))
\end{multline*}
which means that the functor $\Delta$ is fully faithful.

By Proposition~\ref{fpb} the DG-functor $\Delta$ extends to a fully faithful triangulated functor
$\LInd_\Delta:\bD(\cC\otimes(\cD_1\times_\varphi\cD_2)) \to \bD((\cC\otimes\cD_1)\times_{\bar\varphi}(\cC\otimes\cD_2))$
which has a right adjoint. Thus it remains to check that the orthogonal in 
$\bD((\cC\otimes\cD_1)\times_{\bar\varphi}(\cC\otimes\cD_2))$ to the image of $\LInd_\Delta$ 
is zero. For this it suffices to check that any representable $((\cC\otimes\cD_1)\times_{\bar\varphi}(\cC\otimes\cD_2))$-module
is contained in the triangulated category generated by the image of $\LInd_\Delta$.

To check this take any object $(C_1 \otimes M_1,C_2 \otimes M_2,\bar\mu) \in (\cC\otimes\cD_1)\times_{\bar\varphi}(\cC\otimes\cD_2)$.
The canonical triangle~\eqref{glutri} then shows that
\begin{equation*}
(C_1 \otimes M_1,C_2 \otimes M_2,\bar\mu) \cong \Cone((C_1 \otimes M_1,0,0)[-1] \xrightarrow{\ \bar\mu\ } (0,C_2 \otimes M_2,0)).
\end{equation*}
Since both $(C_1 \otimes M_1,0,0) = \Delta(C_1\otimes (M_1,0,0))$ and $(0,C_2\otimes M_2,0) = \Delta(C_2\otimes (0,M_2,0))$
are in the image of the functor $\LInd_\Delta$ the result follows.
%
%
%
%
\end{proof}

\begin{remark}
In fact the above argument shows also that the pretriangulated envelope of $\cC\otimes(\cD_1\times_\varphi\cD_2)$
is quasiequivalent to the pretriangulated envelope of $(\cC\otimes\cD_1)\times_{\bar\varphi}(\cC\otimes\cD_2)$.
\end{remark}

\begin{cor}
We have a semiorthogonal decomposition
\begin{equation*}
\bD(\cC\otimes(\cD_1\times_\varphi\cD_2)) = \langle \bD(\cC\otimes\cD_1),\bD(\cC\otimes\cD_2) \rangle
\end{equation*}
with the gluing functor equal to $-\lotimes_{\cD_2}\varphi:\bD(\cC\otimes\cD_2) \to \bD(\cC\otimes\cD_1)$.
%
%
\end{cor}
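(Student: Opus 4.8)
The plan is to deduce the corollary from Proposition~\ref{derglu} by transporting the semiorthogonal decomposition along the equivalence of Proposition~\ref{glutens}. First I would recall that this equivalence is induced by the DG-functor $\Delta$ from the proof of Proposition~\ref{glutens}, and observe that $\Delta$ intertwines the canonical embedding DG-functors. Writing $i_k:\cD_k\to\cD_1\times_\varphi\cD_2$ and $\bar\imath_k:\cC\otimes\cD_k\to(\cC\otimes\cD_1)\times_{\bar\varphi}(\cC\otimes\cD_2)$ for the functors defined as in~\eqref{i12}, one checks directly from the formulas that $\Delta\circ(\id_\cC\otimes i_k)=\bar\imath_k$ for $k=1,2$ (indeed $\Delta(C\otimes(M_1,0,0))=(C\otimes M_1,0,0)$, and the $k=2$ case is analogous). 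Passing to derived categories and using pseudofunctoriality of $\LInd$ (section~\ref{ss-dge}, Proposition~\ref{fpb}) this gives $\LInd_\Delta\circ\LInd_{\id_\cC\otimes i_k}\cong\LInd_{\bar\imath_k}$.

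Next I would apply Proposition~\ref{derglu} to the gluing $(\cC\otimes\cD_1)\times_{\bar\varphi}(\cC\otimes\cD_2)$: it produces the semiorthogonal decomposition $\bD((\cC\otimes\cD_1)\times_{\bar\varphi}(\cC\otimes\cD_2))=\langle\bD(\cC\otimes\cD_1),\bD(\cC\otimes\cD_2)\rangle$ with embeddings $\LInd_{\bar\imath_1},\LInd_{\bar\imath_2}$ and gluing functor $L\bar\varphi=-\lotimes_{\cC\otimes\cD_2}\bar\varphi$. Transporting this decomposition along the equivalence $\LInd_\Delta$ and using the intertwining of the previous step, I obtain the asserted decomposition of $\bD(\cC\otimes(\cD_1\times_\varphi\cD_2))$; since the gluing functor is intrinsically determined by the decomposition (see section~\ref{ss-gf}), it is still given by $L\bar\varphi$.

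The final step is to rewrite $L\bar\varphi$ as $-\lotimes_{\cD_2}\varphi$. Since $\bar\varphi=\cC\otimes_\kk\varphi$, the derived tensor product over $\cD_2$ affects only the second tensor factor, so by~\eqref{diagltens} the diagonal bimodule of $\cC\otimes\cD_2$ satisfies $(\cC\otimes\cD_2)\lotimes_{\cD_2}\varphi\cong\cC\otimes_\kk(\cD_2\lotimes_{\cD_2}\varphi)\cong\cC\otimes_\kk\varphi=\bar\varphi$; associativity of the derived tensor product then yields, for every $M\in\bD(\cC\otimes\cD_2)$,
\[
M\lotimes_{\cC\otimes\cD_2}\bar\varphi\cong M\lotimes_{\cC\otimes\cD_2}\big((\cC\otimes\cD_2)\lotimes_{\cD_2}\varphi\big)\cong M\lotimes_{\cD_2}\varphi .
\]
None of these steps is hard; the only points needing attention are the bookkeeping of which tensor factor each bimodule acts through and making the identity $\Delta\circ(\id_\cC\otimes i_k)=\bar\imath_k$ precise, so if anything is the main obstacle it is organizing these compatibilities cleanly rather than any real difficulty.
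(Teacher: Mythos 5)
Your proposal is correct and follows the same route as the paper: combine Proposition~\ref{glutens} with Proposition~\ref{derglu} to transport the semiorthogonal decomposition, then identify the gluing functor $-\lotimes_{\cC\otimes\cD_2}\bar\varphi$ with $-\lotimes_{\cD_2}\varphi$ using $\bar\varphi = \cC\otimes_\kk\varphi$. You merely spell out the intertwining of the embedding functors and the bookkeeping in the tensor-product simplification, which the paper leaves implicit.
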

\begin{proof}
The semiorthogonal decomposition follows from the combination of Proposition~\ref{glutens} and Proposition~\ref{derglu}.
By Proposition~\ref{derglu} the gluing functor is isomorphic to $-\lotimes_{\cC\otimes\cD_2}\bar\varphi$.
But since $\bar\varphi = \cC\otimes_\kk\varphi$, this is isomorphic to $-\lotimes_{\cD_2}\varphi$.
\end{proof}

Combining this Corollary with Proposition~\ref{uniglu2} one easily deduces the following

\begin{cor}
Assume that $\cD$ is a small pretriangulated DG-category with a semiorthogonal decomposition
$[\cD] = \langle \cT_1,\cT_2 \rangle$. Then for any small DG-category $\cC$ the derived category
there are semiorthogonal decompositions
\begin{equation*}
\bD(\cC\otimes\cD) = \langle \bD(\cC\otimes\cD_1), \bD(\cC\otimes\cD_2) \rangle,
\qquad
\Perf(\cC\otimes\cD) = \langle \Perf(\cC\otimes\cD_1), \Perf(\cC\otimes\cD_2) \rangle,
\end{equation*}
where $\cD_1$ and $\cD_2$ are the enhancements of $\cT_1$ and $\cT_2$ induced by $\cD$ and $\Perf$
stands for the homotopy category of perfect DG-modules.

\end{cor}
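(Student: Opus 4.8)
The plan is to reduce everything to the gluing case already treated. By Proposition~\ref{uniglu2} the DG-category $\cD$ is quasiequivalent to a gluing $\cD_1\times_\varphi\cD_2$, where $\cD_k$ is the full DG-subcategory of $\cD$ on the objects of $\cT_k$ (so $[\cD_k]=\cT_k$, and these are exactly the enhancements of $\cT_k$ induced by $\cD$) and $\varphi = {}_{i_2}\cD_{i_1}[1]$. First I would check that tensoring the chain of quasiequivalences realizing $\cD \cong \cD_1\times_\varphi\cD_2$ with $\id_\cC$ produces a chain of quasiequivalences $\cC\otimes\cD \cong \cC\otimes(\cD_1\times_\varphi\cD_2)$: since $\kk$ is a field, tensoring a quasiisomorphism of $\Hom$-complexes with a complex $\Hom_\cC(C,D)$ is again a quasiisomorphism, and homotopy equivalences of objects are visibly preserved. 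By Proposition~\ref{fpb} the associated $\LInd$ is then an equivalence $\bD(\cC\otimes\cD)\cong\bD(\cC\otimes(\cD_1\times_\varphi\cD_2))$ restricting to an equivalence on compact objects. The preceding Corollary provides the semiorthogonal decomposition $\bD(\cC\otimes(\cD_1\times_\varphi\cD_2)) = \langle \bD(\cC\otimes\cD_1), \bD(\cC\otimes\cD_2)\rangle$, and transporting it along this equivalence gives the first assertion.

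For the second assertion I would show this decomposition descends to compact objects. Let $I_1,I_2$ be the embedding functors of the two components, $I_1^*$ the left adjoint of $I_1$, and $I_1^!,I_2^!$ the right adjoints of $I_1,I_2$, so that the projections are $\bp_1 = I_1 I_1^*$ and $\bp_2 = I_2 I_2^!$ (as in Lemma~\ref{sod3} and Proposition~\ref{derglu}). By Proposition~\ref{fpb} the functors $I_1,I_2$ (being $\LInd$'s) and $I_1^!,I_2^!$ (being $\Res$'s) commute with arbitrary direct sums, and $I_1^*$ commutes with direct sums as a left adjoint. Applying Lemma~\ref{cprops}(2) twice: $I_1^*$ preserves compactness because its right adjoint $I_1$ commutes with direct sums, and $I_1$ preserves compactness because its right adjoint $I_1^!$ commutes with direct sums. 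Hence for compact $F$ the object $\bp_1(F) = I_1(I_1^*(F))$ is compact, and then $\bp_2(F)$ is compact by the triangle $\bp_2(F)\to F\to\bp_1(F)$. Moreover each $I_k$ is fully faithful, commutes with direct sums, and preserves compactness, so by Lemma~\ref{cprops}(1) it reflects compactness, i.e. $I_k$ identifies $\bD(\cC\otimes\cD_k)^\comp$ with $\bD(\cC\otimes\cD_k)\cap\bD(\cC\otimes\cD)^\comp$. Putting this together, the decomposition restricts to $\bD(\cC\otimes\cD)^\comp = \langle \bD(\cC\otimes\cD_1)^\comp, \bD(\cC\otimes\cD_2)^\comp\rangle$; since compact objects in the derived category of a small DG-category are precisely the perfect DG-modules \cite[Theorem 3.4]{Ke06}, this reads $\Perf(\cC\otimes\cD) = \langle\Perf(\cC\otimes\cD_1),\Perf(\cC\otimes\cD_2)\rangle$.

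The main obstacle, and essentially the only non-formal point, is the passage to perfect objects: one has to verify that all the projection functors of the gluing's semiorthogonal decomposition commute with direct sums and preserve (and reflect) compactness, which is the bookkeeping with Lemma~\ref{cprops} described above. The rest is purely formal, since the decomposition of $\bD(\cC\otimes(\cD_1\times_\varphi\cD_2))$ and the quasiequivalence $\cD\cong\cD_1\times_\varphi\cD_2$ have already been established.
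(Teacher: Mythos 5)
Your proof is correct and follows the same route the paper intends: reduce to the gluing $\cD_1\times_\varphi\cD_2$ via Proposition~\ref{uniglu2}, tensor with $\cC$, and invoke the preceding Corollary for the decomposition of $\bD(\cC\otimes(\cD_1\times_\varphi\cD_2))$. The paper leaves the statement about perfect complexes to the reader ("one easily deduces"); your bookkeeping with Lemma~\ref{cprops} to show the projection functors preserve compactness and the embeddings reflect it supplies exactly the missing detail, and is the natural way to fill it in.
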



%
%


\subsection{Quasifunctors to and from the gluing}


One can describe the category of quasifunctors to and from the gluing.

\begin{defi}\label{defqfun}
A DG-bimodule $\varphi \in (\cD_1^\op\otimes\cD_2)\dgm$ is {\sf right quasirepresentable} (also called a {\sf quasifunctor})
if for any $X_1 \in \cD_1$ the right $\cD_2$-module $\varphi(X_1,-)$ is quasiisomorphic to a representable DG-module. 
\end{defi}


\begin{prop}\label{uniglu}
$(i)$ To give a quasifunctor $\alpha:\cC \to  \cD_1\times_\varphi\cD_2$ is equivalent to giving
a quasifunctor $\alpha_1:\cC \to \cD_1$, a quasifunctor $\alpha_2:\cC \to \cD_2$, and
a morphism from $\alpha_1$ to $\varphi\circ\alpha_2$ in the derived category $\bD(\cC^\op\otimes\cD_1)$.

$(ii)$ If $\cC$ is pretriangulated and $\varphi$ is a quasifunctor then to give a quasifunctor $\beta:\cD_1\times_\varphi \cD_2 \to \cC$ is equivalent to giving
a quasifunctor $\beta_1:\cD_1 \to \cC$, a quasifunctor $\beta_2:\cD_2 \to \cC$, and
a morphism from $\beta_2$ to $\beta_1\circ\varphi$ in the derived category $\bD(\cD_2^\op\otimes\cC)$.
\end{prop}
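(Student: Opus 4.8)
The plan is to describe, for each of the two statements, an explicit functor between the groupoids ``quasifunctors'' on the two sides, and check it is an equivalence using the semiorthogonal decomposition of $\bD(\cD_1\times_\varphi\cD_2)$ from Proposition~\ref{derglu} together with Lemma~\ref{objsod} applied to the relevant derived categories of bimodules. Recall that a quasifunctor $\cC \to \cE$ is the same thing as a right quasirepresentable $\cC$-$\cE$-bimodule, i.e.\ an object of $\bD(\cC^\op\otimes\cE)$ whose restriction along $\sY:\cC \to \cC\dgm$ lands in the representable modules; and a morphism of quasifunctors is a morphism in $\bD(\cC^\op\otimes\cE)$. So both statements are really statements about the structure of $\bD(\cC^\op\otimes(\cD_1\times_\varphi\cD_2))$, respectively $\bD((\cD_1\times_\varphi\cD_2)^\op\otimes\cC)$, together with the quasirepresentability condition. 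For the first, I would use the Corollary at the end of Appendix~A: $\bD(\cC^\op\otimes(\cD_1\times_\varphi\cD_2)) = \langle \bD(\cC^\op\otimes\cD_1), \bD(\cC^\op\otimes\cD_2)\rangle$ with gluing functor $-\lotimes_{\cD_2}\varphi$. For the second, I would use Lemma~\ref{gluop}, which says $(\cD_1\times_\varphi\cD_2)^\op \cong \cD_2^\op\times_\varphi\cD_1^\op$, and then again the same Corollary.

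First, part $(i)$. By the Corollary just quoted, $\bD(\cC^\op\otimes(\cD_1\times_\varphi\cD_2))$ carries a semiorthogonal decomposition with components $\bD(\cC^\op\otimes\cD_1)$ and $\bD(\cC^\op\otimes\cD_2)$ and gluing functor $-\lotimes_{\cD_2}\varphi$. Hence by Lemma~\ref{objsod} (applied to this category, with the gluing functor interpreted on bimodules as $\psi \mapsto \psi\lotimes_{\cD_2}\varphi$) giving an object $\alpha$ of $\bD(\cC^\op\otimes(\cD_1\times_\varphi\cD_2))$ is the same as giving $\alpha_1 \in \bD(\cC^\op\otimes\cD_1)$, $\alpha_2 \in \bD(\cC^\op\otimes\cD_2)$ and a morphism $\alpha_1 \to \alpha_2\lotimes_{\cD_2}\varphi$ in $\bD(\cC^\op\otimes\cD_1)$; note $\alpha_2\lotimes_{\cD_2}\varphi$ is exactly ``$\varphi\circ\alpha_2$''. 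It remains to match the quasirepresentability conditions. For this I would use the canonical triangle~\eqref{glutri} inside the gluing: for $C \in \cC$ the module $\alpha(C,-)$ over $\cD_1\times_\varphi\cD_2$ sits in a triangle $i_2(\alpha_2(C,-)) \to \alpha(C,-) \to i_1(\alpha_1(C,-))$ via the gluing-functor data, where $i_1, i_2$ are the DG-embeddings of \eqref{i12}; hence $\alpha(C,-)$ is quasirepresentable (by an object $(M_1,M_2,\mu)$) precisely when $\alpha_1(C,-)$ is quasirepresentable by $M_1$ over $\cD_1$ and $\alpha_2(C,-)$ is quasirepresentable by $M_2$ over $\cD_2$ — this uses that every representable $(\cD_1\times_\varphi\cD_2)$-module has the form \eqref{glutri} and conversely that such a cone of representables is representable. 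So $\alpha$ is a quasifunctor iff $\alpha_1$ and $\alpha_2$ are, which is exactly the claim. The functoriality (that morphisms of quasifunctors correspond, i.e.\ this is an equivalence of categories and not just a bijection on objects) follows because all the identifications above are via the semiorthogonal decomposition and the projection functors $\bp_1,\bp_2$, which are themselves functorial.

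For part $(ii)$: applying Lemma~\ref{gluop}, a quasifunctor $\cD_1\times_\varphi\cD_2 \to \cC$ is a right quasirepresentable $(\cD_1\times_\varphi\cD_2)^\op$-$\cC$-bimodule, i.e.\ an object of $\bD\big((\cD_2^\op\times_\varphi\cD_1^\op)^\op\otimes\cC\big) = \bD\big((\cD_2^\op\times_\varphi\cD_1^\op)\otimes\cC^\op\big)^\op$... more cleanly: it is a quasifunctor out of the gluing $\cD_2^\op\times_\varphi\cD_1^\op$ with target $\cC$, where now the ``left'' component is $\cD_2^\op$ and the ``right'' component is $\cD_1^\op$. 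I would then run the $\Hom$-complex computation from Remark~\ref{triglu}: for $\beta:\cD_1\times_\varphi\cD_2 \to \cC$ the restrictions $\beta_1 := \beta\circ i_1:\cD_1\to\cC$ and $\beta_2 := \beta\circ i_2:\cD_2\to\cC$ are quasifunctors, and the triangle of Remark~\ref{triglu} (which computes $\Hom_{\cD_1\times_\varphi\cD_2}$ as the fiber of $(f_1,f_2)\mapsto f_2\circ\mu-\nu\circ f_1$ into $\varphi$) shows that $\beta$ is reconstructed from $\beta_1$, $\beta_2$ and a compatibility datum between them through $\varphi$; unwinding, this datum is precisely a morphism $\beta_2 \to \beta_1\circ\varphi$ in $\bD(\cD_2^\op\otimes\cC)$ — here ``$\beta_1\circ\varphi$'' means $\varphi\lotimes_{\cD_1}\beta_1$, and one uses that $\varphi$ is a quasifunctor so that this composite is again a genuine quasifunctor $\cD_2\to\cC$. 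The pretriangularity hypothesis on $\cC$ is what guarantees the relevant cones exist inside $\cC$, so that the glued quasifunctor is again (right) quasirepresentable. The main obstacle I anticipate is the bookkeeping in $(ii)$: tracking the left/right module structures correctly through Lemma~\ref{gluop} and verifying that the quasirepresentability of the three pieces ($\beta_1$, $\beta_2$, $\varphi$) together with pretriangularity of $\cC$ forces quasirepresentability of the glued bimodule — the converse (that the pieces of a quasifunctor out of the gluing are themselves quasifunctors) is immediate since $i_1,i_2$ are honest DG-functors, but the synthesis direction requires care. All of this is essentially an assembly of Proposition~\ref{derglu}, Lemma~\ref{objsod}, Remark~\ref{triglu}, and Lemma~\ref{gluop}, with no genuinely new input.
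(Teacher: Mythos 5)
Your overall strategy matches the paper's: identify $\bD(\cC^\op\otimes(\cD_1\times_\varphi\cD_2))$ (resp.\ $\bD((\cD_1\times_\varphi\cD_2)^\op\otimes\cC)$) as the derived category of a gluing via Proposition~\ref{glutens}, invoke Lemma~\ref{objsod} to split the bimodule into components plus gluing datum, and then chase quasirepresentability. For part~$(i)$ this runs exactly as in the paper, and your argument (projections of $\sY^{(M_1,M_2,\mu)}$ in $\bD(\cD_1\times_\varphi\cD_2)$ are $\sY^{M_1}$ and $\sY^{M_2}$, conversely a cone of representables is representable) is complete.

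In part~$(ii)$ you have a genuine misconception. You assert that ``the converse (that the pieces of a quasifunctor out of the gluing are themselves quasifunctors) is immediate since $i_1,i_2$ are honest DG-functors,'' and that only the synthesis direction needs care. This is wrong: the two pieces of the semiorthogonal decomposition of a bimodule $\beta$ are $\beta_1 = I_2^!\beta$ and $\beta_2 = I_1^*\beta$. Only $I_2^! = \Res_{i_2}$ is a restriction, so $\beta_1(M_1,-) = \beta((M_1,0,0),-)$ is quasirepresentable for free. But $I_1^*$ is the \emph{left} adjoint of $I_1$, not the restriction $I_1^! = \Res_{i_1}$; it sits in the triangle~\eqref{I1s} as the cone of $\beta_{i_1} \to \beta_{i_2}\lotimes\varphi$. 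Establishing that $\beta_2(M_2,-)$ is quasirepresentable therefore requires (a) $\cC$ pretriangulated, so that a cone of representable $\cC$-modules is representable, and (b) $\varphi$ a quasifunctor, so that $\varphi(M_2,-)\lotimes_{\cD_1}\beta_{i_2}$ reduces to an evaluation of $\beta$ at a representable argument. In other words, the two hypotheses on $\cC$ and $\varphi$ are needed in \emph{both} directions, and dismissing the forward direction as immediate leaves a real gap. The paper proves this carefully: it applies~\eqref{I1s} and the quasifunctor property of $\varphi$ to reduce $\beta_2(M_2,-)$ to evaluations of $\beta$ at representable objects of the gluing, exactly the step you skipped.
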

\begin{proof}


$(i)$ 
We apply Proposition~\ref{glutens} with $\cC^\op$ instead of $\cC$. By Lemma~\ref{objsod} to give
an object $\alpha \in \bD(\cC^\op\otimes\cD)$ is equivalent to giving its components 
$\alpha_1 = I_1^*\alpha \in \bD(\cC^\op\otimes\cD_1)$ and $\alpha_2 = I_2^!\alpha \in \bD(\cC^\op\otimes\cD_2)$
and a morphism $\alpha_1 \to \alpha_2\lotimes_{\cD_2}\varphi = \varphi\circ\alpha_2$.
Thus we only have to check that $\alpha$ is a quasifunctor 
if and only if both its components $\alpha_1 \in \bD(\cC^\op\otimes\cD_1)$ and $\alpha_2 \in \bD(\cC^\op\otimes\cD_2)$ are.
Substituting $C$ into distinguished triangle
\begin{equation*}
I_2\alpha_2 \to \alpha \to I_1\alpha_1
\end{equation*}
we obtain a distinguished triangle
\begin{equation*}
(I_2\alpha_2)(C,-) \to \alpha(C,-) \to (I_1\alpha_1)(C,-).
\end{equation*}
Note also that 
\begin{equation*}
(I_k\alpha_k)(C,-) = (\alpha_k\lotimes_{\cD_k}{}_{i_k}\cD)(C,-) = \alpha_k(C,-)\lotimes_{\cD_k}\cD(i_k(-),-) \cong I_k(\alpha_k(C,-)).
\end{equation*}
Thus $\alpha_k(C,-) \in \bD(\cD_k)\subset \bD(\cD)$ are just the components of $\alpha(C,-)$ with respect to the semiorthogonal
decomposition $\bD(\cD) = \langle \bD(\cD_1),\bD(\cD_2) \rangle$.

Now assume that $\alpha$ is a quasifunctor so $\alpha(C,-) \cong \sY^{(M_1,M_2,\mu)}$. 
By the above argument $\alpha_k(C,-) \in \bD(\cD_k)$ are just the components of $\sY^{(M_1,M_2,\mu)}$ in $\bD(\cD_k)$. 
The latter are evidently given by $\sY^{M_k}$. Hence $\alpha_1$ and $\alpha_2$ are quasifunctors.


Vice versa, if both $\alpha_1$ and $\alpha_2$ are quasifunctors and $I_k(\alpha_k(C,-)) \cong I_k(\sY^{M_k})\cong \sY^{i_k(M_k)}$,
we deduce that $\alpha(C,-)$ fits into the triangle
\begin{equation*}
\sY^{i_2(M_2)} \to \alpha(C,-) \to \sY^{i_1(M_1)}
\end{equation*}
in $\bD(\cD) = \langle \bD(\cD_1),\bD(\cD_2) \rangle$. Thus $\alpha(C,-)$ is quasiisomorphic to the cone of a morphism
from $\sY^{M_1}[-1]$ to $\sY^{M_2}$. The space of such morphisms is nothing but the zero cohomology of the complex $\varphi(M_2,M_1)$,
and if we take the morphism corresponding to some closed element $\mu \in \varphi(M_2,M_1)$ of degree zero we will obtain
precisely $\sY^{(M_1,M_2,\mu)}$. Thus we have $\alpha(C,-) \cong \sY^{(M_1,M_2,\mu)}$ for appropriate $\mu$, 
hence $\alpha$ is a quasilinear.

$(ii)$ 
Analogously, we apply Proposition~\ref{glutens} with $\cD^\op = \cD_2^\op\times_\varphi\cD_1^\op$ instead of $\cD$. 
Note that the order of DG-categories $\cD_1$ and $\cD_2$ in the gluing is interchanged. By this reason the embedding
and the projection functors of the associated semiorthogonal decompositions indexed by $1$ (such as $i_1$, $I_1$, $I_1^*$
and so on) are related to the category $\cD_2$ and those indexed by $2$ --- to the category $\cD_1$.

According to Lemma~\ref{objsod} to give an object $\beta \in \bD(\cD^\op\otimes\cC)$ is equivalent to giving its components 
$\beta_2 = I_1^*\beta \in \bD(\cD_2^\op\otimes\cC)$ and $\beta_1 = I_2^!\beta \in \bD(\cD_1^\op\otimes\cC)$
and a morphism $\beta_2 \to \varphi\lotimes_{\cD_1}\beta_1 = \beta_1\circ\varphi$.
Thus we only have to check that $\beta$ is a quasifunctor 
if and only if both its components $\beta_2 \in \bD(\cD_2^\op\otimes\cC)$ and $\beta_1 \in \bD(\cD_1^\op\otimes\cC)$ are.

So, assume that $\beta$ is a quasifunctor. Then 
$\beta_1(M_1,-) = (I_2^!\beta)(M_1,-) = \beta((M_1,0,0),-)$ 
is quasirepresentable, so $\beta_1$ is a quasifunctor. 
The case of $\beta_2$ is slightly more complicated.
We have to check that $\beta_2(M_2,-) = (I_1^*\beta)(M_2,-)$ is quasirepresentable.
Since $\cC$ is pretriangulated, using triangle~\eqref{I1s} we see that it suffices 
to check quasirepresentability of DG-modules $\beta_{i_1}(M_2,-) = \beta((0,M_2,0),-)$ 
and of $(\varphi\lotimes_{\cD_1}\beta_{i_2})(M_2,-) = \varphi(M_2,-)\lotimes_{\cD_1}\beta_{i_2}$.
The first holds since $\beta$ is a quasifunctor.
For the second note that since $\varphi$ is a quasifunctor we know that $\varphi(M_2,-) \cong \sY^{N_1}$ for some $N_1 \in \cD_1$.
Therefore 
\begin{equation*}
\varphi(M_2,-)\lotimes_{\cD_1}\beta_{i_2} \cong \sY^{N_1}\otimes_{\cD_1}\beta_{i_2} = (\beta_{i_2})(N_1,-) = \beta((N_1,0,0),-).
\end{equation*}
This latter DG-module is quasirepresentable since $\beta$ is a quasifunctor. 
Combining all this we conclude that $\beta_2$ is also a quasifunctor.

Vice versa, assume that $\beta_1$ and $\beta_2$ are quasifunctors. Then 
\begin{equation*}
(I_1\beta_2)((M_1,M_2,\mu),-) = 
(\cD_{i_1}\lotimes_{\cD_2}\beta_2)((M_1,M_2,\mu),-).
\end{equation*}
Note that 
\begin{equation*}
(\cD_{i_1})((M_1,M_2,\mu),N_2) =
\Hom_{\cD}((0,N_2,0),(M_1,M_2,\mu)) =
\Hom_{\cD_2}(N_2,M_2) = \sY^{M_2}(N_2).
\end{equation*}
Thus the above tensor product equals $\sY^{M_2}\lotimes_{\cD_2}\beta_2 = \beta_2(M_2,-)$.
Since $\beta_2$ is a quasifunctor, this $\cC$-DG-module is quasirepresentable.

Analogously,
\begin{equation*}
(I_2\beta_1)((M_1,M_2,\mu),-) = 
(\cD_{i_2}\lotimes_{\cD_1}\beta_1)((M_1,M_2,\mu),-)
\end{equation*}
and we have
\begin{equation*}
(\cD_{i_2})((M_1,M_2,\mu),N_1) =
\Hom_{\cD}((N_1,0,0),(M_1,M_2,\mu)) =
\Hom_{\cD_1}(N_1,M_1) \oplus \varphi(M_2,N_1)[-1]
\end{equation*}
with the differential coinciding with that of $\Cone(\sY^{M_1} \xrightarrow{\ \mu\ } \varphi(M_2,-))[-1]$
evaluated on~$N_1$. Therefore the above tensor product is quasiisomorphic to 
\begin{equation*}
\Cone(\sY^{M_1}\lotimes_{\cD_1}\beta_1 \xrightarrow{\ \mu\ } \varphi(M_2,-)\lotimes_{\cD_1}\beta_1)[-1].
\end{equation*}
The first term here is quasiisomorphic to $\beta_1(M_1,-)$, so is quasirepresentable.
On the other hand, since $\varphi$ is a quasifunctor, $\varphi(M_2,-) \cong \sY^{N_1}$ for some $N_1 \in \cD_1$,
hence the second term is quasiisomorphic to $\sY^{N_1}\otimes_{\cD_1}\beta_1 = \beta_1(N_1,-)$,
so it is also quasirepresentable. Since $\cC$ is pretriangulated, the above cone is also quasirepresentable.

Thus we have checked that both $(I_1\beta_2)((M_1,M_2,\mu),-)$ and $(I_2\beta_1)((M_1,M_2,\mu),-)$
are quasirepresentable $\cC$-modules, hence so is $\beta((M_1,M_2,\mu),-)$ as $\cC$ is pretriangulated.
\end{proof}

\section{Appendix B. More on Auslander algebras}

\subsection{Quiver-sheaves}\label{ss-qs}

In this section we give an alternative description of the category of $\csA$-modules
and translate most of the constructions of section~\ref{s-nred} to this language.

\begin{defi}\label{dqc}
A quasicoherent quiver-sheaf on $(S,\fr,n)$  is
\begin{itemize}
\item a collection $M_1,\dots,M_n$ of quasicoherent sheaves on $S$,
\item a collection of morphisms $\alpha:M_i \to M_{i+1}$, 
\item a collection of morphisms $\beta:M_i\otimes_{\cO_S}\fr \to M_{i-1}$,
\end{itemize}
such that 
\begin{enumerate}
\item the diagram 
\begin{equation*}
\vcenter{\xymatrix@C=1cm{
M_n\otimes_{\cO_S}\fr \ar[dr]^\beta \ar[d]_\ba & 
M_{n-1}\otimes_{\cO_S}\fr \ar[l]_-\alpha \ar[dr]^\beta \ar[d]_\ba & 
\dots \ar[l]_-\alpha \ar[dr]^\beta & 
M_2\otimes_{\cO_S}\fr \ar[l]_-\alpha \ar[dr]^\beta \ar[d]_\ba & 
M_1\otimes_{\cO_S}\fr \ar[l]_-\alpha \ar[dr]^\beta \ar[d]_\ba \\
M_n & 
M_{n-1} \ar[l]_-\alpha & 
\dots \ar[l]_-\alpha  & 
M_2 \ar[l]_-\alpha  & 
M_1 \ar[l]_-\alpha & 0 \ar[l]
}}
\end{equation*}
(where $\ba:M_i\otimes_{\cO_S}\fr \to M_i$ is the action of $\fr$ on $M_i$) is commutative;

\item for all $i$ and $k$ there is a map $\beta_k:M_i\otimes_{\cO_S}\fr^k \to M_{i-k}$
such that the diagram
\begin{equation*}
\xymatrix{
M_i\otimes_{\cO_S}\fr^{\otimes k} \ar[r]^-\beta \ar[d] &
M_{i-1}\otimes_{\cO_S}\fr^{\otimes {k-1}} \ar[r]^-\beta &
\dots \ar[r]^-\beta &
M_{i-k+1}\otimes_{\cO_S}\fr \ar[r]^-\beta &
M_{i-k} \ar@{=}[d] \\
M_i \otimes_{\cO_S}\fr^k \ar@{..>}[rrrr]^-{\beta_k} &&&& M_{i-k}
}
\end{equation*}
commutes, where the vertical arrow is the map $M_i\otimes_{\cO_S}\fr^{\otimes k} \to M_i \otimes_{\cO_S}\fr^k$
given by the multiplication in $\fr$.
\end{enumerate}

A {\em morphism of quiver-sheaves} from $\bar M$ to $\bar N$ is a collection of morphisms of quasicoherent sheaves $f_i:M_i \to N_i$
commuting with $\alpha$ and $\beta$.
\end{defi}

\begin{lemma}\label{qsam}
The category of quiver sheaves is equivalent to the category $\Qcoh(\csA)$ of quasicoherent $\csA$-modules.
\end{lemma}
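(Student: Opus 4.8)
The statement asserts an equivalence between the category of quasicoherent quiver-sheaves on $(S,\fr,n)$ and the category $\Qcoh(\csA)$ of quasicoherent right $\csA$-modules, so I would construct functors in both directions and check they are mutually quasi-inverse. The plan is to exploit the system of orthogonal idempotents $\epsilon_1,\dots,\epsilon_n$ with $\epsilon_1+\dots+\epsilon_n=1$: given a right $\csA$-module $\cN$, set $M_i := \cN\epsilon_i$. Each $M_i$ is a quasicoherent sheaf on $S$ via the $\cO_S$-action coming from $\csA_{ii}=\cO_S/\fr^{n+1-i}$, but since $\cN$ is $\cO_S$-linear one views all $M_i$ as $\cO_S$-modules annihilated by $\fr^{n+1-i}$. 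The off-diagonal components of $\csA$ then provide the arrows: right multiplication by the generator of $\csA_{i,i+1}=\fr^{\max(1,0)}/\fr^{n+1-i}$ gives (dualizing carefully with the bimodule conventions) a map playing the role of $\beta:M_i\otimes_{\cO_S}\fr\to M_{i-1}$, while multiplication by the component living in $\csA_{i+1,i}=\cO_S/\fr^{n-i}$ gives $\alpha:M_i\to M_{i+1}$. One has to be slightly careful here: in the matrix~\eqref{cadef} the entry $\csA_{ij}$ equals $\fr^{\max(j-i,0)}/\fr^{n+1-i}$, so the ``$\alpha$'' direction ($i\to i+1$, i.e.\ $j>i$ would naively suggest $\fr$) actually corresponds to the lower-triangular entries which are quotients of $\cO_S$, and ``$\beta$'' corresponds to the strictly upper entries which are powers of $\fr$ — I would nail down these identifications first, matching indices against the convention that $\csA$-module structure is given by right multiplication and that quiver-sheaf arrows $\alpha$ go $M_i\to M_{i+1}$.

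Next I would check that the data $(M_\bullet,\alpha,\beta)$ produced this way satisfies the two axioms of Definition~\ref{dqc}. The commutativity of the first diagram (the ladder relating $\alpha$, $\beta$, and the $\fr$-action $\ba$) is exactly the associativity of multiplication in $\csA$ restricted to triples of indices $(i,i+1,i)$ and $(i,i-1,i)$, combined with the description of the multiplication map $\csA_{ij}\otimes\csA_{jk}\to\csA_{ik}$ given after~\eqref{cadef}. The second axiom (existence of iterated $\beta_k:M_i\otimes\fr^k\to M_{i-k}$ factoring through the tensor-power multiplication $\fr^{\otimes k}\to\fr^k$) is the statement that composing $k$ consecutive $\beta$-arrows only depends on the product in $\fr$, which again is associativity of $\csA$-multiplication together with the observation that the relevant composite $\csA_{i,i-1}\otimes\dots\otimes\csA_{i-k+1,i-k}\to\csA_{i,i-k}$ is induced by $\fr^a\otimes\fr^b\to\fr^{a+b}$ and factors through $\fr^k$. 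In the reverse direction, given a quiver-sheaf, I would reconstruct an $\csA$-module as $\cN:=\bigoplus_i M_i$ with $\csA$ acting componentwise: the diagonal $\csA_{ii}$ acts through the $\cO_S$-structure, the entries $\csA_{i+1,i}=\cO_S/\fr^{n-i}$ act by (a quotient of) $\alpha$, and the entries $\csA_{i,i-k}=\fr^k/\fr^{n+1-i}$ act through $\beta_k$; the two quiver-sheaf axioms guarantee this is a well-defined associative action and that it descends correctly to the indicated quotients (the divisibility inequalities $\max(k-i,0)\le\max(j-i,0)+\max(k-j,0)$ and $\max(j-i,0)+(n+1-j)\ge n+1-i$ from the text are exactly what is needed for the quotients to match).

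Finally I would verify that the two functors are mutually inverse: starting from $\cN$, forming $M_i=\cN\epsilon_i$ and then $\bigoplus M_i$ recovers $\cN$ because $\sum\epsilon_i=1$ and the idempotents are orthogonal; starting from a quiver-sheaf, the roundtrip is the identity by construction once the arrow-identifications are fixed. Compatibility with morphisms is immediate since in both directions a morphism is just a collection of $\cO_S$-linear maps $f_i:M_i\to N_i$ commuting with the extra structure. I expect the main obstacle to be purely bookkeeping: getting the index conventions, the direction of the arrows, and the precise quotients in~\eqref{cadef} to line up so that the two axioms of Definition~\ref{dqc} correspond exactly to associativity and unitality of $\csA$; there is no conceptual difficulty, but the sign- and index-tracking between ``left vs.\ right module'', ``upper vs.\ lower triangular'', and ``$\otimes\fr^k$ vs.\ $\fr^{\otimes k}$'' is where an error would creep in, so I would do that alignment very explicitly before asserting the equivalence. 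Since all the functors in sight are exact and additive and the construction is local on $S$, no further argument (e.g.\ about quasicoherence or direct sums) is needed beyond what is already recorded.
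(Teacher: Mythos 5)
Your overall strategy is the same as the paper's: decompose a right $\csA$-module via the orthogonal idempotents $\epsilon_1,\dots,\epsilon_n$, read off the quiver arrows from right multiplication by the off-diagonal components of $\csA$, and reconstruct an $\csA$-module from a quiver-sheaf by taking $\bigoplus M_i$ with the associativity axioms of Definition~\ref{dqc} guaranteeing a well-defined action. The paper's proof is just as terse as your plan (it literally ends with ``It is a straightforward exercise left to the reader''), so the level of detail is fine.

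However, the specific index identification you wrote down is wrong, and it is precisely the kind of bookkeeping slip you yourself flagged as the danger. You set $M_i := \cN\epsilon_i$. Under this convention, right multiplication by $\csA_{i,i+1}=\fr/\fr^{n+1-i}$ sends $\cN\epsilon_i$ into $\cN\epsilon_{i+1}$, i.e.\ it gives a map $M_i\otimes\fr\to M_{i+1}$, \emph{increasing} the index; and right multiplication by $\csA_{i+1,i}=\cO_S/\fr^{n-i}$ sends $\cN\epsilon_{i+1}$ into $\cN\epsilon_i$, i.e.\ $M_{i+1}\to M_i$, \emph{decreasing} the index. So with your indexing the arrows $\alpha$ and $\beta$ of Definition~\ref{dqc} come out pointing in the wrong directions: the $\fr$-twisted map goes up instead of down, and the untwisted map goes down instead of up. Your sentence ``right multiplication by $\csA_{i,i+1}$ gives a map $\beta:M_i\otimes\fr\to M_{i-1}$'' is therefore false as stated (the target is $M_{i+1}$, not $M_{i-1}$), and likewise for $\alpha$. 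The correct choice, and the one the paper uses, is the reversed indexing $M_i := M\epsilon_{n+1-i}$ (so $M_n = M\epsilon_1$ carries the full $\cO_S$-action, $M_1 = M\epsilon_n$ is killed by $\fr$). Then $\csA_{i,i+1}$ sends $M_{n+1-i}$ to $M_{n-i}=M_{(n+1-i)-1}$, i.e.\ decreases the quiver-index by one, matching $\beta$; and $\csA_{i+1,i}$ sends $M_{n-i}$ to $M_{n+1-i}$, increasing it, matching $\alpha$. You should either adopt this reversed indexing or swap the roles of $\alpha$ and $\beta$ in the target quiver, and then the rest of your plan (checking the two axioms from associativity of $\csA$, checking roundtrips via $\sum\epsilon_i=1$) goes through exactly as you describe and matches the paper.
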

\begin{proof}
Let $\bar M = (M_n,\dots,M_1)$ be a quiver-sheaf. Then we consider a quasicoherent sheaf
$M := M_n \oplus M_{n-1} \oplus \dots \oplus M_1$ on $S$ and define a structure of a right $\csA$-module on it
as follows. For each $i,j$ we consider the map
\begin{equation*}
M_{n+1-i}\otimes\csA_{ij} =
M_{n+1-i}\otimes (\fr^{\max(j-i,0)}/\fr^{n+1-i}) \to M_{n+1-j}
\end{equation*}
which is given by the map $\beta_{j-i}$ if $j \ge i$ and by the map $\alpha^{i-j}$ if $j < i$.
Note that by part~(2) of Definition~\ref{dqc} the ideal $\fr^{n+1-i}$ acts trivially on $M_{n+1-i}$, 
so the above map is well defined.

Vice versa, assume that $M$ is a quasicoherent $\csA$-module. Using idempotents $\epsilon_i$ we define $M_i = M\epsilon_{n+1-i}$.
This gives a decomposition $M = M_n \oplus M_{n-1} \oplus \dots \oplus M_1$. Now we equip
$\bar M = (M_n,M_{n-1},\dots,M_1)$ with a structure of a quiver sheaf by defining the map $\alpha:M_{n-i} \to M_{n+1-i}$
as a map given by $\alpha_i = 1 \in \cO_S/\fr^{n-i} = \csA_{i+1,i}$, and the map $\beta:M_{n+1-i} \otimes \fr \to M_{n-i}$
as a map induced by the action of $\csA_{i,i+1} = \fr/\fr^{n+1-i}$.

It is a straightforward exercise left to the reader to check that these a mutually inverse equivalences.
\end{proof}

It is easy to write down the functors of the semiorthogonal decomposition in terms of quiver sheaves.
An easy verification shows that
%
%
\begin{equation*}\label{sie1}
\renewcommand{\arraystretch}{2}
\begin{array}{rcl}
\sip(M_0) &=&
\big(\xymatrix@1{
M_0\ \ar@<-.5ex>@{..>}[r]_-\beta & 
\ 0\ \ar@<-.5ex>[l]_-\alpha \ar@<-.5ex>@{..>}[r]_-\beta & 
\ \dots\  \ar@<-.5ex>[l]_-\alpha \ar@<-.5ex>@{..>}[r]_-\beta & 
\ 0 \ar@<-.5ex>[l]_-\alpha  
}\big),\\[-10pt]
\sip^*\big(\xymatrix@1{
M_n\ \ar@<-.5ex>@{..>}[r]_-\beta & 
\ M_{n-1}\ \ar@<-.5ex>[l]_-\alpha \ar@<-.5ex>@{..>}[r]_-\beta & 
\ \dots\  \ar@<-.5ex>[l]_-\alpha \ar@<-.5ex>@{..>}[r]_-\beta & 
\ M_1 \ar@<-.5ex>[l]_-\alpha  
}\big)
&=& 
\Coker(\alpha:M_{n-1} \to M_n),\\[-10pt]
\sep\big(\xymatrix@1{
M_{n-1}\ \ar@<-.5ex>@{..>}[r]_-\beta & 
\ \dots\  \ar@<-.5ex>[l]_-\alpha \ar@<-.5ex>@{..>}[r]_-\beta & 
\ M_1 \ar@<-.5ex>[l]_-\alpha  
}\big)
&=&
\big(\xymatrix@1{
M_{n-1}\ \ar@<-.5ex>@{..>}[r]_-{\ba} & 
\ M_{n-1}\ \ar@<-.5ex>[l]_-1 \ar@<-.5ex>@{..>}[r]_-\beta & 
\ \dots\  \ar@<-.5ex>[l]_-\alpha \ar@<-.5ex>@{..>}[r]_-\beta & 
\ M_1 \ar@<-.5ex>[l]_-\alpha  
}\big),\\[-10pt]
%
\sep^!\big(\xymatrix@1{
M_n\ \ar@<-.5ex>@{..>}[r]_-\beta & 
\ M_{n-1}\ \ar@<-.5ex>[l]_-\alpha \ar@<-.5ex>@{..>}[r]_-\beta & 
\ \dots\  \ar@<-.5ex>[l]_-\alpha \ar@<-.5ex>@{..>}[r]_-\beta & 
\ M_1 \ar@<-.5ex>[l]_-\alpha  
}\big)
&=&
\big(\xymatrix@1{
M_{n-1}\ \ar@<-.5ex>@{..>}[r]_-\beta & 
\ \dots\  \ar@<-.5ex>[l]_-\alpha \ar@<-.5ex>@{..>}[r]_-\beta & 
\ M_1 \ar@<-.5ex>[l]_-\alpha  
}\big).
\end{array}
\end{equation*} 

It is also easy to write down the resolution functors
\begin{equation*}\label{auspis1}
\begin{array}{l}
\rho_S^*(M) =
\big(\xymatrix@1@C=16pt{
M\ \ar@<-.5ex>@{..>}[r]_-\beta & 
\ M\otimes_{\cO_S}\fr\ \ar@<-.5ex>[l]_-\alpha \ar@<-.5ex>@{..>}[r]_-\beta & 
\ \dots\  \ar@<-.5ex>[l]_-\alpha \ar@<-.5ex>@{..>}[r]_-\beta & 
\ M\otimes_{\cO_S}\fr^{n-1} \ar@<-.5ex>[l]_-\alpha  
}\big),\\
\rho_{S*}
\big(\xymatrix@1@C=16pt{
M_n\ \ar@<-.5ex>@{..>}[r]_-\beta & 
\ M_{n-1}\ \ar@<-.5ex>[l]_-\alpha \ar@<-.5ex>@{..>}[r]_-\beta & 
\ \dots\  \ar@<-.5ex>[l]_-\alpha \ar@<-.5ex>@{..>}[r]_-\beta & 
\ M_1 \ar@<-.5ex>[l]_-\alpha  
}\big) 
= M_n
\end{array}
\end{equation*} 
where in the first line morphisms $\alpha$ are induced by the injections $\fr^i \to \fr^{i-1}$,
and morphisms $\beta$ are induced by the multiplication $\fr^i\otimes_{\cO_S}\fr \to \fr^{i+1}$.

\subsection{Bounded coherent sheaves on a nonreduced scheme}\label{ss-bcaus}

%

In this section we show that the functor $\rho_{S*}:\bD^b(\coh(\csA)) \to \bD^b(\coh(S))$
is a localization of triangulated categories. First note that the statement is evident on the level of big categories.

\begin{lemma}\label{ausloc}
The category $\bD(S)$ is a localization of the category $\bD(\csA)$.
\end{lemma}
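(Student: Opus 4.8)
The plan is to deduce the Lemma formally from the adjunction $(L\rho_S^*,\rho_{S*})$ of the previous subsection together with the isomorphism $\rho_{S*}\circ L\rho_S^*\cong\id_{\bD(S)}$ of~\eqref{lrrs}. By~\eqref{lrrs} the functor $L\rho_S^*:\bD(S)\to\bD(\csA)$ is fully faithful, i.e.\ the adjunction unit $\id_{\bD(S)}\to\rho_{S*}\circ L\rho_S^*$ is an isomorphism. Set $\cN:=\Ker(\rho_{S*})\subset\bD(\csA)$; this is a localizing subcategory, since $\rho_{S*}$ is triangulated and commutes with arbitrary direct sums (it is exact, being $N\mapsto N\epsilon_1$ by~\eqref{auspis}).

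The first step is to produce a semiorthogonal decomposition $\bD(\csA)=\langle\cN,\,L\rho_S^*(\bD(S))\rangle$ in the sense of the two-component description following Definition~\ref{def-sod}. Semiorthogonality is immediate: for $M\in\bD(S)$ and $E\in\cN$ one has $\Hom_{\bD(\csA)}(L\rho_S^*M,E)\cong\Hom_{\bD(S)}(M,\rho_{S*}E)=0$. For the decomposition triangle of an arbitrary $E\in\bD(\csA)$ I would take the counit triangle $L\rho_S^*\rho_{S*}E\xrightarrow{\ \varepsilon_E\ }E\to E'\to(L\rho_S^*\rho_{S*}E)[1]$; applying $\rho_{S*}$ and using the triangle identity $\rho_{S*}(\varepsilon_E)\circ\eta_{\rho_{S*}E}=\id$ together with the fact that $\eta$ is an isomorphism shows that $\rho_{S*}(\varepsilon_E)$ is an isomorphism, hence $\rho_{S*}E'=0$, i.e.\ $E'\in\cN$, while $L\rho_S^*\rho_{S*}E$ visibly lies in $L\rho_S^*(\bD(S))$. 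Thus $\bp_2(E)=L\rho_S^*\rho_{S*}E$ and $\bp_1(E)=E'$ give the required decomposition triangle.

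The last step is to invoke the standard description of the Verdier quotient attached to a semiorthogonal decomposition: for $\cT=\langle\cT_1,\cT_2\rangle$ the localization functor $\cT\to\cT/\cT_1$ restricts to an equivalence $\cT_2\xrightarrow{\ \sim\ }\cT/\cT_1$, a quasi-inverse being the projection $\bp_2$. Applying this with $\cT_1=\cN$, $\cT_2=L\rho_S^*(\bD(S))$, and composing with the equivalence $\bD(S)\xrightarrow{\ \sim\ }L\rho_S^*(\bD(S))$ coming from full faithfulness of $L\rho_S^*$, one obtains an equivalence $\bD(\csA)/\cN\xrightarrow{\ \sim\ }\bD(S)$ under which the quotient functor is identified with $\rho_{S*}$. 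Hence $\bD(S)=\bD(\csA)/\Ker(\rho_{S*})$ is a localization of $\bD(\csA)$, as claimed.

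The argument is entirely formal, so there is no genuine obstacle; the only point requiring care is verifying that the third vertex $E'$ of the counit triangle lands in $\Ker(\rho_{S*})$, which is precisely where~\eqref{lrrs} (equivalently, full faithfulness of $L\rho_S^*$) enters. If in addition one wants this to be a compactly generated localization --- presumably the form actually needed for the subsequent statement about $\bD^b(\coh)$ --- one further notes that $\rho_{S*}$ preserves compactness by Theorem~\ref{ausres} (via~\eqref{compperf} and Proposition~\ref{bperf}), so that $\cN$ is generated as a localizing subcategory by the compact objects it contains; but this refinement is not needed for the bare statement of the Lemma.
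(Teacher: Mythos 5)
Your proof is correct and follows essentially the same route as the paper: the paper also observes that full faithfulness of $L\rho_S^*$ together with the existence of its right adjoint $\rho_{S*}$ yields the semiorthogonal decomposition $\bD(\csA)=\langle\Ker\rho_{S*},\bD(S)\rangle$, from which the localization statement is immediate. You have merely spelled out the standard argument behind that decomposition (the counit triangle and the triangle identity) which the paper leaves implicit.
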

\begin{proof}
Since the functor $L\rho_S^*:\bD(S) \to  \bD(\csA)$ is fully faithful and has a right adjoint
we have a semiorthogonal decomposition
\begin{equation*}
\bD(\csA_S) = \langle \Ker \rho_{S*}, \bD(S) \rangle.
\end{equation*}
In particular, $\bD(S)$ is equivalent to the localization of $\bD(\csA)$ by $\Ker \rho_{S*}$.
\end{proof}

The only problem with extending this result to bounded coherent categories is that the functor $L\rho_S^*$
does not preserve boundedness. By definition~\eqref{auspis} it is given by a tensor product with a nonperfect
$\cO_S$-module, hence $L\rho_S^*(M)$ is unbounded from below. So, for example if we want to show that 
for any $M \in \bD^b(\coh(S))$ there is $\cM \in \bD^b(\coh(\csA))$ such that $M \cong \rho_{S*}(\cM)$
we cannot just take $\cM = L\rho_S^*(M)$. On the other hand we can take $\cM$ to be a suitable truncation 
of $L\rho_S^*(M)$ with respect to the standard t-structure.



%


\begin{lemma}
The functor $\rho_{S*}:\bD^b(\coh(\csA)) \to \bD^b(\coh(S))$ is t-exact with respect to the standard t-structures.
In other words,
\begin{equation}\label{rhotau}
\rho_{S*}\circ \tau^{\le k} = \tau^{\le k}\circ \rho_{S*},
\qquad
\rho_{S*}\circ \tau^{\ge k} = \tau^{\ge k}\circ \rho_{S*},
\end{equation} 
where $\tau^{\le k}$ and $\tau^{\ge k}$ are the truncation functors 
of the standard t-structures on $\bD^b(\coh(\csA))$ and $\bD^b(\coh(S))$.
\end{lemma}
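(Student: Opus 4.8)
The plan is to observe that the functor $\rho_{S*}$ is exact already on the abelian level, and then to deduce $t$-exactness on bounded derived categories from this.

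First I would recall that $\rho_{S*}:\Qcoh(\csA_S)\to\Qcoh(S)$ is given by $N\mapsto N\epsilon_1$ (see~\eqref{auspis}), which is an exact functor of abelian categories: indeed, $\epsilon_1$ is an idempotent in $\csA_S$, so $N\mapsto N\epsilon_1$ is a direct summand of the forgetful functor $\Res_S$ and hence preserves exact sequences, and it clearly takes coherent $\csA_S$-modules to coherent $\cO_S$-modules. Therefore the termwise application of $\rho_{S*}$ to a complex computes the derived functor, i.e.\ the derived functor $\rho_{S*}:\bD^b(\coh(\csA_S))\to\bD^b(\coh(S))$ from Lemma~\ref{rfscoh} agrees with the termwise functor, and in particular it commutes with taking cohomology sheaves: $H^k(\rho_{S*}(\cM))\cong\rho_{S*}(H^k(\cM))$ for all $k$.

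From this the two displayed identities in~\eqref{rhotau} are formal. For any $\cM\in\bD^b(\coh(\csA_S))$ and any $k$, apply $\rho_{S*}$ to the canonical truncation triangle $\tau^{\le k}\cM\to\cM\to\tau^{\ge k+1}\cM$; since $\rho_{S*}$ is triangulated we obtain a triangle $\rho_{S*}(\tau^{\le k}\cM)\to\rho_{S*}(\cM)\to\rho_{S*}(\tau^{\ge k+1}\cM)$. Because $\rho_{S*}$ commutes with cohomology sheaves, $\rho_{S*}(\tau^{\le k}\cM)$ has cohomology concentrated in degrees $\le k$ and $\rho_{S*}(\tau^{\ge k+1}\cM)$ in degrees $\ge k+1$; by uniqueness of the truncation triangle for the standard $t$-structure on $\bD^b(\coh(S))$ this triangle must coincide with $\tau^{\le k}\rho_{S*}(\cM)\to\rho_{S*}(\cM)\to\tau^{\ge k+1}\rho_{S*}(\cM)$, which gives both identities of~\eqref{rhotau}.

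There is essentially no obstacle here: the only point requiring a little care is the very first one, namely checking that the underived termwise functor really does compute the derived functor --- equivalently, that $\rho_{S*}$ sends acyclic complexes of $\csA_S$-modules to acyclic complexes and $h$-injective (or $h$-flat-perfect, in the DG-enhancement language) resolutions are not needed to compute it. This is immediate from exactness of $N\mapsto N\epsilon_1$, and this exactness was already used implicitly in the proof of Lemma~\ref{rfscoh}. Once that is in hand, the rest is the standard argument that an exact functor between abelian categories induces a $t$-exact functor between the corresponding bounded derived categories.
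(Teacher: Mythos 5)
Your proof is correct and follows the same approach as the paper, which simply observes that $\rho_{S*}(N) = N\epsilon_1$ is a direct summand of the forgetful functor and hence exact, so $t$-exactness is immediate. You spell out the formal step (exactness implies commutation with cohomology sheaves implies $t$-exactness via uniqueness of the truncation triangle) in more detail than the paper's one-line argument, but the content is the same.
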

\begin{proof}
Recall that $\rho_{S*}(M) = M\epsilon_1$, the direct summand corresponding to the idempotent $\epsilon_1$ of $\csA$. 
So it is evidently t-exact.
\end{proof}

We start with the following preparatory result.

\begin{prop}\label{auslift}
Any morphism in $\bD^b(\coh(S))$ can be lifted to $\bD^b(\coh(\csA))$. In other words for any 
morphism $f:M \to N$ in $\bD^b(\coh (S))$ there are objects $\cM,\cN \in \bD^b(\coh(\csA))$
and a morphism $\bar f:\cM \to \cN$ such that $\rho_{S*}(\cM) \cong M$, 
$\rho_{S*}(\cN) \cong N$ and the diagram
\begin{equation*}\label{bmbn}
\vcenter{\xymatrix{
\rho_{S*}(\cM) \ar[r]^\cong \ar[d]_{\rho_{S*}(\bar f)} & M \ar[d]^f \\ \rho_{S*}(\cN) \ar[r]^\cong & N
}} 
\end{equation*}
commutes.
%
\end{prop}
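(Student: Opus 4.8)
\textbf{Proof plan for Proposition~\ref{auslift}.} The idea is to produce the liftings $\cM$ and $\cN$ as truncations of the derived pullbacks $L\rho_S^*(M)$ and $L\rho_S^*(N)$, using that $\rho_{S*}$ is t-exact while $L\rho_S^*$ is fully faithful. First I would recall that since $L\rho_S^*$ is fully faithful with right adjoint $\rho_{S*}$ (by~\eqref{lrrs} and Brown representability), the adjunction unit $\id \to \rho_{S*}\circ L\rho_S^*$ is an isomorphism; in particular $\rho_{S*}(L\rho_S^*(M)) \cong M$ and similarly for $N$, and the morphism $f:M\to N$ corresponds under adjunction to a morphism $\bar g := L\rho_S^*(f): L\rho_S^*(M) \to L\rho_S^*(N)$ in $\bD(\csA)$ with $\rho_{S*}(\bar g)$ identified with $f$. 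The only defect is that $L\rho_S^*(M)$ need not be bounded below (it is a tensor product with the non-perfect $\cO_S$-module $\epsilon_1\csA$, cf.~\eqref{auspis}), hence need not lie in $\bD^b(\coh(\csA))$.

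To fix this, I would choose an integer $k$ small enough that both $M$ and $N$ live in cohomological degrees $\ge k$ for the standard t-structure on $\bD^b(\coh(S))$, and set
\begin{equation*}
\cM := \tau^{\ge k}\big(L\rho_S^*(M)\big), \qquad \cN := \tau^{\ge k}\big(L\rho_S^*(N)\big),
\end{equation*}
where $\tau^{\ge k}$ is the truncation for the standard t-structure on $\bD(\csA)$ (note these are bounded above since $M,N$ are, and coherent in each degree, so they lie in $\bD^b(\coh(\csA))$). The morphism $\bar g$ induces a morphism $\bar f := \tau^{\ge k}(\bar g): \cM \to \cN$ by functoriality of truncation. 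Now I would apply $\rho_{S*}$: by the previous Lemma $\rho_{S*}$ commutes with $\tau^{\ge k}$ (equation~\eqref{rhotau}), so
\begin{equation*}
\rho_{S*}(\cM) \cong \tau^{\ge k}\big(\rho_{S*}(L\rho_S^*(M))\big) \cong \tau^{\ge k}(M) \cong M,
\end{equation*}
the last isomorphism because $M$ already sits in degrees $\ge k$; similarly $\rho_{S*}(\cN)\cong N$. Applying $\rho_{S*}$ to $\bar f = \tau^{\ge k}(\bar g)$ and using naturality of $\tau^{\ge k}$ together with~\eqref{rhotau} gives that $\rho_{S*}(\bar f)$ is identified with $\tau^{\ge k}(\rho_{S*}(\bar g)) = \tau^{\ge k}(f)$, which under the identifications above is just $f$. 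Hence the required square commutes.

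The main point requiring care — though it is not really an obstacle — is checking that all the identifications fit into a genuinely commutative square rather than merely a square commuting up to the chosen isomorphisms; this is a matter of tracking naturality of the truncation functor and of the adjunction unit, and of making the isomorphisms $\rho_{S*}(\cM)\cong M$, $\rho_{S*}(\cN)\cong N$ the ones induced from the adjunction unit so that the diagram commutes on the nose. One also has to note that $\cM,\cN$ are genuinely in $\bD^b(\coh(\csA))$, i.e.\ that truncating $L\rho_S^*(M)$ from below yields something with finitely many nonzero, coherent cohomology sheaves: boundedness above is clear since $L\rho_S^*$ is right t-exact, and coherence follows since each cohomology of $L\rho_S^*(M)$ is a finitely generated $\csA$-module (a subquotient of a tensor product of coherent sheaves with the coherent $\cO_S$-module $\epsilon_1\csA$, restricted to finitely many Tor-degrees by the truncation).
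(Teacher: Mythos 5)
Your proof is correct and matches the paper's argument essentially step for step: both set $\cM = \tau^{\ge k}L\rho_S^*M$, $\cN = \tau^{\ge k}L\rho_S^*N$, $\bar f = \tau^{\ge k}L\rho_S^*(f)$, use the t-exactness identity~\eqref{rhotau} together with $\rho_{S*}\circ L\rho_S^* \cong \id$ to get $\rho_{S*}(\cM)\cong\tau^{\ge k}M \cong M$, and invoke functoriality of $\tau^{\ge k}$ for the commutativity of the square. The only difference is that you spell out boundedness and coherence of the truncated complex more explicitly, which the paper leaves implicit.
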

\begin{proof}
First let us show that for any $M \in \bD^b(\coh(S))$ there is an object $\cM \in \bD^b(\coh(\csA))$
such that $\rho_{S*}\cM \cong M$. Indeed, by~\eqref{rhotau}
%
\begin{equation*}
\tau^{\ge k} M =
\tau^{\ge k} \rho_{S*}L\rho_S^* M =
\rho_{S*}(\tau^{\ge k}L\rho_S^* M).
\end{equation*}
Now choose $k$ such that $\tau^{\ge k} M \cong M$ (this is possible since $M$ has bounded cohomology)
and note that $\tau^{\ge k}L\rho_S^* M \in \bD^b(\coh(\csA))$.

The same argument applies to morphisms. Again, take $k$ such that $\tau^{\ge k} M = M$ and $\tau^{\ge k} N = N$
and consider the morphism $\tau^{\ge k}L\rho_S^*M \xrightarrow{\tau^{\ge k}L\rho_S^*f} \tau^{\ge k}L\rho_S^*N$.
Since the truncation is functorial and comes with a morphism of functors $\id \to \tau^{\ge k}$,
the result follows.
\end{proof}

\begin{cor}
The category $\bD^b(\coh(S))$ is the quotient of $\bD^b(\coh(\csA))$ by $\Ker \rho_{S*}$.
\end{cor}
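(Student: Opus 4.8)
The plan is to leverage Lemma~\ref{ausloc}, which already identifies the large category $\bD(S)$ with the Verdier quotient $\bD(\csA)/\Ker\rho_{S*}$, and then to cut this identification down to the bounded coherent subcategories. Write $\cK_0:=\Ker\rho_{S*}\cap\bD^b(\coh(\csA))$ for the thick subcategory in question. Since $\rho_{S*}$ carries $\bD^b(\coh(\csA))$ into $\bD^b(\coh(S))$ (by Theorem~\ref{ausres}, or simply because $N\mapsto N\epsilon_1$ is exact and preserves coherence) and annihilates $\cK_0$, it induces a triangulated functor $\bar\rho:\bD^b(\coh(\csA))/\cK_0\lto\bD(S)$ landing inside $\bD^b(\coh(S))$. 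I would then prove that $\bar\rho$ is fully faithful with essential image exactly $\bD^b(\coh(S))$, which is the assertion of the Corollary.

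Essential surjectivity is immediate from the first half of Proposition~\ref{auslift}: every $M\in\bD^b(\coh(S))$ is isomorphic to $\rho_{S*}(\tau^{\ge k}L\rho_S^*M)$ for $k\ll 0$, and $\tau^{\ge k}L\rho_S^*M$ lies in $\bD^b(\coh(\csA))$ because $L\rho_S^*$ is right exact with coherent cohomology. For fullness, take $\cM,\cN\in\bD^b(\coh(\csA))$ and a morphism $h:\rho_{S*}\cM\to\rho_{S*}\cN$; by Lemma~\ref{ausloc} it is represented by a left fraction $\cM\xleftarrow{s}\cL\xrightarrow{u}\cN$ in $\bD(\csA)$ with $\Cone(s)\in\Ker\rho_{S*}$, where a priori $\cL$ is unbounded. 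The idea is to shrink $\cL$: in the triangle $\cL\xrightarrow{s}\cM\to\Cone(s)\to\cL[1]$ the morphism $\cM\to\Cone(s)$ has bounded coherent source, so it factors through a bounded truncation $\tau^{\le b}\tau^{\ge a-1}\Cone(s)$ (using only the $\Hom$-vanishing between truncations of the standard t-structure), where $[a,b]$ bounds the amplitude of $\cM$; since $\rho_{S*}$ is t-exact and kills $\Cone(s)$, this truncation still lies in $\Ker\rho_{S*}$. Taking the fiber of $\cM\to\tau^{\le b}\tau^{\ge a-1}\Cone(s)$ and applying the octahedral axiom produces $\cL'\in\bD^b(\coh(\csA))$ together with $s':\cL'\to\cL$ whose cone lies in $\Ker\rho_{S*}$; then $\cM\xleftarrow{ss'}\cL'\xrightarrow{us'}\cN$ is a left fraction living inside $\bD^b(\coh(\csA))$ whose denominator has cone $\Cone(ss')\in\cK_0$ (bounded coherent and annihilated by $\rho_{S*}$), hence defines a morphism $\bar h$ with $\bar\rho(\bar h)=h$.

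Faithfulness I would prove by the same device. If $\bar g:\cM\to\cN$ in $\bD^b(\coh(\csA))/\cK_0$, represented by $\cM\xleftarrow{s}\cL'\xrightarrow{g}\cN$ with $\Cone(s)\in\cK_0$, satisfies $\bar\rho(\bar g)=0$, then $\rho_{S*}g=0$, so $g$ viewed in $\bD(\csA)$ becomes zero in $\bD(\csA)/\Ker\rho_{S*}\cong\bD(S)$ and therefore factors through some object of $\Ker\rho_{S*}$; truncating that object between the cohomological bounds of $\cL'$ replaces it, thanks again to t-exactness of $\rho_{S*}$, by an object of $\cK_0$ through which $g$ still factors, whence $g=0$ already in $\bD^b(\coh(\csA))/\cK_0$ and $\bar g=0$.

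The routine parts are the t-exactness of $\rho_{S*}$ (it is just $N\mapsto N\epsilon_1$) and the octahedral bookkeeping. The main obstacle — and the reason Proposition~\ref{auslift} and the preparatory lemma on $\rho_{S*}$ were established first — is precisely that the calculus of fractions in $\bD(\csA)$ introduces unbounded auxiliary complexes (cones of localizing morphisms, and objects through which maps factor) which must be truncated back into $\bD^b(\coh(\csA))$ without leaving $\Ker\rho_{S*}$; once that truncation mechanism is in place the rest is formal.
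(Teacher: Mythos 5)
Your overall strategy (use the ``big'' equivalence of Lemma~\ref{ausloc} and then cut down by truncation) is the same as the paper's, and essential surjectivity is handled the same way. However, there are two genuine gaps in your fullness and faithfulness arguments, both hiding in the sentence where you pass from the left fraction $\cM\xleftarrow{s}\cL\xrightarrow{u}\cN$ in $\bD(\csA)$ to a bounded coherent replacement.

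First, the cone $K=\Cone(s)$ need not lie in $\bD^b(\coh(\csA))$: the auxiliary object $\cL$ produced by the calculus of fractions in $\bD(\csA)$ is an arbitrary complex of quasicoherent $\csA$-modules, so the long exact cohomology sequence of $\cL\to\cM\to K$ only tells you $H^i(K)$ is an extension involving the (generally non-coherent) $H^{i+1}(\cL)$. Consequently $\tau^{\le b}\tau^{\ge a-1}K$ has no reason to be coherent, and the fiber of $\cM\to\tau^{\le b}\tau^{\ge a-1}K$ is therefore not in $\bD^b(\coh(\csA))$ either, so the object $\cL'$ you build does not land where you need it. Second, the factoring claim itself fails: for $\cM\in\bD^{[a,b]}$ and any $K$, the map $\cM\to K$ does factor uniquely through the counit $\tau^{\le b}K\to K$, but there is no map $\tau^{\ge a-1}\tau^{\le b}K\to K$ to factor through --- the natural morphism for $\tau^{\ge}$ is the unit $\tau^{\le b}K\to\tau^{\ge a-1}\tau^{\le b}K$, pointing the wrong way. (The variance is only favorable when the bounded object is the \emph{target}: one can shrink the \emph{source} of a map $X\to\cM$ to $\tau^{\ge k}X$ using $\Hom(\tau^{\le k-1}X,\cM)=0$, which is exactly what the paper exploits.)

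The paper's proof avoids both problems by never truncating the cone $\Cone(s)$ of the fraction. Instead, starting from the lift $\bar f:\cM'\to\cN'$ provided by Proposition~\ref{auslift}, it compares $\cM$ and $\cM'$ through the single auxiliary object $\tau^{\ge k}L\rho_S^*\rho_{S*}\cM'$. This object is automatically in $\bD^b(\coh(\csA))$ because $L\rho_S^*$ of a bounded coherent complex has coherent cohomology and is bounded above, and the maps out of it to $\cM$ and $\cM'$ factor through the truncation for the right variance reason (their targets are bounded below). If you want to push your fraction-based approach through, the missing input is exactly this: you must show the auxiliary $\cL$ in the fraction can be taken in $\bD^b(\coh(\csA))$, and the clean way to do that is to take $\cL=\tau^{\ge k}L\rho_S^*\rho_{S*}\cM$ with $s$ the truncated counit --- which reduces to the paper's argument.
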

\begin{proof}
By definition of the quotient category the functor $\rho_{S*}$ factors through a functor 
\begin{equation}\label{ausequi}
\bD^b(\coh(\csA))/\Ker \rho_{S*} \xrightarrow{\qquad} \bD^b(\coh(S))
\end{equation}
and we have to check that this functor is an equivalence.
Note that it is essentially surjective on objects by Proposition~\ref{auslift}.
Let us check that it is fully faithful. 

First let us check that the functor is surjective on morphisms.
Take arbitrary objects $M = \rho_{S*}\cM$, $N = \rho_{S*}\cN$ and consider a morphism
$f:\rho_{S*}\cM \to \rho_{S*}\cN$. Our goal is to construct a morphism $\cM \to \cN$
which goes to $f$ under $\rho_{S*}$. Note that by Proposition~\ref{auslift} there is a morphism
$\bar f:\cM' \to \cN'$ in $\bD^b(\coh(\csA))$ such that diagram
\begin{equation*}
\vcenter{\xymatrix{
\rho_{S*}(\cM') \ar[r]^\cong \ar[d]_{\rho_{S*}(\bar f)} & M \ar[d]^f \\ \rho_{S*}(\cN') \ar[r]^\cong & N
}} 
\end{equation*}
commutes.
By adjunction it gives a morphism
$L\rho_S^*\rho_{S*}\cM' \to \cM$. On the other hand we have an adjunction morphism $L\rho_S^*\rho_{S*}\cM' \to \cM'$.
Since both $\cM$ and $\cM'$ have finite number of cohomology sheaves, there exists $k$ such that
both maps factor through $\tau^{\ge k}L\rho_S^*\rho_{S*} \cM'$. Moreover, the argument of Proposition~\ref{auslift}
shows that $k$ may be chosen in such a way that after application of $\rho_{S*}$ both maps will be isomorphisms.
Thus their cones are in $\Ker \rho_{S*}$, hence both maps are isomorphisms in the quotient category. Therefore
$\cM'$ and $\cM$ are isomorphic in the quotient category, hence the morphism $\bar f$ gives a morphism
from $\cM$ to $\cN$ which maps into $f$ by $\rho_{S*}$. Thus~\eqref{ausequi} is surjective on morphisms.

Now consider a morphism $\bar f:\cM \to \cN$ and assume that $\rho_{S*}\bar f:\rho_{S*}\cM \to \rho_{S*}\cN$ is zero.
By adjunction the composition $L\rho_S^*\rho_{S*}\cM \xrightarrow{} \cM \xrightarrow{\bar f} \cN$ is zero.
But since both $\cM$ and $\cN$ are bounded, it follows that there is $k$ such that the composition
$\tau^{\ge k}L\rho_S^*\rho_{S*}\cM \xrightarrow{} \cM \xrightarrow{\bar f} \cN$ is zero and 
$\rho_{S*}(\tau^{\ge k}L\rho_S^*\rho_{S*}\cM) \cong \rho_{S*}\cM$. Therefore the cone of the map 
$\tau^{\ge k}L\rho_S^*\rho_{S*}\cM \xrightarrow{} \cM$ is in $\Ker \rho_{S*}$,
hence the first map is an isomorphism in the quotient category, hence $\bar f$ in the quotient category is zero.
Thus~\eqref{ausequi} is injective on morphisms and we are done.
\end{proof}

\subsection{The opposite Auslander algebra}\label{ss-aop}

Besides the usual Auslander algebra one can consider its opposite $\csA^\opp$. It turns out that most
of the properties established for the Auslander algebra hold for the opposite as well.
As an illustration we sketch a construction of the semiorthogonal decomposition
analogous to that of section~\ref{ss-aussod}.

First, we have a functor $\si_\op:\bD(S_0) \to \bD(\csA^\op)$ defined by the same formula~\eqref{sia} as the functor~$\si$,
as well as its (derived) left adjoint
\begin{equation*}
L\si_\op^*:\bD(\csA^\op) \to \bD(S_0),
\qquad
M \mapsto \cO_{S_0}\lotimes_\csA M.
\end{equation*}
We also have an adjoint pair of functors
\begin{equation*}
\begin{array}{ll}
L\se_\op:\bD({\csA'}^\op) \to \bD(\csA^\op),
\qquad&
M \mapsto \csA(1-\epsilon_1)\lotimes_{\csA'}M,\\
\hphantom{L}\se_\op^!:\bD({\csA}^\op) \to \bD({\csA'}^\op),
\qquad&
N \mapsto \Hom_\csA(\csA(1-\epsilon_1),N) = (1-\epsilon_1)N
\end{array}
\end{equation*}
(so the only difference is that the functor $\se_\op$ is not exact and we have to take its derived functor).
This functors still enjoy the analogues of relations~\eqref{iec}, 
and tensoring~\eqref{cicos0} by arbitrary $M \in \bD(\csA^\op)$ we
obtain an analogue of triangle~\eqref{eemii}. Thus there is a semiorthogonal decomposition
\begin{equation}
\bD(\csA^\op) = \langle \si_\op(\bD({S_0})),L\se_\op(\bD({\csA'}^\op)) \rangle.
\end{equation} 
By iteration, we deduce an $n$-component semiorthogonal decomposition
\begin{equation*}
\bD(\csA^\op) = \langle \bD({S_0}),\dots,\bD({S_0}) \rangle.
\end{equation*}

\subsection{Homological dimension}\label{ss-gldim}

Recall that the {\sf projective dimension $\pdim_A(M)$ of a right module $M$} over an algebra $A$ is the minimal length
of its projective resolution. Furthermore, the {\sf global dimension $\gldim A$ of $A$} is the supremum of the projective
dimensions of right $A$-modules.

\begin{prop}\label{gda}
Assume that $S$ is a regular local scheme of dimension $d$. Then
\begin{equation*}
\gldim\csA_{S,\fr,n} \le nd + 2(n-1),
\qquad
\gldim\csA_{S,\fr,n}^\op \le nd + 2(n-1).
\end{equation*}
\end{prop}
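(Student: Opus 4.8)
The plan is to argue by induction on the width $n$, using the recollement‑type structure of $\Qcoh(\csA)$ from Section~\ref{ss-aussod}. Since a regular local ring is a domain, the nilpotent ideal $\fr$ vanishes, so $\csA=\csA_{S,\fr,n}$ is the ``triangular'' algebra built from $\cO_S$; in particular $S_0=S$ is again regular local of dimension $d$, and the algebra $\csA'=\csA_{S',\fr,n-1}$ occurring in the decomposition has the same shape with width $n-1$. For $n=1$ we have $\csA\cong\cO_S$ by Remark~\ref{oa}, and $\gldim\cO_S=d$ by the Auslander--Buchsbaum--Serre characterization of regular local rings; since $\cO_S$ is commutative this also settles $\csA^\op$, and the bound $1\cdot d+2\cdot 0=d$ holds. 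Moreover $\csA$ is module‑finite over its Noetherian centre $\cO_S$, hence left and right Noetherian, so its left and right global dimensions coincide (both equal the weak global dimension); thus it suffices to treat $\csA$, the statement for $\csA^\op$ following automatically, and it suffices to bound $\pdim_\csA M$ for coherent $\csA$‑modules $M$.

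For the inductive step, recall the exact fully faithful functors $\si:\Qcoh(S_0)\to\Qcoh(\csA)$ and $\se:\Qcoh(\csA')\to\Qcoh(\csA)$, their adjoints $L\si^*$ (with a left derived functor) and $\se^!$ (exact), the fact that $\se$ preserves projectives (Remark~\ref{eproj}), and the functorial triangle $\se\se^!(M)\to M\to\si(L\si^* M)$. From this triangle $\pdim_\csA M\le\max\bigl(\pdim_\csA\se(\se^! M),\,\pdim_\csA\si(L\si^* M)\bigr)$, so I would bound the two terms separately. For the first, applying the exact projective‑preserving functor $\se$ to a projective resolution of the honest $\csA'$‑module $\se^!(M)$ gives $\pdim_\csA\se(\se^! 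M)\le\pdim_{\csA'}\se^!(M)\le\gldim\csA'\le (n-1)d+2(n-2)$ by induction.

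The second term needs two auxiliary estimates. First, $L\si^*(M)=M\lotimes_\csA\cO_{S_0}$ is a complex of $\cO_{S_0}$‑modules of amplitude $\le 1$: indeed $\cO_{S_0}$ admits the length‑one left $\csA$‑projective resolution $0\to\csI\to\csA\to\cO_{S_0}\to 0$, because $\csI$ is projective as a left $\csA$‑module — this follows from the identifications used in the proof of Proposition~\ref{ie-equ}, where $\csI\cong\csA(1-\epsilon_1)\lotimes_{\csA'}(1-\epsilon_1)\csA$ and $(1-\epsilon_1)\csA$ is a projective left $\csA'$‑module, so the right‑hand side is a direct sum of direct summands of $\csA$. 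Second, for a coherent $\cO_{S_0}$‑module $F$, resolving $F$ by free $\cO_{S_0}$‑modules (of length $\le d$, as $S_0$ is regular of dimension $d$) and applying the exact functor $\si$ gives $\pdim_\csA\si(F)\le d+\pdim_\csA\si(\cO_{S_0})$, while applying the first‑term estimate to the projective module $\csA$ through the triangle $\se\se^!(\csA)\to\csA\to\si(\cO_{S_0})$ gives $\pdim_\csA\si(\cO_{S_0})\le 1+\gldim\csA'$. Combining these, $\pdim_\csA\si(L\si^* M)\le 1+\max_j\pdim_\csA\si\bigl(H^j(L\si^* M)\bigr)\le 1+d+1+\gldim\csA'=\gldim\csA'+d+2$. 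Hence $\gldim\csA\le\gldim\csA'+d+2$, and with $\gldim\csA_{S,\fr,1}\le d$ the recursion yields $\gldim\csA_{S,\fr,n}\le d+(n-1)(d+2)=nd+2(n-1)$; the bound for $\csA^\op$ follows from the Noetherian symmetry above (or, for a direct argument, by the same induction with the functors $\si_\op$, $L\se_\op$ of Section~\ref{ss-aop}, invoking in addition finiteness of the projective dimension of $\csA(1-\epsilon_1)$ over $\csA'$).

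The main obstacle — and the only place the estimate is not sharp — is the bookkeeping for $\pdim_\csA\si(L\si^* M)$: one must simultaneously control the amplitude of $L\si^*$ and the projective dimension of the ``bridging'' object $\si(\cO_{S_0})$ between the two semiorthogonal components, and it is precisely these two contributions (each costing a $+1$ per induction step) that account for the ``$2(n-1)$'' slack in the bound.
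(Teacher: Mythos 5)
Your argument is, in substance, the paper's own: the same base case, the same triangle $\se\se^!M \to M \to \si L\si^*M$, the same two auxiliary estimates (projectivity of $\csI$ as a left $\csA$-module giving $\pdim_{\csA^\op}\cO_{S_0}\le 1$ and hence amplitude $\le 1$ for $L\si^*M$; and $\pdim_\csA\cO_{S_0}\le 1+\gldim\csA'$ via Remark~\ref{eproj}), and the same resulting recursion $\gldim\csA\le\gldim\csA'+d+2$. The two small deviations are both fine: you justify projectivity of the left $\csA$-module $\csI$ through the factorization $\csI\cong\csA(1-\epsilon_1)\lotimes_{\csA'}(1-\epsilon_1)\csA$ from the proof of Proposition~\ref{ie-equ}, where the paper reads the decomposition $\csI\cong\csA\epsilon_2\oplus\csA(1-\epsilon_1)$ directly off~\eqref{cidef}; and you dispatch the $\csA^\op$ bound by invoking the coincidence of left and right global dimension for two-sided Noetherian rings, which is a genuine shortcut over the paper's separate parallel induction with $\si_\op$ and $L\se_\op$, and is arguably cleaner.

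One caution about your opening sentence. You deduce $\fr=0$ from ``$S$ regular local $\Rightarrow$ domain,'' and observe that $\csA$ is then the lower triangular matrix algebra over $\cO_S$. If you pursue that reading consistently you should notice it trivializes the statement: $T_n(\cO_S)$ has global dimension $d+1$ for $n\ge 2$, much better than $nd+2(n-1)$, and the whole induction is unnecessary. The paper's own proof (``$\csA=\cO_{S_0}$ is a regular local ring'') and the generalized statement in Section~\ref{ss-ausgen} make clear that the intended hypothesis is that $S_0=\cO_S/\fr$ is regular local of dimension $d$, not $S$ itself — in which case $\fr$ need not vanish and the bound is not a priori trivial. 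Fortunately, nothing in your proof after the first sentence actually uses $\fr=0$; the only thing the induction needs is $S'_0=S_0$, which holds always, so your argument survives the reinterpretation unchanged.
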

\begin{proof}
We are going to prove the claim by induction on~$n$. The case $n = 1$ is evident --- in this case $\csA = \cO_{S_0}$
is a regular local ring, so its global dimension equals its Krull dimension which agrees with the statement of the Proposition.

Now assume the claim is known for $n-1$.
Note that by~\eqref{cidef} and~\eqref{cadef} as a left $\csA$-module we have $\csI \cong \csA\epsilon_2 \oplus \csA(1-\epsilon_1)$,
in particular it is projective, so
\begin{equation}\label{pdilop}
\pdim_{\csA^\op}\csI = 0.
\end{equation} 
On the other hand, $(1-\epsilon_1)\csA$ is projective as a right $\csA$-module, hence the functor $\se(-) = -\lotimes_{\csA'}(1-\epsilon_1)\csA$ 
does not increase the projective dimension of a right $\csA'$-module. This means that 
\begin{equation}\label{pdil}
\pdim_{\csA}\csI \le \gldim \csA'.
\end{equation} 
Using~\eqref{cicos0}, \eqref{pdilop}, and~\eqref{pdil} we conclude that
\begin{equation}\label{pdo}
\pdim_{\csA^\op}\cO_{S_0} \le 1,
\qquad
\pdim_{\csA}\cO_{S_0} \le \gldim\csA' + 1.
\end{equation}

%
%
%

Now let $M$ be a right $\csA$-module and consider the canonical triangle
\begin{equation*}
\se(\se^!(M)) \to M \to \si(L\si^*(M)).
\end{equation*}
Note also that we have 
\begin{equation*}
\si(L\si^*(M)) = M \lotimes_{\csA} \cO_{S_0}.
\end{equation*}
It follows from the first part of~\eqref{pdo} that $L\si^*(M)$ has cohomology only in degrees $0$ and $-1$, hence 
it has a free over $\cO_{S_0}$ resolution of length $d + 1$. On the other hand, by the second part of~\eqref{pdo}
$\si(\cO_{S_0})$ has a projective resolution of length $\gldim\csA' + 1$. Thus $\si(L\si^*(M))$ has a projective
resolution of length $\gldim\csA' + d + 2$. On the other hand, since $\se^!$ is exact, $\se^!(M)$ is an $\csA'$-module,
hence $\pdim_{\csA'}(\se^!(M)) \le \gldim\csA'$ and since the functor $\se$ does not increase the projective dimension, 
the module $\se(\se^!(M))$ has a projective resolution of length $\gldim\csA'$. Combining these two observations 
we conclude that $\gldim \csA  \le \gldim\csA' + d + 2$.
So, the induction hypothesis $\gldim\csA' \le (n-1)d + 2(n-2)$ implies that we have $\gldim\csA \le nd + 2(n-1)$.

Analogously, let $M$ be a left $\csA$-module and consider the canonical triangle
\begin{equation*}
L\se_\op(\se_\op^!(M)) \to M \to \si_\op(L\si_\op^*(M)).
\end{equation*}
Now by the second part of~\eqref{pdo} we know that $L\si_\op^*(M)$ has cohomology only in degrees from $0$ to $-(\gldim{\csA'}^\op + 1)$, hence 
it has a free over $\cO_{S_0}$ resolution of length $\gldim{\csA'}^\op + d + 1$. On the other hand, by the first part of~\eqref{pdo}
$\si_\op(\cO_{S_0})$ has a projective resolution of length $1$. Thus $\si_\op(L\si_\op^*(M))$ has a projective
resolution of length $\gldim{\csA'}^\op + d + 2$. On the other hand, since $\se_\op^!$ is exact, $\se_\op^!(M)$ is an $\csA'$-module,
hence $\pdim_{{\csA'}^\op}(\se^!(M)) \le \gldim{\csA'}^\op$. Since $\csA(1-\epsilon_1)$ is a projective left $\csA$-module,
the functor $L\se_\op$ takes projective resolutions to projective resolutions of the same length, so $\se_\op(\se_\op^!(M))$ has a projective resolution
of length $\gldim{\csA'}^\op$. Combining these two observations we again conclude that $\gldim \csA^\op  \le \gldim{\csA'}^\op + d + 2$.
Using the induction hypothesis in the same way as above, we deduce the claim.
%
%
%
%
%
%
%
\end{proof}

\subsection{Generalized Auslander spaces}\label{ss-ausgen}

In this section we indicate how the algebra $\csA$ can be generalized.

Let $S$ be a scheme. Choose a collection of ideals $\fa_{ij} \subset \cO_S$, where $1 \le i \le n$ and $2 \le j \le n+1$,
and the following incidence conditions are satisfied
\begin{equation}\label{a-inc}
\begin{array}{ccccccccc}
\fa_{12} & \supset & \fa_{13} & \supset & \dots & \supset & \fa_{1n} & \supset & \fa_{1,n+1} \\
&& \cap &&&& \cap && \cap \\
&& \fa_{23} & \supset & \dots & \supset & \fa_{2n} & \supset & \fa_{2,n+1} \\
&&&&&& \cap && \cap \\
&&&& \ddots && \vdots && \vdots \\
&&&&&& \cap && \cap \\
&&&&&& \fa_{n-1,n} & \supset & \fa_{n-1,n+1} \\
&&&&&&&& \cap \\
&&&&&&&& \fa_{n,n+1} \\
\end{array}
\end{equation}
Assume also 
\begin{equation}\label{a-comp}
\fa_{ij}\cdot\fa_{jk} \subset \fa_{ik}
\end{equation}
for all $1 \le i < j < k \le n+1$.
We define a {\sf generalized Auslander algebra} as $\csA = \oplus_{i,j = 1}^n \csA_{ij}$ with
\begin{equation*}
\csA_{ij} = 
\begin{cases}
\fa_{ij}/\fa_{i,n+1}, & \text{if $i < j$},\\
\cO_S/\fa_{i,n+1}, & \text{if $i \ge j$}
\end{cases}
\end{equation*}
In other words,
\begin{equation}\label{gadef}
\csA :=
\begin{pmatrix}
\cO_S & \fa_{12}/\fa_{1,n+1} & \fa_{13}/\fa_{1,n+1} & \dots & \fa_{1n}/\fa_{1,n+1} \\
\cO_S/\fa_{2,n+1} & \cO_S/\fa_{2,n+1} & \fa_{23}/\fa_{2,n+1} & \dots & \fa_{2,n}/\fa_{2,n+1} \\
\cO_S/\fa_{3,n+1} & \cO_S/\fa_{3,n+1} & \cO_S/\fa_{3,n+1} & \dots & \fa_{3,n}/\fa_{3,n+1} \\
\vdots & \vdots & \vdots & \ddots & \vdots \\
\cO_S/\fa_{n,n+1} & \cO_S/\fa_{n,n+1} & \cO_S/\fa_{n,n+1} & \dots & \cO_S/\fa_{n,n+1} 
\end{pmatrix}.
\end{equation}
The multiplication is induced by the natural embedding 
\begin{equation}
\csA \subset \End(\cO_S/\fa_{1,n+1} \oplus \cO_S/\fa_{2,n+1} \oplus \cO_S/\fa_{3,n+1} \oplus \dots \oplus \cO_S/\fa_{n,n+1}).
\end{equation} 

\begin{example}
Let $\fr \subset \cO_S$ be an ideal such that $\fr^n = 0$. Then
\begin{equation*}
\fa_{ij} = \fr^{n+1-i}:\fr^{n+1-j}
\end{equation*}
gives $\csA = \End(\cO_S \oplus \cO_S/\fr \oplus \cO_S/\fr^2 \oplus \dots \oplus \cO_S/\fr^{n-1})$,
the {\sf original Auslander algebra}.
\end{example}

\begin{example}
Let $\fr \subset \cO_S$ be an ideal such that $\fr^n = 0$. Then
\begin{equation*}
\fa_{ij} = \fr^{j-i}
\end{equation*}
gives the algebra~\eqref{cadef}, the {\sf special Auslander algebra}.
\end{example}

It turns out that the generalized Auslander algebras enjoy the same properties as the special Auslander algebras
considered in section~\ref{s-nred}.
For example the argument of Proposition~\ref{dqcsod} and Corollary~\ref{sodqqc} proves the following

\begin{prop}\label{sodga}
There are semiorthogonal decompositions
\begin{equation*}
\begin{array}{lll}
\bD(\csA) &=& \langle \bD(S_1),\bD(S_2),\dots,\bD(S_n) \rangle,\\
\bD^b(\coh(\csA)) &=& 
\langle \bD^b(\coh(S_1)), \bD^b(\coh(S_2)), \dots, \bD^b(\coh(S_n)) \rangle.
\end{array}
\end{equation*}
where $S_i$ is the subscheme of $S$ corresponding to the ideal $\fa_{i,i+1}$, so that $\cO_{S_i} = \cO_S/\fa_{i,i+1}$.
\end{prop}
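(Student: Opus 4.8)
The plan is to repeat, essentially verbatim, the proofs of Proposition~\ref{ie-equ}, Proposition~\ref{dqcsod} and Corollary~\ref{sodqqc}, proceeding by induction on the width~$n$ and peeling off the component $\bD(S_1)$ (respectively $\bD^b(\coh(S_1))$) at each step. For $n=1$ one has $\csA=\cO_S/\fa_{1,n+1}=\cO_{S_1}$ and there is nothing to prove, so assume $n>1$. First I would fix the diagonal idempotents $\epsilon_1,\dots,\epsilon_n$ of the generalized Auslander algebra~\eqref{gadef}, form the two-sided ideal $\csI:=\csA(1-\epsilon_1)\csA$ and the subalgebra $\csA':=(1-\epsilon_1)\csA(1-\epsilon_1)=\bigoplus_{i,j\ge2}\csA_{ij}$, and note that $\csA'$ is again a generalized Auslander algebra: re-indexing $i\mapsto i-1$ it is the one attached to $S$ and the collection $\fa'_{ij}:=\fa_{i+1,j+1}$, which inherits the incidence conditions~\eqref{a-inc} and the compatibility~\eqref{a-comp} as a sub-diagram; its width is $n-1$ and its subschemes are $S_2,\dots,S_n$.

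Next I would re-establish in this generality the three structural facts on which the special case rests. (i) $\csA/\csI\cong\cO_{S_1}$: computing the $(1,1)$-entry of $\csI$ as $\sum_{k\ge2}\csA_{1k}\csA_{k1}$ and using~\eqref{a-inc} (so that $\fa_{1k}\subset\fa_{12}$ for $k\ge2$, with equality at $k=2$) together with~\eqref{a-comp}, one gets $\csI_{11}=\fa_{12}$, while every other entry of $\csI$ agrees with that of $\csA$; hence $\csA/\csI\cong\cO_S/\fa_{12}=\cO_{S_1}$. (ii) The right $\csA$-module $\csA(1-\epsilon_1)=\bigoplus_{i\ge2}\csA\epsilon_i$ is projective, and the left $\csA'$-module $(1-\epsilon_1)\csA$ is projective; exactly as for~\eqref{omea}, deleting the first column of $(1-\epsilon_1)\csA$ gives $\csA'$ while the first column coincides with $\csA'\epsilon_2=(1-\epsilon_1)\csA\epsilon_2$, so $(1-\epsilon_1)\csA\cong\csA'\epsilon_2\oplus\csA'$ as a left $\csA'$-module. (iii) The derived identity $\csI\cong\csA(1-\epsilon_1)\lotimes_{\csA'}(1-\epsilon_1)\csA$, which follows from (ii) just as~\eqref{csieq} does in the proof of Proposition~\ref{ie-equ}. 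The derived-category formalism needed (Grothendieck category, enough h-flats and h-injectives, Brown representability) transfers with no change, since $\csA$ is still a sheaf of $\cO_S$-algebras quasicoherent over $\cO_S$, so Lemma~\ref{ahf} applies.

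With (i)--(iii) in place, the argument is formal. Define $\si:\bD(S_1)\to\bD(\csA)$ by restriction of scalars along $\csA\twoheadrightarrow\csA/\csI\cong\cO_{S_1}$ and $\se:\bD(\csA')\to\bD(\csA)$ by $M\mapsto M\lotimes_{\csA'}(1-\epsilon_1)\csA$. As in Proposition~\ref{ie-equ}, the formulas~\eqref{sisa} and~\eqref{sesa} give a left adjoint $L\si^*$ of $\si$ and a right adjoint $\se^!$ of $\se$; the four relations~\eqref{iec} hold verbatim (their proofs use only (i)--(iii)); and tensoring the sequence $0\to\csI\to\csA\to\cO_{S_1}\to0$ with an arbitrary $M$ produces the decomposition triangle $\se(\se^!(M))\to M\to\si(L\si^*(M))$ as in~\eqref{eemii}. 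This yields the two-step semiorthogonal decompositions $\bD(\csA)=\langle\si(\bD(S_1)),\se(\bD(\csA'))\rangle$ and $\bD^b(\coh(\csA))=\langle\si(\bD^b(\coh(S_1))),\se(\bD^b(\coh(\csA')))\rangle$, the bounded-coherent version using, as in Proposition~\ref{dqcsod}, that $(1-\epsilon_1)\csA$ is projective over $\csA'$ and that $\si$ is conservative. Finally I would invoke the induction hypothesis for $\csA'$ and iterate, obtaining the asserted $n$-component decompositions with factors $\bD(S_1),\dots,\bD(S_n)$ (resp.\ $\bD^b(\coh(S_1)),\dots,\bD^b(\coh(S_n))$).

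There is no serious obstacle here: the content is entirely in checking that fact (i) and the identity (iii) really do follow from the incidence hypotheses~\eqref{a-inc} and the compatibility~\eqref{a-comp}, i.e.\ that these hypotheses are precisely what make the matrix manipulations of section~\ref{ss-aussod} go through unchanged. The one point deserving a careful line or two in the write-up is therefore the $(1,1)$-entry computation in (i) and the ``delete the first column'' isomorphism in (ii)/(iii); everything else is an unmodified transcription of the proofs already given.
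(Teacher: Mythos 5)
Your proposal is correct and takes essentially the same approach the paper intends: the paper itself only remarks that ``the argument of Proposition~\ref{dqcsod} and Corollary~\ref{sodqqc} proves the following,'' and your write-up fleshes out exactly what that transcription requires, namely the identification of $\csA'=(1-\epsilon_1)\csA(1-\epsilon_1)$ as the generalized Auslander algebra of the shifted system $\fa'_{ij}=\fa_{i+1,j+1}$, the computation $\csA/\csI\cong\cO_{S_1}$, and the projectivity/``delete the first column'' isomorphism giving $\csI\cong\csA(1-\epsilon_1)\lotimes_{\csA'}(1-\epsilon_1)\csA$. One cosmetic slip: $\csI_{11}=\fa_{12}/\fa_{1,n+1}$ rather than $\fa_{12}$ (the two agree only when $\fa_{1,n+1}=0$), but this does not affect the conclusion $\csA/\csI\cong\cO_S/\fa_{12}$.
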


\begin{remark}
Note that the components of the decomposition do not depend on the ideals $\fa_{i,j}$ with $j - i > 1$.
However one can check that these ideals govern the gluing functors of the decomposition.
\end{remark}

For the special Auslander algebra we have $\fa_{i,i+1} = \fr$ for all $i$, hence $S_1 = \dots = S_n = S_0$,
so all the components of the semiorthogonal decomposition coincide. On the other hand, for the original Auslander algebra
we have $\fa_{i,i+1} = \fr^{n+1-i}:\fr^{n-i}$, so the components may be different.

\begin{example}
Consider $S = \Spec \kk[x,y]/(x^2,xy)$. Take $\fr = (x)$ and $n = 2$. Then we have $\fa_{1,2} = \Ann(\fr) = (x,y)$, $\fa_{1,3} = 0$, $\fa_{2,3} = \fr$,
so $S_1 = \Spec\kk$, $S_2 = \Spec \kk[y]$. In particular, the original Auslander algebra gives a semiorthogonal decomposition
with components $\bD(\kk)$ and $\bD(\kk[y])$, while the special Auslander algebra gives a decomposition with both components
equal to $\bD(\kk[y])$.
\end{example}

\begin{prop}
In the assumptions of Proposition~\ref{sodga},
assume that the subschemes $S_1,\dots,S_n \subset S$ are local regular schemes of dimensions $d_1,\dots,d_n$. Then
\begin{equation*}
\gldim\csA \le d_1 + \dots + d_n + 2(n-1),
\qquad
\gldim\csA^\op \le d_1 + \dots + d_n + 2(n-1).
\end{equation*}
\end{prop}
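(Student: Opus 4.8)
The plan is to reproduce the proof of Proposition~\ref{gda} almost verbatim, running an induction on the width~$n$. For $n=1$ the algebra $\csA$ equals $\cO_{S_1}$, a regular local ring of dimension $d_1$, so $\gldim\csA=d_1=d_1+2(n-1)$, and the same for $\csA^\op$; this is the base of the induction.

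For the inductive step I would first set $\csA'=(1-\epsilon_1)\csA(1-\epsilon_1)=\bigoplus_{2\le i,j\le n}\csA_{ij}$ and check that, after the reindexing $i\mapsto i-1$, this is again a generalized Auslander algebra --- the one attached to the scheme $S$ and to the family of ideals $\fa_{i+1,j+1}$, which inherits~\eqref{a-inc} and~\eqref{a-comp} as a subtriangle of the original family. Its associated subschemes are then $S_2,\dots,S_n$, which are regular local of dimensions $d_2,\dots,d_n$, so the induction hypothesis gives $\gldim\csA',\gldim{\csA'}^\op\le d_2+\dots+d_n+2(n-2)$. Next I would establish, just as in the special case (generalizing Lemma~\ref{csi}, Remark~\ref{eproj}, and the proof of Proposition~\ref{ie-equ}) and using the incidence and compatibility conditions, the structural facts that the argument needs: the ideal $\csI:=\csA(1-\epsilon_1)\csA$ satisfies $\csA/\csI\cong\cO_{S_1}$; one has $\csI\cong\csA(1-\epsilon_1)\lotimes_{\csA'}(1-\epsilon_1)\csA$ with $(1-\epsilon_1)\csA\cong\csA'\epsilon_2\oplus\csA'$ as a left $\csA'$-module, whence $\csI$ is projective as a left $\csA$-module and $\pdim_{\csA^\op}\csI=0$; the module $(1-\epsilon_1)\csA$ is projective as a right $\csA$-module, so the functor $\se$ does not raise projective dimension and $\pdim_\csA\csI\le\gldim\csA'$; and $\csA(1-\epsilon_1)$ is projective as a left $\csA$-module. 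From the analogue of~\eqref{cicos0}, the short exact sequence $0\to\csI\to\csA\to\cO_{S_1}\to0$, these yield the analogue of~\eqref{pdo}: $\pdim_{\csA^\op}\cO_{S_1}\le1$ and $\pdim_\csA\cO_{S_1}\le\gldim\csA'+1$.

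The numerical part is then a mechanical copy of Proposition~\ref{gda}. For a right $\csA$-module $M$ I would use the analogue of the triangle~\eqref{eemii}, $\se(\se^!(M))\to M\to\si(L\si^*(M))$ with $\si(L\si^*(M))=M\lotimes_\csA\cO_{S_1}$: the first bound $\pdim_{\csA^\op}\cO_{S_1}\le1$ shows $L\si^*(M)$ sits in cohomological degrees $0$ and $-1$, hence admits a free $\cO_{S_1}$-resolution of length $d_1+1$, and applying $\si$ (whose image $\csA/\csI$ has a projective $\csA$-resolution of length $\gldim\csA'+1$ by the second bound) gives a projective resolution of $\si(L\si^*(M))$ of length $\le\gldim\csA'+d_1+2$; since $\se^!(M)$ is an $\csA'$-module and $\se$ preserves projectives, $\se(\se^!(M))$ has a projective resolution of length $\le\gldim\csA'$, so $\gldim\csA\le\gldim\csA'+d_1+2$ and the induction hypothesis gives $\gldim\csA\le d_1+\dots+d_n+2(n-1)$. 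For $\csA^\op$ I would run the symmetric argument of section~\ref{ss-aop} on a left $\csA$-module $M$ via $L\se_\op(\se_\op^!(M))\to M\to\si_\op(L\si_\op^*(M))$: here $L\si_\op^*(M)=\cO_{S_1}\lotimes_\csA M$ has cohomology in degrees $0$ down to $-(\gldim\csA'+1)$, hence a free $\cO_{S_1}$-resolution of length $\gldim\csA'+1+d_1$, while $\si_\op(\cO_{S_1})$ has a projective $\csA^\op$-resolution of length $\le1$, and $L\se_\op$ preserves projective resolutions because $\csA(1-\epsilon_1)$ is a projective left $\csA$-module, so $L\se_\op(\se_\op^!(M))$ has a projective $\csA^\op$-resolution of length $\le\gldim{\csA'}^\op$; combining these and invoking the induction hypothesis on $\gldim{\csA'}^\op$ yields $\gldim\csA^\op\le d_1+\dots+d_n+2(n-1)$ as well.

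The main obstacle is the second step: verifying that the structural lemmas of section~\ref{s-nred} --- in particular the description $\csA/\csA(1-\epsilon_1)\csA\cong\cO_{S_1}$, the projectivity of $\csI$ as a left $\csA$-module, and the isomorphism $(1-\epsilon_1)\csA\cong\csA'\epsilon_2\oplus\csA'$ of left $\csA'$-modules --- survive the passage to the generalized Auslander algebra. These are the only places where the hypotheses~\eqref{a-inc} and~\eqref{a-comp} genuinely enter (they are precisely what makes the matrix~\eqref{gadef} an associative algebra and $\csA'$ of the same shape), and once they are in hand the rest of the induction is identical to Proposition~\ref{gda}.
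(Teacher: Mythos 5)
Your proof is correct and follows exactly the route the paper indicates: the paper's own ``proof'' of this proposition is a one-line appeal to the arguments of Proposition~\ref{gda} and section~\ref{s-nred}, and your write-up supplies precisely the verifications that make that appeal legitimate (that $\csA'=(1-\epsilon_1)\csA(1-\epsilon_1)$ is the generalized Auslander algebra of width $n-1$ for the sub-triangle of ideals, that $\csA/\csA(1-\epsilon_1)\csA\cong\cO_{S_1}$, that $(1-\epsilon_1)\csA\cong\csA'\epsilon_2\oplus\csA'$ as left $\csA'$-modules, and so the analogues of~\eqref{cicos0}, \eqref{csieq}, \eqref{pdo} and the triangle~\eqref{eemii} hold), after which the numerical induction is identical. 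As a minor point, you correctly use $\gldim\csA'+1$ as the length bound when resolving $\cO_{S_1}$ as a right $\csA$-module in the $\csA^\op$ half of the argument, where the printed proof of Proposition~\ref{gda} appears to have a harmless misprint ($\gldim{\csA'}^\op+1$); this does not affect the outcome since the inductive hypothesis bounds both by the same quantity.
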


\begin{prop}
In the assumptions of Proposition~\ref{sodga},
if all the subschemes $S_1,\dots,S_n$ of $S$ are smooth then 
\begin{equation*}
\bD(\csA)^\comp = \bD^\perf(\csA) = \bD^b(\coh(\csA)).
\end{equation*}
\end{prop}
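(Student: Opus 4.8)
The plan is to argue by induction on the width $n$, following verbatim the proofs of Proposition~\ref{bperf} and Proposition~\ref{cperf}, which establish the corresponding statements for the special Auslander algebra $\csA_{S,\fr,n}$. The one ingredient that has to be put in place first is the recollement data for a generalized Auslander algebra $\csA$ of width $n$: the two-sided ideal $\csI := \csA(1-\epsilon_1)\csA$, the isomorphism $\csA/\csI \cong \cO_{S_1}$ (where $S_1$ is the component scheme attached to $\fa_{1,2}$ as in Proposition~\ref{sodga}), the algebra $\csA' := (1-\epsilon_1)\csA(1-\epsilon_1)$ --- which is again a generalized Auslander algebra, now of width $n-1$, with defining collection $\{\fa_{i+1,j+1}\}$ and hence component schemes $S_2,\dots,S_n$ --- together with the exact functors $\sip:\bD(S_1)\to\bD(\csA)$, $\sep:\bD(\csA')\to\bD(\csA)$, $\sep^!:\bD(\csA)\to\bD(\csA')$, the left derived functor $L\sip^*:\bD(\csA)\to\bD(S_1)$, the relations~\eqref{iec}, and the canonical triangle $\sep(\sep^!(M))\to M\to \sip(L\sip^*(M))$ obtained by tensoring $0\to\csI\to\csA\to\cO_{S_1}\to 0$ with $M$. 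All of this is produced exactly as in Section~\ref{ss-aussod}: a direct inspection of the matrix~\eqref{gadef} shows that the top–left entry of $\csI$ is $\fa_{1,2}/\fa_{1,n+1}$ while every row below the first lies entirely in $\csI$, whence $\csA/\csI\cong\cO_S/\fa_{1,2}=\cO_{S_1}$ as in Lemma~\ref{csi}; and the proof of Proposition~\ref{ie-equ} carries over to give that $(1-\epsilon_1)\csA$ is a projective right $\csA$-module and that $\csI\cong \csA(1-\epsilon_1)\lotimes_{\csA'}(1-\epsilon_1)\csA$. This bookkeeping is the only real obstacle; it is a routine matrix computation parallel to Section~\ref{ss-aussod}, and I would not reproduce it in detail.

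Granting this, I would first prove $\bD^b(\coh(\csA)) = \bD^\perf(\csA)$. The inclusion $\bD^\perf(\csA)\subset\bD^b(\coh(\csA))$ is automatic. For the converse, when $n=1$ we have $\csA\cong\cO_{S_1}$ with $S_1$ smooth, which is classical. For $n>1$ take $M\in\bD^b(\coh(\csA))$ and apply the triangle $\sep(\sep^!(M))\to M\to\sip(L\sip^*(M))$. Since $\sep^!$ is exact, $\sep^!(M)\in\bD^b(\coh(\csA'))$, hence perfect over $\csA'$ by the induction hypothesis (the component schemes $S_2,\dots,S_n$ of $\csA'$ are smooth); since $\sep$ carries projective $\csA'$-modules to projective $\csA$-modules (the analogue of Remark~\ref{eproj}, using projectivity of $(1-\epsilon_1)\csA$ over $\csA$), the object $\sep(\sep^!(M))$ is perfect over $\csA$. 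As in the proof of Proposition~\ref{dqcsod}, the same triangle together with conservativity of $\sip$ gives $L\sip^*(M)\in\bD^b(\coh(S_1))$, which is perfect over $S_1$ because $S_1$ is smooth; so it only remains to see that $\sip(\cO_{S_1})$ is perfect over $\csA$. Applying the triangle to $M=\csA$, using $L\sip^*(\csA)\cong\cO_{S_1}$, yields $\sep(\sep^!(\csA))\to\csA\to\sip(\cO_{S_1})$ with both of the first two vertices already known to be perfect, so the third is too.

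Finally I would prove $\bD(\csA)^\comp=\bD^\perf(\csA)$, again by induction on $n$, exactly mimicking Proposition~\ref{cperf}. Every perfect complex of $\csA$-modules is compact (by the argument of Neeman used for $\csA_{S,\fr,n}$), so $\bD^\perf(\csA)\subset\bD(\csA)^\comp$. For the reverse, $n=1$ is~\eqref{compperf}; for $n>1$ let $M\in\bD(\csA)^\comp$. By Lemma~\ref{cprops}(2), since $\sip$ commutes with direct sums, $L\sip^*(M)\in\bD(S_1)^\comp=\bD^b(\coh(S_1))$, hence $\sip(L\sip^*(M))\in\bD^b(\coh(\csA))=\bD^\perf(\csA)$ by the first part, in particular compact; from the triangle, $\sep(\sep^!(M))$ is then compact; by Lemma~\ref{cprops}(1) (using that $\sep$ is fully faithful and commutes with direct sums), $\sep^!(M)\in\bD(\csA')^\comp$, so $\sep^!(M)\in\bD^\perf(\csA')$ by the induction hypothesis, and then $\sep(\sep^!(M))\in\bD^b(\coh(\csA))=\bD^\perf(\csA)$; looking once more at the triangle, $M$ is perfect. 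Combining the two equalities gives $\bD(\csA)^\comp=\bD^\perf(\csA)=\bD^b(\coh(\csA))$, as claimed. I expect no surprises beyond the matrix identities for $\csI$ and $\csA'$ recorded in the first paragraph; everything downstream is formal and identical to the special case.
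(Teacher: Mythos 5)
Your proposal is correct and does exactly what the paper has in mind: the paper's ``proof'' of this proposition is the single sentence that it follows ``by the same arguments as the corresponding statements in Section~\ref{s-nred}'', and you have filled that in faithfully. The only substantive verification you needed to perform --- that the recollement data $(\csI,\ \csA',\ \sip,\ \sep,\ \sep^!,\ L\sip^*)$ exists for a generalized Auslander algebra, that $\csA'=(1-\epsilon_1)\csA(1-\epsilon_1)$ is again a generalized Auslander algebra of width $n-1$ on $S'$ with component schemes $S_2,\dots,S_n$, that $(1-\epsilon_1)\csA$ is projective on both sides, and that $\csI\cong\csA(1-\epsilon_1)\lotimes_{\csA'}(1-\epsilon_1)\csA$ --- is exactly the matrix bookkeeping you flagged, and it does go through as you say (the paper itself records the first two of these facts in Example~\ref{egm}). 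After that the induction on $n$ via the triangle $\sep\sep^!M\to M\to\sip L\sip^*M$ is identical to Propositions~\ref{bperf} and~\ref{cperf}, and your two steps reproduce them correctly.
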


Again, one can prove these Propositions by the same arguments as the corresponding statements in section~\ref{s-nred}
and section~\ref{ss-gldim}.

Moreover, one can define a DG-category $\cdcf(\csA)$ in the same way as it is done in section~\ref{s-nred},
construct a DG-functor $\rho_S:\cdcf(S) \to \cdcf(\csA)$. Again, the same arguments prove

\begin{theo}
In the assumptions of Proposition~\ref{sodga},
if all the subschemes $S_1,\dots,S_n$ of $S$ are smooth then 
the functor $\rho_S$ is a DG-resolution of singularities and the derived
functor $L\rho_S^*:\bD(S) \to \bD(\csA)$ is a categorical resolution.
\end{theo}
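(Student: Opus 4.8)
The plan is to reduce the statement to Theorem~\ref{ausres} by observing that all three of its ingredients have been reproved in the generalized setting. Precisely, I would verify: (i) the DG-category $\cdcf(\csA)$ is smooth; (ii) the derived pullback $L\rho_S^*:\bD(S)\to\bD(\csA)$ is fully faithful, with a right adjoint $\rho_{S*}$ that commutes with arbitrary direct sums; and (iii) $\rho_{S*}$ sends $[\cdcf(\csA)]=\bD^\perf(\csA)$ into $\bD^b(\coh(S))$. Granting these, full faithfulness of $L\rho_S^*$ forces $\rho_S$ to be quasi fully faithful exactly as in the proof of Theorem~\ref{ausres}, so together with (i) the DG-functor $\rho_S$ is a categorical DG-resolution; and (ii)--(iii) together with Proposition~\ref{dgcrcr} show that $L\rho_S^*$ is a categorical resolution in the sense of Definition~\ref{resboth}.

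For (i) I would induct on the width $n$. When $n=1$ we have $\csA=\cO_{S_1}=\cO_S$ and $S=S_1$ is smooth, so there is nothing to prove. For $n>1$, Proposition~\ref{sodga} together with Propositions~\ref{uniglu2} and~\ref{regluequi} exhibits $\cdcf(\csA)$, up to quasiequivalence, as a gluing $\cdcf(S_1)\times_\varphi\cdcf(\csA')$, where $\csA'=(1-\epsilon_1)\csA(1-\epsilon_1)$ is the generalized Auslander algebra of width $n-1$ obtained by deleting the first row and column of~\eqref{gadef}, with associated subschemes $S_2,\dots,S_n$, all smooth. Since $\cdcf(S_1)$ is smooth by hypothesis and $\cdcf(\csA')$ is smooth by the inductive hypothesis, Proposition~\ref{utsm} reduces the claim to perfectness of the gluing bimodule $\varphi$, and by Proposition~\ref{tv} it is enough to show that the functor $L\varphi:\bD(\csA')\to\bD(S_1)$ --- which by Proposition~\ref{derglu} is, up to shift, the gluing functor $\si^!\circ\se$ --- preserves compactness. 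At this point I would copy the reduction from the proof of Theorem~\ref{dasm}: the functor $\se$ preserves compactness (its right adjoint $\se^!$ commutes with sums), and since $\si$ is fully faithful and sum-preserving one is reduced to showing that $\si\circ\si^!$ preserves compactness; as a compact object of $\bD(\csA)$ is precisely a bounded complex with coherent cohomology (the generalized analogues of Propositions~\ref{bperf} and~\ref{cperf}, recorded above), this amounts to checking that $\Res_S(\si(\si^!(N)))\cong\RCHom_\csA(\cO_{S_1},N)$ is bounded and coherent for every compact $N$, which holds because $\cO_{S_1}=\cO_S/\fa_{1,2}$ admits a finite resolution by locally free $\csA$-modules of finite rank.

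For (ii) and (iii) I would carry over section~\ref{ss-rhos} verbatim. The morphism $\rho_S$ of generalized $\csA$-spaces gives $\rho_S^*(M)=M\otimes_{\cO_S}(\epsilon_1\csA)$ and $\rho_{S*}(N)=N\epsilon_1$; since $\csA_{11}=\cO_S$ we have $\epsilon_1\csA\epsilon_1=\cO_S$, hence $\rho_{S*}\circ\rho_S^*\cong\id$ on quasicoherent modules, and as in Lemma~\ref{rrs} and~\eqref{lrrs} this descends to $\rho_{S*}\circ L\rho_S^*\cong\id_{\bD(S)}$; in particular $L\rho_S^*$ is fully faithful, $\rho_{S*}$ is its right adjoint (Brown representability), and $\rho_{S*}$ commutes with arbitrary direct sums because it is the exact, sum-preserving functor $N\mapsto N\epsilon_1$. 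Finally, by the generalized analogue of Proposition~\ref{bperf} one has $[\cdcf(\csA)]=\bD^\perf(\csA)=\bD^b(\coh(\csA))$, and $\rho_{S*}$, being exact and picking out an $\cO_S$-direct summand, visibly preserves boundedness and coherence (this is also the $\rho_S$-instance of Lemma~\ref{rfscoh}), so $\rho_{S*}([\cdcf(\csA)])\subset\bD^b(\coh(S))$ and Proposition~\ref{dgcrcr} applies, completing both claims of the theorem.

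The step I expect to be the main obstacle sits inside (i): proving the generalized analogue of Proposition~\ref{bperf}, namely that when all $S_i$ are smooth every object of $\bD^b(\coh(\csA))$ is perfect, and in particular that $\cO_{S_1}$ has a finite resolution by locally projective $\csA$-modules of finite rank. As in the special case this goes through the two-sided ideal $\csI=\csA(1-\epsilon_1)\csA$ and the identification $\csI\cong\csA(1-\epsilon_1)\lotimes_{\csA'}(1-\epsilon_1)\csA$, which rests on $(1-\epsilon_1)\csA$ being projective of the right shape as a left $\csA'$-module; verifying this now genuinely uses the incidence conditions~\eqref{a-inc} and the compatibility~\eqref{a-comp} among the ideals $\fa_{ij}$, in place of the explicit power-of-an-ideal computation available for the special Auslander algebra. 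Once this is settled, the rest is a faithful transcription of the arguments of sections~\ref{s-nred} and~\ref{ss-rhos}.
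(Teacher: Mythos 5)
Your proof is correct and follows exactly the route the paper intends (the paper itself gives no explicit proof here, asserting only that ``the same arguments'' as in section~\ref{s-nred} and Theorem~\ref{dasm} apply): reduce to smoothness of $\cdcf(\csA)$ via the gluing decomposition, carry over $\rho_{S*}\circ L\rho_S^*\cong\id$ using $\epsilon_1\csA\epsilon_1=\cO_S$, and verify the generalized analogues of Propositions~\ref{bperf}/\ref{cperf} through the identification $\csI\cong\csA(1-\epsilon_1)\lotimes_{\csA'}(1-\epsilon_1)\csA$, whose proof you correctly locate in the projectivity of $(1-\epsilon_1)\csA\cong\csA'\epsilon_2\oplus\csA'$ as a left $\csA'$-module. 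The only tiny slip is terminological --- the paper says $\cO_{S_1}$ has a finite resolution by \emph{locally projective} (not locally free) $\csA$-modules of finite rank, since the building blocks $\csA\epsilon_i$ are projective but not free --- and this does not affect the argument.
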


%
%

\subsection{Generalized morphisms of $\csA$-spaces}\label{ss-genmaps}

In this section we describe the notion of generalized morphisms of $\csA$-spaces.
Let $(S,\fa)$ be a generalized Auslander space as defined in section~\ref{ss-ausgen}.

\begin{defi}
A {\sf generalized morphism of $\csA$-spaces} from $(T,\fa^T,n_T)$ to $(S,\fa^S,n_S)$ consists of
a continuous map $f:T \to S$ of the underlying topological spaces and a sheaf $\cP$ of $f^{-1}\csA_S$-$\csA_T$-bimodules
which is projective over $\csA_T$.
%
\end{defi}

If $\bff = (f,\cP)$ is a morphism of $\csA$-spaces we can define the functors
\begin{equation*}
\bff^*:\Qcoh(\csA_S) \to \Qcoh(\csA_T),
\qquad
M \mapsto f^{-1}M\otimes_{f^{-1}\csA_S}\cP
\end{equation*}
\begin{equation*}
\bff_*:\Qcoh(\csA_T) \to \Qcoh(\csA_S),
\qquad
N \mapsto f_*\cHom_{\csA_T}(\cP,N).
\end{equation*}

One can easily prove the following

\begin{lemma}
The functor $\bff^*$ is right exact and the functor $\bff_*$ is left exact.
Moreover, the functor $\bff^*$ is the left adjoint of the functor $\bff_*$.
\end{lemma}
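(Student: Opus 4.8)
The plan is to verify the two claims of the Lemma directly from the definitions, exactly as in the proof of Lemma~\ref{apbpf}. For the exactness statements, I would decompose each functor into a composition of simpler functors whose exactness properties are classical. The functor $\bff^*$ is the composition $M \mapsto f^{-1}M$ followed by $N \mapsto N\otimes_{f^{-1}\csA_S}\cP$; the first factor $f^{-1}$ is exact (it is the inverse image of sheaves of abelian groups on topological spaces), and the second is a tensor product, hence right exact; so $\bff^*$ is right exact. Dually, $\bff_*$ is the composition $N \mapsto \cHom_{\csA_T}(\cP,N)$ followed by $f_*$; the functor $f_*$ (direct image of sheaves) is left exact, and $\cHom_{\csA_T}(\cP,-)$ is left exact in general, so $\bff_*$ is left exact. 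Here I would not even need projectivity of $\cP$ for the exactness claims, though it would upgrade $\cHom_{\csA_T}(\cP,-)$ to an exact functor.

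Next, for the adjunction, the plan is to write down the standard chain of natural isomorphisms, mirroring the display in the proof of Lemma~\ref{apbpf}. For $M \in \Qcoh(\csA_S)$ and $N \in \Qcoh(\csA_T)$ one has
\begin{equation*}
\Hom_{\csA_T}(\bff^*M,N) = \Hom_{\csA_T}(f^{-1}M\otimes_{f^{-1}\csA_S}\cP,N)
\end{equation*}
which by the tensor-Hom adjunction over the sheaf of rings $f^{-1}\csA_S$ is isomorphic to
\begin{equation*}
\Hom_{f^{-1}\csA_S}\big(f^{-1}M,\cHom_{\csA_T}(\cP,N)\big),
\end{equation*}
and then by the $(f^{-1},f_*)$ adjunction for sheaves this equals
\begin{equation*}
\Hom_{\csA_S}\big(M,f_*\cHom_{\csA_T}(\cP,N)\big) = \Hom_{\csA_S}(M,\bff_*N).
\end{equation*}
One must check that the composite isomorphism is natural in both arguments, but each of the three isomorphisms in the chain is a standard natural isomorphism, so this is automatic. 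A minor point is to observe that $\cHom_{\csA_T}(\cP,N)$ genuinely carries an $f^{-1}\csA_S$-module structure (using the left $f^{-1}\csA_S$-action on $\cP$), so that all the Hom-groups in the middle of the chain are over the correct rings; this is where the bimodule structure on $\cP$ is used, just as in section~\ref{ss-afun}.

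The only real thing to watch is that sheafy tensor-Hom adjunction and $(f^{-1},f_*)$ adjunction are being applied to sheaves of modules over (sheaves of) noncommutative rings rather than over $\cO_S$, but these adjunctions hold verbatim in that generality, since they are formal consequences of the adjunctions at the level of presheaves together with sheafification; there is no obstacle here beyond bookkeeping. I expect the whole proof to be about three lines, essentially identical in structure to that of Lemma~\ref{apbpf}, and I would phrase it as ``The proof is identical to that of Lemma~\ref{apbpf}'' followed by the decomposition into exact/right-exact factors and the displayed chain of adjunction isomorphisms above.
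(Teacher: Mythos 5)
Your proof is correct and is exactly the argument the paper intends: the Lemma is stated without proof (prefaced by ``One can easily prove the following''), and the intended argument is precisely the one you give, obtained from the proof of Lemma~\ref{apbpf} by replacing the specific bimodule $(\epsilon_1+\dots+\epsilon_{n_S})\csA_T$ with the general $f^{-1}\csA_S$-$\csA_T$-bimodule $\cP$. Your observation that projectivity of $\cP$ over $\csA_T$ is not needed for left exactness of $\bff_*$ (only for exactness) is also accurate and a nice extra.
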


Assume that $f:T \to  S$ is a morphism of schemes compatible with ideals $\fa^T_{ij}$ and $\fa^S_{ij}$, i.e.
\begin{equation*}
f^{-1}(\fa^S_{ij}) \subset \fa^T_{ij}
\end{equation*}
for all $1 \le i < j \le n_S + 1$ and $n_S\le n_T$. Then the morphism $f$ induces a morphism $f^{-1}\csA_S \to \csA_T$
compatible with the addition and multiplication laws
and taking the unit of $\csA_S$ to the idempotent
$\epsilon_1 + \dots + \epsilon_{n_S}$ of $\csA_T$.
Thus 
\begin{equation*}\label{cpf}
\cP_f := (\epsilon_1 + \dots + \epsilon_{n_S})\csA_T
\end{equation*} 
is a $f^\bull(\csA_S)-\csA_T$-bimodule which is projective over $\csA_T$. So, the pair $\bff := (f,\cP_f)$ 
defines a generalized morphism of $\csA$-spaces, which we will refer to as {\sf the morphism of $\csA$-spaces
induced by the morphism $f$ of schemes}. It is easy to see that the induced pullback and pushforward functors
on categories of $\csA$-modules coincide with those defined in section~\ref{s-nred}.

\begin{example}\label{egm}
Let $S_1 \subset S$ be the subscheme corresponding to the ideal $\fa_{1,2} \subset \cO_S$.
Then we have a generalized morphism from $(S_1,0,1)$ to $(S,\fa^S,n_S)$ given by the embedding of
schemes $i:S_1\to S$ and the bimodule $\cP_1 = \cO_{S_1}$. Then the corresponding pullback 
and pushforward functors coincide with the functors $\si^*$ and $\si$ investigated in section~\ref{s-nred}.


On the other hand, let $S' \subset S$ be the subscheme corresponding to the ideal $\fa_{2,n+1} \subset \cO_S$
and let $\csA'$ be the generalized Auslander algebra corresponding to the system of ideals
in $\cO_{S'} = \cO_S / \fa_{2,n+1}$ obtained from~\eqref{a-inc} by removing the first line.
Note that the schemes $S$ and $S'$ have the same underlying topological spaces. Let $e:S' \to S$
be the identity morphism of those (note that it does not extend to a morphism of schemes).
Further, $\cP' = (1-\epsilon_1)\csA$ is a $\csA'$-$\csA$-bimodule which is projective over $\csA$,
hence $\be = (e,\cP')$ is a morphism of $\csA$-spaces. 
Then the pullback functor $\be^*$ is isomorphic to the functor $\se$
while the pushforward functor $\be_*$ is isomorphic to $\se^!$ defined 
in~\eqref{sea} and~\eqref{sesa} respectively.

It is also easy to see that the induction and the restriction functors 
considered in Lemma~\ref{ahf} are also the pullback and the pushforward
for appropriate generalized morphisms of $\csA$-spaces.
\end{example}


\begin{thebibliography}{XXXX}

\bibitem[AJL]{AJL} L.~Alonso, A.~Jeremias, J.~Lipman, 
{\em Local homology and cohomology on schemes}, 
Ann. Sci Ecole Norm. Sup. (4) 30 (1997), no.1, 1-39. 

\bibitem[A]{A} M. Auslander, 
{\em Representation dimension of artin algebras}, in
Selected works of Maurice Auslander. Part 1, American Mathematical
Society, Providence, RI, 1999, Edited and with a foreword by Idun Reiten,
Sverre O. Smal\o, and {\O}yvind Solberg.

\bibitem[SGA6]{SGA6} P. Berthelot, A. Grothendieck, and L. Illusie, 
{\em Th\'eorie des r\'eductions et th\'eor\`eme de Riemann--Roch}, 
Lecture Notes in Math., vol. 225, Springer-Verlag, Heidelberg 1971.

\bibitem[BM97]{BM97}
E. Bierstone, P. Milman,
{\em Canonical desingularization in characteristic zero by blowing up the maximum strata of a local invariant},
Inventiones Mathematicae, Volume 128, Number 2 (1997), 207-302.



\bibitem[BK89]{BK89} A. Bondal, M. Kapranov,
{\em Representable functors, Serre functors, and reconstructions},
(Russian)  Izv. Akad. Nauk SSSR Ser. Mat. {\bf 53} (1989), no. 6, 1183--1205, 1337;
translation in  Math. USSR-Izv. {\bf 35} (1990), no. 3, 519--541.

\bibitem[BK90]{BK90} A. Bondal, M. Kapranov,
{\em Framed triangulated categories},
(Russian)  Mat. Sb.  {\bf 181}  (1990),  no. 5, 669--683;
translation in  Math. USSR-Sb. {\bf 70} (1991),  no. 1, 93--107.


\bibitem[BO95]{BO95} A. Bondal, D. Orlov,
{\em Semiorthogonal decomposition for algebraic varieties},
preprint math.AG/9506012.

\bibitem[BO02]{BO02} A. Bondal, D. Orlov,
{\em Derived categories of coherent sheaves},
Proceedings of the International Congress of Mathematicians,
Vol. II (Beijing, 2002), 47--56, Higher Ed. Press, Beijing, 2002.





\bibitem[D04]{D04} V. Drinfeld,
{\em DG quotients of DG categories}, 
J. Algebra 272 (2004), no. 2, 643–-691.

\bibitem[Ha]{Ha} R. Hartshorne,
{\em Algebraic geometry}. 
Graduate Texts in Mathematics, No. 52. Springer-Verlag, New York-Heidelberg, 1977. xvi+496 pp.

\bibitem[KSch]{KSch} M. Kashiwara, P. Schapira,
{\em Categories and sheaves},
Grundlehren der Mathematischen Wissenschaften [Fundamental Principles of Mathematical Sciences], {\bf 332}.
Springer-Verlag, Berlin, 2006.



%


\bibitem[Ke06]{Ke06} B. Keller,
{\em On differential graded categories},
International Congress of Mathematicians. Vol. II,  151--190, Eur. Math. Soc., Z\"urich, 2006.



\bibitem[K08]{K08} A. Kuznetsov,
{\em Lefschetz decompositions and categorical resolutions of singularities},
Selecta Mathematica, V. 13 (2008), N. 4, 661--696. 



%
%

\bibitem[L10a]{L10a} V. Lunts, 
{\em Categorical resolution of singularities}, 
J. Algebra 323 (2010), no. 10, 2977–-3003.

\bibitem[L10b]{L10b} V. Lunts, 
{\em Categorical resolutions, poset schemes and Du Bois singularities}, 
preprint arXiv:1011.6089.

\bibitem[LO]{LO} V. Lunts, D. Orlov,
{\em Uniqueness of enhancement for triangulated categories},
JAMS, Volume 23, Number 3, July 2010, Pages 853–908.

\bibitem[LS]{LS} V. Lunts, O. Schn\"urer,
{\em Smoothness of equivariant derived categories},
preprint arXiv:1205.3132.

\bibitem[N92]{N92} A. Neeman,
{\em The connection between the K-theory localisation theorem of Thomason, 
Trobaugh and Yao, and the smashing subcategories of Bousfield and Ravenel}, 
Ann. Sci. \'Ecole Norm. Sup. 25 (1992), 547--566.

\bibitem[N96]{N96} A. Neeman,
{\em The Grothendieck duality theorem via Bousfield's techniques and Brown representability}, 
J. Amer. Math. Soc. 9 (1996), 205--236.


\bibitem[N01]{N01} A. Neeman,
{\em Triangulated categories},
Annals of Mathematics Studies, 148. Princeton University Press, Princeton, NJ, 2001.



\bibitem[Sp88]{Sp88} N. Spaltenstein,
{\em Resolutions of unbounded complexes},
Compositio Mathematica, tome 65, no 2 (1988), p. 121--154.


\bibitem[Ta]{Ta} G. Tabuada,
{\em Theorie homotopique des DG-categories},
arXiv:0710.4303.

\bibitem[TV]{TV} B. To\"en, M. Vaqui\'e,
{\em Moduli of objects in dg-categories}, 
Annales Scientifiques de l'\'Ecole Normale Sup\'erieure
Volume 40, Issue 3 (2007), 387--444.

\bibitem[T07]{T07} B. To\"en,
{\em The homotopy theory of $dg$-categories and derived Morita theory},
Invent. Math.  167 (2007), no. 3, 615--667.


\bibitem[VdB04]{VdB04} M. Van den Bergh, 
{\em Non-commutative crepant resolutions}, 
The Legacy of Niels Hendrik Abel, Springer, pp. 749--770, 2004.

\end{thebibliography}
\end{document}